\newcommand\version{July 27, 2022}
\newtheorem{theorem}{Theorem}[section]
\newtheorem{proposition}[theorem]{Proposition}
\newtheorem{lemma}[theorem]{Lemma}
\newtheorem{corollary}[theorem]{Corollary}
\theoremstyle{definition}
\newtheorem{assumption}[theorem]{Assumption}
\theoremstyle{remark}
\newtheorem{remark}[theorem]{Remark}
\numberwithin{equation}{section}
\renewcommand{\epsilon}{\varepsilon}
\newcommand{\R}{\mathbb{R}}
\newcommand{\eps}{\varepsilon}
\newcommand{\xl}{{x, \lambda}}
\newcommand{\pxi}{\partial_{x_i}}
\newcommand{\pxj}{\partial_{x_j}}
\newcommand{\pyi}{\partial_{y_i}}
\newcommand{\pyj}{\partial_{y_j}}
\newcommand{\pl}{\partial_{\lambda}}
\DeclareMathOperator{\Hess}{Hess}
\title[Blow-up of solutions of critical elliptic equations]{Blow-up of solutions of critical elliptic equations\\ in three dimensions}
\author{Rupert L. Frank}
\address[Rupert L. Frank]{Mathematisches Institut, Ludwig-Maximilians-Universit\"at M\"unchen, Theresienstr. 39, 80333 M\"unchen, Germany, and Mathematics 253-37, Caltech, Pasa\-de\-na, CA 91125, USA}
\email{r.frank@lmu.de, rlfrank@caltech.edu}
\author{Tobias König}
\address[Tobias König]{Institut de Mathématiques de Jussieu -- Paris Rive Gauche	Université de Paris - Campus des Grands Moulins, Bâtiment Sophie Germain, Boite Courrier 7012,	8 Place Aurélie Nemours, 75205 Paris Cedex 13}
\email{koenig@imj-prg.fr}
\author {Hynek Kova\v{r}\'{\i}k}
\address [Hynek Kova\v{r}\'{\i}k]{DICATAM, Sezione di Matematica, Universit\`a degli studi di Brescia, Italy}
\email {hynek.kovarik@unibs.it}
\date{\version}
\thanks{\copyright\, 2021 by the authors. This paper may be reproduced, in its entirety, for non-commercial purposes.\\
	Partial support through US National Science Foundation grants DMS-1363432 and DMS-1954995 (R.L.F.) and through ANR BLADE-JC ANR-18-CE40-002 (T.K.) is acknowledged. T.K. thanks Paul Laurain for several useful discussions. The authors are grateful to Haim Br\'ezis for helpful remarks on a first draft of this manuscript.}
\begin{document}

\begin{abstract}
	We describe the asymptotic behavior of positive solutions $u_\eps$ of the equation $-\Delta u + au = 3\,u^{5-\eps}$ in $\Omega\subset\R^3$ with a homogeneous Dirichlet boundary condition. The function $a$ is assumed to be critical in the sense of Hebey and Vaugon and the functions $u_\eps$ are assumed to be an optimizing sequence for the Sobolev inequality. Under a natural nondegeneracy assumption we derive the exact rate of the blow-up and the location of the concentration point, thereby proving a conjecture of Br\'ezis and Peletier (1989). Similar results are also obtained for solutions of the equation $-\Delta u + (a+\eps V) u = 3\,u^5$ in $\Omega$. 
\end{abstract}

\maketitle

\section{Introduction and main results}

We are interested in the behavior of solutions to certain semilinear elliptic equations that are perturbations of the critical equation
$$
-\Delta U = 3\, U^5
\qquad\text{in}\ \R^3 \,.
$$
It is well-known that all positive solutions to the latter equation are given by
\begin{equation}
	\label{eq:uxl}
	U_{x, \lambda}(y) := \frac{\lambda^{1/2}}{(1 + \lambda^2|y-x|^2)^{1/2}}
\end{equation}
with parameters $x \in \R^3$ and $\lambda > 0$. This equation arises as the Euler--Lagrange equation of the optimization problem related to the Sobolev inequality
$$
\int_{\R^3} |\nabla z|^2 \geq S \left( \int_{\R^3} z^6 \right)^{1/3}
$$
with sharp constant \cite{Rod,Ro,Au,Ta}
$$
S := 3 \left( \frac\pi 2 \right)^{4/3}.
$$

The perturbed equations that we are interested in are posed in a bounded open set $\Omega\subset\R^3$ and involve a function $a$ on $\Omega$ such that the operator $-\Delta +a$ with Dirichlet boundary conditions is coercive. (Later, we will be more precise concerning regularity assumptions on $\Omega$ and $a$.) One of the two families of equations also involves another, rather arbitrary function $V$ on $\Omega$. The case where $a$ and $V$ are constants is also of interest.

We consider solutions $u=u_\epsilon$, parametrized by $\epsilon>0$, to the following two families of equations,
\begin{equation}
	\label{BP-problem}
	\begin{cases}
		- \Delta u + a u = 3\, u^{5-\eps} & \text{ in } \Omega,  \\
		u > 0  & \text{ in } \Omega, \\
		u = 0 & \text{ on } \partial  \Omega
	\end{cases}
\end{equation}
and
\begin{equation}
	\label{equation u}
	\begin{cases}
		- \Delta u + (a + \epsilon V) u = 3\, u^5 & \text{ in } \Omega,  \\
		u > 0  & \text{ in } \Omega, \\
		u = 0 & \text{ on } \partial  \Omega.
	\end{cases}
\end{equation}
While there are certain differences between the problems \eqref{BP-problem} and \eqref{equation u}, the methods used to study them are similar, and we will treat both in this paper. We are interested in the behavior of the solutions $u_\eps$ as $\eps\to 0$, and we assume that in this limit the solutions form a minimizing sequence for the Sobolev inequality. More precisely, for \eqref{equation u} we assume
\begin{equation}
	\label{eq:sobmin}
	\lim_{\epsilon \to 0} \frac{\int_\Omega |\nabla u_\epsilon|^2}{\left( \int_\Omega u_\epsilon^6 \right)^{1/3}} = S \,
\end{equation}
and for \eqref{BP-problem} we assume 
\begin{equation}
\label{eq:sobmineps}
\lim_{\epsilon \to 0} \frac{\int_\Omega |\nabla u_\epsilon|^2}{\left( \int_\Omega u_\epsilon^{6-\eps} \right)^{\frac{2}{6-\eps}}} = S \, .
\end{equation}

For example, when $\Omega=B$ is the unit ball, $a=-\pi^2/4$, and $V=-1$,  then \eqref{equation u} has a solution if and only if  $0<\epsilon<\frac{3\pi^2}{4}$, see \cite[Sec.~1.2]{BrNi}. Note that in this case $\pi^2$ is the first eigenvalue of the operator  $-\Delta$ with Dirichlet boundary conditions  on $\Omega$.

Returning to the general situation, the existence of solutions to \eqref{BP-problem} and \eqref{equation u} satisfying \eqref{eq:sobmin} and \eqref{eq:sobmineps} can be proved via minimization under certain assumptions on $a$ and $V$; see, for instance, \cite{FrKoKo1} for \eqref{equation u}. Moreover, it is not hard to prove, based on the characterization of optimizers in Sobolev's inequality, that these functions converge weakly to zero in $H^1_0(\Omega)$ and that $u_\eps^6$ converges weakly in the sense of measures to a multiple of a delta function; see Proposition \ref{lemma PU + w}. In this sense, the functions $u_\eps$ blow up.

The problem of interest is to describe this blow-up behavior more precisely. This question was advertised in an influential paper by Br\'ezis and Peletier \cite{BrPe}, who presented a detailed study of the case where $\Omega$ is a ball and $a$ and $V$ are constants. For earlier results on \eqref{BP-problem} with $a\equiv 0$, see \cite{AtPe,Bu}. Concerning the case of general open sets $\Omega\subset\R^3$, the Br\'ezis--Peletier paper contains three conjectures, the first two of which concern the blow-up behavior of solutions to the analogues of  \eqref{BP-problem} and \eqref{equation u} in dimensions $N\geq 3$ ($N\geq 4$ for \eqref{equation u}) with $a\equiv 0$. These conjectures were proved independently in seminal works of Han \cite{Ha} and Rey \cite{Re1,Re2}.

In the present paper, under a natural nondegeneracy condition, we prove the third Br\'ezis--Peletier conjecture, which has remained open so far. It concerns the blow-up behavior of solutions of \eqref{BP-problem} for certain nonzero $a$ in the three-dimensional case. We also prove the corresponding result for \eqref{equation u}. This latter result is not stated explicitly  as a conjecture in \cite{BrPe}, but it is contained there in spirit and could have been formulated using the same heuristics. Indeed, it is the version with $a \not \equiv 0$ of the second Brézis--Peletier conjecture, in the same way as, concerning \eqref{BP-problem}, the third conjecture is the $a \not \equiv 0$ version of the first one.

A characteristic feature of the three-dimensional case is the notion of criticality for the function $a$. To motivate this concept, let
$$
S(a) := \inf_{0\not\equiv z\in H^1_0(\Omega)} \frac{\int_\Omega (|\nabla z|^2+ a z^2)}{(\int_\Omega z^6)^{1/3}} \,.
$$
One of the findings of Br\'ezis and Nirenberg \cite{BrNi} is that if $a$ is small (for instance, in $L^\infty(\Omega)$), but possibly nonzero, then $S(a)=S$. This is in stark contrast to the case of dimensions $N\geq 4$ where the corresponding analogue of $S(a)$ (with the exponent $6$ replaced by $2N/(N-2)$) is always strictly below the corresponding Sobolev constant, whenever $a$ is negative somewhere.

This phenomenon leads naturally to the following definition due to Hebey and Vaugon \cite{HeVa}. A continuous function $a$ on $\overline\Omega$ is said to be \emph{critical} in $\Omega$ if $S(a)=S$ and if for any continuous function $\tilde a$ on $\overline\Omega$ with $\tilde a\leq a$ and $\tilde a\not\equiv a$ one has $S(\tilde a)<S(a)$. Throughout this paper we assume that $a$ is critical in $\Omega$.

A key role in our analysis is played by the regular part of the Green's function and its zero set. To introduce these, we follow the sign and normalization convention of \cite{Re2}. Since the operator $-\Delta+a$ in $\Omega$ with Dirichlet boundary conditions is assumed to be coercive, it has a Green's function $G_a$ satisfying, for each fixed $y\in\Omega$,
\begin{equation} \label{Ga-pde}
\left\{
\begin{array}{l@{\quad}l}
-\Delta_x\, G_a(x,y) + a(x)\, G_a(x,y) =  4\pi \, \delta_y & \quad \text{in} \ \ \Omega\,, \\
G_a(\cdot,y) = 0  & \quad \text{on} \ \ \partial\Omega \,.
\end{array}
\right.  
\end{equation}
The regular part $H_a$ of $G_a$ is defined by 
\begin{equation} \label{ha-def}
H_a(x,y) := \frac{1}{|x-y|} - G_a(x,y)\, .
\end{equation}
It is well-known that for each $y\in\Omega$ the function $H_a(\cdot,y)$, which is originally defined in $\Omega\setminus\{y\}$, extends to a continuous function in $\Omega$ and we abbreviate
$$
\phi_a(y) := H_a(y,y) \,.
$$
It was proved by Br\'ezis \cite{Br} that $\inf_{y\in\Omega} \phi_a(y) <0$ implies $S(a) <S$. The reverse implication, which was stated in \cite{Br} as an open problem, was proved by Druet \cite{Dr}. Hence, as a consequence of criticality we have
\begin{equation}
	\label{eq:druet}
	\inf_{y\in\Omega} \phi_a(y) = 0 \,;
\end{equation}
see also \cite{Es} and \cite[Proposition 5.1]{FrKoKo1} for alternative proofs. Note that \eqref{eq:druet} implies, in particular, that each point $x$ with $\phi_a(x)=0$ is a critical point of $\phi_a$.

Let us summarize the setting in this paper. In the sequel we set 
$$
\mathcal N_a := \left\{ x \in \Omega \, : \, \phi_a(x) = 0 \right\}.
$$

\begin{assumption}
	\begin{enumerate}
		\item[(a)] $\Omega\subset\R^3$ is a bounded, open set with $C^2$ boundary
		\item[(b)] $a \in C^{0,1}(\overline{\Omega})\cap C^{2,\sigma}_{\rm loc}(\Omega)$ for some $\sigma>0$ 
		\item[(c)] $a$ is critical in $\Omega$
		\item[(d)]  Any point in $\mathcal N_a$ is a nondegenerate critical point of $\phi_a$, that is, for any $x_0\in\mathcal N_a$, the Hessian $D^2 \phi_a(x_0)$ does not have a zero eigenvalue
	\end{enumerate}
\end{assumption}

Let us briefly comment on these items. Assumptions (a) and (b) are modest regularity assumptions, which can probably be further relaxed with more effort. Concerning assumption (d) we first note that $\phi_a \in C^2(\Omega)$ by Lemma \ref{lemma C2}, and therefore any point in $\mathcal N_a$ is a critical point of $\phi_a$, see  \eqref{eq:druet}. We believe that assumption (d) is `generically' true. (For results in this spirit, but in the noncritical case $a\equiv 0$, see \cite{MiPi}.) The corresponding assumption for $a\equiv 0$ appears frequently in the literature, for instance, in \cite{Re2,dPDoMu}. Assumption (d) holds, in particular, if $\Omega$ a ball and $a$ is a constant, as can be verified by explicit computation.

To leading order, the blow-up behavior of solutions of \eqref{equation u} will be given by the projection of a solution \eqref{eq:uxl} of the unperturbed whole space equation to $H^1_0(\Omega)$. For parameters $x\in\R^3$, $\lambda>0$ we introduce $PU_{x, \lambda} \in H^1_0(\Omega)$ as the unique function satisfying 
\begin{equation} \label{eq-pu}
\Delta PU_{x,\lambda} = \Delta U_{x,\lambda}\ \ \  \text{ in } \Omega, \qquad PU_{x,\lambda} = 0 \ \ \ \text{ on } \partial \Omega \,.
\end{equation}
Moreover, let
$$
T_{x, \lambda} := \text{ span}\, \big\{ PU_{x, \lambda},\  \partial_\lambda PU_{x, \lambda},\  \partial_{x_1} PU_{x, \lambda}\,\ \partial_{x_2} PU_{x, \lambda}\,\ \partial_{x_3} PU_{x, \lambda} \big\}
$$
and let $T_{x, \lambda}^\perp$ be the orthogonal complement of $T_{x,\lambda}$ in $H^1_0(\Omega)$ with respect to the inner product $\int_\Omega \nabla u \cdot \nabla v$. By $\Pi_{x,\lambda}$ and $\Pi_{x,\lambda}^\bot$ we denote the orthogonal projections in $H^1_0(\Omega)$ onto $T_{x,\lambda}$ and $T_{x,\lambda}^\bot$, respectively.

Here are our main results. We begin with those pertaining to equation \eqref{BP-problem} and we first provide an asymptotic expansion of $u_\eps$ with a remainder in $H^1_0(\Omega)$.

\begin{theorem}[Asymptotic expansion of $u_\eps$]
	\label{thm expansionconj}
	Let $(u_\epsilon)$ be a family of solutions to \eqref{BP-problem} satisfying \eqref{eq:sobmineps}. Then there are sequences $(x_\epsilon)\subset\Omega$, $(\lambda_\epsilon)\subset(0,\infty)$, $(\alpha_\epsilon)\subset\R$ and $(r_\eps) \subset T_{x_\eps, \lambda_\eps}^\bot$ such that
	\begin{equation} \label{u-eps-finalconj}
		u_\epsilon =  \alpha_\epsilon \left( PU_{x_\epsilon, \lambda_\epsilon} - \lambda_\epsilon^{-1/2}\, \Pi_{x_\epsilon,\lambda_\epsilon}^\bot (H_a(x_\epsilon, \cdot)- H_0(x_\epsilon, \cdot)) + r_\epsilon \right)
	\end{equation}
	and a point $x_0 \in \Omega$ with $\nabla\phi_a(x_0)=0$ such that, along a subsequence,
	\begin{align}
		|x_\epsilon - x_0| &= o(1) \,, \label{x-xconj} \\
		\lim_{\epsilon \to 0}\,  \epsilon \,  \lambda_\epsilon & = \frac{32}\pi\,\phi_a(x_0) \,, \label{lim eps lambdaconj}\\[3pt]
		\alpha_\epsilon^{4-\eps} & = 1 + \frac\eps 2 \log\lambda_\eps +
		\begin{cases}
			\mathcal O(\lambda_\eps^{-1}) & \text{if}\ \phi_a(x_0)\neq 0 \,, \\
			\frac{64}{3\pi} \, \phi_0(x_0) \,\lambda_\eps^{-1} + o(\lambda_\eps^{-1}) & \text{if}\ \phi_a(x_0) = 0 \,,
		\end{cases}
		\label{alpha-asympconj}\\
		\|\nabla r_\epsilon\|_2&= \begin{cases} \mathcal O(\lambda_\eps^{-1}) & \text{if}\ \phi_a(x_0)\neq 0 \,, \\
			\mathcal O(\lambda_\eps^{-3/2}) & \text{if}\ \phi_a(x_0) = 0 \,.
			\end{cases}
	\end{align}
	Moreover, if $\phi_a(x_0)=0$, then 
\begin{equation}
\label{lim-eps-lambda^2}
	\lim_{\epsilon \to 0}\,  \epsilon \,  \lambda_\epsilon^2 = -32\,a(x_0) \,.
\end{equation}
\end{theorem}

Our second main result concerns the pointwise blow-up behavior, both at the blow-up point and away from it, and, in the special case of constant $a$, verifies the conjecture from \cite{BrPe} under the natural nondegeneracy assumption (d).

\begin{theorem}[Br\'ezis-Peletier conjecture]
	\label{thm BPconj}
	Let $(u_\epsilon)$ be a family of solutions to \eqref{BP-problem} satisfying \eqref{eq:sobmineps}.
	\begin{enumerate}
		\item[(a)] The asymptotics close to the concentration point $x_0$ are given by 
		\[ \lim_{\epsilon \to 0}\,  \epsilon \,  \|u_\eps\|^2_\infty = \lim_{\epsilon \to 0}\,  \epsilon \, |u_\eps(x_\eps)|^2  = \frac{32}\pi \, \phi_a(x_0). \]
		If $\phi_a(x_0)=0$, then 
		\begin{equation}
		\label{lim-eps-u^4}
		\lim_{\epsilon \to 0}\,  \epsilon \,  \|u_\eps\|^4_\infty =  \lim_{\epsilon \to 0}\,  \epsilon \, |u_\eps(x_\eps)|^4 = -32\, a(x_0) \, .
		\end{equation}
		\item[(b)] The asymptotics away from the concentration point $x_0$ are given by
		\[ u_\eps (x) = \lambda_\eps^{-1/2} G_a(x, x_0) + o(\lambda_\eps^{-1/2}) \]
		for every fixed $x \in \Omega \setminus \{x_0\}$. The convergence is uniform for $x$ away from $x_0$.
	\end{enumerate}
\end{theorem}

Strictly speaking, the Br\'ezis--Peletier conjecture in \cite{BrPe} is stated without the criticality assumption (c) on $a$, but rather under the assumption $\phi_a\geq 0$ on $\Omega$. (Note that \cite{BrPe} uses the opposite sign convention for the regular part of the Green's function. Also, their Green's function is normalized to be $\frac{1}{4\pi}$ times ours.) The remaining case, however, is much simpler and can be proved with existing methods. Indeed, by Druet's theorem \cite{Dr}, the inequality $\phi_a\geq 0$ on $\Omega$ is equivalent to $S(a)=S$, and the assumption that $a$ is critical is equivalent to $\min\phi_a=0$. Thus, the case of the Br\'ezis--Peletier conjecture that is not covered by our Theorem \ref{thm BPconj} is that where $\min\phi_a>0$. This case can be treated in the same way as the case $a\equiv 0$ in \cite{Ha,Re1} (or as we treat the case $\phi_a(x_0)>0$). Note that in this case the nondegeneracy assumption (d) is not needed. Whether this assumption can be removed in the case where $\phi_a(x_0)=0$ is an open problem. 

We note that Theorems \ref{thm expansionconj} and \ref{thm BPconj} and, in particular, the asymptotics \eqref{lim-eps-lambda^2} and \eqref{lim-eps-u^4}, hold independently of whether $a(x_0)=0$ or not. We note that $a(x_0)\leq 0$ if $\phi_a(x_0)$ as shown in \cite[Corollary 2.2]{FrKoKo1}.
We are grateful to H.~Br\'ezis (personal communication) for raising the question of whether $a(x_0)=0$ can happen and what the asymptotics of $\lambda_\eps$ resp.~$\|u_\eps\|_\infty$ would be in this case, or whether one can show that $\phi_a(x_0)=0$ implies $a(x_0)<0$. Deciding which alternative holds does not appear to be easy, in particular due to the non-local nature of $\phi_a(x_0)$. Here is a simple observation that may illustrate the expected level of difficulty: In the spirit of \cite[Theorem 2.1 and Corollary 2.2]{FrKoKo1}, $a(x_0)< 0$ would follow if one could exhibit a family of very refined test functions $\eta_{x_0, \lambda}$ such that when $\inf_\Omega \phi_a = \phi_a(x_0) = 0$, the Sobolev quotient defining $S(a)$ satisfies $\mathcal S_a[\eta_{x_0, \lambda}] = S - c_1 a(x_0) \lambda^{-2} - c_2 \lambda^{-\tau} + o(\lambda^{-\tau})$ for some $c_1, c_2 >0$ and $\tau > 2$, say. However, extracting such an explicit term $c_2 \lambda^{-\tau}$ is beyond the precision of both \cite{FrKoKo1} and the present paper.    

We also point out that the conjecture in \cite{BrPe} is formulated with assumption \eqref{eq:sobmin} rather that \eqref{eq:sobmineps}. However, the latter assumption is typically used in the posterior literature dealing with problem \eqref{BP-problem}, see e.g. \cite{Ha, GrPa}, and we follow this convention.

We now turn our attention to the results for the second family of equations, namely \eqref{equation u}. Whenever we deal with that problem, we impose the following additional assumptions; \begin{assumption}
	\begin{enumerate}
		\item[(e)] $a<0$ in $\mathcal N_a$
		\item[(f)] $V\in C^{0,1}(\overline\Omega)$
	\end{enumerate}
\end{assumption}

Again, assumption (f) is a modest regularity assumption, which can probably be further relaxed with more effort. Assumption (e) is not severe, as we know from \cite[Corollary 2.2]{FrKoKo1} that any critical $a$ satisfies $a \leq 0$ on $\mathcal N_a$, see also the above discussion of the question by 
Brezis of whether or not this assumption is automatically satisfied. In particular, it is fulfilled if $a$ is a negative constant.

Let
\begin{align} \label{eq-Q}
	Q_V(x) & := \int_\Omega V(y) \, G_a(x,y)^2  , \qquad x\in\Omega \,.
\end{align}
Again, we first provide an asymptotic expansion of $u_\eps$ with a remainder in $H^1_0(\Omega)$.

\begin{theorem}[Asymptotic expansion of $u_\eps$]
\label{thm expansion}
Let $(u_\epsilon)$ be a family of solutions to \eqref{equation u} satisfying \eqref{eq:sobmin}. Then there are sequences $(x_\epsilon)\subset\Omega$, $(\lambda_\epsilon)\subset(0,\infty)$, $(\alpha_\epsilon)\subset\R$ and $(r_\eps) \subset T_{x_\eps, \lambda_\eps}^\bot$ such that
\begin{equation} \label{u-eps-final}
u_\epsilon =  \alpha_\epsilon \left( PU_{x_\epsilon, \lambda_\epsilon} - \lambda_\epsilon^{-1/2}\, \Pi_{x_\epsilon,\lambda_\epsilon}^\bot (H_a(x_\epsilon, \cdot)- H_0(x_\epsilon, \cdot)) + r_\epsilon \right)
\end{equation}
and a point $x_0 \in \mathcal N_a$ with $Q_V(x_0) \leq 0$ such that, along a subsequence,
\begin{align}
|x_\epsilon - x_0| &= o(\eps^{1/2}) \,, \label{x-x} \\
\phi_a(x_\epsilon) & = o(\epsilon) \,, \label{phi-asymp} \\
\lim_{\epsilon \to 0}\,  \epsilon \,  \lambda_\epsilon & = 4\pi^2\, \frac{|a(x_0)|}{|Q_V(x_0)|} \,, \label{lim eps lambda}\\[3pt]
\alpha_\epsilon & = 1 +  \frac{4}{3\pi^3}\, \frac{\phi_0(x_0) \, |Q_V(x_0)|}{|a(x_0)|}\, \epsilon + o(\eps) \,,  \label{alpha-asymp}\\
\|\nabla r_\epsilon\|_2&= \mathcal O(\epsilon^{3/2})\,.   
\end{align}
If $Q_V(x_0) = 0$, the right side of \eqref{lim eps lambda} is to be interpreted as $\infty$. 
\end{theorem}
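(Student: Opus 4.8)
The plan is to set up a Lyapunov–Schmidt–type reduction and then analyze the finite-dimensional reduced equation, following the general strategy of Rey and Han but adapted to the three-dimensional critical setting. First I would record the basic concentration facts from Proposition \ref{lemma PU + w}: after extracting a subsequence, $u_\eps = \alpha_\eps(PU_{x_\eps,\lambda_\eps} + w_\eps)$ with $\lambda_\eps \to \infty$, $\alpha_\eps \to 1$, $w_\eps$ small in $H^1_0(\Omega)$, and $x_\eps$ converging to some point of $\Omega$; the freedom in choosing $(x_\eps,\lambda_\eps)$ can be used to impose $w_\eps \in T_{x_\eps,\lambda_\eps}^\perp$, which is the orthogonality condition underlying the definition of $r_\eps$. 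Substituting this ansatz into \eqref{equation u} and projecting onto $T_{x_\eps,\lambda_\eps}^\perp$ yields, via the invertibility of the linearized operator on $T_{x_\eps,\lambda_\eps}^\perp$ (a standard nondegeneracy fact for the bubble $U_{x,\lambda}$), an equation $w_\eps = w_\eps(x_\eps,\lambda_\eps,\eps)$ whose right-hand side can be expanded in powers of $\lambda_\eps^{-1}$ and $\eps$. The leading correction comes precisely from the discrepancy $PU_{x,\lambda} - U_{x,\lambda}$, which to leading order equals $-\lambda^{-1/2} H_a(x,\cdot)$ near the boundary; subtracting its projection produces the explicit term $-\lambda_\eps^{-1/2}\Pi_{x_\eps,\lambda_\eps}^\perp(H_a(x_\eps,\cdot)-H_0(x_\eps,\cdot))$ in \eqref{u-eps-final}, with the genuinely infinite-dimensional remainder $r_\eps$ then being of higher order.

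Next I would carry out the energy expansion. Evaluating the functional $J_\eps(u) := \tfrac12\int_\Omega(|\nabla u|^2 + (a+\eps V)u^2) - \tfrac12\int_\Omega u^6$ (or the appropriate reduced functional) on the ansatz and using the above control on $w_\eps$, one obtains an expansion of the form
\begin{equation*}
J_\eps(\alpha_\eps(PU_{x,\lambda}+w_\eps)) = c_0 + c_1\,\frac{\phi_a(x)}{\lambda} + c_2\,\frac{Q_V(x)}{\lambda}\,\eps\,\lambda + \ldots
\end{equation*}
schematically, where the two competing terms are a multiple of $\lambda^{-1}\phi_a(x)$ (the self-interaction through the regular part of the Green's function, which is $\geq 0$ by criticality and Druet's result) and a multiple of $\eps\, Q_V(x)$ coming from the perturbation $\eps V$, together with a term involving $\eps\, a(x)$ and $\lambda^{-2}$ corrections. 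Stationarity in $\lambda$ forces a balance $\lambda^{-1}\phi_a \sim \eps$ at the relevant scale; combined with $\phi_a(x_\eps) \to \min \phi_a = 0$ this gives $\phi_a(x_\eps) = o(\eps)$, i.e. \eqref{phi-asymp}, and pins down $\lim \eps\lambda_\eps$ through the ratio $|a(x_0)|/|Q_V(x_0)|$, which is \eqref{lim eps lambda}. Stationarity in $x$, together with Assumption (e) (nondegeneracy of critical points of $\phi_a$ on $\mathcal N_a$) and a careful second-order Taylor expansion of $\phi_a$ near $\mathcal N_a$, upgrades $x_\eps \to x_0 \in \mathcal N_a$ to the quantitative rate $|x_\eps - x_0| = o(\eps^{1/2})$ in \eqref{x-x}; the sign condition $Q_V(x_0) \leq 0$ is what makes the balance of the two positive/negative contributions possible. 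The constant $\alpha_\eps$ is then read off from the remaining stationarity condition (differentiation in $\alpha$, i.e. the Pohozaev-type or norm identity relating $\int|\nabla u_\eps|^2$ to $\int u_\eps^6$), giving \eqref{alpha-asymp} after inserting the already-established asymptotics of $\lambda_\eps$ and $x_\eps$ and the value $\phi_0(x_0)$; finally the bound $\|\nabla r_\eps\| = \mathcal O(\eps^{3/2})$ follows by feeding the now-known rates back into the fixed-point estimate for $w_\eps$ and bookkeeping the orders.

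The main obstacle, I expect, is obtaining the sharp orders in the energy expansion in three dimensions. Because the bubble $U_{x,\lambda}$ decays only like $\lambda^{-1/2}|y-x|^{-1}$, several integrals are only borderline convergent and the naive expansions produce error terms that are not obviously smaller than the main terms; one must track constants precisely (this is where the explicit Sobolev constant $S = 3(\pi/2)^{4/3}$ and exact integrals of powers of $U_{x,\lambda}$ enter) and use the criticality of $a$ crucially to know that the $\lambda^{-1}\phi_a(x)$ term is nonnegative and that $\phi_a$ vanishes somewhere, so that it can be balanced against the $O(\eps)$ perturbation rather than dominating it. A secondary difficulty is the simultaneous control of the three scales $\eps$, $\lambda_\eps^{-1}$, and $|x_\eps - x_0|$: one needs the expansion to be uniform in $(x,\lambda)$ over the relevant range and must bootstrap, first getting rough rates and then refining them, which requires the remainder $r_\eps$ to be estimated in a norm strong enough to close the argument. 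The nondegeneracy Assumption (e) is exactly what prevents the location $x_\eps$ from drifting too slowly and is indispensable for the $o(\eps^{1/2})$ rate.
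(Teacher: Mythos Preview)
Your outline has the right architecture (decompose, extract the correction $-\lambda^{-1/2}(H_a-H_0)$, read off the parameters from finitely many scalar relations), and in fact when you say ``stationarity in $\lambda$'' and ``stationarity in $x$'' you are morally describing the Pohozaev identities the paper actually uses: integrating the equation against $\partial_\lambda\psi_{x,\lambda}$ and against $\nabla u$. But there is a genuine gap in the logical flow, and it sits exactly at the step the paper singles out as the hard one.

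You write that stationarity in $\lambda$, ``combined with $\phi_a(x_\eps)\to \min\phi_a=0$'', already gives $\phi_a(x_\eps)=o(\eps)$. It does not. The $\partial_\lambda$-identity (the paper's Proposition~\ref{phiaexp}) yields only
\[
\phi_a(x_\eps)=\pi\,a(x_\eps)\,\lambda_\eps^{-1}-\frac{\eps}{4\pi}\,Q_V(x_\eps)+o(\lambda_\eps^{-1})+o(\eps),
\]
and together with $\phi_a\ge 0$, $a(x_0)<0$ this gives $\lambda_\eps^{-1}=\mathcal O(\eps)$ and hence $\phi_a(x_\eps)=\mathcal O(\eps)$, but no better. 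Getting from $\mathcal O(\eps)$ to $o(\eps)$ is precisely what determines the limit $\eps\lambda_\eps\to 4\pi^2|a(x_0)|/|Q_V(x_0)|$, so you cannot assume it. The paper closes this gap by a separate argument: the $\nabla u$ Pohozaev identity (Proposition~\ref{prop-grad-phi}) gives $|\nabla\phi_a(x_\eps)|\lesssim\eps^\mu$ for every $\mu<1$, and then the nondegeneracy assumption (e) is used, via the elementary Lemma~\ref{lemma hessian}, in the form
\[
\phi_a(x_\eps)\lesssim |\nabla\phi_a(x_\eps)|^2\lesssim \eps^{2\mu}=o(\eps).
\]
Only after this does one go back to the $\partial_\lambda$-identity to read off $\lim\eps\lambda_\eps$, and only then does the rate $|x_\eps-x_0|=o(\eps^{1/2})$ follow (from nondegeneracy and $\phi_a(x_\eps)=o(\eps)$). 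So in your scheme the roles of the $\lambda$- and $x$-relations are partly reversed, and the key inequality $\phi_a\lesssim|\nabla\phi_a|^2$ is the missing hinge.

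Two smaller points. First, your framing in terms of expanding the energy $J_\eps$ and invoking ``stationarity'' is the approach of \cite{FrKoKo1}, and the paper is explicit that it does \emph{not} carry over here because the $u_\eps$ are not assumed to minimize anything---they are merely solutions that form a Sobolev-minimizing sequence. What replaces the energy expansion is testing the equation directly against $u$, $\partial_\lambda\psi$, and $\nabla u$ (five scalar identities in all). Second, your sentence about the linearized operator being invertible on $T^\perp$ and producing $w_\eps=w_\eps(x,\lambda,\eps)$ describes the Lyapunov--Schmidt \emph{construction} of solutions; here the solution is given and $w_\eps$ is defined by the decomposition, so the relevant tool is not a fixed point but the coercivity inequality of Lemma~\ref{lemma coercivity}, which the paper applies twice (once to bound $w$, once to bound $r$) to iteratively improve the remainder.
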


The following result concerns the pointwise blow-up behavior.

\begin{theorem}\label{thm BP}
Let $(u_\epsilon)$ be a family of solutions to \eqref{equation u} satisfying \eqref{eq:sobmin}.
\begin{enumerate}
\item[(a)] The asymptotics close to the concentration point $x_0$ are given by 
\[ \lim_{\epsilon \to 0}\,  \epsilon \,  \|u_\eps\|^2_\infty = \lim_{\epsilon \to 0}\,  \epsilon \, |u_\eps(x_\eps)|^2  = 4\pi^2\, \frac{|a(x_0)|}{|Q_V(x_0)|}. \]
If $Q_V(x_0) = 0$, the right side is to be interpreted as $\infty$. 
\item[(b)] The asymptotics away from the concentration point $x_0$ are given by
\[ u_\eps (x) = \lambda_\eps^{-1/2} G_a(x, x_0) + o(\lambda_\eps^{-1/2}) \]
for every fixed $x \in \Omega \setminus \{x_0\}$. The convergence is uniform for $x$ away from $x_0$.
\end{enumerate}
\end{theorem}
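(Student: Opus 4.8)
The plan is to deduce both statements from the $H^1$-expansion of Theorem~\ref{thm expansion} by upgrading it to pointwise information, the one extra ingredient being the standard single-bubble bound
\begin{equation} \label{apriori}
 0 < u_\epsilon(y) \le C\, U_{x_\epsilon, \lambda_\epsilon}(y) \qquad\text{for all } y\in\Omega \text{ and all small }\epsilon.
\end{equation}
This holds because \eqref{eq:sobmin} forces the $u_\epsilon$ to concentrate as exactly one bubble at $x_\epsilon$, with no room for secondary concentration; it is obtained by the usual blow-up and Harnack arguments, and in a form at least this strong it is already produced by the analysis behind Theorem~\ref{thm expansion}. I will also use the elementary facts $\int_{\R^3} 3\,U_{0,1}^5 = 4\pi$ and $U_{0,1}(0)=1$, the classical expansion $PU_{x,\lambda} = U_{x,\lambda} - \lambda^{-1/2} H_0(x,\cdot) + o(\lambda^{-1/2})$ locally uniformly on $\Omega$, the bound $\|\xi\|_\infty \le C\lambda^{1/2}\|\nabla\xi\|$ for $\xi\in T_{x,\lambda}$ (from the almost-orthogonality of the five generators of $T_{x,\lambda}$), and $\int_\Omega U_{x,\lambda} = \mathcal O(\lambda^{-1/2})$.

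For part (a) I would first rescale, setting $\tilde u_\epsilon(w) := \lambda_\epsilon^{-1/2} u_\epsilon(x_\epsilon + \lambda_\epsilon^{-1} w)$ on $\lambda_\epsilon(\Omega - x_\epsilon)$, which exhausts $\R^3$ since $x_\epsilon\to x_0\in\Omega$. Rescaling \eqref{u-eps-final}, the term coming from $PU_{x_\epsilon,\lambda_\epsilon}$ converges to $U_{0,1}$ in $C^\infty_\loc(\R^3)$; the correction term $\lambda_\epsilon^{-1/2}\Pi^\bot_{x_\epsilon,\lambda_\epsilon}(H_a(x_\epsilon,\cdot)-H_0(x_\epsilon,\cdot))$ becomes $\mathcal O(\lambda_\epsilon^{-1/2})$ in $L^\infty_\loc$ (via the $W^{2,p}$-regularity of $H_a-H_0$ and the displayed bound on $T_{x,\lambda}$); and the rescaled remainder satisfies $\|\nabla(\lambda_\epsilon^{-1/2} r_\epsilon(x_\epsilon+\lambda_\epsilon^{-1}\cdot))\|_{L^2(\R^3)} = \|\nabla r_\epsilon\|_{L^2(\Omega)} = \mathcal O(\epsilon^{3/2})$, so with $\alpha_\epsilon\to1$ one gets $\tilde u_\epsilon\to U_{0,1}$ in $\dot H^1(\R^3)$. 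By \eqref{apriori}, $0<\tilde u_\epsilon\le C\,U_{0,1}$; since $\tilde u_\epsilon$ solves $-\Delta\tilde u_\epsilon + \lambda_\epsilon^{-2}(a+\epsilon V)(x_\epsilon+\lambda_\epsilon^{-1}w)\,\tilde u_\epsilon = 3\,\tilde u_\epsilon^5$ with coefficient tending to $0$ in $L^\infty_\loc$, elliptic regularity then upgrades this to $C^1_\loc(\R^3)$-convergence. In particular $\lambda_\epsilon^{-1/2} u_\epsilon(x_\epsilon)=\tilde u_\epsilon(0)\to 1$, so $\epsilon\,|u_\epsilon(x_\epsilon)|^2 = \epsilon\lambda_\epsilon(1+o(1))$, and \eqref{lim eps lambda} yields the stated value (or $+\infty$ when $Q_V(x_0)=0$). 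For $\|u_\epsilon\|_\infty$ I would note $\|u_\epsilon\|_\infty \ge u_\epsilon(x_\epsilon) = \lambda_\epsilon^{1/2}(1+o(1))$, and that at a maximum point $y_\epsilon$ the bound \eqref{apriori} forces $\lambda_\epsilon|y_\epsilon-x_\epsilon|$ to stay bounded (else $u_\epsilon(y_\epsilon)=o(\lambda_\epsilon^{1/2})$, contradicting the lower bound), so $y_\epsilon$ sits in the region of $C^1_\loc$-convergence and $\|u_\epsilon\|_\infty = \lambda_\epsilon^{1/2}\tilde u_\epsilon(\lambda_\epsilon(y_\epsilon-x_\epsilon)) \le \lambda_\epsilon^{1/2}(1+o(1))$. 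Hence $\|u_\epsilon\|_\infty^2 = \lambda_\epsilon(1+o(1)) = |u_\epsilon(x_\epsilon)|^2(1+o(1))$ and the two limits agree.

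For part (b) I would start from the Green's representation $u_\epsilon(x) = \frac{1}{4\pi}\int_\Omega G_a(x,y)\big(3\,u_\epsilon(y)^5 - \epsilon V(y)u_\epsilon(y)\big)\diff y$, which follows from \eqref{equation u} and \eqref{Ga-pde}. Using \eqref{apriori} and the $C^1_\loc$-convergence of $\tilde u_\epsilon$, splitting the integral of $3u_\epsilon^5$ into $\{|y-x_\epsilon|<R/\lambda_\epsilon\}$ and its complement and letting $\epsilon\to0$ then $R\to\infty$ gives $\int_\Omega 3\,u_\epsilon^5 = 4\pi\lambda_\epsilon^{-1/2}(1+o(1))$, with the measures $3\,u_\epsilon^5\diff y$ concentrating at $x_\epsilon\to x_0$; the same pointwise bound controls their small tails, in particular near the pole $y=x$ of $G_a(x,\cdot)$. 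Therefore, for $x$ in a compact subset $K$ of $\Omega\setminus\{x_0\}$, where $y\mapsto G_a(x,y)$ is continuous near $x_0$ uniformly in $x$, one gets $\frac{1}{4\pi}\int_\Omega G_a(x,y)\,3\,u_\epsilon(y)^5\diff y = \lambda_\epsilon^{-1/2} G_a(x,x_0) + o(\lambda_\epsilon^{-1/2})$, uniformly on $K$. For the remaining term, $\int_\Omega u_\epsilon = \mathcal O(\lambda_\epsilon^{-1/2})$ by \eqref{apriori}, and since the pole $y=x$ is bounded away from the bubble for $x\in K$, one finds $\int_\Omega G_a(x,y)u_\epsilon(y)\diff y = \mathcal O(\lambda_\epsilon^{-1/2})$ uniformly on $K$, whence this term is $\mathcal O(\epsilon\lambda_\epsilon^{-1/2}) = o(\lambda_\epsilon^{-1/2})$. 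Adding the two contributions gives (b).

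The hard part is really the passage from the purely $H^1$-level information of Theorem~\ref{thm expansion} to pointwise control, i.e.\ the availability and correct use of \eqref{apriori}: it is what permits the bootstrap producing the rescaled $C^1_\loc$-limit, confines the maximum of $u_\epsilon$ to the bubble core, and controls the Green's function integral in (b). Once \eqref{apriori} is in hand, the rest is bookkeeping with the constants listed above together with \eqref{x-x}, \eqref{phi-asymp}, \eqref{lim eps lambda} and \eqref{alpha-asymp}.
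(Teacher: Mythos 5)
Your overall scheme (rescale, extract a $C^1_{\rm loc}$ limit, localize the maximum, use the Green's representation for the far field) is plausible, but it rests entirely on the pointwise bound $u_\eps \le C\, U_{x_\eps,\lambda_\eps}$, and that bound is neither proved in your proposal nor available from the paper. The assertion that it is ``already produced by the analysis behind Theorem~\ref{thm expansion}'' is not correct: the analysis leading to Theorem~\ref{thm expansion} is carried out at the $H^1_0$ level, and the only pointwise information the paper extracts is $\|w_\eps\|_\infty = o(\lambda_\eps^{1/2})$ (Proposition~\ref{proposition infty bound w}), which is proved by a separate Moser-type iteration on the equation for $w_\eps$. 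That estimate controls the remainder only by the bubble's height; it gives no decay in $|y-x_\eps|$ and is much weaker than $u_\eps \lesssim U_{x_\eps,\lambda_\eps}$. A bound of the latter type is a genuine theorem about single-bubble blow-up (it is the content of the $C^0$-theory of Druet--Hebey--Robert, or of Harnack/moving-sphere arguments) and establishing it here would be at least as much work as the iteration the paper actually performs; invoking ``the usual blow-up and Harnack arguments'' leaves the central step of your proof, which you yourself identify as the hard part, unjustified.

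Note also that the full strength of such a bound is not needed. For part (a) the paper gets by with $u_\eps = \alpha_\eps(PU_{x_\eps,\lambda_\eps}+w_\eps)$, $\|w_\eps\|_\infty = o(\lambda_\eps^{1/2})$ and $\|PU_{x_\eps,\lambda_\eps}\|_\infty = \lambda_\eps^{1/2}+\mathcal O(\lambda_\eps^{-1/2})$, which immediately give $\|u_\eps\|_\infty^2 = \lambda_\eps(1+o(1)) = |u_\eps(x_\eps)|^2(1+o(1))$, and then \eqref{lim eps lambda} concludes; no rescaled $C^1_{\rm loc}$ limit or localization of the maximum point is required. For part (b) no pointwise bound at all is needed: in the representation
$$
u_\eps(x) = \frac{3}{4\pi}\int_\Omega G_a(x,y)\,u_\eps(y)^5\,dy - \frac{\eps}{4\pi}\int_\Omega G_a(x,y)\,V(y)\,u_\eps(y)\,dy
$$
the first term is handled by observing that $\lambda_\eps^{1/2}\tfrac{3}{4\pi}u_\eps^5$ is a delta-sequence, using only $\|\nabla w_\eps\| = \mathcal O(\lambda_\eps^{-1/2})$ and H\"older, and the second term by $\|u_\eps\|_{\frac{3-\delta}{2-\delta}} \lesssim \lambda_\eps^{-1/2}$ together with $\|G_a(x,\cdot)\|_{3-\delta}<\infty$. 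So if you replace your unproved a priori bound by Proposition~\ref{proposition infty bound w} (or prove such a statement along the paper's lines), your argument for (a) can be salvaged, and (b) goes through with purely $H^1$ information.
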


Theorems \ref{thm expansionconj} and \ref{thm expansion} state that to leading order the solution is given by a projected bubble $PU_{x_\epsilon,\lambda_\epsilon}$. One of the main points of these theorems, which enters crucially in the proof of Theorems \ref{thm BPconj} and \ref{thm BP}, is the identification of the localization length $\lambda_\eps^{-1}$ of the projected bubble as an explicit constant times $\eps$ (for \eqref{BP-problem} if $\phi_a(x_0)\neq 0$ and for \eqref{equation u} if $Q_V(x_0)<0$) or $\eps^{1/2}$ (for \eqref{BP-problem} if $\phi_a(x_0)=0$ and $a(x_0)\neq 0$).

The fact that the solutions are given to leading order by a projected bubble is a rather general phenomenon, which is shared, for instance, also by the higher dimensional generalizations of \eqref{BP-problem} and \eqref{equation u}. In contrast to the higher dimensional case, however, in order to compute the asymptotics of the localization length $\lambda_\eps^{-1}$, we need to extract the leading order correction to the bubble. Remarkably, this correction is for both problems \eqref{BP-problem} and \eqref{equation u} given by $\lambda_\epsilon^{-1/2}\, \Pi_{x_\epsilon,\lambda_\epsilon}^\bot (H_a(x_\epsilon, \cdot)- H_0(x_\epsilon, \cdot))$. 

In this relation it is natural to wonder whether the above projected bubble $PU_{x,\epsilon}$ can be replaced by a different projected bubble $\widetilde{PU}_{x,\lambda}$, namely where the projection is defined with respect to the scalar product coming from the operator $-\Delta+a$, leading to $(-\Delta +a) \widetilde{PU}_\xl = (-\Delta + a) U_\xl$, $\widetilde{PU}_\xl|_{\partial \Omega} = 0$. Such a choice is probably possible and would even simplify some computations, but it would lead to additional difficulties elsewhere (for instance, in the proofs of Propositions \ref{lemma PU + w} and \ref{prop first expansion conj} our choice allows us to apply the classical results by Bahri and Coron).

Moreover, for both problems the concentration point $x_0$ is shown to satisfy $\nabla\phi_a(x_0)=0$. Here, however, we see an interesting difference between the two problems. Namely, for \eqref{equation u} we also know that $\phi_a(x_0)=0$, whereas we know from \cite[Theorem 2(b)]{dPDoMu} that there are solutions of \eqref{BP-problem} concentrating at any critical point of $\phi_a$, not necessarily in $\mathcal N_a$. (These solutions also satisfy \eqref{eq:sobmin}.)

An asymptotic expansion very similar to that in Theorem \ref{thm expansion} is proved in \cite{FrKoKo1} for energy-minimizing solutions of \eqref{equation u}; see also \cite{FrKoKo2} for the simpler higher-dimensional case. There, we did not assume the nondegeneracy of $D^2\phi_a(x_0)$, but we did assume that $Q_V<0$ in $\mathcal N_a$. Moreover, in the energy minimizing setting we showed that $x_0$ satisfies
$$
Q_V(x_0)^2/|a(x_0)| = \sup_{x\in\mathcal N_a, Q_V(x)<0}Q_V(x)^2/|a(x)| \,,
$$
but this cannot be expected in the more general setting of the present paper.

Before describing the technical challenges that we overcome in our proofs, let us put our work into perspective. In the past three decades there has been an enormous literature on blow-up phenomena of solutions to semilinear equations with critical exponent, which is impossible to summarize. We mention here only a few recent works from which, we hope, a more complete bibliography can be reconstructed. In some sense, the situation in the present paper is the simplest blow-up situation, as it concerns single bubble blow-up of positive solutions in the interior. Much more refined blow-up scenarios have been studied, including, for instance, multi-bubbling, sign-changing solutions or concentration on the boundary under Neumann boundary conditions. For an introduction and references we refer to the books \cite{DrHeRo,He}. In this paper we are interested in the description of the behavior of a given family of solutions. For the converse problem of constructing blow-up solutions in our setting, see \cite{dPDoMu} and also \cite{MuSa}, and for a survey of related results, see \cite{Pi} and the references therein. Obstructions to the existence of solutions in three dimensions were studied in \cite{DrLa}. The spectrum near zero of the linearization of solutions was studied in \cite{GrPa,ChKiLe}. There are also connections to the question of compactness of solutions, see \cite{BrMa,KhMaSc} and references therein.

What makes the critical case in three dimensions significantly harder than the higher-dimen\-sional analogues solved by Han \cite{Ha} and Rey \cite{Re1,Re2} is a certain cancellation, which is related to the fact that $\inf \phi_a =0$. Thus, the term that in higher dimensions completely determines the blow-up vanishes in our case. Our way around this impasse is to iteratively improve our knowledge about the functions $u_\epsilon$. The mechanism behind this iteration is a certain coercivity inequality, due to Esposito \cite{Es}, which we state in Lemma \ref{lemma coercivity}, and a crucial feature of our proof is to apply this inequality repeatedly, at different orders of precision. To arrive at the level of precision stated in Theorems \ref{thm expansionconj} and \ref{thm expansion} two iterations are necessary (plus a zeroth one, hidden in the proof of Proposition \ref{lemma PU + w}).

The first iteration, contained in Sections \ref{sec:firstexpansion} and \ref{sec-Bp-prelim}, is relatively standard and follows Rey's ideas in \cite{Re2} with some adaptions due to Esposito \cite{Es} to the critical case in three dimensions. The main outcome of this first iteration is the fact that concentration occurs in the interior and an order-sharp remainder bound in $H^1_0$ on the remainder $\alpha_\eps^{-1} u_\eps - PU_{x_\epsilon, \lambda_\epsilon}$.

The second iteration, contained in Sections \ref{section refining} and \ref{section refining conj}, is more specific to the problem at hand. Its main outcome is the extraction of the subleading correction $\lambda_\epsilon^{-1/2}\, \Pi_{x_\epsilon,\lambda_\epsilon}^\bot (H_a(x_\epsilon, \cdot)- H_0(x_\epsilon, \cdot))$. Using the nondegeneracy of $D^2\phi_a(x_0)$ we will be able to show in the proof of Theorems \ref{thm expansionconj} and \ref{thm expansion} that $\lambda_\eps$ is proportional to $\eps^{-1}$ (for \eqref{BP-problem} if $\phi_a(x_0)\neq 0$ and for \eqref{equation u} if $Q_V(x_0)<0$) or $\eps^{-1/2}$ (for \eqref{BP-problem} if $\phi_a(x_0)=0$ and $a(x_0)\neq 0$).

The arguments described so far are, for the most part, carried out in $H^1_0$ norm. Once one has completed the two iterations, we apply in Subsections \ref{subsection infty bound w} and \ref{subsection infty bound wconj} a Moser iteration argument in order to show that the remainder $\alpha_\eps^{-1} u_\eps - PU_{x_\epsilon, \lambda_\epsilon}$ is negligible also in $L^\infty$ norm. This will then allow us to deduce Theorems \ref{thm BPconj} and \ref{thm BP}.

As we mentioned before, Theorem \ref{thm expansion} is the generalization of the corresponding theorem in \cite{FrKoKo1} for energy-minimizing solutions. In that previous paper, we also used a similar iteration technique. Within each iteration step, however, minimality played an important role in \cite{FrKoKo1} and we used the iterative knowledge to further expand the energy functional evaluated at a minimizer. There is no analogue of this procedure in the current paper. Instead, as in most other works in this area, starting with \cite{BrPe}, Pohozaev identities now play an important role. These identities were not used in \cite{FrKoKo1}. In fact, in \cite{FrKoKo1} we did not use equation \eqref{equation u} at all and our results there are valid as well for a certain class of `almost minimizers'.

There are five types of Pohozaev-type identities corresponding, in some sense, to the five linearly independent functions in the kernel of the Hessian at an optimizer of the Sobolev inequality on $\R^3$ (resulting from its invariance under multiplication by constants, by dilations and by translations). All five identities will be used to control the five parameters $\alpha_\eps$, $\lambda_\eps$ and $x_\eps$ in \eqref{u-eps-finalconj} and \eqref{u-eps-final}, which precisely correspond to the five asymptotic invariances. In fact, all five of these identities are used in the first iteration and then again in the second iteration. (To be more precise, in the first iteration in the proof of Theorem \ref{thm expansion} it is more economical to only use four identities, since the information from the fifth identity is not particularly useful at this stage, due to the above mentioned cancellation $\phi_a(x_0)=0$.)

Thinking of the five Pohozaev-type identities as coming from the asymptotic invariances is useful, but an oversimplification. Indeed, there are several possible choices for the multipliers in each category, for instance, $u$, $PU_\xl$, $\psi_\xl$ corresponding to multiplication by constants, $y\cdot\nabla u$, $\partial_\lambda PU_\xl$, $\partial_\lambda \psi_\xl$ corresponding to dilations and $\partial_{x_j} u$, $\nabla_{x_j} PU_\xl$, $\nabla_{x_j} PU_\xl$ corresponding to translations. (Here $\psi_\xl$ is a modified bubble defined below in \eqref{definition psi}.) The choice of the multiplier is subtle and depends on the available knowledge at the moment of applying the identity and the desired precision of the outcome. In any case, the upshot is that these identities can be brought together in such a way that they give the final result of Theorems \ref{thm expansionconj} and \ref{thm expansion} concerning the expansion in $H^1_0(\Omega)$. As mentioned before, the desired pointwise bounds in Theorems \ref{thm BPconj} and \ref{thm BP} then follow in a relatively straightforward way using a Moser iteration.

We believe that our techniques are robust enough to derive blow-up asymptotics for \eqref{BP-problem} and \eqref{equation u} in more general situations containing a non-zero weak limit and/or multiple concentration points. Since our main motivation was to solve the Brézis--Peletier conjecture stated for single blow-up in \cite{BrPe}, and to limit the amount of calculations needed, we do not attempt to pursue this further here.

Let us also mention that a problem similar to, but different from \eqref{BP-problem} has been studied  in the recent article \cite{MM} by  similar approach. While the analysis in  \cite{MM}, carried out on a Riemannian manifold $M$ of dimension $n \geq 5$, is rather comprehensive and also treats the case of multiple blow-up points, it does not seem to contain an analogue of the vanishing phenomenon for $\phi_a(x_0)$ nor, as a consequence, of our refined iteration step we described above.

The structure of this paper is as follows. The first part of the paper, consisting of Sections \ref{sec:firstexpansion}, \ref{section refining} and \ref{sec:proofsadd}, is devoted to problem \eqref{equation u}, while the second part, consisting of Sections \ref{sec-Bp-prelim}, \ref{section refining conj} and \ref{sec:proofssubcrit}, is devoted to \eqref{BP-problem}. The two parts are presented in a parallel manner, but the emphasis in the second part is on the necessary changes compared to the first part. The preliminary Sections \ref{sec:firstexpansion} and \ref{sec-Bp-prelim} contain an initial expansion, the subsequent Sections \ref{section refining} and \ref{section refining conj} contain its refinement and, finally, in Sections \ref{sec:proofsadd} and \ref{sec:proofssubcrit} the main theorems presented in this introduction are proved. Some technical results are deferred to two appendices.


\section{Additive case: A first expansion}\label{sec:firstexpansion}

In this and the following section we will prepare for the proof of Theorems \ref{thm expansion} and \ref{thm BP}. 

The main result from this section is the following preliminary asymptotic expansion of the family of solutions $(u_\eps)$.

\begin{proposition}
\label{prop first expansion}
Let $(u_\epsilon)$ be a family of solutions to \eqref{equation u} satisfying \eqref{eq:sobmin}.

Then, up to extraction of a subsequence, there are sequences $(x_\epsilon)\subset\Omega$, $(\lambda_\epsilon)\subset(0,\infty)$, $(\alpha_\epsilon)\subset\R$ and $(w_\eps) \subset T_{x_\eps, \lambda_\eps}^\bot$ such that
\begin{equation}
\label{expansion PU + w}
u_\epsilon = \alpha_\epsilon(PU_{x_\eps, \lambda_\eps} + w_\eps),
\end{equation}
and a point $x_0 \in \Omega$ such that
\begin{equation}
\label{parameters PU + w}
|x_\eps - x_0| = o(1), \quad \alpha_\eps = 1 + o(1), \quad \lambda_\eps \to \infty, \quad \|\nabla w_\eps \|_2= \mathcal O(\lambda^{-1/2}).
\end{equation}
\end{proposition}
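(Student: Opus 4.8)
The plan is to combine the weak convergence coming from the Sobolev-minimizing property with a quantitative stability statement for optimizers of the Sobolev inequality, i.e.\ a profile decomposition at the level of a single bubble. Concretely, I would first invoke the standard consequence of \eqref{eq:sobmin} (cf.\ Proposition \ref{lemma PU + w}) that $u_\eps \rightharpoonup 0$ weakly in $H^1_0(\Omega)$ while $\int_\Omega |\nabla u_\eps|^2 \to S^{3/2} \cdot 3^{-1/2}$ (the exact constant coming from the normalization $3u^5$ and the value of $S$) and $u_\eps^6 \to c\,\delta_{x_0}$ as measures. The point $x_0\in\overline\Omega$ is the support of this limiting delta; a separate argument — essentially the first use of the coercivity inequality, or alternatively a Pohozaev/boundary argument — is needed to guarantee $x_0\in\Omega$ rather than on $\partial\Omega$, but for this proposition we only need $x_0\in\Omega$ as asserted, which follows from the blow-up analysis since a boundary concentration point would violate \eqref{eq:sobmin} via the extra reflected bubble energy.

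Next I would extract the concentration parameters. Using the concentration-compactness / bubbling lemma for the critical exponent, there exist $x_\eps\to x_0$ and $\lambda_\eps\to\infty$ and $\alpha_\eps\to\alpha_0>0$ such that $u_\eps - \alpha_\eps U_{x_\eps,\lambda_\eps}\to 0$ in $\dot H^1(\R^3)$ after extension by zero; the value $\alpha_0=1$ is then forced because $u_\eps$ solves \eqref{equation u} with $3u^5$ on the right-hand side and $U_{x,\lambda}$ solves $-\Delta U = 3U^5$ with the same constant, so any other multiple would fail to be an approximate solution (equivalently, testing the equation against $u_\eps$ and using $\int U_{x,\lambda}^6 = S^{3/2}3^{-3/2}$ pins down $\alpha_0=1$). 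I would then replace $U_{x_\eps,\lambda_\eps}$ by its projection $PU_{x_\eps,\lambda_\eps}$, noting that $\|\nabla(U_{x_\eps,\lambda_\eps}-PU_{x_\eps,\lambda_\eps})\| = O(\lambda_\eps^{-1/2})$ by the standard estimate on the regular part (this is where $H_0(x_\eps,\cdot)$ enters and where $C^2$ regularity of $\partial\Omega$ is used). Writing $u_\eps = \alpha_\eps(PU_{x_\eps,\lambda_\eps} + \tilde w_\eps)$ with $\|\nabla \tilde w_\eps\| = o(1)$, a further optimization of the parameters $(x_\eps,\lambda_\eps,\alpha_\eps)$ — minimizing $\|\nabla(\alpha^{-1}u_\eps - PU_{x,\lambda})\|$ over a neighborhood — produces $w_\eps\in T_{x_\eps,\lambda_\eps}^\perp$ by the first-order optimality conditions, i.e.\ the orthogonality to the five-dimensional tangent space $T_{x_\eps,\lambda_\eps}$ is exactly the Euler--Lagrange condition of this minimization, and the implicit function theorem guarantees the minimizing parameters exist and still satisfy $x_\eps\to x_0$, $\lambda_\eps\to\infty$, $\alpha_\eps\to1$.

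The remaining point — and the one I expect to be the main obstacle — is upgrading $\|\nabla w_\eps\| = o(1)$ to the sharp rate $\|\nabla w_\eps\| = O(\lambda_\eps^{-1/2})$. For this I would insert the decomposition \eqref{expansion PU + w} into equation \eqref{equation u}, obtaining a linear equation for $w_\eps$ of the schematic form $-\Delta w_\eps + (a+\eps V)w_\eps - 15\,PU_{x_\eps,\lambda_\eps}^4 w_\eps = N(w_\eps) + E_\eps$, where $N(w_\eps)$ collects the superquadratic nonlinear terms and $E_\eps = 3(PU)^5 - 3(PU+w_\eps)^5 + \ldots$ together with the ``error'' $(-\Delta + a + \eps V)PU - 3(PU)^5$ measures how far $PU$ is from solving the equation. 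The error term $E_\eps$ is estimated by the standard bubble computations: the contributions of $aPU$, $\eps V PU$ and of the difference between $U^5$ and $(PU)^5$ are all $O(\lambda_\eps^{-1/2})$ in the relevant dual norm (here criticality of $a$ does \emph{not} yet help and is not needed — the crude bound suffices at this stage, since we are not chasing cancellations yet). Then the coercivity inequality of Esposito (Lemma \ref{lemma coercivity}), valid precisely because $w_\eps\perp T_{x_\eps,\lambda_\eps}$ and $\lambda_\eps$ is large, gives $\|\nabla w_\eps\|^2 \lesssim \langle \text{linearized operator}\, w_\eps, w_\eps\rangle$, and a fixed-point / bootstrap argument (absorbing $N(w_\eps)$, which is $O(\|\nabla w_\eps\|^2)$, into the left-hand side once $\|\nabla w_\eps\|$ is known to be small) closes the estimate at the rate dictated by $\|E_\eps\| = O(\lambda_\eps^{-1/2})$. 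The delicate part is making sure the linear operator $-\Delta + (a+\eps V) - 15(PU)^4$ is uniformly coercive on $T_{x_\eps,\lambda_\eps}^\perp$ — this is exactly the content of Lemma \ref{lemma coercivity}, and the role of the five orthogonality conditions is to remove the (asymptotically) zero modes — so once that lemma is granted the argument is routine, and the conclusion \eqref{parameters PU + w} follows.
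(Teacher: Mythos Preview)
Your overall strategy is sound and matches the paper's: qualitative bubbling via Struwe/Bahri--Coron, orthogonalization by minimizing over the bubble parameters, then the quantitative bound $\|\nabla w\|=\mathcal O(\lambda^{-1/2})$ by testing the equation for $w$ against $w$ and invoking the coercivity Lemma~\ref{lemma coercivity}. That last step is exactly what the paper does in Subsection~\ref{subsection bound w}.

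The genuine gap is your treatment of interior concentration. You write that $x_0\in\Omega$ ``follows from the blow-up analysis since a boundary concentration point would violate \eqref{eq:sobmin} via the extra reflected bubble energy.'' That heuristic is not a proof here: the Sobolev quotient of $PU_{x,\lambda}$ tends to $S$ whenever $d\lambda\to\infty$, so \eqref{eq:sobmin} alone does not rule out $d\to 0$. The paper explicitly notes that the energy-minimizing argument of Esposito is unavailable in this setting, and instead proves $d^{-1}=\mathcal O(1)$ by a Pohozaev argument (Subsection~\ref{subsec bdry conc}): one integrates the equation against $\nabla u$, identifies the leading boundary contribution as $C\lambda^{-1}\nabla\phi_0(x)$ via Lemma~\ref{lemma PU bdry integral}, and compares with Rey's lower bound $|\nabla\phi_0(x)|\gtrsim d^{-2}$. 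Making this work requires controlling $\|\partial w/\partial n\|_{L^2(\partial\Omega)}$, which in turn needs a nontrivial $L^{3/2}$ estimate on $\Delta w$ near the boundary (Lemma~\ref{lemma w bdry integral}, including a Moser-type step to handle the $|w|^5$ term). None of this is visible in your plan.

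This also affects the order of your argument. You assume $x_0\in\Omega$ and then prove $\|\nabla w\|=\mathcal O(\lambda^{-1/2})$; but the error terms you quote (e.g.\ $aPU$, the difference $U^5-(PU)^5$) are $\mathcal O(\lambda^{-1/2})$ only once $d^{-1}=\mathcal O(1)$ is known --- otherwise they carry factors of $(d\lambda)^{-1}$. The paper therefore first proves the intermediate bound $\|\nabla w\|=\mathcal O(\lambda^{-1/2})+\mathcal O((d\lambda)^{-1})$ (Proposition~\ref{boundw}), uses \emph{that} bound as input to the Pohozaev/boundary argument to obtain $d^{-1}=\mathcal O(1)$, and only then collapses the estimate to $\|\nabla w\|=\mathcal O(\lambda^{-1/2})$. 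Your proposal reverses this dependency without supplying an independent route to $x_0\in\Omega$.
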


This proposition follows to a large extent by an adaptation of existing results in the literature. We include the proof since we have not found the precise statement and since related arguments will appear in the following section in a more complicated setting.

An initial qualitative expansion follows from works of Struwe \cite{St} and Bahri-Coron \cite{BaCo}. In order to obtain the statement of Proposition \ref{prop first expansion}, we then need to show two things, namely, the bound on $\|\nabla w\|$ and the fact that $x_0\in\Omega$. The proof of the bound on $\|\nabla w\|$ that we give is rather close to that of Esposito \cite{Es}. The setting in \cite{Es} is slightly different (there, $V$ is equal to a negative constant and, more importantly, the solutions are assumed to be energy minimizing), but this part of the proof extends to our setting. On the other hand, the proof in \cite{Es} of the fact that $x_0\in\Omega$ relies on the energy minimizing property and does not work for us. Instead, we adapt some ideas from Rey in \cite{Re2}. The proof in \cite{Re2} is only carried out in dimensions $\geq 4$ and without the background $a$, but, as we will see, it extends with some effort to our situation.

We subdivide the proof of Proposition \ref{prop first expansion} into a sequence of subsections. The main result of each subsection is stated as a proposition at the beginning and summarizes the content of the corresponding subsection.


\subsection{A qualitative initial expansion}

As a first important step, we derive the following expansion, which is already of the form of that in Proposition \ref{prop first expansion}, except that all remainder bounds are nonquantitative and the limit point $x_0$ may a priori be on the boundary $\partial \Omega$. 

\begin{proposition}
\label{lemma PU + w}
Let $(u_\epsilon)$ be a family of solutions to \eqref{equation u} satisfying \eqref{eq:sobmin}.

Then, up to extraction of a subsequence, there are sequences $(x_\epsilon)\subset\Omega$, $(\lambda_\epsilon)\subset(0,\infty)$, $(\alpha_\epsilon)\subset\R$ and $(w_\eps) \subset T_{x_\eps, \lambda_\eps}^\bot$ such that \eqref{expansion PU + w} holds and a point $x_0 \in \overline{\Omega}$ such that
\begin{equation}
\label{parameters PU + w prelim}
|x_\eps - x_0| = o(1), \quad \alpha_\eps = 1 + o(1), \quad d_\eps \lambda_\eps \to \infty, \quad \|\nabla w_\eps \|_2= o(1),
\end{equation}
where we denote $d_\eps := d(x_\eps, \partial \Omega)$.
\end{proposition}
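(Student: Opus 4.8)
The plan is to obtain the stated expansion from the standard concentration-compactness machinery for optimizing sequences of the Sobolev quotient, and then to convert the qualitative profile decomposition into the precise algebraic form \eqref{expansion PU + w} by projecting onto the five-dimensional tangent space $T_{x_\eps,\lambda_\eps}$. Concretely, I would first argue that $(u_\eps)$ is bounded in $H^1_0(\Omega)$: this follows from \eqref{eq:sobmin}, since the numerator $\int_\Omega|\nabla u_\eps|^2$ and (by coercivity of $-\Delta+a$, together with the smallness of $\eps V$ for $\eps$ small) the energy $\int_\Omega(|\nabla u_\eps|^2 + (a+\eps V)u_\eps^2)$ are comparable, and testing equation \eqref{equation u} with $u_\eps$ gives $\int_\Omega(|\nabla u_\eps|^2 + (a+\eps V)u_\eps^2) = 3\int_\Omega u_\eps^6$; combining with \eqref{eq:sobmin} forces $\int_\Omega u_\eps^6$ to converge to a finite positive limit ($S^{3/2}/3^{3/2}$ up to constants) and hence $\|\nabla u_\eps\|$ to be bounded. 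In particular $u_\eps\rightharpoonup u_0$ weakly in $H^1_0(\Omega)$ along a subsequence.

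Next I would invoke the profile decomposition of Struwe \cite{St} (together with the classification of Bahri--Coron \cite{BaCo} ruling out, in this single-bubble minimizing regime, multiple bubbles and a nontrivial weak limit): since $\int_\Omega u_\eps^6 \to S^{3/2}/3^{3/2}$ is exactly the mass of one bubble and \eqref{eq:sobmin} says the quotient tends to $S$, the decomposition must consist of exactly one bubble and no residual, i.e. there exist $(x_\eps)\subset\Omega$, $(\lambda_\eps)\subset(0,\infty)$ with $\lambda_\eps\to\infty$ (blow-up, since $u_0=0$) and $\lambda_\eps\, d(x_\eps,\partial\Omega)\to\infty$, and a number $c_\eps\to c_0>0$, such that $u_\eps = c_\eps\, PU_{x_\eps,\lambda_\eps} + \rho_\eps$ with $\|\nabla\rho_\eps\|\to 0$. (The appearance of $PU$ rather than $U$ itself, and the condition $\lambda_\eps d_\eps\to\infty$, are standard: because the bubble must live at $H^1_0$-scale deep inside $\Omega$, $\|\nabla(U_{x_\eps,\lambda_\eps}-PU_{x_\eps,\lambda_\eps})\|\to 0$ precisely when $\lambda_\eps d_\eps\to\infty$, and one shows the latter must hold or else the energy would drop below $S^{3/2}$.) Set $x_0\in\overline\Omega$ to be the limit of $x_\eps$ along a further subsequence; at this stage $x_0$ may lie on $\partial\Omega$, which is why this proposition only claims $d_\eps\lambda_\eps\to\infty$ rather than $x_0\in\Omega$ — that refinement is the content of Proposition \ref{prop first expansion} and is deferred.

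The remaining point is to put the decomposition in the normalized form \eqref{expansion PU + w} with $w_\eps\in T_{x_\eps,\lambda_\eps}^\bot$. Here I would apply a standard implicit-function-theorem argument: the map $(\alpha,\lambda,x)\mapsto \Pi_{x,\lambda}(\alpha^{-1}u_\eps - PU_{x,\lambda})$, or equivalently the five scalar equations expressing orthogonality of $\alpha_\eps^{-1}u_\eps - PU_{x_\eps,\lambda_\eps}$ to $PU$ and to $\partial_\lambda PU,\ \partial_{x_i}PU$, has nondegenerate differential at the approximate parameters $(c_\eps,\lambda_\eps,x_\eps)$ coming from Struwe's decomposition — this nondegeneracy is exactly the statement that the Gram matrix of $\{PU,\partial_\lambda PU,\partial_{x_i}PU\}$ is, after rescaling, uniformly invertible, a well-known computation. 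Solving these equations perturbatively near $(c_\eps,\lambda_\eps,x_\eps)$ yields corrected parameters $\alpha_\eps,\lambda_\eps,x_\eps$ (abusing notation) with $\alpha_\eps\to c_0$ and, after the further normalization $c_0=1$ forced by the $u_\eps^6$-mass being exactly one bubble's worth, $\alpha_\eps=1+o(1)$; the corrected $\lambda_\eps,x_\eps$ still satisfy $\lambda_\eps\to\infty$, $\lambda_\eps d_\eps\to\infty$, $x_\eps\to x_0$; and the new remainder $w_\eps := \alpha_\eps^{-1}u_\eps - PU_{x_\eps,\lambda_\eps}$ lies in $T_{x_\eps,\lambda_\eps}^\bot$ with $\|\nabla w_\eps\|=o(1)$. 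I expect the main obstacle to be bookkeeping rather than conceptual: one must verify that passing from Struwe's decomposition to the orthogonal form does not destroy the qualitative properties (in particular $\lambda_\eps d_\eps\to\infty$), and one must carefully normalize $\alpha_\eps\to 1$ using the precise constant in \eqref{eq:sobmin} and the characterization of Sobolev optimizers — all of this is routine but must be stated cleanly since the quantitative improvements of later sections build directly on it.
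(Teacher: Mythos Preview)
Your proposal is correct and follows essentially the same route as the paper: bound $(u_\eps)$ in $H^1_0$ via the energy identity and \eqref{eq:sobmin}, then invoke the Lions--Struwe--Bahri--Coron machinery (the paper cites \cite[Proof of Proposition~2]{Re2}) to obtain the single-bubble expansion with the orthogonality $w_\eps\in T_{x_\eps,\lambda_\eps}^\bot$. The only cosmetic difference is that the paper singles out and proves directly, via a Br\'ezis--Lieb argument, that the weak limit $u_0$ vanishes (since a nonzero $u_0$ would be a Sobolev optimizer supported in $\Omega\subsetneq\R^3$), whereas you fold this into the energy quantization of Struwe's decomposition; both are valid, and your more explicit discussion of the implicit-function step for the orthogonal form is in fact more detailed than what the paper writes.
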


\begin{proof}
	We shall only prove that $u_\eps\rightharpoonup 0$ in $H^1_0(\Omega)$. Once this is shown, we can use standard arguments, due to Lions \cite{Lio}, Struwe \cite{St} and Bahri--Coron \cite{BaCo}, to complete the proof of the proposition; see, for instance,~\cite[Proof of Proposition 2]{Re2}.
	
	\emph{Step 1.} We begin by showing that $(u_\eps)$ is bounded in $H^1_0(\Omega)$ and that $\|u_\eps\|_6 \gtrsim 1$. Integrating the equation for $u_\eps$ against $u_\eps$, we obtain
	\begin{equation}
		\label{eq:energyident}
		\int_\Omega \left( |\nabla u_\eps|^2 + (a-\eps V)u_\eps^2\right) = 3 \int_\Omega u_\eps^6
	\end{equation}
	and therefore	
	$$
	3 \left( \int_\Omega u_\eps^6 \right)^{2/3} = \frac{\int_\Omega |\nabla u_\eps|^2}{\left( \int_\Omega u_\eps^6 \right)^{1/3}} + \frac{\int_\Omega (a+\epsilon V) u_\eps^2}{\left( \int_\Omega u_\eps^6 \right)^{1/3}} \,.
	$$
	On the right side, the first quotient converges by \eqref{eq:sobmin} and the second quotient is bounded by H\"older's inequality. Thus, $(u_\eps)$ is bounded in $L^6(\Omega)$. By \eqref{eq:sobmin} we obtain boundedness in $H^1_0(\Omega)$. By coercivity of $-\Delta +a$ in $H^1_0(\Omega)$ and Sobolev's inequality, for all sufficiently small $\eps>0$, the left side in \eqref{eq:energyident} is bounded from below by a constant times $\|u_\eps\|_6^2$. This yields the lower bound on $\|u_\eps\|_6 \gtrsim 1$.
	
	\emph{Step 2.} According to Step 1, $(u_\eps)$ has a weak limit point in $H^1_0(\Omega)$ and we denote by $u_0$ one of those. Our goal is to show that $u_0\equiv 0$. Throughout this step, we restrict ourselves to a subsequence of $\eps$'s along which $u_\eps\rightharpoonup u_0$ in $H^1_0(\Omega)$. By Rellich's lemma, after passing to a subsequence, we may also assume that $u_\eps\to u_0$ almost everywhere. Moreover, passing to a further subsequence, we may also assume that $\|\nabla u_\eps\|$ has a limit. Then, by \eqref{eq:sobmin}, $\|u_\eps\|_6$ has a limit as well and, by Step 1, none of these limits is zero.
	
	We now argue as in the proof of \cite[Proposition 3.1]{FrKoKo1} and note that, by weak convergence,
	$$
	\mathcal T = \lim_{\eps\to 0} \int_\Omega |\nabla (u_\eps-u_0)|^2
	\quad \text{exists and satisfies}\quad \lim_{\eps\to 0} \int_\Omega |\nabla u_\eps|^2 = \int_\Omega |\nabla u_0|^2 + \mathcal T
	$$
	and, by the Br\'ezis--Lieb lemma \cite{BrLi},
	$$
	\mathcal M = \lim_{\eps\to 0} \int_\Omega  (u_\eps-u_0)^6
	\quad \text{exists and satisfies}\quad \lim_{\eps\to 0} \int_\Omega u_\eps^6 = \int_\Omega u_0^6 + \mathcal M \,. 
	$$
	Thus, \eqref{eq:sobmin} gives
	$$
	S\left( \int_\Omega u_0^6 + \mathcal M \right)^{1/3} = \int_\Omega |\nabla u_0|^2 + \mathcal T \,.
	$$
	We bound the left side from above with the help of the elementary inequality
	\begin{equation*}
		\left( \int_\Omega u_0^6 + \mathcal M \right)^{1/3} \leq \left( \int_\Omega u_0^6\right)^{1/3} + \mathcal M^{1/3}
	\end{equation*}
	and, by the Sobolev inequality for $u_\eps-u_0$, we bound the right side from below using
	\begin{equation*}
		\mathcal T \geq S \mathcal M^{1/3} \,.
	\end{equation*}
	Thus,
	$$
	S \left( \int_\Omega u_0^6 \right)^{1/3} \geq \int_\Omega |\nabla u_0|^2 \,.
	$$
	Thus, either $u_0\equiv 0$ or $u_0$ is an optimizer for the Sobolev inequality. Since $u_0$ has support in $\Omega\subsetneq\R^3$, the latter is impossible and we conclude that $u_0\equiv 0$, as claimed.	
\end{proof}

\textbf{Convention.} Throughout the rest of the paper, we assume that the sequence $(u_\epsilon)$ satisfies the assumptions and conclusions from Proposition \ref{lemma PU + w}. We will make no explicit mention of subsequences. Moreover, we typically drop the index $\eps$ from $u_\eps$, $\alpha_\eps$, $x_\eps$, $\lambda_\eps$, $d_\eps$ and $w_\eps$. 


\subsection{Coercivity}
The following coercivity inequality from \cite[Lemma 2.2]{Es} is a crucial tool for us in subsequently refining the expansion of $u_\eps$. It states, roughly speaking, that the subleading error terms coming from the expansion of $u_\eps$ can be absorbed into the leading term, at least under some orthogonality condition.

\begin{lemma}\label{lemma coercivity}
There are constants  $T_*<\infty$ and $\rho>0$ such that for all $x\in\Omega$, all $\lambda>0$ with $d\lambda\geq T_*$ and all $v\in T_{x,\lambda}^\bot$,
\begin{equation} \label{coercivity}
\int_\Omega \left( |\nabla v|^2 + av^2 - 15\, U_{x,\lambda}^4 v^2\right) \geq \rho \int_\Omega |\nabla v|^2 \,.
\end{equation}
\end{lemma}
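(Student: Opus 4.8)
The plan is to prove the coercivity inequality \eqref{coercivity} by a compactness-and-contradiction argument, after first reducing to the model inequality on $\R^3$. The key point is that the non-degeneracy of the second variation of the Sobolev quotient at the standard bubble $U_{0,1}$, restricted to the orthogonal complement of its five-dimensional kernel, gives a spectral gap; the background potential $a$ is a lower-order perturbation that does not destroy this gap once $d\lambda$ is large enough, since then $U_{x,\lambda}$ lives at scale $\lambda^{-1}$ well inside $\Omega$.

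\emph{Step 1 (reduction by scaling).} First I would rescale. Given $x\in\Omega$, $\lambda>0$ and $v\in T_{x,\lambda}^\bot$, set $\tilde v(z) := \lambda^{-1/2} v(x + z/\lambda)$, which lives in $H^1_0(\lambda(\Omega-x))$. Under this scaling $\int_\Omega |\nabla v|^2 = \int |\nabla \tilde v|^2$, $\int_\Omega U_{x,\lambda}^4 v^2 = \int U_{0,1}^4 \tilde v^2$, and $\int_\Omega a v^2 = \lambda^{-2}\int a(x+z/\lambda)\,\tilde v(z)^2\,dz$, so the term involving $a$ carries a favorable factor $\lambda^{-2}\|a\|_\infty$, which is $\mathcal O((d\lambda)^{-2})$ relative to the Sobolev-controlled terms — this is where the hypothesis $d\lambda\geq T_*$ enters. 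The orthogonality conditions $v\perp T_{x,\lambda}$ transform into orthogonality of $\tilde v$ (in $\dot H^1(\R^3)$) to the rescaled functions $PU_{0,1}$-type objects; one must check that these converge, as $d\lambda\to\infty$, to the exact kernel directions $U_{0,1},\ \partial_\lambda U_{0,1}|_{\lambda=1},\ \partial_{x_j} U_{0,1}$ of the linearized operator $-\Delta - 15 U_{0,1}^4$ on $\dot H^1(\R^3)$.

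\emph{Step 2 (the model spectral gap).} On $\dot H^1(\R^3)$ one has the sharp fact that $-\Delta - 15 U_{0,1}^4 \geq 0$ with kernel exactly the five-dimensional span above, and moreover there is $\rho_0>0$ with $\int(|\nabla w|^2 - 15 U_{0,1}^4 w^2) \geq \rho_0 \int |\nabla w|^2$ for all $w$ orthogonal to that kernel. This is classical (Rey, Bianchi–Egnell); I would cite it.

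\emph{Step 3 (contradiction argument).} Suppose \eqref{coercivity} fails for every choice of constants. Then there are sequences $x_n\in\Omega$, $\lambda_n>0$ with $d_n\lambda_n\to\infty$, and $v_n\in T_{x_n,\lambda_n}^\bot$, normalized so that $\int_\Omega|\nabla v_n|^2 = 1$, with $\int_\Omega(|\nabla v_n|^2 + a v_n^2 - 15 U_{x_n,\lambda_n}^4 v_n^2) \to c \leq 0$. Rescale as in Step 1 to get $\tilde v_n$ with $\int|\nabla\tilde v_n|^2 = 1$; extract a weak limit $\tilde v$ in $\dot H^1(\R^3)$, with $\tilde v_n \to \tilde v$ strongly in $L^2_{\loc}$ and a.e. Since $\lambda_n^{-2}\|a\|_\infty\int\tilde v_n^2$ — wait, one must be careful that $\int\tilde v_n^2$ need not be bounded; however the term is $\lambda_n^{-2}\int a(x_n+z/\lambda_n)\tilde v_n(z)^2\,dz$ and $\tilde v_n$ is supported in the ball of radius $d_n\lambda_n$, so a crude bound (using that $a\in L^\infty$ and Hardy/Sobolev on the rescaled domain) shows this contribution is $\mathcal O((d_n\lambda_n)^{-2}) \to 0$; this estimate needs care and is the technical crux. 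Passing to the limit, using $U_{0,1}^4 \in L^{3/2}$ so that $\int U_{0,1}^4 \tilde v_n^2 \to \int U_{0,1}^4 \tilde v^2$ by a concentration-compactness / vanishing-tails argument, we get $\int(|\nabla \tilde v|^2 - 15 U_{0,1}^4 \tilde v^2) \leq c \leq 0$. On the other hand the orthogonality passes to the limit: $\tilde v$ is orthogonal to the kernel of $-\Delta - 15 U_{0,1}^4$. By Step 2 this forces $\tilde v = 0$, hence $\int U_{0,1}^4 \tilde v_n^2 \to 0$, and then $c = \lim \int |\nabla \tilde v_n|^2 + o(1) = 1 > 0$, contradicting $c\leq 0$. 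Finally, a genuine $\rho>0$ (not just $\rho=0$) in \eqref{coercivity} follows by absorbing: apply the obtained nonnegativity to $(1-\delta)$ times the quadratic form plus $\delta$ times the form, using Step 2's gap on the limit — more cleanly, rerun the contradiction argument with $\rho$ replaced by a sequence $\rho_n\to 0$ and target $\int(\cdots) - \rho_n\int|\nabla v_n|^2 \to c\leq 0$, which changes nothing in the limit and still yields $c = 1 > 0$.

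The main obstacle I expect is Step 3's control of the $a v_n^2$ term after rescaling: unlike the gradient and the $U^4$ terms, it is not automatically small just from $\|a\|_\infty$ because $\int \tilde v_n^2$ over the large ball $B_{d_n\lambda_n}$ could in principle grow. Resolving this requires exploiting that $\tilde v_n\in H^1_0$ of that ball together with the factor $\lambda_n^{-2}$; in three dimensions Hardy's inequality $\int |w|^2/|z|^2 \lesssim \int|\nabla w|^2$ combined with the support constraint gives $\int_{B_R}|w|^2 \lesssim R^2 \int|\nabla w|^2$, so $\lambda_n^{-2}\int_{B_{d_n\lambda_n}}\tilde v_n^2 \lesssim \lambda_n^{-2}(d_n\lambda_n)^2 = d_n^2$ — which is \emph{not} small, so a finer argument is needed: one should instead not rescale the $a$-term but keep $\int_\Omega a v_n^2$ and use that $v_n$, being $\dot H^1$-normalized and $H^1_0(\Omega)$ with $\Omega$ bounded, is bounded in $L^2(\Omega)$ by Poincaré, so $|\int_\Omega a v_n^2| \leq \|a\|_\infty \|v_n\|_{L^2(\Omega)}^2 = \mathcal O(1)$, and this bounded term survives in the limit only if $\tilde v\neq 0$ captures it — but $\tilde v = 0$ after the kernel projection, and $v_n\rightharpoonup 0$ weakly in $H^1_0(\Omega)$ (since its mass concentrates at scale $\lambda_n^{-1}$), hence $\|v_n\|_{L^2(\Omega)}\to 0$ by Rellich, killing the $a$-term. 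That is the clean route, and identifying it correctly is the delicate part.
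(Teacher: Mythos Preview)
Your Step~2 and the passage of orthogonality to the limit are fine, but the handling of the $a$-term in Step~3 has a genuine gap. You normalize $\int_\Omega|\nabla v_n|^2=1$, rescale to $\tilde v_n$, extract a weak limit $\tilde v$ in $\dot H^1(\R^3)$, and eventually conclude $\tilde v=0$. From this you want to infer that $v_n\rightharpoonup 0$ in $H^1_0(\Omega)$ (so that Rellich kills $\int_\Omega a v_n^2$), arguing that ``its mass concentrates at scale $\lambda_n^{-1}$''. This is not justified: $v_n$ is an arbitrary element of $T_{x_n,\lambda_n}^\bot$ with unit gradient norm, and nothing forces it to concentrate. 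In general $v_n$ may have a nonzero weak limit $v_0$ in $H^1_0(\Omega)$; the rescaling blows this macroscopic part down to a point and sees only the microscopic remainder, so $\tilde v=0$ tells you nothing about $v_0$. Consequently $\int_\Omega a v_n^2\to\int_\Omega a v_0^2$ need not vanish, and your inequality $\int(|\nabla\tilde v|^2-15U_{0,1}^4\tilde v^2)\le c$ does not follow from the assumed $c\le 0$.

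The paper's route avoids this difficulty by never rescaling to $\R^3$. It quotes Rey's inequality already on $\Omega$,
\[
\int_\Omega\bigl(|\nabla v|^2-15\,U_{x,\lambda}^4 v^2\bigr)\ \ge\ \tfrac{4}{7}\int_\Omega|\nabla v|^2
\qquad\text{for all }v\in T_{x,\lambda}^\bot,
\]
and then runs the compactness argument on $\Omega$ (this is Esposito's proof). The key extra ingredient you are missing is the \emph{coercivity of $-\Delta+a$}: once you split $v_n=v_0+(v_n-v_0)$ with $v_n-v_0\rightharpoonup 0$, the $U_{x_n,\lambda_n}^4$-term decouples from $v_0$ (by concentration), Rey's $4/7$-bound controls the microscopic part $v_n-v_0$, and the assumption that $-\Delta+a$ is coercive gives $\int_\Omega(|\nabla v_0|^2+a v_0^2)\ge c_0\int_\Omega|\nabla v_0|^2$ for the macroscopic part. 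Summing yields a strictly positive lower bound, contradicting $c\le 0$. Your proposal can be repaired along these lines, but as written it omits precisely this macroscopic/microscopic decomposition and the use of coercivity of $-\Delta+a$.
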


The proof proceeds by compactness, using the inequality \cite[(D.1)]{Re2}
\begin{equation*}
\int_\Omega \left( |\nabla v|^2 - 15\, U_{x,\lambda}^4 v^2 \right) \geq \frac47 \int_\Omega |\nabla v|^2
\qquad\text{for all}\ v\in T_{x,\lambda}^\bot \,.
\end{equation*}
For details of the proof, we refer to \cite{Es}.

In the following subsection, we use Lemma \ref{lemma coercivity} to deduce a refined bound on $\|\nabla w\|_2$. We will use it again in Section \ref{subsection nabla r} below to obtain improved bounds on the refined error term $\|\nabla r\|_2$, with $r \in T_\xl^\bot$ defined in \eqref{definition r}. 


\subsection{The bound on $\|\nabla w\|_2$}
\label{subsection bound w}

The goal of this subsection is to prove

\begin{proposition}\label{boundw}
	As $\eps\to 0$,
	\begin{equation}
		\label{bound w subsec}
		\|\nabla w\|_2= \mathcal O(\lambda^{-1/2}) + \mathcal O((\lambda d)^{-1}). 
	\end{equation}
\end{proposition}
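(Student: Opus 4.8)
The plan is to derive the bound on $\|\nabla w\|$ by plugging the decomposition $u_\eps = \alpha_\eps(PU_{x,\lambda}+w)$ with $w \in T_{x,\lambda}^\bot$ into the equation \eqref{equation u}, testing against $w$ itself, and then using the coercivity inequality from Lemma \ref{lemma coercivity} to absorb the quadratic-in-$w$ part of the nonlinearity into the left-hand side. Concretely, since $u_\eps$ solves $-\Delta u + (a+\eps V)u = 3u^5$, dividing by $\alpha_\eps$ and using $-\Delta PU_{x,\lambda} = -\Delta U_{x,\lambda} = 3U_{x,\lambda}^5$ gives an equation of the schematic form
\begin{equation*}
-\Delta w + (a+\eps V)w - 15 U_{x,\lambda}^4 w = 3\alpha_\eps^4\big( (PU_{x,\lambda}+w)^5 - PU_{x,\lambda}^5 - 5 PU_{x,\lambda}^4 w \big) + \big(3\alpha_\eps^4 -3\big) PU_{x,\lambda}^5 + 15(U_{x,\lambda}^4 - \alpha_\eps^4 PU_{x,\lambda}^4) w - (a+\eps V)PU_{x,\lambda} + 3U_{x,\lambda}^5 - 3 PU_{x,\lambda}^5\,.
\end{equation*}
Testing this identity against $w$, the left side is exactly the quadratic form appearing in \eqref{coercivity} (note $d\lambda \to \infty$ by Proposition \ref{lemma PU + w}, so the hypothesis $d\lambda \ge T_*$ is eventually satisfied), hence bounded below by $\rho\|\nabla w\|^2$. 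It remains to estimate each term on the right after pairing with $w$.

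The key estimates I would carry out are the following. First, the "superquadratic" remainder: $|(PU+w)^5 - PU^5 - 5PU^4 w| \lesssim PU^3 w^2 + |w|^5$, so its pairing with $w$ is $\mathcal{O}(\|PU^{3/5}w\|_6^3 \cdot \text{something}) $ — more carefully, by Hölder and Sobolev it is $\lesssim \|\nabla w\|^3 + \|\nabla w\|^6$, which is higher order and absorbed once we know $\|\nabla w\| = o(1)$ (which we do, from Proposition \ref{lemma PU + w}). Second, the term $(\alpha_\eps^4-1)PU^5$ paired with $w$: since $\alpha_\eps = 1+o(1)$, and $\|PU^5\|_{6/5}\lesssim \|U^5\|_{6/5} \lesssim \lambda^{-1/2}$ by the explicit form of $U_{x,\lambda}$, this contributes $o(1)\cdot\lambda^{-1/2}\|\nabla w\|$; in fact one needs the sharper fact, to be obtained from a separate Pohozaev/energy argument or from the known first-order expansion, that $\alpha_\eps - 1 = \mathcal{O}(\lambda^{-1}) + \ldots$, but at this level it suffices to treat it as a lower-order contribution that after Young's inequality gives $o(\|\nabla w\|^2) + o(\lambda^{-1})$. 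Third, and most importantly, the genuinely inhomogeneous terms $-(a+\eps V)PU_{x,\lambda}$ and $3(U_{x,\lambda}^5 - PU_{x,\lambda}^5)$: these do not involve $w$ and their pairing with $w$ is bounded, by Cauchy–Schwarz in the form $\langle f, w\rangle \le \|f\|_{H^{-1}}\|\nabla w\|$, by $(\|(a+\eps V)PU_{x,\lambda}\|_{6/5} + \|U_{x,\lambda}^5 - PU_{x,\lambda}^5\|_{6/5})\|\nabla w\|$. Here one uses the standard pointwise bound $0 \le U_{x,\lambda} - PU_{x,\lambda} \le \lambda^{-1/2} H_a(x,\cdot)$-type estimate (Rey's expansion of $PU$), giving $\|U_{x,\lambda}^5 - PU_{x,\lambda}^5\|_{6/5} \lesssim (\lambda d)^{-1}\lambda^{-1/2}$ or so, and $\|PU_{x,\lambda}\|_{6/5} \lesssim \|U_{x,\lambda}\|_{6/5} \lesssim \lambda^{-1/2}$, so $\|aPU_{x,\lambda}\|_{6/5}\lesssim \lambda^{-1/2}$.

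Collecting, testing against $w$ yields an inequality of the form $\rho\|\nabla w\|^2 \le C\big(\lambda^{-1/2} + (\lambda d)^{-1}\big)\|\nabla w\| + o(1)\|\nabla w\|^2$, from which Young's inequality (absorbing $o(1)\|\nabla w\|^2$ into the left, and $\tfrac{\rho}{2}\|\nabla w\|^2 \le \tfrac{\rho}{4}\|\nabla w\|^2 + C\lambda^{-1}$ etc. on the cross terms) gives $\|\nabla w\|^2 \lesssim \lambda^{-1} + (\lambda d)^{-2}$, i.e. \eqref{bound w subsec}. A subtle point worth flagging: to get the clean power $\lambda^{-1/2}$ rather than something worse from the term $3(U_{x,\lambda}^5 - PU_{x,\lambda}^5)$ — which a priori one might bound by just $\lambda^{-1/2}$ giving only $\|\nabla w\| = \mathcal{O}(\lambda^{-1/4})$ — one must exploit that $U-PU$ is small (order $\lambda^{-1/2}(\lambda d)^{-1}$ pointwise after rescaling), not merely that $U$ is concentrated.

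The main obstacle I expect is bookkeeping the various $L^p$ norms of $U_{x,\lambda}$, $PU_{x,\lambda}$ and their differences uniformly in the two small parameters $\lambda^{-1}$ and $(\lambda d)^{-1}$, and in particular making sure the estimate for $\langle U_{x,\lambda}^5 - PU_{x,\lambda}^5, w\rangle$ is sharp enough; this requires the precise pointwise expansion $PU_{x,\lambda} = U_{x,\lambda} - \lambda^{-1/2}H_a(x,\cdot) + (\text{lower order})$, or at least two-sided bounds of that shape, which is exactly the kind of estimate established by Rey \cite{Re2} and adapted by Esposito \cite{Es}. Everything else — the coercivity input, the Brézis–Lieb-type splitting of the quintic nonlinearity, and the final Young's inequality absorption — is routine. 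I would also remark that no new use of the equation is needed beyond testing against $w \in T_{x,\lambda}^\bot$: the orthogonality is precisely what makes the coercivity applicable, and it is preserved because we defined $w$ via the decomposition with $w \perp T_{x,\lambda}$.
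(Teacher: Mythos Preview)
Your overall approach matches the paper's: test the equation for $w$ against $w$ itself, move the $15\,U_{x,\lambda}^4 w^2$ term to the left, invoke the coercivity Lemma~\ref{lemma coercivity}, and identify $(a+\eps V)PU_{x,\lambda}$ and the $U^5$-vs-$PU^5$ discrepancy as the sources of $\lambda^{-1/2}$ and $(\lambda d)^{-1}$ respectively. That is exactly what the paper does.

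However, there is a genuine slip in your treatment of the $(\alpha_\eps^4-1)PU_{x,\lambda}^5$ term. You write $\|PU^5\|_{6/5}\lesssim\|U^5\|_{6/5}\lesssim\lambda^{-1/2}$, but $\|U^5\|_{6/5}=\|U\|_6^5\sim 1$, not $\lambda^{-1/2}$. With only $\alpha_\eps^4-1=o(1)$ available at this stage, your estimate then gives a contribution $o(1)\cdot\|\nabla w\|$, which after Young's inequality leaves an $o(1)$ additive term on the right and only recovers $\|\nabla w\|=o(1)$, not \eqref{bound w subsec}. The fix is to use the orthogonality $\int_\Omega U_{x,\lambda}^5 w=0$ (which you mention only in connection with coercivity): pairing $(\alpha^4-1)PU^5$ with $w$ equals $(\alpha^4-1)\int(PU^5-U^5)w$, and $\|PU^5-U^5\|_{6/5}\lesssim\|U^4\varphi_{x,\lambda}\|_{6/5}\lesssim(\lambda d)^{-1}$, so this term is harmless.

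The paper sidesteps this bookkeeping entirely by \emph{not} separating out the factor $\alpha^4$: it keeps $3\alpha^4\int(PU+w)^5 w$ intact, expands $(PU+w)^5=(U-\varphi_{x,\lambda}+w)^5$ around $U$ (not $PU$), kills the $U^5$ term by orthogonality, and moves $15\alpha^4\int U^4 w^2$ to the left side, where coercivity still applies since $\alpha^4=1+o(1)$. This organization is cleaner and avoids ever needing any quantitative information on $\alpha_\eps-1$. (Minor remark: the expansion of $PU$ involves $H_0$, not $H_a$, at this stage.)
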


Using this bound, we will prove in Subsection \ref{subsec bdry conc} that $d^{-1} = \mathcal O(1)$ and therefore the bound in Proposition \ref{boundw} becomes $\| \nabla w\|_2=  \mathcal O(\lambda^{-1/2})$, as claimed in Proposition \ref{prop first expansion}. 

\begin{proof}
	The starting point is the equation satisfied by $w$. Since $-\Delta PU_\xl = -\Delta U_\xl = 3 U_\xl^5$, from \eqref{expansion PU + w} and \eqref{equation u} we obtain
\begin{equation}
\label{equation w} (-\Delta +a) w = - 3 U_\xl^5 + 3 \alpha^4 (PU_\xl + w)^5 - (a + \eps V) PU_\xl - \eps V w. 
\end{equation}
Integrating this equation against $w$ and using $\int_\Omega U_\xl^5 w = (1/3) \int_\Omega \nabla PU_\xl\cdot \nabla w = 0$, we get 
\begin{equation}
\label{esposito eq1} 
\int_\Omega (|\nabla w|^2 + a w^2)  = 3 \alpha^4 \int_\Omega (PU_\xl +w)^5 w  - \int_\Omega (a + \eps V) PU_\xl w  - \int_\Omega \eps V w^2 . 
\end{equation} 
We estimate the three terms on the right hand side separately. 

 The second and third ones are easy: we have by Lemma \ref{lemma Lq norm of U}
\[ \left|\int_\Omega (a + \eps V) PU_\xl w  \right| \lesssim \| w\|_6 \|U_\xl\|_{6/5} \lesssim \lambda^{-1/2} \|\nabla w\|_2\,.   \]
Moreover, 
\[ \left|\int_\Omega \eps V w^2  \right| \lesssim \eps \| w\|_6^2 = o(\|\nabla w\|^2_2) \,. \]
The first term on the right side of \eqref{esposito eq1} needs a bit more care. We write $PU_\xl = U_\xl - \varphi_\xl$ as in Lemma \ref{lemma PU} and expand 
\begin{align*}
& \int_\Omega (PU_\xl +w)^5 w  \\
&= \int_\Omega U_\xl^5 w  + 5 \int_\Omega U_\xl^4 w^2  + \mathcal O\left( \int_\Omega \left( U_\xl^4\, \varphi_\xl |w| + U_\xl^3 (|w|^3 + |w| \varphi_\xl^2) + \varphi_\xl^5 |w| + w^6 \right) \right) \\
& =  5 \int_\Omega U_\xl^4 w^2  + \mathcal O \left( 
\int_\Omega U_\xl^4 \, \varphi_\xl |w|  + \|\nabla w\|_2\|\varphi_\xl\|_6^2 + \|\nabla w\|_2^3 \right). 
\end{align*}
where we again used $\int_\Omega U_\xl^5 w  = 0$. By Lemmas \ref{lemma Lq norm of U} and \ref{lemma PU}, we have $\|\varphi_\xl\|_6^2 \lesssim (d\lambda)^{-1}$ and
\begin{align*}
\int_\Omega U_\xl^4 \varphi_\xl |w|  &\lesssim \| w\|_6 \|\varphi_\xl\|_\infty \|U_\xl\|_{24/5}^4 \lesssim  \|\nabla w\|_2  (d \lambda)^{-1}.
\end{align*}
Putting all the estimates together, we deduce from \eqref{esposito eq1} that 
\begin{align*}
 \int_\Omega (|\nabla w|^2 + a w^2 - 15 \alpha^4 U^4 w^2)  & = \mathcal O( (d\lambda)^{-1} \|\nabla w\|_2+ \lambda^{-1/2} \|\nabla w\|_2) + o (\|\nabla w\|^2_2)\, .
\end{align*}
Due to the coercivity inequality from Lemma \ref{lemma coercivity}, the left side is bounded from below by a positive constant times $\| \nabla w\|^2_2$.  Thus, \eqref{bound w subsec} follows. 
\end{proof}


\subsection{Excluding boundary concentration}
\label{subsec bdry conc}

The goal of this subsection is to prove
\begin{proposition}
\label{prop bdry concentration}
 $d^{-1} = \mathcal O(1)$. 
\end{proposition}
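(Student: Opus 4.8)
The plan is to argue by contradiction: suppose that, along a subsequence, $d = d_\eps = d(x_\eps,\partial\Omega) \to 0$. The idea is to exploit a Pohozaev-type identity to derive a contradiction with the fact that $-\Delta + a$ is coercive (so that the boundary term, which is forced to have a sign by the maximum principle / Hopf lemma, cannot balance the bulk terms). Concretely, I would multiply equation \eqref{equation u} by $y\cdot\nabla u$ and by $u$ and integrate over $\Omega$ (or over $\Omega$ intersected with a suitable ball). Since $u = 0$ on $\partial\Omega$, one has $\nabla u = (\partial_\nu u)\nu$ on the boundary, and the Rellich--Pohozaev identity produces a boundary integral of the form $\tfrac12\int_{\partial\Omega} (\partial_\nu u)^2 \, (y\cdot\nu)\, d\sigma$ (after an appropriate choice of origin, $y\cdot\nu \geq c > 0$ on the relevant part of $\partial\Omega$ near the concentration point). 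This boundary term has a definite sign.

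Next I would insert the expansion $u = \alpha(PU_{x,\lambda} + w)$ from Proposition \ref{lemma PU + w}, together with $PU_{x,\lambda} = U_{x,\lambda} - \varphi_{x,\lambda}$ from Lemma \ref{lemma PU}, and estimate each resulting term in powers of $\lambda$, $d$ and $\eps$, using the norm estimates for $U_{x,\lambda}$ (Lemma \ref{lemma Lq norm of U}), the bounds $\|\varphi_{x,\lambda}\|_\infty$, $\|\varphi_{x,\lambda}\|_6$ from Lemma \ref{lemma PU}, and the bound $\|\nabla w\| = \mathcal O(\lambda^{-1/2}) + \mathcal O((\lambda d)^{-1})$ from Proposition \ref{boundw}. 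The key competition is between the boundary term, which should be of order $(\lambda d)^{-1}$ times a positive constant (it picks up $\partial_\nu PU_{x,\lambda}$, which is governed by the harmonic-like profile $\varphi_{x,\lambda}$ near $\partial\Omega$), and the interior contribution which, because $a$ is critical, must be smaller than this when $\lambda d$ stays bounded. Since when $d\to 0$ we cannot have $\lambda d \to \infty$ fast enough to kill the boundary term (recall only $d\lambda \to \infty$ is known a priori, not any rate), one reaches a sign contradiction: a manifestly positive quantity is shown to equal something of smaller order, forcing $d$ to stay bounded below.

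The main obstacle, as I see it, is bookkeeping the precise order of the boundary term $\int_{\partial\Omega}(\partial_\nu u)^2 (y\cdot\nu)$ and showing it genuinely dominates. This requires a sufficiently sharp description of $PU_{x,\lambda}$ and its normal derivative near the boundary — essentially one needs the Robin-function/Green's-function asymptotics as $x$ approaches $\partial\Omega$, where $\varphi_{x,\lambda}$ behaves like $\lambda^{1/2}$ times (a multiple of) the regular part, and $\partial_\nu\varphi_{x,\lambda}$ scales like $\lambda^{1/2} d^{-2}$ near the nearest boundary point, while being much smaller far away. Controlling the contribution of $w$ to the boundary term is delicate since $w$ is only controlled in $H^1_0$, not pointwise near $\partial\Omega$; the standard remedy (following Rey \cite{Re2}) is to localize the Pohozaev identity to a ball $B(x_\eps, \delta)$ of fixed small radius, where elliptic estimates upgrade the $H^1$ control of $w$ to pointwise control on the sphere $\partial B(x_\eps,\delta)$, so that all the sphere integrals are of strictly smaller order than the boundary term on $\partial\Omega\cap B(x_\eps,\delta)$. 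Assembling these estimates carefully so that the positive boundary term is not overwhelmed is the technical heart of the argument; everything else is a routine, if lengthy, term-by-term estimation.
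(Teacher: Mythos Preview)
Your strategy is in the right neighborhood, but it diverges from the paper in several places, and at least one of the divergences is a genuine gap.

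\textbf{Different Pohozaev identity.} The paper does \emph{not} test against $y\cdot\nabla u$; it tests against $\nabla u$, which gives the vector-valued identity
\[
-\int_\Omega \nabla(a+\eps V)\,u^2 \;=\; \int_{\partial\Omega} n\,\Bigl(\frac{\partial u}{\partial n}\Bigr)^2 .
\]
After inserting $u=\alpha(PU_{x,\lambda}+w)$, the leading boundary piece is identified exactly as $C\lambda^{-1}\nabla\phi_0(x)$ (Lemma~\ref{lemma PU bdry integral}(a)), and the contradiction is closed using the quantitative lower bound $|\nabla\phi_0(x)|\gtrsim d^{-2}$ from \cite[(2.9)]{Re2}. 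Criticality of $a$ plays no role whatsoever in this step. Your sketch relies instead on a sign argument with $y\cdot\nu\geq c>0$ and invokes criticality of $a$ to say the interior term is subdominant; neither of these is the actual mechanism.

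\textbf{Wrong scaling.} You say the boundary term is of order $(\lambda d)^{-1}$. It is not: one has $\int_{\partial\Omega}(\partial_n PU_{x,\lambda})^2 = \mathcal O(\lambda^{-1}d^{-2})$, and the volume term $\int_\Omega\nabla(a+\eps V)u^2$ is $\mathcal O(\lambda^{-1}+(\lambda d)^{-2})$. It is the comparison $\lambda^{-1}d^{-2}\gg \lambda^{-1}d^{-3/2}+o(\lambda^{-1}d^{-2})$ that yields $d^{-1}=\mathcal O(1)$.

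\textbf{Controlling $w$ on the boundary.} You propose localizing to $B(x_\eps,\delta)$ with $\delta$ fixed and using interior elliptic estimates on $\partial B(x_\eps,\delta)$. If $d_\eps\to 0$ this ball is not contained in $\Omega$, so the ``fixed $\delta$'' picture does not make sense; you would need $\delta\sim d$, and then all constants in the elliptic estimates blow up. The paper instead proves directly (Lemma~\ref{lemma w bdry integral}) that $\int_{\partial\Omega}(\partial_n w)^2=\mathcal O(\lambda^{-1}d^{-1})+o(\lambda^{-1}d^{-2})$ via the trace-type inequality $\|\partial_n z\|_{L^2(\partial\Omega)}\lesssim\|\Delta z\|_{L^{3/2}(\Omega)}$ applied to a cut-off of $w$, together with a one-step Moser argument to handle the $|w|^5$ term in $\|\Delta w\|_{3/2}$. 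This is the technical heart of the section, and your outline does not supply an analogue of it.
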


By integrating the equation for $u$ against $\nabla u$, one obtains the  Pohozaev-type identity
\begin{equation}
\label{pohozaev type u}
- \int_\Omega (\nabla (a+\eps V)) u^2  = \int_{\partial \Omega} n \left( \frac{\partial u}{\partial n} \right)^2  \,.
\end{equation}
Inserting the decomposition $u = \alpha ( PU + w)$, we get 
\begin{align}
\label{bdry conc identity}
\int_{\partial \Omega} n \left( \frac{\partial PU_\xl}{\partial n} \right)^2  & = -\int_{\partial \Omega} n \left( 2 \frac{\partial PU_\xl}{\partial n}   \frac{\partial w}{\partial n} + \left( \frac{\partial w}{\partial n} \right)^2 \right)  \notag \\
& \quad  - \int_\Omega (\nabla (a+ \eps V)) (PU_\xl+w)^2 .
\end{align}

Since $a, V \in C^1(\overline{\Omega})$, the volume integral is bounded by
\begin{equation}
	\label{eq:poho1vol}
	\left| \int_\Omega (\nabla (a+ \eps V)) (PU_\xl+w)^2  \right| \lesssim \|PU_\xl\|_2^2 + \|w\|_2^2 \lesssim \lambda^{-1} + (\lambda d)^{-2},
\end{equation}
where we used \eqref{bound w subsec} and Lemmas \ref{lemma Lq norm of U} and \ref{lemma PU}.

The function $\partial PU_\xl/\partial n$ on the boundary is discussed in Lemma \ref{lemma PU bdry integral}. We now control the function $\partial w/\partial n$ on the boundary.

\begin{lemma}
\label{lemma w bdry integral}
$\int_{\partial \Omega} \left( \frac{\partial w}{\partial n} \right)^2  = \mathcal O(\lambda^{-1} d^{-1}) + o(\lambda^{-1} d^{-2})$. 
\end{lemma}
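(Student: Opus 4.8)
The natural strategy is to obtain the estimate on $\int_{\partial\Omega}(\partial w/\partial n)^2$ from an elliptic boundary estimate (a trace/regularity argument) applied to the PDE \eqref{equation w} satisfied by $w$. Concretely, one would like an inequality of the form
\[
\int_{\partial\Omega}\left(\frac{\partial w}{\partial n}\right)^2 \lesssim \|\nabla w\|^2 + (\text{norm of the right side of \eqref{equation w} in a suitable negative Sobolev space})^2 \,,
\]
so that the problem reduces to (i) inserting the already-established bound $\|\nabla w\| = \mathcal{O}(\lambda^{-1/2}) + \mathcal{O}((\lambda d)^{-1})$ from Proposition \ref{boundw}, and (ii) estimating each term on the right side of \eqref{equation w}, namely $-3U_{x,\lambda}^5 + 3\alpha^4(PU_{x,\lambda}+w)^5$, $-(a+\eps V)PU_{x,\lambda}$, and $-\eps Vw$, in the appropriate norm. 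The term $-3U_{x,\lambda}^5 + 3\alpha^4(PU_{x,\lambda}+w)^5$ should be expanded as in the proof of Proposition \ref{boundw}: the leading contributions are $3(\alpha^4-1)U_{x,\lambda}^5$, $15\alpha^4 U_{x,\lambda}^4 w$, a term involving $\varphi_{x,\lambda} = U_{x,\lambda}-PU_{x,\lambda}$ (controlled via $\|\varphi_{x,\lambda}\|_\infty \lesssim (d\lambda)^{-1}$ from Lemma \ref{lemma PU}), and higher-order $w$ terms. Using the $L^q$-norm bounds on $U_{x,\lambda}$ from Lemma \ref{lemma Lq norm of U}, each of these is of size $\mathcal{O}(\lambda^{-1/2})$ or smaller (indeed the terms carrying a $\varphi_{x,\lambda}$ or a power of $\eps$ are smaller, consistent with the claimed $o(\lambda^{-1}d^{-2})$ remainder once squared alongside the $(\lambda d)^{-1}$ piece of $\|\nabla w\|$).

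An alternative, perhaps cleaner, route avoiding a black-box trace inequality is to test \eqref{equation w} against a suitable vector field applied to $w$, i.e. to run a Pohozaev/Rellich-type computation directly for $w$. Multiplying $(-\Delta+a)w = (\text{RHS})$ by $\xi\cdot\nabla w$ for a vector field $\xi$ that equals the outer normal $n$ near $\partial\Omega$ produces the boundary term $\int_{\partial\Omega}(n\cdot\xi)(\partial w/\partial n)^2 = \int_{\partial\Omega}(\partial w/\partial n)^2$ plus interior terms that are integrals of $|\nabla w|^2$, $w^2$, and (RHS)$\cdot(\xi\cdot\nabla w)$ over $\Omega$; these are then bounded by $\|\nabla w\|^2$ plus the pairing of the right side of \eqref{equation w} with $\xi\cdot\nabla w$, estimated exactly as above. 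Either way, the arithmetic of combining the pieces is routine once the ingredients of Section \ref{sec:firstexpansion} (Lemmas \ref{lemma Lq norm of U}, \ref{lemma PU}, \ref{lemma coercivity} and Proposition \ref{boundw}) are in hand.

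The main obstacle I expect is bookkeeping of the $d$-dependence: since $x_\eps$ may a priori approach $\partial\Omega$, every estimate must track powers of $d = d(x_\eps,\partial\Omega)$ carefully (this is precisely the point of the subsection, feeding into the exclusion of boundary concentration in Proposition \ref{prop bdry concentration}). In particular one must verify that the contribution of $-(a+\eps V)PU_{x,\lambda}$ to the boundary integral is genuinely of lower order $o(\lambda^{-1}d^{-2})$ rather than merely $\mathcal{O}(\lambda^{-1}d^{-2})$, which forces one to exploit the smallness of $\varphi_{x,\lambda}$ and of $\eps$ and to be slightly generous in the exponents when applying Hölder. The other delicate point is ensuring that the trace estimate one uses is uniform in $x$ and $\lambda$ (with the correct dependence on $d$), since the elliptic operator $-\Delta+a$ and the domain $\Omega$ are fixed but the functions $U_{x,\lambda}$ concentrate; this is why formulating the estimate in terms of $\|\nabla w\|$ and a dual norm of the right side — quantities for which we already have sharp control — is the safe path.
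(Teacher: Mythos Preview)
Your proposal misses the two main technical devices of the proof, and as written it would not go through.

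\textbf{First gap: the cutoff.} The trace-type inequality the paper uses is
\[
\left\|\frac{\partial z}{\partial n}\right\|_{L^2(\partial\Omega)}^2 \lesssim \|\Delta z\|_{L^{3/2}(\Omega)}^2
\qquad\text{for } z\in H^2(\Omega)\cap H^1_0(\Omega),
\]
and there is no version with a negative-order Sobolev norm on the right that would suffice here. Applying this directly to $w$ fails: the source $F$ in $-\Delta w = F$ contains $U_{x,\lambda}^5$, and $\|U_{x,\lambda}^5\|_{3/2}=\|U_{x,\lambda}\|_{15/2}^5\sim \lambda^{1/2}$, which is far too large (even after multiplying by $|\alpha^4-1|=o(1)$ you still get only $o(\lambda^{1/2})$). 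The same obstruction appears in your Pohozaev alternative: if $\xi$ is not supported away from $x$, the pairing of $U_{x,\lambda}^5$ with $\xi\cdot\nabla w$ cannot be made small. The paper's fix is to introduce a cutoff $\zeta$ equal to $0$ on $B_{d/2}(x)$ and $1$ outside $B_d(x)$, apply the trace inequality to $\zeta w$ (which agrees with $w$ near $\partial\Omega$), and then estimate $\|\zeta F\|_{3/2}$. On $\supp\zeta$ one has $U_{x,\lambda}\lesssim \lambda^{-1/2}d^{-1}$, and now the $U^5$ contribution is indeed of the claimed size. The commutator terms $\nabla\zeta\cdot\nabla w$ and $(\Delta\zeta)w$ carry the $d$-dependence (through $|\nabla\zeta|\lesssim d^{-1}$, $|\Delta\zeta|\lesssim d^{-2}$) and are precisely what produces the $\mathcal O(\lambda^{-1}d^{-1})+o(\lambda^{-1}d^{-2})$ bookkeeping you anticipated.

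\textbf{Second gap: the $|w|^5$ term.} Even with the cutoff, $\|\zeta w^5\|_{3/2}$ requires $L^{15/2}$ control of $w$, and you only have $w\in H^1_0\subset L^6$. This is not ``routine.'' The paper handles it by a one-step Moser-type iteration: multiply $-\Delta w=F$ by $\zeta^{1/2}|w|^{1/2}w$, use a pointwise inequality to extract $\|\nabla(\zeta^{1/4}|w|^{1/4}w)\|_2^2$ on the left, apply Sobolev to get $\|\zeta w^5\|_{3/2}^{1/2}$, and absorb the $|w|^5$ contribution from the right side back into the left using $\|w\|_6=o(1)$.

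Your remark about checking uniformity of the trace estimate in $x,\lambda$ is a red herring: the inequality is on the fixed domain $\Omega$ and carries no such dependence. All of the $d$-dependence enters through the cutoff and the $L^p$ bounds on $U_{x,\lambda}$ restricted to $\Omega\setminus B_{d/2}(x)$ from Lemma~\ref{lemma Lq norm of U}.
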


\begin{proof}
The following proof is analogous to \cite[Appendix C]{Re2}. It relies on the inequality 
\begin{equation}
\label{trace estimate w}
\left\| \frac{\partial z }{\partial n} \right\|^2_{L^2(\partial \Omega)} \lesssim \left\| \Delta z \right\|^2_{L^{3/2}(\Omega)} \qquad \text{ for all } z \in H^2(\Omega) \cap H^1_0(\Omega) \,.
\end{equation}
This inequality is well-known and contained in \cite[Appendix C]{Re2}. A proof can be found, for instance, in \cite{HaWaYa}.

We write equation \eqref{equation w} for $w$ as $-\Delta w = F$ with
\begin{equation}
	\label{eq:eqwrhs}
	F:=3 \alpha^4 (PU_\xl  + w)^5 - 3 U_\xl^5 - (a+ \eps V) (PU_\xl +w) \,.
\end{equation}
We fix a smooth $0 \leq \chi \leq 1$ with $\chi \equiv 0$ on $\{|y| \leq 1/2\}$ and $\chi \equiv 1$ on $\{ |y| \geq 1 \}$ and define the cut-off function
\begin{equation} \label{zeta-def} 
\zeta(y) := \chi \left(\frac{y-x}{d}\right). 
\end{equation}
Then $\zeta w \in H^2(\Omega) \cap H^1_0(\Omega)$ and
\[ -\Delta (\zeta w) = \zeta F - 2 \nabla \zeta \cdot \nabla w - (\Delta \zeta)w \,. \]
The function $F$ satisfies the simple pointwise bound 
\begin{equation}
\label{pointwise bound f}
|F| \lesssim U_\xl^5 + |w|^5 + U_\xl + |w| \,,
\end{equation}
which, when combined with inequality \eqref{trace estimate w}, yields
\begin{align*}
	\left\| \frac{\partial w}{\partial n} \right\|_{L^2(\partial \Omega)}^2 & = \left\| \frac{\partial (\zeta w)}{\partial n} \right\|_{L^2(\partial \Omega)}^2 \lesssim \|\zeta F - 2 \nabla \zeta \cdot \nabla w - (\Delta \zeta)w \|_{3/2}^2 \\
	& \lesssim \|\zeta(U_\xl ^5 + |w|^5 + U_\xl  + |w|)\|_{3/2}^2 + \| |\nabla \zeta| |\nabla w|\|_{3/2}^2 + \|(\Delta \zeta)w \|_{3/2}^2 \,.
\end{align*}

It remains to bound the norms on the right side. The term most difficult to estimate is $\|\zeta w^5 \|_{3/2}$, because $5\cdot 3/2 = 15/2 > 6$, and we shall come back to it later. The other terms can all be estimated using bounds on $\|U\|_{L^p(\Omega \setminus B_{d/2}(x))}$ from Lemma \ref{lemma Lq norm of U}, as well as the bound $\|w\|_6 \lesssim \lambda^{-1/2} + \lambda^{-1} d^{-1}$ from Proposition \ref{boundw}. Indeed, we have
\begin{align*}
	\| \zeta U_\xl ^5 \|_{3/2}^2 & \lesssim \|U_\xl \|_{L^{15/2}(\Omega \setminus B_{d/2}(x))}^{10} \lesssim \lambda^{-5} d^{-6} = o(\lambda^{-1} d^{-2}), \\
	\| \zeta U_\xl  \|_{3/2}^2 & \lesssim \|U_\xl \|_{L^{3/2}(\Omega \setminus B_d)}^2 \lesssim \lambda^{-1} = \mathcal O(\lambda^{-1} d^{-1}), \\
	\| \zeta w\|_{3/2}^2 & \lesssim \|w\|_6^2 \lesssim \lambda^{-1} + \lambda^{-2} d^{-2} = \mathcal O(\lambda^{-1} d^{-1}) + o(\lambda^{-1} d^{-2}), \\
	\| |\nabla \zeta| |\nabla w| \|_{3/2}^2 & \lesssim \|\nabla w\|_2^2 \|\nabla \zeta\|_6^2 \lesssim (\lambda^{-1} + \lambda^{-2} d^{-2}) d^{-1} = \mathcal O(\lambda^{-1} d^{-1}) + o(\lambda^{-1} d^{-2})
\end{align*}
and 
\[ \| (\Delta \zeta) w\|_{3/2}^2 \lesssim \|w\|_6^2 \|\Delta \zeta\|_2^2  \lesssim (\lambda^{-1} + \lambda^{-2} d^{-2}) d^{-1} = \mathcal O(\lambda^{-1} d^{-1}) + o(\lambda^{-1} d^{-2}). \]

In order to estimate the difficult term $\|\zeta w^5 \|_{3/2}$, we multiply the equation $-\Delta w =\! F$ by $\zeta^{1/2} |w|^{1/2} w$ and integrate over $\Omega$ to obtain
\begin{equation}
	\label{eq:moserinput}
	\int_\Omega \nabla (\zeta^{1/2} |w|^{1/2} w) \cdot \nabla w  \leq \int_\Omega |F| \,  \zeta^{1/2} |w|^{3/2}  \,.
\end{equation}
We now note that there are universal constants $c>0$ and $C<\infty$ such that pointwise a.e.
\begin{equation}
	\label{eq:ptwineq}
		\nabla (\zeta^{1/2} |w|^{1/2} w) \cdot\nabla w \geq c  |\nabla (\zeta^{1/4} |w|^{1/4} w)|^2 - C  |w|^{5/2} |\nabla (\zeta^{1/4})|^2.
\end{equation}
Indeed, by repeated use of the product rule and chain rule for Sobolev functions, one finds
	\begin{align*}
		\nabla (\zeta^{1/2} |w|^{1/2} w) \cdot\nabla w & = \frac 32 \left( \frac{4}{5} \right)^2 |\nabla (\zeta^{1/4} |w|^{1/4} w)|^2 + \left( \frac 32 \left( \frac{4}{5} \right)^2 -  \frac 45 \cdot 2 \right) |w|^{5/2} |\nabla (\zeta^{1/4})|^2 \\
		& \quad - \left(  \frac 32 \left( \frac{4}{5} \right)^2 \cdot 2 - \frac 45 \cdot 2 \right) |w|^{1/4} w \nabla (\zeta^{1/4}) \cdot \nabla (\zeta^{1/4} |w|^{1/4} w)  \,.
	\end{align*}
	The claimed inequality \eqref{eq:ptwineq} follows by applying Schwarz's inequality $v_1 \cdot v_2 \geq - \eps |v_1|^2  - \frac{1}{4 \eps}|v_2|^2$ to the cross term on the right side with $\eps > 0$ small enough.
	
As a consequence of \eqref{eq:ptwineq}, we can bound the left side in \eqref{eq:moserinput} from below by
\begin{align*}
	\int_\Omega \nabla (\zeta^{1/2} |w|^{1/2} w) \cdot \nabla w   
	\geq c \int_\Omega |\nabla (\zeta^{1/4} |w|^{1/4} w)|^2  - C \int_\Omega |w|^{5/2} |\nabla (\zeta^{1/4})|^2  \,.
\end{align*}
Thus, by the Sobolev inequality for the function $\zeta^{1/4} |w|^{1/4} w$ and \eqref{eq:moserinput}, we get
\begin{align} \| \zeta w^5\|_{3/2}^2 &= \left(\int_\Omega |\zeta^{1/4} |w|^{1/4} w|^6  \right)^{4/3} \lesssim \left(\int_\Omega |\nabla (\zeta^{1/4} |w|^{1/4} w)|^2  \right)^4 \label{zeta w5 estimate} \nonumber \\
&\lesssim \left( \int_\Omega |w|^{5/2} |\nabla (\zeta^{1/4})|^2  \right) ^4 + \left( \int_\Omega |F| \,  \zeta^{1/2} |w|^{3/2}  \right)^4. 
\end{align}
For the first term on the right side, we have
\begin{align*}
\left( \int_\Omega |w|^{5/2} |\nabla (\zeta^{1/4})|^2  \right) ^4 &\leq \|w\|_6^{10} \left( \int_\Omega  |\nabla (\zeta^{1/4})|^{24/7} \right)^{7/3} \lesssim (\lambda^{-5} + \lambda^{-10} d^{-10}) d^{-1} \\
&= \mathcal O(\lambda^{-1} d^{-1}) + o(\lambda^{-1} d^{-2}). 
\end{align*}
To control the second term on the right side of \eqref{zeta w5 estimate}, we use again the pointwise estimate \eqref{pointwise bound f}. The contribution of the $|w|^5$ term to the second term on the right side of \eqref{zeta w5 estimate} is 
\[ \left( \int_\Omega |w|^{5 + \frac 32}  \zeta^{1/2}   \right)^4 = \left( \int_\Omega ( \zeta^{1/2} w^{5/2}) w^4  \right)^4 \leq \|\zeta w^5\|_{3/2}^2 \|w\|_6^{16} = o(\|\zeta w^5\|_{3/2}^2), \]
which can be absorbed into the left side of \eqref{zeta w5 estimate}. 

For the remaining terms, we have
\begin{align*}
 \left( \int_\Omega |w|^{ 3/2} U_\xl^5  \zeta^{1/2}  \right)^4 &\lesssim \|w\|_6^6 \|U_\xl\|_{L^{20/3}(\Omega \setminus B_{d/2}(x))}^{20} = (\lambda^{-3} + (d \lambda)^{-6}) (\lambda^{-10} d^{-11}), \\
 \left( \int_\Omega |w|^{ 3/2} U_\xl  \zeta^{1/2}   \right)^4 &\lesssim \|w\|_6^6 \|U_\xl\|_{L^{4/3}(\Omega)}^{4} = (\lambda^{-3} + (d \lambda)^{-6}) \lambda^{-2}, \\
 \left( \int_\Omega |w|^{ 5/2}  \zeta^{1/2}   \right)^4 &\lesssim \|w\|_6^{10} = \lambda^{-5} + (d\lambda)^{-10}, 
\end{align*}
all of which is $\mathcal O(\lambda^{-1} d^{-1}) + o(\lambda^{-1} d^{-2})$. This concludes the proof of the bound $\|\zeta w^5\|_{3/2}^2 = \mathcal O(\lambda^{-1} d^{-1}) + o(\lambda^{-1} d^{-2})$, and thus of Lemma \ref{lemma w bdry integral}. 
\end{proof}

It is now easy to complete the proof of the main result of this section.

\begin{proof}[Proof of Proposition \ref{prop bdry concentration}]
	The identity \eqref{bdry conc identity}, together with the bound \eqref{eq:poho1vol} and Lemma \ref{lemma PU bdry integral} (a), yields 
	\[ C \lambda^{-1} \nabla \phi_0(x) = \mathcal O(\lambda^{-1}) + o (\lambda^{-1} d^{-2}) + \mathcal O\left( \left\| \frac{\partial PU_\xl}{\partial n}\right\|_{L^2(\partial \Omega)} \left\| \frac{\partial w}{\partial n} \right\|_{L^2(\partial \Omega)} + \left\| \frac{\partial w}{\partial n} \right\|_{L^2(\partial \Omega)}^2 \right) \]
	for some $C >0$. By Lemmas \ref{lemma PU bdry integral} (c) and \ref{lemma w bdry integral} the last term on the right side is bounded by $\lambda^{-1} d^{-3/2} + o(\lambda^{-1}d^{-2})$, so we get
	$$
	\nabla \phi_0(x) = \mathcal O(d^{-3/2}) + o(d^{-2}) \,.
	$$
	On the other hand, according to \cite[Equation (2.9)]{Re2}, we have $|\nabla \phi_0(x) | \gtrsim d^{-2}$. Hence $d^{-2} =\mathcal O(d^{-3/2}) + o(d^{-2})$, which yields $d^{-1} = \mathcal O(1)$, as claimed.
\end{proof}


\subsection{Proof of Proposition \ref{prop first expansion}}

The existence of the expansion follows from Proposition \ref{lemma PU + w}. Proposition \ref{prop bdry concentration} implies that $d^{-1} = \mathcal O(1)$, which implies that $x_0\in\Omega$. Moreover, inserting the bound $d^{-1} = \mathcal O(1)$ into Proposition \ref{boundw}, we obtain $\| \nabla w\|_2=  \mathcal O(\lambda^{-1/2})$, as claimed in Proposition \ref{prop first expansion}. This completes the proof of the proposition. \qed


\section{Additive case: Refining the expansion}
\label{section refining}

Our goal in this section is to improve the decomposition given in Proposition \ref{prop first expansion}. As in \cite{FrKoKo1}, our goal is to discover that a better approximation to $u_\epsilon$ is given by the function 
\begin{equation}
\label{definition psi}
\psi_\xl := PU_{\xl} - \lambda^{-1/2}\left(H_a(x, \cdot) - H_0(x, \cdot)\right).
\end{equation}
Let us set
\begin{equation}
\label{definition q}
q_\epsilon := w_\epsilon + \lambda_\epsilon^{-1/2} \left(H_a(x_\epsilon, \cdot) - H_0(x_\epsilon, \cdot) \right),
\end{equation}
so that
$$
u_\epsilon = \alpha_\eps \left( \psi_{x_\eps, \lambda_\eps} + q_\eps \right).
$$
As in \cite{FrKoKo1}, we further decompose
\begin{equation} 	\label{q-split}
q_\eps = s_\eps + r_\eps 
\end{equation}
with $s_\eps \in T_{x_\eps, \lambda_\eps}$ and $r_\eps \in T_{x_\eps, \lambda_\eps}^\bot$ given by
\begin{equation}
\label{definition r}
r_\eps := \Pi_{x_\eps, \lambda_\eps}^\perp q
\qquad\text{and}\qquad
s_\eps :=  \Pi_{x_\eps, \lambda_\eps} q \,.
\end{equation}
We note that the notation $r_\eps$ is consistent with the one used in Theorem \ref{thm expansion} since, writing $w_\eps= q_\eps+ \lambda_\eps^{-1/2} \left( H_a(x_\eps,\cdot) - H_0(x_\eps,\cdot) \right)$ and using $w_\eps\in T_{x_\eps, \lambda_\eps}^\bot$, we have
\begin{equation}
	\label{eq:sproj}
	s_\eps = \lambda_\eps^{-1/2}\,  \Pi_{x_\eps, \lambda_\eps} \left( H_a(x_\eps,\cdot) - H_0(x_\eps,\cdot) \right).
\end{equation}

The following proposition summarizes the results of this section. 

\begin{proposition}
\label{prop second expansion}
Let $(u_\epsilon)$ be a family of solutions to \eqref{equation u} satisfying \eqref{eq:sobmin}.

Then, up to extraction of a subsequence, there are sequences $(x_\epsilon)\subset\Omega$, $(\lambda_\epsilon)\subset(0,\infty)$, $(\alpha_\epsilon)\subset\R$, $(s_\eps) \subset T_{x_\eps, \lambda_\eps}$ and $(r_\eps) \subset T_{x_\eps, \lambda_\eps}^\bot$ such that
\begin{equation}
\label{expansion psi + q}
u_\epsilon = \alpha_\epsilon(\psi_{x_\eps, \lambda_\eps} + s_\eps + r_\eps)
\end{equation}
and a point $x_0 \in \Omega$ such that, in addition to Proposition \ref{prop first expansion},
\begin{align}
\|\nabla r_\eps\|_2 &= \mathcal O(\eps \lambda_\eps^{-1/2}) \,, \label{r-eps-bound}\\
\phi_a(x_\eps) &=   a(x_\eps) \pi \lambda_\eps^{-1} - \frac{\eps}{4\pi}\, Q_V(x_\eps) + o(\lambda_\eps^{-1}) +o(\eps) \,, \nonumber \\
\nabla \phi_a(x_\eps) &= \mathcal O(\eps^\mu) \qquad\text{for any}\ \mu<1\,,  \nonumber\\
\lambda_\eps^{-1} &= \mathcal O(\eps) \,, \nonumber \\
\alpha_\eps^4 &= 1 + \frac{64}{3 \pi}\, \phi_0(x_\eps)\,  \lambda_\eps^{-1}  + \mathcal O(\eps \lambda_\eps^{-1}) \,. \nonumber
\end{align}
\end{proposition}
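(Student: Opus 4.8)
The strategy is an iteration mirroring the one carried out in Section \ref{sec:firstexpansion}, but now working with the refined ansatz $u_\eps = \alpha_\eps(\psi_{\xl} + q_\eps)$ and the decomposition $q_\eps = s_\eps + r_\eps$ from \eqref{q-split}--\eqref{definition r}. The starting data is everything proved in Proposition \ref{prop first expansion}: in particular $d^{-1} = \mathcal O(1)$, $\lambda \to \infty$, $\alpha = 1 + o(1)$, and $\|\nabla w\| = \mathcal O(\lambda^{-1/2})$. Since $\|\nabla(H_a(x,\cdot) - H_0(x,\cdot))\| = \mathcal O(1)$ uniformly for $x$ in compact subsets of $\Omega$ (which is where $x_\eps$ lives, by the exclusion of boundary concentration), the function $q_\eps$ defined in \eqref{definition q} satisfies $\|\nabla q_\eps\| = \mathcal O(\lambda^{-1/2})$ as well, and likewise $\|\nabla s_\eps\| = \mathcal O(\lambda^{-1/2})$ and $\|\nabla r_\eps\| = \mathcal O(\lambda^{-1/2})$ as a crude first bound.

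\textbf{Step 1: the equation for $r_\eps$ and the improved remainder bound.} First I would derive the PDE satisfied by $q_\eps$ (equivalently $r_\eps$) by inserting $u_\eps = \alpha_\eps(\psi_{\xl} + q_\eps)$ into \eqref{equation u} and using the defining property $-\Delta \psi_{\xl} = 3U_{\xl}^5 - \lambda^{-1/2}(-\Delta)(H_a(x,\cdot) - H_0(x,\cdot)) = 3U_\xl^5 + \lambda^{-1/2}(a H_a(x,\cdot) - \text{(correction)})$; the point is that $\psi_\xl$ is built precisely so that $(-\Delta + a)\psi_\xl$ cancels the main part of $(a+\eps V)PU_\xl$ at the relevant order. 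One then integrates the resulting equation against $r_\eps$, uses $r_\eps \in T_\xl^\bot$ to kill the orthogonality terms, applies the coercivity inequality of Lemma \ref{lemma coercivity} (with $v = r_\eps$) to bound the left side below by $\rho\|\nabla r_\eps\|^2$, and estimates every term on the right using the $L^q$-bounds on $U_\xl$ (Lemma \ref{lemma Lq norm of U}), the bounds on $\varphi_\xl$ and on $H_a(x,\cdot) - H_0(x,\cdot)$ (Lemma \ref{lemma PU} and regularity of the Green's function), and the crude bound $\|\nabla r_\eps\| = \mathcal O(\lambda^{-1/2})$ from the previous iteration to close a bootstrap. The arithmetic here is where the cancellation built into $\psi_\xl$ pays off: the term that in the $w$-equation was $\mathcal O(\lambda^{-1/2}\|\nabla w\|)$ is now reduced to $\mathcal O(\eps\lambda^{-1/2})$ (coming from the residual $\eps V\psi_\xl$ term) plus genuinely higher-order pieces, giving $\|\nabla r_\eps\| = \mathcal O(\eps\lambda^{-1/2})$ as in \eqref{r-eps-bound}.

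\textbf{Step 2: the five Pohozaev identities.} Next I would write down the five Pohozaev-type identities obtained by testing the equation for $u_\eps$ against $u_\eps$, against $y\cdot\nabla u_\eps$ (plus a suitable multiple of $u_\eps$), and against each $\partial_{x_j} u_\eps$; equivalently, integrating $-\Delta u_\eps = 3u_\eps^5 - (a+\eps V)u_\eps$ multiplied by these test functions and using the boundary condition. Into each identity I substitute the decomposition $u_\eps = \alpha_\eps(\psi_\xl + s_\eps + r_\eps)$, expand, and estimate all error terms using Step 1's bound $\|\nabla r_\eps\| = \mathcal O(\eps\lambda^{-1/2})$, the explicit expression \eqref{eq:sproj} for $s_\eps$, and the known integrals of $U_\xl$, $PU_\xl$, $H_a$, $H_0$. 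The $u_\eps$-identity produces the expansion of $\alpha_\eps^4$ in terms of $\phi_0(x_\eps)\lambda^{-1}$; the $y\cdot\nabla u_\eps$-identity produces the relation $\phi_a(x_\eps) = a(x_\eps)\pi\lambda^{-1} - \tfrac{\eps}{4\pi}Q_V(x_\eps) + o(\lambda^{-1}) + o(\eps)$ — this is the key new computation, and it is exactly here that the subleading correction term in $\psi_\xl$ is needed to make $Q_V$ appear; the $\partial_{x_j} u_\eps$-identities produce $\nabla\phi_a(x_\eps) = \mathcal O(\eps^\mu)$ for every $\mu<1$ (one loses an $\eps^{1-\mu}$ because at this stage one does not yet control $\lambda^{-1}$ relative to $\eps$ with full precision). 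Finally, combining the $\phi_a$-relation with $\inf\phi_a = 0$, with the assumption $a < 0$ on $\mathcal N_a$, and with continuity, one deduces $\phi_a(x_\eps) \to 0$ hence $\lambda^{-1} = \mathcal O(\eps)$.

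\textbf{Main obstacle.} The hard part is the bookkeeping in the $y\cdot\nabla u_\eps$ Pohozaev identity: one must expand it to an order of precision at which both the $a(x_\eps)\pi\lambda^{-1}$ term and the $\tfrac{\eps}{4\pi}Q_V(x_\eps)$ term emerge, which requires tracking cross terms between $\psi_\xl$ and $s_\eps$ and between $\psi_\xl$ and itself down to order $\lambda^{-1}$ and $\eps$, while confirming that all remaining contributions (involving $r_\eps$, higher powers of $q_\eps$, the difference $\varphi_\xl$, and the Lipschitz-but-not-$C^1$ nature of $V$) are genuinely $o(\lambda^{-1}) + o(\eps)$. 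This is delicate because several a priori error terms are only \emph{marginally} small — e.g. terms of size $\lambda^{-1/2}\|\nabla r_\eps\| = \mathcal O(\eps\lambda^{-1/2})$ must be compared against the target order $\lambda^{-1}$, which is where the not-yet-available relation $\lambda^{-1}\sim\eps$ would be circular to invoke — so one has to be careful to arrange the estimates so that the final conclusions $\lambda^{-1} = \mathcal O(\eps)$ and \eqref{r-eps-bound} are obtained without circularity, presumably by first extracting $\lambda^{-1} = \mathcal O(\eps)$ from a cruder form of the $\phi_a$-identity and only then sharpening the remaining estimates.
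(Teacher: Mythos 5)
Your overall strategy is the paper's: keep the refined ansatz $u=\alpha(\psi_\xl+s+r)$, get an improved bound on $\|\nabla r\|$ from the coercivity inequality of Lemma \ref{lemma coercivity}, and then feed the decomposition into the Pohozaev identities associated with the five approximate zero modes (the paper tests against $u$, against $\pl\psi_\xl$ for the dilation mode rather than $y\cdot\nabla u$, and against $\nabla u$ for the translations), finally using $\phi_a\geq 0$ together with $a(x_0)<0$ to extract $\lambda^{-1}=\mathcal O(\eps)$. However, two concrete points in your plan would not survive contact with the estimates. First, Step 1 overclaims: the coercivity bootstrap for $r$ cannot yield \eqref{r-eps-bound} directly. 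The equation for $r$ contains source terms of size $\phi_a(x)\lambda^{-1}$ (from the cross terms $U_\xl^4\,\lambda^{-1/2}H_a(x,\cdot)$) and of size $\lambda^{-3/2}$ (from the zero-mode part $s$, whose $L^2$-norm is only $\mathcal O(\lambda^{-3/2})$, and from $g_\xl$), and at this stage neither $\phi_a(x)$ nor $\lambda^{-1}$ is known to be $\mathcal O(\eps)$. The honest intermediate bound is $\|\nabla r\|=\mathcal O(\phi_a(x)\lambda^{-1}+\lambda^{-3/2}+\eps\lambda^{-1/2})$ (Proposition \ref{prop-r-bound}); the Pohozaev expansions must be run with this weaker bound, and \eqref{r-eps-bound} is only recovered at the very end, after $\phi_a(x)=\mathcal O(\eps)$ and $\lambda^{-1}=\mathcal O(\eps)$ have been established (Corollary \ref{cor-lambda-eps}). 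Assuming the strong bound from the outset, as your Step 1 does, is exactly the circularity you worry about in your last paragraph, and it sits in Step 1, not only in the Pohozaev bookkeeping.

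Second, and more importantly, your plan is missing the key cancellation that makes the dilation identity usable. At the order where $a(x)\lambda^{-1}$ and $\eps Q_V(x)$ are to be read off, the identity also contains contributions of exactly the same size coming from $(1-\alpha^4)\,\phi_0(x)\lambda^{-2}$ and from the term quadratic in $s$, which produces $\beta\gamma\,\lambda^{-3}$ (see \eqref{phi a intermed}). These cannot be discarded by size estimates: one needs the $u$-identity in the sharp form $\alpha^4=1-4\beta\lambda^{-1}+\dots$ (Proposition \ref{prop-alpha4}) \emph{and} the explicit zero-mode coefficients of $s$, namely $\beta=\tfrac{16}{3\pi}(\phi_a(x)-\phi_0(x))+\mathcal O(\lambda^{-1})$ and $\gamma=-\tfrac85\beta+\mathcal O(\lambda^{-1})$ (Proposition \ref{prop-beta-gamma}), obtained by projecting $\lambda^{-1/2}(H_a(x,\cdot)-H_0(x,\cdot))$ onto $PU_\xl$ and $\pl PU_\xl$. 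Only after inserting these does the combination $16\pi\beta\phi_0(x)-\tfrac{15}{8}\pi^2\beta\gamma$ reduce to a quantity of order $\phi_a(x)$, as in \eqref{R3}, so that the expansion $\phi_a(x)=\pi a(x)\lambda^{-1}-\tfrac{\eps}{4\pi}Q_V(x)+o(\lambda^{-1})+o(\eps)$ emerges; without this step a spurious term of order $\phi_0(x)^2\lambda^{-1}$ pollutes the coefficient and the proof of Proposition \ref{phiaexp} breaks down. Your proposal treats $s$ only as a cross term to be ``tracked,'' and never computes $\beta,\gamma$, so as written it would not produce the stated expansion of $\phi_a$. (A minor further point: the deduction of $\lambda^{-1}=\mathcal O(\eps)$ does not follow from $\phi_a(x_\eps)\to0$ alone; one uses $0\leq\phi_a(x)$ in the expansion together with $a(x)\to a(x_0)<0$ to force $\pi|a(x)|\lambda^{-1}\lesssim\eps$.)
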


The expansion of $\phi_a(x)$ will be of great importance also in the final step of the proof of Theorem \ref{thm expansion}. Indeed, by using the bound on $|\nabla \phi_a(x)|$ we will show that in fact $\phi_a(x) = o(\lambda^{-1}) + o(\epsilon)$. This allows us to determine $\lim_{\epsilon \to 0} \eps \lambda_\eps$. 

We prove Proposition \ref{prop second expansion} in the following subsections. Again the strategy is to expand suitable energy functionals. 


\subsection{Bounds on $s$}
\label{subsection s}

In this section we record bounds on the function $s$ introduced in \eqref{definition r}, and on the coefficients $\beta,\gamma$ and $\delta_j$ defined by the decomposition 
\begin{equation}
	\label{expansion s}
	s =  \Pi_\xl q =: \lambda^{-1} \beta PU_\xl + \gamma \partial_\lambda PU_\xl + \lambda^{-3} \sum_{i = 1}^3 \delta_i \partial_{x_i} PU_\xl \,. 
\end{equation}
Since $PU_\xl$, $\partial_\lambda PU_\xl$ and $\partial_{x_i} PU_\xl$, $i=1,2,3$, are linearly independent for sufficiently small $\epsilon$, the numbers $\beta$, $\gamma$ and $\delta_i$, $i=1,2,3$, (depending on $\eps$, of course) are uniquely determined. The choice of the different powers of $\lambda$ multiplying these coefficients is motivated by the following proposition.

\begin{proposition}
\label{proposition s}
The coefficients appearing in \eqref{expansion s} satisfy 
\begin{equation}
\label{bound beta gamma delta}  \beta, \gamma, \delta_i = \mathcal O(1).
\end{equation}
Moreover, we have the bounds 
\begin{equation}
\label{bounds s}
\|s\|_\infty = \mathcal O(\lambda^{-1/2}), \quad \| \nabla s\|_2= \mathcal O(\lambda^{-1})\quad \text{and } \quad \|s\|_{2} = \mathcal O(\lambda^{-3/2}),
\end{equation}
as well as
\begin{equation}
\label{bound nabla s outside} \|\nabla s\|_{L^2(\Omega \setminus B_{d/2}(x))} =\mathcal O (\lambda^{-3/2}). 
\end{equation}
\end{proposition}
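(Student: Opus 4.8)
The plan is to exploit the orthogonality relations that define $s = \Pi_\xl q$ as a linear combination of the five tangent functions $PU_\xl$, $\partial_\lambda PU_\xl$, $\partial_{x_i} PU_\xl$, and to compute the inner products $\langle q, v\rangle = \int_\Omega \nabla q\cdot\nabla v$ for each of these five basis vectors $v$. Since $r = q - s \perp T_\xl$, one has $\langle q, v\rangle = \langle s, v\rangle$ for each $v \in T_\xl$, so the coefficients $\beta,\gamma,\delta_i$ are obtained by solving the resulting $5\times 5$ linear system. The Gram matrix of the (rescaled) basis is, to leading order, a fixed invertible matrix with entries that are explicit constants times appropriate powers of $\lambda$; this is a standard computation (as in \cite{Re2,FrKoKo1}) and it explains the normalizing powers $\lambda^{-1}, 1, \lambda^{-3}$ chosen in \eqref{expansion s}. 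So the whole problem reduces to estimating the five numbers $\langle q, PU_\xl\rangle$, $\langle q, \partial_\lambda PU_\xl\rangle$, $\langle q, \partial_{x_i} PU_\xl\rangle$.

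To estimate these, recall $q = w + \lambda^{-1/2}(H_a(x,\cdot) - H_0(x,\cdot))$. For the $w$-contribution we use $w \in T_\xl^\bot$, so $\langle w, v\rangle = 0$ for every $v\in T_\xl$ and these terms drop out entirely. Thus $\langle q, v\rangle = \lambda^{-1/2}\langle H_a(x,\cdot)-H_0(x,\cdot), v\rangle$ for each of the five $v$'s. Now I would compute these using integration by parts: since $-\Delta PU_\xl = 3U_\xl^5$ and the functions $H_a(x,\cdot), H_0(x,\cdot)$ are harmonic-up-to-lower-order (more precisely $(-\Delta+a)H_a(x,\cdot) = a|x-\cdot|^{-1}$ in $\Omega$, and $-\Delta H_0(x,\cdot)=0$), one gets $\langle H_a-H_0, PU_\xl\rangle = \int_\Omega \nabla(H_a - H_0)\cdot\nabla PU_\xl$, which after integrating by parts becomes $3\int_\Omega (H_a - H_0)(x,y) U_\xl(y)^5 \diff y$ plus boundary terms that vanish because $PU_\xl = 0$ on $\partial\Omega$. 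Since $H_a(x,\cdot) - H_0(x,\cdot)$ is bounded on $\Omega$ (in fact $C^{0,1}$, by the regularity of $a$ and of $\partial\Omega$ — cf.\ the hypotheses and Lemma \ref{lemma C2}) and $\|U_\xl^5\|_{L^1} \lesssim \lambda^{-1/2}$ while $\|U_\xl^5 (y-x)\|_{L^1}$ carries an extra gain, one reads off $\langle H_a - H_0, PU_\xl\rangle = \mathcal O(\lambda^{-1/2})$, $\langle H_a-H_0, \partial_\lambda PU_\xl\rangle = \mathcal O(\lambda^{-3/2})$, and $\langle H_a-H_0, \partial_{x_i} PU_\xl\rangle = \mathcal O(\lambda^{1/2})$ (the $x$-derivative produces a factor of $\lambda$ from scaling, partially compensated by the oddness of $\partial_{x_i}U_\xl$ against the smooth function $H_a - H_0$, which is where one must be slightly careful). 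Combined with the $\lambda^{-1/2}$ prefactor and the inverse Gram matrix, this yields $\beta,\gamma,\delta_i = \mathcal O(1)$, which is \eqref{bound beta gamma delta}.

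Granting \eqref{bound beta gamma delta}, the remaining bounds \eqref{bounds s} and \eqref{bound nabla s outside} follow by the triangle inequality from \eqref{expansion s} together with the standard norm estimates for $PU_\xl$ and its derivatives collected in Lemmas \ref{lemma Lq norm of U} and \ref{lemma PU}: one has $\|PU_\xl\|_\infty \sim \lambda^{1/2}$ (so $\lambda^{-1}\|PU_\xl\|_\infty = \mathcal O(\lambda^{-1/2})$), $\|\partial_\lambda PU_\xl\|_\infty \sim \lambda^{-1/2}$, $\|\partial_{x_i}PU_\xl\|_\infty \sim \lambda^{3/2}$ (so $\lambda^{-3}\|\partial_{x_i}PU_\xl\|_\infty = \mathcal O(\lambda^{-3/2})$), and similarly $\|\nabla PU_\xl\| = \mathcal O(1)$, $\|\nabla\partial_\lambda PU_\xl\|=\mathcal O(\lambda^{-1})$, $\|\nabla\partial_{x_i}PU_\xl\|=\mathcal O(\lambda)$, and $\|PU_\xl\|_2 = \mathcal O(\lambda^{-1/2})$, $\|\partial_\lambda PU_\xl\|_2 = \mathcal O(\lambda^{-3/2})$, $\|\partial_{x_i}PU_\xl\|_2 = \mathcal O(\lambda^{1/2})$; for \eqref{bound nabla s outside} one uses instead the sharper decay of $\nabla PU_\xl$ and its derivatives on $\Omega\setminus B_{d/2}(x)$ from Lemma \ref{lemma Lq norm of U}, where $d\gtrsim 1$ by Proposition \ref{prop bdry concentration}, to gain the extra power of $\lambda^{-1/2}$. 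The main obstacle is the $\partial_{x_i}$-estimate of $\langle H_a - H_0, \partial_{x_i}PU_\xl\rangle$: naively it is only $\mathcal O(\lambda^{1/2})$ per factor before the prefactor, and one must be careful that after multiplication by $\lambda^{-3}$ (from \eqref{expansion s}) and $\lambda^{-1/2}$ (from $q$) the $\delta_i$ come out $\mathcal O(1)$ rather than growing; this is where the precise structure of the Gram matrix and the parity of the bubble derivatives must be used with care, exactly as in \cite[Section 3]{FrKoKo1}.
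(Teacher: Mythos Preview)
Your overall strategy is correct and matches the paper's: since $w\in T_\xl^\bot$, the projection $s=\Pi_\xl q$ depends only on $\lambda^{-1/2}(H_a(x,\cdot)-H_0(x,\cdot))$, and the coefficients are extracted by pairing against the five basis vectors and inverting the (asymptotically diagonal) Gram matrix. The paper simply cites \cite[Lemma~6.1]{FrKoKo1} for \eqref{bound beta gamma delta}, so you are essentially reproducing that argument.

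There is, however, a genuine gap in your $\delta_i$ estimate. You claim $\langle H_a-H_0,\pxi PU_\xl\rangle=\mathcal O(\lambda^{1/2})$, saying oddness only ``partially'' compensates the scaling. This bound is \emph{insufficient}: since $\|\nabla\pxi PU_\xl\|^2\sim\lambda^2$, the relation $\lambda^{-3}\delta_i\cdot c\lambda^2\approx\lambda^{-1/2}\langle H_a-H_0,\pxi PU_\xl\rangle$ with your estimate gives only $\delta_i=\mathcal O(\lambda)$, not $\mathcal O(1)$. What is actually true---and needed---is $\langle H_a-H_0,\pxi PU_\xl\rangle=\mathcal O(\lambda^{-1/2})$. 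The oddness of $U_\xl^4\pxi U_\xl$ kills the constant term in the Taylor expansion of $H_a(x,\cdot)-H_0(x,\cdot)$ around $x$ \emph{completely}, so the first surviving contribution comes from the linear term $\nabla_y(H_a-H_0)(x,x)\cdot(y-x)$, which carries an extra factor $|y-x|\sim\lambda^{-1}$ under the bubble scaling. That full gain of $\lambda^{-1}$ (not a partial one) is precisely what brings $\delta_i$ down to $\mathcal O(1)$. This is the content of the computation in \cite{FrKoKo1} that you cite at the end, but your stated exponent does not reflect it.

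A smaller point: for \eqref{bound nabla s outside} you appeal to ``sharper decay of $\nabla PU_\xl$ from Lemma~\ref{lemma Lq norm of U}'', but that lemma bounds $U_\xl$, $\pl U_\xl$, $\pxi U_\xl$ themselves on $\Omega\setminus B_\rho(x)$, not their $y$-gradients. The paper instead integrates by parts, writing for instance $\int_{\Omega\setminus B_{d/2}(x)}|\nabla\pl PU_\xl|^2=15\int_{\Omega\setminus B_{d/2}(x)}U_\xl^4\pl U_\xl\,\pl PU_\xl+\text{(boundary term on }\partial B_{d/2}(x))$, and controls both pieces via Lemma~\ref{lemma Lq norm of U}, the explicit formula for $\nabla\pl U_\xl$, and the mean-value bound $|\nabla\pl\varphi_\xl|\lesssim\|\pl\varphi_\xl\|_\infty$ for the harmonic part. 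Your approach can be made to work with the same ingredients, but it is not quite as direct as you suggest.
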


\begin{proof}
	Because of \eqref{eq:sproj}, $s_\eps$ depends on $u_\eps$ only through the parameters $\lambda$ and $x$. Since these parameters satisfy the same properties $\lambda\to\infty$ and $d^{-1} =\mathcal O(1)$ as in \cite{FrKoKo1}, the results on $s_\eps$ there are applicable. In particular, the bound \eqref{bound beta gamma delta} follows from \cite[Lemma 6.1]{FrKoKo1}. 
	
	The bounds stated in \eqref{bounds s} follow readily from \eqref{expansion s} and \eqref{bound beta gamma delta}, together with the corresponding bounds on the basis functions $PU_\xl$, $\pl PU_\xl$ and $\pxi PU_\xl$, $i=1,2,3$, which come from 
	\[ \| U_\xl\|_\infty \lesssim \lambda^{1/2}, \quad \|\nabla U_\xl\|_2\lesssim 1, \quad   \|U_\xl\|_{2} \lesssim \lambda^{-1/2}, \]
	and similar bounds on $\pl U_\xl$ and $\pxi U_\xl$, compare Lemma \ref{lemma Lq norm of U}, as well as
	\[ \|H_0(x, \cdot)\|_2+\|\nabla_x H_0(x, \cdot)\|_2+ \|\nabla_x \nabla_y H_0(x,y)\|_2 \lesssim 1. \]

	It remains to prove \eqref{bound nabla s outside}. Again by \eqref{expansion s} and \eqref{bound beta gamma delta}, it suffices to show that
	\begin{equation}
		\label{eq:propsproof}
		\lambda^{-1} \|\nabla  PU_\xl\|_{L^2(\Omega \setminus B_{d/2}(x))}
		+ \|\nabla  \pl PU_\xl\|_{L^2(\Omega \setminus B_{d/2}(x))} + \lambda^{-3} \|\nabla \pxi PU_\xl\|_{L^2(\Omega \setminus B_{d/2}(x))}
		 \lesssim \lambda^{-3/2}.
	\end{equation}
	(In fact, there is a better bound on $\nabla \pxi PU_\xl$, but we do not need this.) Since the three bounds in \eqref{eq:propsproof} are all proved similarly, we only prove the second one.
	
	By integration by parts, we have
\[ \int_{\Omega \setminus B_{d/2}(x)} |\nabla \pl PU_\xl|^2 = 15 \int_{\Omega \setminus B_{d/2}(x)} U_\xl^4 \pl U_\xl \pl PU_\xl + \int_{\partial B_{d/2}(x)} \frac{\partial (\pl PU_\xl)}{\partial n} \pl PU_\xl \,. \]
By the bounds from Lemmas \ref{lemma Lq norm of U} and \ref{lemma PU}, the volume integral is estimated by
\begin{align*}
 \int_{\Omega \setminus B_{d/2}(x)} U_\xl^4 \pl U_\xl \pl PU_\xl 
&\leq \int_{\R^3 \setminus B_{d/2}(x)} U_\xl^4 (\pl U_\xl)^2 + \|\pl \varphi_\xl\|_\infty  \int_{\R^3 \setminus B_{d/2}(x)} U_\xl^4 |\pl U_\xl| \\
& \lesssim \lambda^{-5}. 
\end{align*}
Since 
\[  \nabla \pl U_\xl(y) = \frac{\lambda^{3/2}}{2}  \frac{(-5+3\lambda^2|y-x|^2)(y-x) }{(1 + \lambda^2|y-x|^2)^{5/2}},  \]
we find $|\nabla \pl U_\xl| \lesssim \lambda^{-3/2}$ on $\partial B_{d/2}(x)$. By the mean value formula for the harmonic function $\pl \varphi_\xl$ and the bound from Lemma \ref{lemma PU},
\[ |\nabla \pl \varphi_\xl(y)| = \|\pl \varphi_\xl\|_\infty \lesssim \lambda^{-3/2}
\qquad\text{for all}\ y\in\partial B_{d/2}(x) . \]
This implies that $|\nabla (\pl PU_\xl)| \lesssim \lambda^{-3/2}$ on $\partial B_{d/2}(x)$. Thus, the boundary integral is estimated by 
\begin{align*}
\int_{\partial B_{d/2}(x)} \frac{\partial (\pl PU_\xl)}{\partial n} \pl PU_\xl &= \|\nabla (\pl PU_\xl)\|_{L^\infty(\partial B_{d/2}(x))} (\| \pl U_\xl\|_{L^\infty(\Omega \setminus B_{d/2}(x))} + \|\pl \varphi_\xl\|_\infty) \\
& \lesssim \lambda^{-3}  \,,
\end{align*} 
since $\| \pl U_\xl\|_{L^\infty(\Omega \setminus B_{d/2}(x))} \lesssim \lambda^{-3/2}$ by Lemma \ref{lemma Lq norm of U}. Collecting these estimates, we find that
$\|\nabla \pl PU_\xl\|_{L^2(\Omega \setminus B_{d/2}(x))}$ $\lesssim \lambda^{-3/2}$, which is the second bound in \eqref{eq:propsproof}. 
\end{proof}

Later we will also need the leading order behavior of the zero mode coefficients $\beta$ and $\gamma$ in \eqref{expansion s}.

\begin{proposition} \label{prop-beta-gamma} As $\eps\to 0$, 
	\begin{equation}
		\label{eq beta gamma subsec}
		\beta= \frac{16}{3\pi}\,  (\phi_a(x) - \phi_0(x)) + \mathcal{O}(\lambda^{-1}) , \qquad 
		\gamma=-\frac 85\, \beta + \mathcal{O}(\lambda^{-1}).  
	\end{equation}
\end{proposition}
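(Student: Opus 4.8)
The plan is to compute $\beta$ and $\gamma$ by testing the equation for $q_\eps$ (equivalently $u_\eps$) against the zero modes $PU_\xl$ and $\partial_\lambda PU_\xl$, i.e.\ by projecting onto the tangent space $T_\xl$. More precisely, since $s = \Pi_\xl q$ and $r = \Pi_\xl^\perp q \in T_\xl^\perp$, I would take the inner product of the identity $u_\eps = \alpha_\eps(\psi_\xl + s + r)$, or rather the PDE it satisfies, with $PU_\xl$ and with $\partial_\lambda PU_\xl$, and solve the resulting (approximately diagonal) linear system for $\beta$ and $\gamma$. The key algebraic input is the Gram matrix of $\{PU_\xl, \partial_\lambda PU_\xl, \partial_{x_i}PU_\xl\}$ in the $\int\nabla\cdot\nabla$ inner product: this is computed in \cite{FrKoKo1} (and classically goes back to Rey \cite{Re2}) and is, to leading order in $\lambda^{-1}$, block-diagonal with the $PU$–$\partial_\lambda PU$ block having entries of definite sizes in $\lambda$. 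The powers of $\lambda$ in the normalization \eqref{expansion s} are precisely chosen so that $\beta,\gamma,\delta_i$ come out $\mathcal{O}(1)$, as recorded in Proposition \ref{proposition s}.

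The main term is extracted from the pairing of $q_\eps = w_\eps + \lambda^{-1/2}(H_a(x,\cdot)-H_0(x,\cdot))$ with the zero modes. Using $w_\eps \in T_\xl^\perp$, the $w_\eps$-contribution to $\langle q_\eps, PU_\xl\rangle$ and $\langle q_\eps, \partial_\lambda PU_\xl\rangle$ vanishes, so one is left with $\lambda^{-1/2}\langle H_a(x,\cdot)-H_0(x,\cdot),\, PU_\xl\rangle$ and $\lambda^{-1/2}\langle H_a(x,\cdot)-H_0(x,\cdot),\, \partial_\lambda PU_\xl\rangle$. These pairings are evaluated using $-\Delta PU_\xl = 3U_\xl^5$ and the fact that $H_a(x,\cdot)-H_0(x,\cdot)$ solves an explicit elliptic problem; expanding $U_\xl^5$ as $\lambda^{-1/2}$ times (a constant times) a delta-like bump concentrating at $x$, one picks up the value $H_a(x,x)-H_0(x,x) = \phi_a(x)-\phi_0(x)$ times an explicit integral constant (some power of $\pi$ coming from $\int_{\R^3}(1+|y|^2)^{-5/2}\,dy$ and the like). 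This produces the stated leading coefficient $\frac{16}{3\pi}(\phi_a(x)-\phi_0(x))$ for $\beta$, and the relation $\gamma = -\tfrac85\beta + \mathcal{O}(\lambda^{-1})$ falls out from inverting the $2\times 2$ leading-order Gram block (the ratio $-8/5$ is exactly the off-diagonal-to-diagonal ratio in that block). Throughout, I would invoke Proposition \ref{proposition s}, the bound $\|\nabla w\| = \mathcal{O}(\lambda^{-1/2})$ from Proposition \ref{prop first expansion}, and the regularity $H_a(x,\cdot)-H_0(x,\cdot) \in C$ together with its $H^1_0$ bounds, to see that all error terms are $\mathcal{O}(\lambda^{-1})$ after dividing through by the relevant power of $\lambda$.

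The one genuine subtlety — and the step I expect to be the main obstacle — is controlling the pairing $\langle H_a(x,\cdot)-H_0(x,\cdot),\, PU_\xl\rangle$ and its $\partial_\lambda$-analogue to the required precision $\mathcal{O}(\lambda^{-1})$ in the normalized coefficients. Naively, $H_a(x,\cdot)-H_0(x,\cdot)$ is merely continuous, so one must be careful in replacing $\int U_\xl^5 (H_a(x,y)-H_0(x,y))\,dy$ by $(H_a(x,x)-H_0(x,x))\int U_\xl^5$: this requires a modulus-of-continuity estimate for $H_a(x,\cdot)-H_0(x,\cdot)$ near the diagonal, or alternatively using that this difference solves $-\Delta_y(H_a(x,y)-H_0(x,y)) + a(y)(\cdots) = -a(y)/|x-y| + (\text{lower order})$ so that it is in fact $C^{1}$ (or better, $C^{1,\sigma}_{\loc}$) by elliptic regularity given $a \in C^{0,1}$, which suffices. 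A parallel care is needed because $PU_\xl = U_\xl - \varphi_\xl$, and the $\varphi_\xl$ correction must be tracked; by Lemma \ref{lemma PU} one has $\varphi_\xl = \lambda^{-1/2}H_a(x,\cdot) + o(\lambda^{-1/2})$ pointwise, contributing at the same order, and one must make sure these pieces combine correctly rather than double-counting. Once the concentration estimate is pinned down with an explicit error, the rest is bookkeeping: assemble the linear system, invert the leading Gram block, and read off \eqref{eq beta gamma subsec}.
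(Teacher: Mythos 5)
Your proposal is, in substance, the paper's own proof: since $s=\Pi_{x,\lambda}q$ and $w\in T_{x,\lambda}^\perp$, pairing with $PU_{x,\lambda}$ and $\partial_\lambda PU_{x,\lambda}$ in the $\int_\Omega\nabla\cdot\nabla$ inner product reduces to computing $\lambda^{-1/2}\int_\Omega \nabla\left(H_a(x,\cdot)-H_0(x,\cdot)\right)\cdot\nabla PU_{x,\lambda}$ and its $\partial_\lambda$-analogue; integrating by parts (using $-\Delta PU_{x,\lambda}=3U_{x,\lambda}^5$, resp.\ $15U_{x,\lambda}^4\partial_\lambda U_{x,\lambda}$, and the vanishing of $H_a(x,\cdot)-H_0(x,\cdot)$ on $\partial\Omega$) these are evaluated by the concentration estimates \eqref{U^5 Hb} and \eqref{U^4 pl U Hb} of Lemma \ref{lemma U Ha}, while the left sides are expanded via \eqref{expansion s} and the scalar products from Rey's appendix. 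Note that the PDE \eqref{equation u} is not needed at all here: the identity \eqref{eq:sproj} is pure linear algebra, which is exactly how the paper proceeds. Your regularity worry is resolved as you suggest and is packaged into Lemma \ref{lemma U Ha} (via the diagonal expansion of $H_b$ in Lemma \ref{lemma Ha expansion new}); and since one works directly with $PU_{x,\lambda}$ and its equation, no tracking of $\varphi_{x,\lambda}$ (hence no double-counting issue) arises — incidentally, $\varphi_{x,\lambda}=\lambda^{-1/2}H_0(x,\cdot)+f_{x,\lambda}$, with $H_0$ rather than $H_a$.

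One correction to your bookkeeping, though: the ratio $-8/5$ is \emph{not} the off-diagonal-to-diagonal ratio of the Gram block. With the normalization \eqref{expansion s}, the cross term $\int_\Omega\nabla PU_{x,\lambda}\cdot\nabla\partial_\lambda PU_{x,\lambda}=\mathcal O(\lambda^{-2})$ contributes only to the errors, so the $2\times 2$ system is effectively diagonal: $\beta$ is the right side $4\pi(\phi_a(x)-\phi_0(x))\lambda^{-1}+\mathcal O(\lambda^{-2})$ divided by $\|\nabla PU_{x,\lambda}\|^2\lambda^{-1}\approx \tfrac{3\pi^2}{4}\lambda^{-1}$, while $\gamma$ is the right side $-2\pi(\phi_a(x)-\phi_0(x))\lambda^{-2}+\mathcal O(\lambda^{-3})$ divided by $\|\nabla\partial_\lambda PU_{x,\lambda}\|^2\approx\tfrac{15\pi^2}{64}\lambda^{-2}$. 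The factor $-8/5$ therefore comes from the ratio of the two concentration integrals \eqref{U^5 Hb} and \eqref{U^4 pl U Hb} combined with the two diagonal Gram entries, not from the off-diagonal coupling; if you set up the inversion expecting the off-diagonal term to act at leading order, you would be looking for the mechanism in the wrong place, even though a careful full inversion would still return the correct constants.
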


\begin{proof}
	According to \eqref{eq:sproj}, we have 
	\begin{align}
		\label{eq s Ha H0 scalarprod1}
		\int_\Omega \nabla s \cdot \nabla PU_\xl & = \lambda^{-1/2} \int_\Omega \nabla (H_a(x,\cdot) - H_0(x,\cdot))\cdot \nabla PU_\xl, \\
		\label{eq s Ha H0 scalarprod2}
		\int_\Omega \nabla s \cdot \nabla \pl PU_\xl & = \lambda^{-1/2} \int_\Omega \nabla (H_a(x,\cdot) - H_0(x,\cdot)) \nabla \pl PU_\xl.
	\end{align}
	By \eqref{expansion s}, the left side of \eqref{eq s Ha H0 scalarprod1} is 
	\begin{align*}
		& \beta \lambda^{-1} \int_\Omega |\nabla PU_\xl|^2 + \gamma \int_\Omega \nabla \pl PU_\xl\cdot \nabla PU_\xl + \lambda^{-3} \sum_{i=1}^3 \delta_i \int_\Omega \nabla \pxi PU_\xl \cdot \nabla PU_\xl \\
		& = 3 \beta \lambda^{-1} \frac{\pi^2}{4} + \mathcal O(\lambda^{-2}),
	\end{align*}
	where we used the facts that, by \cite[Appendix B]{Re2},
	\begin{align}
		\label{rey scalarprods}
		& \int_\Omega |\nabla PU_\xl|^2 = 3 \frac{\pi^2}{4} + \mathcal O(\lambda^{-1}), \qquad \int_\Omega \nabla \pl PU_\xl \cdot \nabla PU_\xl = \mathcal O(\lambda^{-2}) \\
		& \int_\Omega \nabla \pxi PU_\xl \cdot \nabla PU_\xl = \mathcal O(\lambda^{-1}). 
	\end{align}
	On the other hand, the right side of \eqref{eq s Ha H0 scalarprod1} is 
	\begin{align*}
		\lambda^{-1/2} \int_\Omega \nabla (H_a(x,\cdot) - H_0(x,\cdot)) \cdot \nabla PU 
		& = 3 \lambda^{-1/2} \int_\Omega (H_a(x,\cdot) - H_0(x,\cdot)) U_\xl^5 \\
		& = 4 \pi (\phi_a(x) - \phi_0(x)) \lambda^{-1} + \mathcal O(\lambda^{-2})
	\end{align*}
	by Lemma \ref{lemma U Ha}. Comparing both sides yields the expansion of $\beta$ stated in \eqref{eq beta gamma subsec}. 
	
	Similarly, by \eqref{expansion s}, the left side of \eqref{eq s Ha H0 scalarprod2} is 
	\begin{align*}
		& \frac{\beta}{\lambda^2} \int_\Omega \nabla PU_\xl \cdot \nabla \pl PU_\xl + \gamma \int_\Omega |\nabla \pl PU_\xl|^2 +  \lambda^{-3} \sum_{i=1}^3 \delta_i \int_\Omega \nabla \pxi PU_\xl \cdot \nabla \pl PU_\xl \\
		&= \frac{15 \pi^2  \gamma}{64\,  \lambda^2} + \mathcal O(\lambda^{-3}) \,,
	\end{align*}
	where, besides \eqref{rey scalarprods}, we used $\int_\Omega \nabla \pxi PU_\xl \cdot \nabla \pl PU_\xl  = \mathcal O(\lambda^{-2})$ by \cite[Appendix B]{Re2}, and
	\[
	\int_\Omega |\nabla \pl PU_\xl|^2 = \int_\Omega |\nabla \pl U_\xl|^2 + \mathcal O(\lambda^{-3}) = \frac{15 \pi^2}{64} \lambda^{-2} + \mathcal O(\lambda^{-3}) \,.
	\]
	(The numerical value comes from an explicit evaluation of the integral in terms of beta functions, which we omit.) On the other hand, the right side of \eqref{eq s Ha H0 scalarprod2} is 
	\begin{align*}
		\lambda^{-1/2} \int_\Omega \nabla (H_a(x,\cdot) - H_0(x,\cdot))\cdot \nabla \pl PU_\xl &= 15 \lambda^{-1/2} \int_\Omega (H_a(x,\cdot) - H_0(x,\cdot)) U_\xl^4 \pl U_\xl \\
		& = -2 \pi (\phi_a(x) - \phi_0(x)) \lambda^{-2} + \mathcal O(\lambda^{-3})
	\end{align*}
	by Lemma \ref{lemma U Ha}. Comparing both sides yields the expansion of $\gamma$ stated in \eqref{eq beta gamma subsec}.
\end{proof}


\subsection{The bound on $\|\nabla r\|_2$}
\label{subsection nabla r} The goal of this subsection is to prove 

\begin{proposition} \label{prop-r-bound}
As $\eps\to 0$, 
\begin{equation}
\label{nabla r subsection}
\|\nabla r\|_2= \mathcal O(\phi_a(x) \lambda^{-1}) + \mathcal O(\lambda^{-3/2}) + \mathcal O(\eps \lambda^{-1/2}).
\end{equation} 
\end{proposition}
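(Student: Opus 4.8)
The plan is to derive the equation satisfied by $r$, test it against $r$ itself, and use the coercivity inequality from Lemma \ref{lemma coercivity} to bound $\|\nabla r\|$ in terms of the right-hand side contributions. First I would write down the PDE for $q = s + r$. Starting from $u = \alpha(\psi_\xl + q)$ and the equation \eqref{equation u}, together with the defining property \eqref{definition psi} of $\psi_\xl$ — recall $-\Delta \psi_\xl = -\Delta PU_\xl - \lambda^{-1/2} a(x) G_a(x,\cdot) \cdot(\text{something})$; more precisely $-\Delta(H_a(x,\cdot)-H_0(x,\cdot)) = -a(x)\delta_x \cdot 0 + a G_a$-type terms, so that the "bad" local term $a\, PU_\xl$ is partially cancelled — I obtain
\begin{equation*}
(-\Delta + a) q = -3U_\xl^5 + 3\alpha^4(\psi_\xl + q)^5 - (a+\eps V)\psi_\xl - \eps V q + (\text{correction from }\Delta(H_a-H_0)).
\end{equation*}
The key algebraic point, exactly as in \cite{FrKoKo1}, is that the term $(a+\eps V)PU_\xl$ that obstructed the first expansion is replaced, after adding $-\lambda^{-1/2}(H_a-H_0)$, by something of smaller order plus an $\eps V$ term. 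One then projects onto $T_\xl^\perp$: since $r = \Pi^\perp q$, testing the equation against $r$ kills the $T_\xl$-components (up to the error coming from the fact that $-\Delta+a$ does not preserve $T_\xl^\perp$, which is controlled by $\|s\|$ and the bounds in Proposition \ref{proposition s}).

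Next I would estimate term by term. Testing against $r$ and using $\int U_\xl^5 r = 0$ (since $r \perp PU_\xl$), the quadratic-in-$r$ part produces $\int(|\nabla r|^2 + a r^2 - 15\alpha^4 U_\xl^4 r^2)$, which by Lemma \ref{lemma coercivity} (applicable since $d\lambda \ge T_*$ for small $\eps$) is bounded below by $\rho\|\nabla r\|^2$. The right-hand side splits into: (i) the source term $\int (-3U_\xl^5 + 3\psi_\xl^5 + \text{lin.})\, r$ coming from expanding $(\psi_\xl+q)^5$ and the $\Delta$-correction, which after using $\psi_\xl = PU_\xl - \lambda^{-1/2}(H_a-H_0)$ and Lemma \ref{lemma U Ha} and the various $L^p$ bounds on $U_\xl$, $\varphi_\xl$, $H_a-H_0$ should be of size $\mathcal O(\phi_a(x)\lambda^{-1}) + \mathcal O(\lambda^{-3/2})$; (ii) the $\eps$-terms $\eps\int V\psi_\xl r + \eps\int V q r$, bounded by $\mathcal O(\eps\lambda^{-1/2})\|\nabla r\| + o(\|\nabla r\|^2)$; (iii) cross terms involving $s$, bounded using \eqref{bounds s} and \eqref{bound nabla s outside} by lower-order quantities times $\|\nabla r\|$; (iv) higher-order terms $\int U_\xl^3 r^3$, $\int r^6$ etc., which are $o(\|\nabla r\|^2)$ since $\|\nabla r\| = o(1)$ (already known from Proposition \ref{prop first expansion}, as $\|\nabla w\| = \mathcal O(\lambda^{-1/2})$ and $s$ is small). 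Collecting, $\rho\|\nabla r\|^2 \lesssim \big(\phi_a(x)\lambda^{-1} + \lambda^{-3/2} + \eps\lambda^{-1/2}\big)\|\nabla r\| + o(\|\nabla r\|^2)$, and dividing through gives \eqref{nabla r subsection}.

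I expect the main obstacle to be the careful bookkeeping in step (i): one must verify that the linear source term genuinely has the claimed size, which is the whole point of introducing $\psi_\xl$ rather than $PU_\xl$. The delicate cancellation is that $3\int(\psi_\xl^5 - U_\xl^5)r$ combined with the term from $-\lambda^{-1/2}\Delta(H_a - H_0) = -\lambda^{-1/2}(a G_a + a/|\cdot - x| \text{-type})$ and with $-\int a\, \psi_\xl r$ produces, to leading order, a multiple of $\lambda^{-1/2}\int a(x)(H_a(x,\cdot)-H_0(x,\cdot))$-against-$U_\xl^4 r$ type quantities whose coefficient is governed by $\phi_a(x) - \phi_a(x) = 0$ at the right order — more precisely, the $\lambda^{-1}$-order term that survives is proportional to $\phi_a(x)$ (not $\phi_0(x)$), because $\psi_\xl$ was designed exactly so that the $H_0$ contribution is removed. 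A secondary subtlety is controlling the failure of $-\Delta + a$ to map $T_\xl^\perp$ into $T_\xl^\perp$: one writes $\int \nabla s \cdot \nabla r = 0$ but $\int a\, s\, r \ne 0$, and this term must be absorbed using Proposition \ref{proposition s}; here the specific powers of $\lambda$ in \eqref{expansion s} and the outside-ball bound \eqref{bound nabla s outside} are what make the estimate close. Everything else is routine application of Lemmas \ref{lemma Lq norm of U}, \ref{lemma PU}, \ref{lemma U Ha} together with Hölder and Sobolev.
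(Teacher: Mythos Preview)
Your overall strategy is correct and matches the paper's: derive the equation for $r$ (the paper writes it as \eqref{equation r}), test against $r$, extract $15\alpha^4\int U_\xl^4 r^2$ from the quintic, and invoke coercivity. However, there is one concrete step where the bounds you cite do not close.

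The problematic term is the cross term $5\int_\Omega U_\xl^4\, s\, r$ arising from the expansion of $(\psi_\xl + s + r)^5$. You propose to bound it using \eqref{bounds s} and \eqref{bound nabla s outside}, but neither suffices: the best you get from $\|s\|_6\lesssim\|\nabla s\|\lesssim\lambda^{-1}$ and H\"older is $\bigl|\int U_\xl^4 s\,r\bigr|\le\|U_\xl\|_6^4\|s\|_6\|r\|_6\lesssim\lambda^{-1}\|r\|_6$, which would leave an unwanted $\mathcal O(\lambda^{-1})$ in \eqref{nabla r subsection}, strictly larger than all three stated terms once $\phi_a(x)=o(1)$. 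The outside-ball bound \eqref{bound nabla s outside} is irrelevant here because $U_\xl^4$ lives inside the ball. The paper's device (Lemma~\ref{lemma expansion r}(a)) is to decompose $s=\lambda^{-1}\beta\,U_\xl+\gamma\,\partial_\lambda U_\xl+\tilde s$; the first two pieces are annihilated by the orthogonality $r\in T_\xl^\perp$ (since $\int U_\xl^5 r=\int U_\xl^4\partial_\lambda U_\xl\,r=0$), and the residual $\tilde s$ --- built from the $\delta_i$-modes and the $\varphi_\xl$-corrections --- satisfies $\|\tilde s\|_6\lesssim\lambda^{-3/2}$, giving the required $\mathcal O(\lambda^{-3/2})\|r\|_6$.

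A minor remark on (i): your description of the source term is more convoluted than necessary. There is no cancellation ``$\phi_a(x)-\phi_a(x)=0$''. The point is simply that $\psi_\xl=U_\xl-\lambda^{-1/2}H_a(x,\cdot)-f_\xl$, so writing $H_a(x,y)=\phi_a(x)+\mathcal O(|x-y|)$ splits $\lambda^{-1/2}\|U_\xl^4 H_a(x,\cdot)\|_{6/5}$ directly into $\mathcal O(\phi_a(x)\lambda^{-1})+\mathcal O(\lambda^{-2})$; the remaining linear pieces are $\int a(f_\xl+g_\xl)r$ and $\int a\,s\,r$, both $\mathcal O(\lambda^{-3/2})\|r\|_6$ by Lemma~\ref{lem-g} and $\|s\|_2\lesssim\lambda^{-3/2}$.
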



Using $\Delta (H_a(x,\cdot)-H_0(x,\cdot)) = -a G_a(x,\cdot)$ and introducing the function $g_\xl$ from \eqref{eq:defg}, we see that the equation \eqref{equation w} for $w$ implies
\begin{equation}
\label{equation r}
(-\Delta+ a) r = - 3 U_\xl^5 + 3 \alpha^4 (\psi_\xl + s + r)^5 + a (f_\xl+g_\xl) - as   - \eps V (\psi_\xl + s +r) + \Delta s \,.
\end{equation}
Integrating against $r$ and using the orthogonality conditions $\int_\Omega (\Delta s ) r = -\int_\Omega \nabla s \cdot \nabla r = 0$ and $3\int_\Omega U_\xl^5 r = \int_\Omega \nabla PU_\xl\cdot\nabla r =0$, we obtain
\begin{equation}
\label{energy r}
\int_\Omega \left(|\nabla r|^2+ ar^2\right) = 3 \alpha^4 \int_\Omega (\psi_\xl + s + r)^5 r - \int_\Omega a (s- f_\xl - g_\xl) r - \int_\Omega \eps V (\psi_\xl + s +r) r. 
\end{equation}

The terms appearing in \eqref{energy r} satisfy the following bounds. 

\begin{lemma}
\label{lemma expansion r}
As $\eps \to 0$, the following holds. 
\begin{enumerate}
\item[(a)] $\left| 3 \alpha^4 \int_\Omega (\psi_\xl + s + r)^5 r - 15  \alpha^4 \int_\Omega U_\xl^4 r^2\right| \lesssim \left( \lambda^{-3/2} + \lambda^{-1}\phi_a(x) + \|r\|_6^2 \right)\|r\|_6$. 
\item[(b)] $\left | \int_\Omega \left( a (s- f_\xl - g_\xl ) + \epsilon V(\psi_\xl+s+r)\right) r \right| \lesssim \left( \lambda^{-3/2} + \epsilon \lambda^{-1/2} \right) \|r\|_6$.
\end{enumerate}
\end{lemma}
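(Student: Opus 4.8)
The plan is to estimate each of the three integrals on the right-hand side of \eqref{energy r} by brute-force expansion, exploiting the known bounds on $\psi_\xl$, $s$, and the "correction" data $f_\xl$, $g_\xl$ from the earlier lemmas (Lemmas \ref{lemma Lq norm of U}, \ref{lemma PU}, \ref{lemma U Ha}, Proposition \ref{proposition s}), and always pulling out one factor of $\|r\|_6$ via Hölder so that the final bound has the linear-in-$\|r\|_6$ structure needed to close the fixed-point-type argument in Proposition \ref{prop-r-bound}.

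For part (a), I would write $(\psi_\xl+s+r)^5 = U_\xl^5 + 5U_\xl^4 r + \text{(lower order)}$, more precisely expanding around $U_\xl$ and separating: (i) the quadratic-in-$r$ main term $5U_\xl^4 r^2$ that survives on the left side after using the coercivity inequality; (ii) the term $5U_\xl^4(\psi_\xl - U_\xl + s)r = 5U_\xl^4(s - f_\xl)r$ up to harmonic corrections, which should contribute at order $(\lambda^{-3/2} + \lambda^{-1}\phi_a(x))\|r\|_6$ using $\|\psi_\xl - U_\xl\|_\infty \lesssim \lambda^{-1/2}\phi_a(x) + \dots$ from the $H_a - H_0$ bounds and $\|s\|_\infty = \mathcal O(\lambda^{-1/2})$; (iii) the higher powers of $r$ (i.e. $U_\xl^3 r^3$, $\dots$, $r^5 \cdot r$) that give the $\|r\|_6^2\|r\|_6$ and smaller terms; and (iv) all the mixed terms involving powers of $(\psi_\xl - U_\xl + s)$, which are controlled by their $L^{6/5}$ (or appropriate) norms times $\|r\|_6$, all of which should land inside $(\lambda^{-3/2} + \lambda^{-1}\phi_a(x))\|r\|_6$. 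I will use $\int_\Omega U_\xl^5 r = 0$ to kill the leading linear term. The key quantitative input is that $\|\psi_\xl - U_\xl\|$ in the relevant Lebesgue norms is of size $\lambda^{-1/2}\phi_a(x)$ plus $\lambda^{-3/2}$-type remainders, which is exactly what produces the $\lambda^{-1}\phi_a(x)$ coefficient after integrating against $U_\xl^4$.

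For part (b), the three pieces are more elementary: $\int_\Omega a\,s\,r$ is bounded by $\|a\|_\infty \|s\|_2 \|r\|_6 \cdot |\Omega|^{1/3} \lesssim \lambda^{-3/2}\|r\|_6$ using $\|s\|_2 = \mathcal O(\lambda^{-3/2})$ from \eqref{bounds s}; $\int_\Omega a(f_\xl + g_\xl)r$ is bounded using the bounds on $f_\xl, g_\xl$ (which are of size $\lambda^{-1/2}$ times a difference of regular parts, hence contribute at order $\lambda^{-3/2}$ or better after integrating against the appropriate power) — here I need the precise size estimates on $f_\xl$ and $g_\xl$ from the lemma where \eqref{eq:defg} is introduced; and $\epsilon\int_\Omega V(\psi_\xl + s + r)r$ splits as $\epsilon\|V\|_\infty(\|\psi_\xl\|_{6/5} + \|s\|_{6/5})\|r\|_6 + \epsilon\|V\|_\infty\|r\|_6^2$, where $\|\psi_\xl\|_{6/5} \lesssim \|U_\xl\|_{6/5} + \lambda^{-1/2} \lesssim \lambda^{-1/2}$, giving the $\epsilon\lambda^{-1/2}\|r\|_6$ term, while the $\epsilon\|r\|_6^2$ term is $o(\|r\|_6) \cdot \|r\|_6$ or can be absorbed later and so is dominated by the stated right-hand side for small $\eps$.

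The main obstacle I anticipate is bookkeeping in part (a): correctly tracking which of the many cross terms in the binomial expansion of $(\psi_\xl + s + r)^5$ genuinely contributes at order $\lambda^{-1}\phi_a(x)$ versus which are smaller, and making sure no term of intermediate size (e.g. something involving $\phi_a(x)^2$, or $\phi_a(x)\lambda^{-1/2}$) is overlooked or mis-estimated. This requires being careful that $\phi_a(x) \to 0$ is used only where legitimate and that the decomposition $\psi_\xl = PU_\xl - \lambda^{-1/2}(H_a(x,\cdot) - H_0(x,\cdot)) = U_\xl - \varphi_\xl - \lambda^{-1/2}(H_a - H_0)$ is handled so that the near-cancellation $H_a(x,x) - H_0(x,x) = \phi_a(x) - \phi_0(x)$, combined with the fact that this is not small (since $\phi_0(x) \gtrsim 1$), still leads to the claimed $\lambda^{-1}\phi_a(x)$ rather than $\lambda^{-1}$; I expect this works because the $\phi_0$ part is exactly cancelled by the corresponding piece of $\varphi_\xl$ (the regular part of $PU$), leaving only the genuinely small $\phi_a(x)$ contribution, but verifying this cancellation cleanly is the delicate point.
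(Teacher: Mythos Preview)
Your overall strategy for part (a) is sound, and your treatment of the $\psi_\xl - U_\xl$ contribution is on the right track (indeed, once you write $\psi_\xl = U_\xl - \lambda^{-1/2}H_a(x,\cdot) - f_\xl$, the worry in your last paragraph evaporates: $\phi_0$ never enters, so no cancellation is needed). Part (b) is essentially fine.

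However, there is a genuine gap in part (a): your proposed bound on the term $\int_\Omega U_\xl^4\, s\, r$ is too crude. Using $\|s\|_\infty = \mathcal O(\lambda^{-1/2})$ together with $\|U_\xl\|_{24/5}^4 = \mathcal O(\lambda^{-1/2})$ gives only
\[
\Bigl|\int_\Omega U_\xl^4\, s\, r\Bigr| \lesssim \|s\|_\infty\,\|U_\xl\|_{24/5}^4\,\|r\|_6 = \mathcal O(\lambda^{-1})\,\|r\|_6,
\]
and using $\|s\|_6 \lesssim \|\nabla s\| = \mathcal O(\lambda^{-1})$ gives the same order. But the lemma requires $\mathcal O(\lambda^{-3/2})\|r\|_6$; an $\mathcal O(\lambda^{-1})\|r\|_6$ term would propagate to $\|\nabla r\| = \mathcal O(\lambda^{-1})$ in Proposition~\ref{prop-r-bound}, which is not sharp enough for the subsequent expansion of $\phi_a(x)$.

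The paper's remedy is to exploit the explicit zero-mode structure of $s$ from \eqref{expansion s}: write $s = \lambda^{-1}\beta\, U_\xl + \gamma\, \pl U_\xl + \tilde s$. The first two pieces contribute nothing to $\int_\Omega U_\xl^4\, s\, r$ because $r \in T_{x,\lambda}^\bot$ forces $\int_\Omega U_\xl^5 r = 0$ and $\int_\Omega U_\xl^4\, \pl U_\xl\, r = 0$. What remains is $\int_\Omega U_\xl^4\, \tilde s\, r$, and $\tilde s$ (consisting of the $\delta_i$-terms and the $\varphi_\xl$, $\pl\varphi_\xl$ corrections) satisfies $\|\tilde s\|_6 = \mathcal O(\lambda^{-3/2})$, which yields the required bound. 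You need this second use of orthogonality; $\|s\|$-bounds alone will not do.
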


\begin{proof}
(a) We write $\psi_\xl = U_\xl - \lambda^{-1/2} H_a(x,\cdot) - f_\xl$ and bound pointwise
\begin{align}
	\label{eq:expansionrproof}
	(\psi_\xl + s + r)^5 & = U_\xl^5 + 5U_\xl^4(s+r) + \mathcal O\left( U_\xl^4\left( \lambda^{-1/2} |H_a(x,\cdot)| + |f_\xl|\right) + U_\xl^3 \left( r^2 + s^2 \right) \right) \notag \\
	& \quad + \mathcal O\left( \lambda^{-5/2}|H_a(x,\cdot)|^5 + |f_\xl|^5 + |r|^5 + |s|^5\right).
\end{align}
When integrated against $r$, the first term vanishes by orthogonality. Let us bound the contribution coming from the second term, that is, from $5 U_\xl^4 s$. We write
$$
s = \lambda^{-1} \beta U_\xl + \gamma \pl U_\xl + \tilde s \,,
$$
so $\tilde s$ consists of the zero mode contributions involving the $\delta_i$, plus contributions from the difference between $PU_\xl$ and $U_\xl$ in the terms involving $\beta$ and $\gamma$. By orthogonality, we have
$$
\int_\Omega U_\xl^4 sr = \int_\Omega U_\xl^4 \tilde s r = \mathcal O(\|U_\xl\|_6^4 \|\tilde s\|_6 \|r\|_6) \,.
$$
and, by Lemmas \ref{lemma Lq norm of U} and \ref{lemma PU}, as well as Proposition \ref{proposition s},
$$
\|\tilde s\|_6 \leq \left( |\beta|+|\gamma|\right) \left( \lambda^{-1} \|\varphi_\xl \|_6 + \|\pl\varphi_\xl\|_6 \right) + \lambda^{-3} \sum_{i=1}^3 |\delta_i| \|\partial_{x_i}PU_\xl\|_6 \lesssim \lambda^{-3/2} \,.
$$
This proves
\begin{equation} \label{u4-rs}
\int_\Omega U_\xl^4 sr = \mathcal O(\lambda^{-3/2}\| r\|_6) \,.
\end{equation} 

It remains to bound the remainder terms in \eqref{eq:expansionrproof}. We write $H_a(x,y) = \phi_a(x) + \mathcal O(|x-y|)$ and bound
\begin{align*}
\int_{\Omega} U_\xl^{24/5} |H_a(x,\cdot)|^{6/5} & \lesssim \phi_a(x)^{6/5} \int_\Omega U_\xl^{24/5}  + \int_\Omega U_\xl^{24/5} |x-y|^{6/5} \lesssim \lambda^{-3/5}\phi_a(x)^{6/5} + \lambda^{-9/5} \,.
\end{align*}
Hence
\begin{align}
	& \left| \int_\Omega U_\xl^4 \left( \lambda^{-1/2} |H_a(x, \cdot)| + |f_\xl| \right) |r| \right| \leq \left( \lambda^{-1/2} \|U_\xl^4 H_a(x,\cdot)\|_{6/5} + \| U_\xl^4\|_{6/5} \|f_\xl \|_\infty \right) \| r\|_6 \nonumber \\
	& \lesssim  \left(\lambda^{-1} \phi_a(x) + \lambda^{-2} \right) \| r\|_6 \,.  \label{u4r6}
\end{align}
Finally, using Proposition \ref{proposition s},
\begin{align*}
& \int_\Omega U_\xl^3 \left(r^2 + s^2 \right)|r| + \int_\Omega \left(\lambda^{-5/2} |H_a(x,\cdot)|^5 + |f_\xl|^5 + |r|^5 + |s|^5 \right)|r| \\
& \lesssim  \left( \|r\|_6^2 + \|s\|^2_6 + \lambda^{-5/2} + \|f_\xl\|_\infty^5 + \|r\|_6^5 + \|s\|_6^5 \right) \|r\|_6 
\lesssim \left( \|r\|_6^2 + \lambda^{-2} \right) \| r\|_6 . \nonumber
\end{align*} 

\medskip

(b) We have
\begin{align*}
	& \left | \int_\Omega \left( a (s- f_\xl - g_\xl ) + \epsilon V(\psi_\xl+s+r)\right) r \right| \\
	& \lesssim \left( \|s\|_{6/5} + \|f_\xl\|_{6/5} + \|g_\xl\|_{6/5} + \epsilon \|\psi_\xl\|_{6/5} + \epsilon \|r\|_{6/5} \right) \|r\|_6 \,.
\end{align*}
By Proposition \ref{proposition s}, $\|s\|_{6/5}\lesssim \|s\|_2 \lesssim \lambda^{-3/2}$. By Lemma \ref{lemma PU}, $\|f_\xl\|_{6/5}\lesssim \|f_\xl\|_\infty \lesssim \lambda^{-5/2}$. By Lemma \ref{lem-g}, $\|g_\xl\|_{6/5}\lesssim \lambda^{-2}$. By Lemmas \ref{lemma Lq norm of U} and \ref{lemma PU}, $\|\psi_\xl\|_{6/5} \lesssim \lambda^{-1/2}$. Finally, $\|r\|_{6/5}\lesssim \|r\|_6$. This proves the claimed bound.
\end{proof}

\begin{proof}[Proof of Proposition \ref{prop-r-bound}]
We deduce from identity \eqref{energy r} together with Lemma \ref{lemma expansion r} that
\begin{align*}
	\int_\Omega \left( |\nabla r|^2 + ar^2 - 15 \alpha^4 \, U_{x,\lambda}^4 r^2\right) 
	& \lesssim \left( \lambda^{-1} \phi_a(x) + \lambda^{-3/2} + \epsilon \lambda^{-1/2} + \|\nabla r\|^2_2 + \epsilon \|\nabla r\|_2\right) \|\nabla r\|_2\,.
	\end{align*}
Since $\alpha^4 \to 1$ and $r \in T_\xl^\bot$, the coercivity inequality \eqref{coercivity} implies that for all sufficiently small $\epsilon>0$ the left side is bounded from below by $c \|\nabla r\|^2_2$ with a universal constant $c>0$. Thus,
$$
\|\nabla r\|_2\lesssim \lambda^{-1} \phi_a(x) + \lambda^{-3/2} + \epsilon \lambda^{-1/2}  + \|\nabla r\|^2_2 + \epsilon \|\nabla r\|_2\,.
$$
For all sufficiently small $\epsilon>0$, the last two terms on the right side can be absorbed into the left side and we obtain the claimed inequality \eqref{nabla r subsection}. 
\end{proof}

\noindent Proposition \ref{prop-r-bound} is a first step to prove the bound \eqref{r-eps-bound} in Proposition \ref{prop second expansion}. In Section \ref{subsection phi a} we will show that $\phi_a(x)=\mathcal O(\lambda^{-1}+\epsilon)$ and $\lambda^{-1} =\mathcal O(\eps)$. Combining these bounds with Proposition \ref{prop-r-bound} we will obtain \eqref{r-eps-bound}.


\subsection{Expanding $\alpha^4$}
\label{ssec-alpha^4}
In this subsection, we will prove 

\begin{proposition}  \label{prop-alpha4}
As $\eps\to 0$, 
\begin{equation}
\label{alpha4 exp subsec}
\alpha^4 = 1 - 4 \beta \lambda^{-1} + \mathcal O(\phi_a(x) \lambda^{-1} + \lambda^{-2} + \eps \lambda^{-1}),
\end{equation}
where $\beta$ is the zero-mode coefficient from \eqref{expansion s}. 
\end{proposition}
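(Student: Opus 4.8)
The plan is to read off $\alpha^4$ from the Pohozaev-type identity obtained by testing \eqref{equation u} against $u$ itself, i.e.\ from $\int_\Omega(|\nabla u|^2+(a+\eps V)u^2)=3\int_\Omega u^6$ (cf.\ \eqref{eq:energyident}). Writing $u=\alpha\,\tilde u$ with $\tilde u:=\psi_\xl+s+r$ as in \eqref{expansion psi + q} and dividing by $\alpha^2$, this becomes
\begin{equation*}
\alpha^4=\frac ND\,,\qquad N:=\int_\Omega\big(|\nabla\tilde u|^2+a\,\tilde u^2\big)+\eps\int_\Omega V\tilde u^2\,,\qquad D:=3\int_\Omega\tilde u^6\,.
\end{equation*}
It then remains to expand $N$ and $D$ to order $\lambda^{-1}$, tracking only the coefficient of $\beta$ and discarding everything of size $\mathcal O(\phi_a(x)\lambda^{-1}+\lambda^{-2}+\eps\lambda^{-1})$; we abbreviate the latter by $\mathcal O(\mathrm{err})$.

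For the denominator I would expand $\tilde u^6=\psi_\xl^6+6\psi_\xl^5(s+r)+\mathcal O\big(\psi_\xl^4(s+r)^2+(s+r)^6\big)$. Using $\psi_\xl=U_\xl-\lambda^{-1/2}H_a(x,\cdot)-f_\xl$ (from \eqref{definition psi} and Lemma \ref{lemma PU}, with $\|f_\xl\|_\infty=\mathcal O(\lambda^{-5/2})$), Lemma \ref{lemma U Ha} gives $\int_\Omega\psi_\xl^6=\tfrac{\pi^2}{4}+\mathcal O(\mathrm{err})$. The term $6\int_\Omega\psi_\xl^5 r$ carries no $\lambda^{-1}$ contribution: its leading piece $\int_\Omega U_\xl^5 r=\tfrac13\int_\Omega\nabla PU_\xl\cdot\nabla r$ vanishes since $r\in T_\xl^\bot$, and the rest is $\mathcal O(\mathrm{err})$ by Proposition \ref{prop-r-bound} and the bound $\lambda^{-1/2}\|U_\xl^4 H_a(x,\cdot)\|_{6/5}\lesssim\phi_a(x)\lambda^{-1}+\lambda^{-2}$ from the proof of Lemma \ref{lemma expansion r}. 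The decisive term is $6\int_\Omega\psi_\xl^5 s$: up to $\mathcal O(\mathrm{err})$ (estimated with Proposition \ref{proposition s}) it equals $6\int_\Omega U_\xl^5 s=2\int_\Omega\nabla PU_\xl\cdot\nabla s$, and by \eqref{expansion s}, $\beta,\gamma,\delta_i=\mathcal O(1)$ and the scalar products \eqref{rey scalarprods} this is $\tfrac{3\pi^2}{2}\beta\lambda^{-1}+\mathcal O(\lambda^{-2})$. Hence $D=\tfrac{3\pi^2}{4}+\tfrac{9\pi^2}{2}\beta\lambda^{-1}+\mathcal O(\mathrm{err})$.

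The numerator is where some care is needed, and I expect this to be the main obstacle: naively $\int_\Omega a\psi_\xl^2$ and the cross terms $\int_\Omega a\psi_\xl U_\xl$ are individually only $\mathcal O(\lambda^{-1})$, with $\Omega$-dependent constants, so one cannot simply expand $\int_\Omega(|\nabla\psi_\xl|^2+a\psi_\xl^2)$ term by term. The remedy is to write $\int_\Omega(|\nabla\tilde u|^2+a\tilde u^2)=\int_\Omega(-\Delta\tilde u+a\tilde u)\tilde u$ and exploit the identity
\begin{equation*}
(-\Delta+a)\psi_\xl=3U_\xl^5+a\big(PU_\xl-\lambda^{-1/2}G_0(x,\cdot)\big)\,,
\end{equation*}
where $G_0$ is the Green's function of $-\Delta$ in $\Omega$; this follows from $-\Delta PU_\xl=3U_\xl^5$ and $-\Delta_y H_a(x,y)=a(y)G_a(x,y)$, together with $H_a+G_a=|x-\cdot|^{-1}=H_0+G_0$. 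Now $PU_\xl-\lambda^{-1/2}G_0(x,\cdot)=U_\xl-\lambda^{-1/2}|x-\cdot|^{-1}-f_\xl$, and since $U_\xl(y)$ and $\lambda^{-1/2}|x-y|^{-1}$ agree to leading order for $\lambda|x-y|\gg1$ (equivalently, $3U_\xl^5$ and $4\pi\lambda^{-1/2}\delta_x$ carry the same mass), one checks $\|PU_\xl-\lambda^{-1/2}G_0(x,\cdot)\|_{L^{6/5}(\Omega)}=\mathcal O(\lambda^{-2})$ — this is precisely the cancellation that annihilates the geometry-dependent $\mathcal O(\lambda^{-1})$ terms. Consequently $\int_\Omega(|\nabla\psi_\xl|^2+a\psi_\xl^2)=3\int_\Omega U_\xl^5\psi_\xl+\mathcal O(\lambda^{-2})=\tfrac{3\pi^2}{4}+\mathcal O(\mathrm{err})$ by Lemma \ref{lemma U Ha}; the cross term with $s$ gives $2\int_\Omega(-\Delta+a)\psi_\xl\cdot s=6\int_\Omega U_\xl^5 s+\mathcal O(\lambda^{-3})=\tfrac{3\pi^2}{2}\beta\lambda^{-1}+\mathcal O(\lambda^{-2})$; the cross term with $r$ vanishes to leading order since $\int_\Omega U_\xl^5 r=0$, leaving $\mathcal O(\lambda^{-2}\|r\|_6)=\mathcal O(\mathrm{err})$; the purely quadratic $s$-, $r$- and $sr$-contributions are $\mathcal O(\lambda^{-2})$ by Propositions \ref{proposition s} and \ref{prop-r-bound} together with the orthogonality $\int_\Omega\nabla s\cdot\nabla r=0$; and $\eps\int_\Omega V\tilde u^2=\mathcal O(\eps\lambda^{-1})$ using $\|\tilde u\|_2=\mathcal O(\lambda^{-1/2})$ (from Lemmas \ref{lemma Lq norm of U}, \ref{lemma PU} and Propositions \ref{proposition s}, \ref{prop-r-bound}). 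Collecting, $N=\tfrac{3\pi^2}{4}+\tfrac{3\pi^2}{2}\beta\lambda^{-1}+\mathcal O(\mathrm{err})$.

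Finally, dividing and using $\beta=\mathcal O(1)$,
\begin{equation*}
\alpha^4=\frac ND=\frac{1+2\beta\lambda^{-1}+\mathcal O(\mathrm{err})}{1+6\beta\lambda^{-1}+\mathcal O(\mathrm{err})}=\big(1+2\beta\lambda^{-1}\big)\big(1-6\beta\lambda^{-1}\big)+\mathcal O(\mathrm{err})=1-4\beta\lambda^{-1}+\mathcal O(\mathrm{err})\,,
\end{equation*}
which is \eqref{alpha4 exp subsec}. Thus the only genuinely delicate point is the cancellation of the $\Omega$-dependent $\mathcal O(\lambda^{-1})$ contributions in the numerator, encoded by the smallness of $PU_\xl-\lambda^{-1/2}G_0(x,\cdot)$; everything else is bookkeeping with the estimates of Sections \ref{subsection s} and \ref{subsection nabla r}.
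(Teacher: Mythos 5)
Your argument is correct and is essentially the paper's proof: both rest on the identity obtained by integrating \eqref{equation u} against $u$, on the equation $(-\Delta+a)\psi_\xl=3U_\xl^5-a(f_\xl+g_\xl)$ (which is your identity, since $PU_\xl-\lambda^{-1/2}G_0(x,\cdot)=-(f_\xl+g_\xl)$ with $\|f_\xl+g_\xl\|_{6/5}=\mathcal O(\lambda^{-2})$ by Lemmas \ref{lemma PU} and \ref{lem-g}), on the orthogonality of $r$, and on $\int_\Omega\nabla s\cdot\nabla PU_\xl=\tfrac{3\pi^2}{4}\beta\lambda^{-1}+\mathcal O(\lambda^{-2})$ from \eqref{expansion s} and \eqref{rey scalarprods}. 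The differences are purely organizational: you expand the numerator and denominator of $\alpha^4=N/D$ separately and re-derive the leading expansions of $\int_\Omega\bigl(|\nabla\psi_\xl|^2+a\psi_\xl^2\bigr)$ and $\int_\Omega\psi_\xl^6$ directly, whereas the paper quotes these from \cite[Theorem 2.1]{FrKoKo1} and reads off $1-\alpha^4$ from the linear relation $(1-\alpha^4)\tfrac{3\pi^2}{4}+(1-3\alpha^4)\tfrac{3\pi^2}{2}\beta\lambda^{-1}=\mathcal O\bigl(\phi_a(x)\lambda^{-1}+\lambda^{-2}+\eps\lambda^{-1}\bigr)$; the arithmetic and error accounting are the same (your stray claim that the quadratic $r$-terms are $\mathcal O(\lambda^{-2})$ should read $\mathcal O(\lambda^{-2}+\eps^2\lambda^{-1})$, which is still within the admissible error).
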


\noindent To prove \eqref{alpha4 exp subsec}, we expand the energy identity obtained by integrating the equation for $u$ against $u$. Writing $u = \alpha(\psi_\xl + q)$, this yields
\begin{equation*}
\int_\Omega |\nabla (\psi_\xl + q)|^2  + \int_\Omega (a + \eps V) (\psi_\xl + q)^2  = 3 \alpha^4 \int_\Omega (\psi_\xl + q)^6,
\end{equation*}
which we write as
\begin{align}
	\label{energy identity alpha^4}
	& \int_\Omega \left( |\nabla\psi_\xl|^2 + (a+\eps V) \psi_\xl^2 - 3 \alpha^4 \psi_\xl^6 \right) + 2 \int_\Omega \left( \nabla q\cdot\nabla\psi_\xl + (a+\eps V) q \psi_\xl - 9 \alpha^4 q\psi_\xl^5 \right) \notag \\
	& = \mathcal R_0
\end{align}
with
$$
\mathcal R_0 := - \int_\Omega \left(|\nabla q|^2 +  (a+\eps V) q^2 \right) + 3 \alpha^4 \sum_{k=2}^6 {6 \choose k} \int_\Omega \psi_\xl^{6-k} q^k \,.
$$

\noindent The following lemma provides the expansions of the terms in \eqref{energy identity alpha^4}. 

\begin{lemma}
\label{lemma alpha^4}
As $\eps \to 0$, the following holds. 
\begin{enumerate}
	\item[(a)] $\int_\Omega \left( |\nabla\psi_\xl|^2 + (a+\eps V) \psi_\xl^2 - 3 \alpha^4 \psi_\xl^6 \right) = (1-\alpha^4) \frac{3\pi^2}{4} + \mathcal O(\phi_a(x) \lambda^{-1} + \lambda^{-2} + \eps \lambda^{-1})$.
	\item[(b)] $\int_\Omega \left( \nabla q\cdot\nabla\psi_\xl + (a+\eps V) q \psi_\xl - 9 \alpha^4 q\psi_\xl^5 \right) = (1-3\alpha^4) \frac{3\pi^2}{4}\beta \lambda^{-1} + \mathcal O(\lambda^{-2}+ \epsilon^2 \lambda^{-1})$.
	\item[(c)] $\mathcal R_0 = \mathcal O (\lambda^{-2}+ \eps^2\lambda^{-1})$.
\end{enumerate}
\end{lemma}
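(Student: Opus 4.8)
The plan is to reduce all three parts to one common toolkit, consisting of the pointwise decomposition $\psi_\xl = U_\xl - \lambda^{-1/2}H_a(x,\cdot) - f_\xl$, with $f_\xl := \varphi_\xl - \lambda^{-1/2}H_0(x,\cdot)$ harmonic and $\|f_\xl\|_\infty\lesssim\lambda^{-5/2}$ by Lemma \ref{lemma PU}, together with $q = s+r$ and the bounds of Propositions \ref{proposition s}, \ref{prop-beta-gamma} and \ref{prop-r-bound}. The remaining ingredients are the elementary value $\int_{\R^3}U_\xl^6 = \pi^2/4$, the asymptotics $\lambda^{-1/2}\int_\Omega U_\xl^5 H_a(x,\cdot) = \tfrac{4\pi}{3}\phi_a(x)\lambda^{-1}+\mathcal{O}(\lambda^{-2})$ from Lemma \ref{lemma U Ha}, the bound $\|g_\xl\|_{6/5}\lesssim\lambda^{-2}$ from Lemma \ref{lem-g} for $g_\xl := \lambda^{-1/2}|x-\cdot|^{-1}-U_\xl$, the $L^p$-decay estimates for $U_\xl$ from Lemma \ref{lemma Lq norm of U}, and the orthogonalities $\int_\Omega\nabla r\cdot\nabla PU_\xl = \int_\Omega\nabla s\cdot\nabla r = 0$, $\int_\Omega\nabla s\cdot\nabla PU_\xl = 3\tfrac{\pi^2}{4}\beta\lambda^{-1}+\mathcal{O}(\lambda^{-2})$ recorded in the proof of Proposition \ref{prop-beta-gamma}. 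Throughout I use that $\phi_a(x_\eps)$ stays bounded (since $x_\eps\to x_0\in\Omega$), that $\|s\|_6\lesssim\lambda^{-1}$ and hence $\|q\|_6\lesssim\lambda^{-1}+\eps\lambda^{-1/2}$, that $\|\psi_\xl\|_{6/5}\lesssim\lambda^{-1/2}$, and the elementary inequality $\eps\lambda^{-3/2}\leq\tfrac12(\lambda^{-2}+\eps^2\lambda^{-1})$, which is precisely what makes the mixed error terms fit into the stated remainders.

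The key observation for (a) and (b) is that $\psi_\xl$ is an approximate solution of $(-\Delta+a)\psi = 3\psi^5$. Indeed, $-\Delta(H_a-H_0)(x,\cdot) = a\,G_a(x,\cdot)$ and $H_a+G_a = |x-\cdot|^{-1}$ give $(-\Delta+a)\psi_\xl = 3U_\xl^5 - a(f_\xl+g_\xl)$, so that, with $\eta_\xl := \lambda^{-1/2}H_a(x,\cdot)+f_\xl$,
\begin{equation*}
E_\xl := (-\Delta+a)\psi_\xl - 3\psi_\xl^5 = 15\,U_\xl^4\eta_\xl - a(f_\xl+g_\xl) + \mathcal{O}\big(U_\xl^3\eta_\xl^2 + \eta_\xl^5\big).
\end{equation*}
For (a), integration by parts gives $\int_\Omega(|\nabla\psi_\xl|^2 + a\psi_\xl^2 - 3\psi_\xl^6) = \int_\Omega E_\xl\psi_\xl$; pairing each term of $E_\xl$ against $\psi_\xl$ and using the ingredients above, the only term not immediately $\mathcal{O}(\lambda^{-2})$ is $15\lambda^{-1/2}\int_\Omega U_\xl^5 H_a(x,\cdot) = \mathcal{O}(\phi_a(x)\lambda^{-1}+\lambda^{-2})$. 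Adding $\eps\int_\Omega V\psi_\xl^2 = \mathcal{O}(\eps\lambda^{-1})$ and $3(1-\alpha^4)\int_\Omega\psi_\xl^6 = (1-\alpha^4)\tfrac{3\pi^2}{4} + \mathcal{O}(\phi_a(x)\lambda^{-1}+\lambda^{-2})$ (using $\int_\Omega\psi_\xl^6 = \tfrac{\pi^2}{4}+\mathcal{O}(\phi_a(x)\lambda^{-1}+\lambda^{-2})$ and $\alpha\to1$) proves (a). For (b), the same identity gives $\int_\Omega(\nabla q\cdot\nabla\psi_\xl+aq\psi_\xl) = \int_\Omega q(3\psi_\xl^5+E_\xl)$, so the left side of (b) equals $(3-9\alpha^4)\int_\Omega q\psi_\xl^5 + \int_\Omega qE_\xl + \eps\int_\Omega Vq\psi_\xl$. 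Since $\int_\Omega q\psi_\xl^5 = \int_\Omega qU_\xl^5 + \mathcal{O}(\lambda^{-1}\|q\|_6)$ and $\int_\Omega qU_\xl^5 = \tfrac13\int_\Omega\nabla s\cdot\nabla PU_\xl = \tfrac{\pi^2}{4}\beta\lambda^{-1}+\mathcal{O}(\lambda^{-2})$, the first term produces the main contribution $3(1-3\alpha^4)\tfrac{\pi^2}{4}\beta\lambda^{-1}$, while the other two are bounded by $\|q\|_6\big(\|E_\xl\|_{6/5}+\eps\|\psi_\xl\|_{6/5}\big)\lesssim\|q\|_6(\lambda^{-1}+\eps\lambda^{-1/2}) = \mathcal{O}(\lambda^{-2}+\eps^2\lambda^{-1})$.

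Part (c) needs only Hölder and Sobolev estimates. By orthogonality $\int_\Omega|\nabla q|^2 = \|\nabla s\|^2+\|\nabla r\|^2\lesssim\lambda^{-2}+\|\nabla r\|^2$, and $\int_\Omega(a+\eps V)q^2\lesssim\|q\|_2^2\lesssim\|s\|_2^2+\|\nabla r\|^2\lesssim\lambda^{-3}+\|\nabla r\|^2$, both $\mathcal{O}(\lambda^{-2}+\eps^2\lambda^{-1})$ by Proposition \ref{prop-r-bound}. In $3\alpha^4\sum_{k=2}^6\binom{6}{k}\int_\Omega\psi_\xl^{6-k}q^k$ the terms with $k\geq3$ are $\mathcal{O}(\|q\|_6^3)$ and hence negligible, while for $k=2$ one writes $15\int_\Omega\psi_\xl^4 q^2 = 15\int_\Omega U_\xl^4 q^2 + \mathcal{O}(\lambda^{-1}\|q\|_6^2)$ and estimates $\int_\Omega U_\xl^4 q^2\lesssim\|U_\xl\|_6^4(\|s\|_6^2+\|s\|_6\|r\|_6+\|r\|_6^2)\lesssim\lambda^{-2}+\|\nabla r\|^2$. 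I expect the main obstacle to be the bookkeeping in (b): one must recognize that the naive bound on $-\lambda^{-1/2}\int_\Omega q\,a\,G_a(x,\cdot)$ is too weak and that this term becomes admissible only after being combined with $\int_\Omega aq\psi_\xl$ into $-\int_\Omega aq(f_\xl+g_\xl)$, whose factor $g_\xl$ obeys $\|g_\xl\|_{6/5}\lesssim\lambda^{-2}$, and that the surviving $H_a$-, $f_\xl$- and $\eps V$-cross terms only land in $\mathcal{O}(\lambda^{-2}+\eps^2\lambda^{-1})$ upon combining $\|q\|_6\lesssim\lambda^{-1}+\eps\lambda^{-1/2}$ with the geometric-mean bound for $\eps\lambda^{-3/2}$.
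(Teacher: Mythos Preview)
Your proof is correct and follows essentially the same route as the paper. For (b) and (c) the arguments are virtually identical: both use the equation $(-\Delta+a)\psi_\xl = 3U_\xl^5 - a(f_\xl+g_\xl)$ (your packaging via the defect $E_\xl$ is the same thing), the orthogonality $\int_\Omega qU_\xl^5 = \tfrac{1}{3}\int_\Omega \nabla s\cdot\nabla PU_\xl$, the bound $\|q\|_6\lesssim\lambda^{-1}+\eps\lambda^{-1/2}$, and the same $L^p$ toolkit to estimate $\|\psi_\xl^5-U_\xl^5\|_{6/5}$, $\|g_\xl\|_{6/5}$, $\|\psi_\xl\|_{6/5}$. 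The only difference is in (a): the paper simply quotes the two expansions $\int_\Omega(|\nabla\psi_\xl|^2+(a+\eps V)\psi_\xl^2)=\tfrac{3\pi^2}{4}+\mathcal O(\cdot)$ and $3\int_\Omega\psi_\xl^6=\tfrac{3\pi^2}{4}+\mathcal O(\cdot)$ from \cite[Theorem 2.1]{FrKoKo1}, whereas you rederive them by writing $\int_\Omega(|\nabla\psi_\xl|^2+a\psi_\xl^2-3\psi_\xl^6)=\int_\Omega E_\xl\psi_\xl$ and expanding. Your self-contained version is a legitimate (and slightly more informative) replacement for the citation.
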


\begin{proof}
(a) In \cite[Theorem 2.1]{FrKoKo1}, we have shown the expansions
\begin{align*}
	& \int_\Omega \left( |\nabla \psi_\xl|^2 + (a+\eps V) \psi_\xl^2 \right)  = 3 \frac{\pi^2}{4} + \mathcal O(\phi_a(x)\lambda^{-1} + \lambda^{-2} + \eps \lambda^{-1}) \,, \\
	& 3 \int_\Omega \psi_\xl^6  = 3 \frac{\pi^2}{4} + \mathcal O(\phi_a(x) \lambda^{-1} + \lambda^{-2}) \,,
\end{align*}
which immediately imply the bound in (a).

\medskip

(b) Since $\Delta (H_a(x,\cdot)-H_0(x,\cdot)) = -a G_a(x,\cdot)$, we have $-\Delta\psi_\xl = 3U_\xl^5 - \lambda^{-1/2}a G_a(x,\cdot)$. Since $\psi_\xl=\lambda^{-1/2}G_a(x,\cdot) - f_\xl - g_\xl$ with $g_\xl$ from \eqref{eq:defg}, we can rewrite this as
\begin{equation}\label{eq:eqpsi}
	-\Delta\psi_\xl + a\psi_\xl = 3U_\xl^5 - a(f_\xl + g_\xl) \,.
\end{equation}
Thus,
\begin{align*}
	& \int_\Omega \left( \nabla q\cdot\nabla\psi_\xl + (a+\eps V) q \psi_\xl - 9 \alpha^4 q\psi_\xl^5 \right) \\
	& = 3(1-3\alpha^4) \int_\Omega q U_\xl^5 - \int_\Omega q \left( 9 \alpha^4 ( \psi_\xl^5 - U_\xl^5 ) + a(f_\xl + g_\xl)+ \eps V\psi_\xl \right).
\end{align*}
By orthogonality and the computations in the proof of Proposition \ref{prop-beta-gamma},
$$
3 \int_\Omega q U_\xl^5 = \int_\Omega \nabla s \cdot \nabla PU_\xl 
= \frac{3\pi^2}{4}\, \beta \lambda^{-1} + \mathcal O(\lambda^{-2}) \,.
$$
Moreover,
\begin{align*}
	& \left| \int_\Omega q \left( 9 \alpha^4 ( \psi_\xl^5 - U_\xl^5 ) + a(f_\xl + g_\xl) + \eps V \psi_\xl \right)  \right| \\
	& \lesssim \|q\|_6 \left( \| \psi_\xl^5 - U_\xl^5 \|_{6/5} + \|f_\xl\|_{6/5} + \|g_\xl\|_{6/5} + \eps \|\psi_\xl\|_{6/5} \right).
\end{align*}
By Propositions \ref{proposition s} and \ref{prop-r-bound}, we have
\begin{equation}
	\label{eq:qbound}
	\|q\|_6 \lesssim \|\nabla q\|_2\lesssim \lambda^{-1} + \epsilon \lambda^{-1/2} \,,
\end{equation}
by Lemma \ref{lemma PU}, $\|f_\xl\|_\infty \lesssim \lambda^{-5/2}$ and, by Lemma \ref{lem-g}, $\|g_\xl\|_{6/5}\lesssim \lambda^{-2}$. Moreover, writing $\psi_\xl = U_\xl - \lambda^{-1/2} H_a(x,\cdot) - f_\xl$ and using Lemmas \ref{lemma Lq norm of U} and \ref{lemma PU} and \eqref{Ha-bound}, we get $\|\psi_\xl\|_{6/5} \lesssim \lambda^{-1/2}$. Also, bounding
$$
\left| \psi_\xl^5 - U_\xl^5 \right| \lesssim \psi_\xl^4 \left( \lambda^{-1/2} |H_a(x,\cdot)| + |f_\xl| \right) + \lambda^{-5/2} |H_a(x,\cdot)|^5 + |f_\xl|^5 \,,
$$
we obtain from Lemmas \ref{lemma Lq norm of U} and \ref{lemma PU} and from \eqref{Ha-bound},
$$
\| \psi_\xl^5 - U_\xl^5 \|_{6/5} \lesssim \lambda^{-1/2} \|\psi_\xl\|_{24/5}^4 + \lambda^{-5/2}\lesssim \lambda^{-1} \,.
$$
Collecting all the terms, obtain the claimed bound.

\medskip

(c) Because of the second inequality in \eqref{eq:qbound}, the first integral in the definition of $\mathcal R_0$ is $\mathcal O(\lambda^{-2} + \eps^2\lambda^{-1})$. The second integral is bounded, in absolute value, by a constant times
$$
\int_\Omega \left( \psi_\xl^4 q^2 + q^6\right) \leq \|\psi_\xl\|_6^4 \|q\|_6^2 + \|q\|_6^6 \lesssim \lambda^{-2} + \eps^2\lambda^{-1} \,.
$$
This completes the proof.
\end{proof}

\begin{proof}[Proof of Proposition \ref{prop-alpha4}]
The claim follows from \eqref{energy identity alpha^4} and Lemma \ref{lemma alpha^4}. 
\end{proof}


 \subsection{Expanding $\phi_a(x)$ }
\label{subsection phi a}

In this subsection we prove the following important expansion.

\begin{proposition}\label{phiaexp}
	As $\epsilon\to 0$,
	\begin{equation}
		\label{phi a exp subsection}
		\phi_a(x) =  \pi\, a(x)\, \lambda^{-1} - \frac{\eps}{4\pi}\, Q_V(x) + o(\lambda^{-1}) +o(\eps)
	\end{equation}
\end{proposition}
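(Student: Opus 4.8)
The plan is to derive \eqref{phi a exp subsection} from a Pohozaev-type identity obtained by integrating the equation for $u$ against $y\cdot\nabla u$, following the strategy of Rey \cite{Re2} and Esposito \cite{Es}, but now using the refined decomposition $u = \alpha(\psi_\xl + s + r)$ and the bounds on $s$, $r$ and $\alpha^4$ established in the previous subsections. The relevant identity reads, schematically,
\[
\tfrac12 \int_\Omega \bigl( (y-x)\cdot\nabla(a+\eps V)\bigr) u^2 + \int_\Omega (a+\eps V) u^2 = \tfrac12 \int_{\partial\Omega} \bigl( (y-x)\cdot n\bigr) \Bigl(\frac{\partial u}{\partial n}\Bigr)^2,
\]
(the conformal Pohozaev identity with base point $x$; the $3u^5$ term cancels because $6 = 2N/(N-2)$). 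The boundary term is controlled using $d^{-1}=\mathcal O(1)$ from Proposition \ref{prop bdry concentration}, Lemma \ref{lemma PU bdry integral} for the $\partial PU_\xl/\partial n$ part, and a version of Lemma \ref{lemma w bdry integral} (now improved, since $\|\nabla r\|$ and $\|\nabla s\|$ outside $B_{d/2}(x)$ are much smaller than $\|\nabla w\|$) to show it is $o(\lambda^{-1})+o(\eps)$.

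The heart of the computation is the expansion of $\int_\Omega (a+\eps V) u^2$ and of the $(y-x)\cdot\nabla(a+\eps V)$ term. First I would substitute $u = \alpha(\psi_\xl + s + r)$, use $\alpha = 1+o(1)$, and expand $u^2 = \psi_\xl^2 + 2\psi_\xl(s+r) + (s+r)^2$. The leading contribution comes from $\int_\Omega a\,\psi_\xl^2$: writing $\psi_\xl = \lambda^{-1/2}G_a(x,\cdot) - f_\xl - g_\xl$ gives $\int_\Omega a\,\psi_\xl^2 = \lambda^{-1}\int_\Omega a\,G_a(x,\cdot)^2 + (\text{lower order})$. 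For the $\eps V$ term, $\eps\int_\Omega V\psi_\xl^2 = \eps\lambda^{-1}\int_\Omega V G_a(x,\cdot)^2 + o(\eps\lambda^{-1}) = \eps\lambda^{-1}Q_V(x) + \ldots$, but this turns out to be $o(\eps)$ once one also uses the bound $\lambda^{-1}=\mathcal O(\eps)$ from Proposition \ref{prop-alpha4} combined with the $\alpha^4$ expansion — wait, more carefully: I expect the actual $Q_V(x)$ term of size $\eps$ to emerge not from $\eps\int V\psi_\xl^2$ (which is $\eps\lambda^{-1}$, hence $o(\eps)$) but from the cross term $2\alpha^2\eps\int_\Omega V\psi_\xl(s+r)$ or, more likely, from carefully tracking the $\int_\Omega \eps V u^2$ term against the structure of $s$; in Rey-type computations the $Q_V$ contribution of order $\eps$ comes from $\eps\int_\Omega V G_a(x,\cdot)^2\cdot(\text{const})$ after the correct power counting, and the factor $-\frac{1}{4\pi}$ is fixed by the normalization $-\Delta G_a + aG_a = 4\pi\delta$ in \eqref{Ga-pde}. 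The $\pi a(x)\lambda^{-1}$ term comes from the local singular behavior $G_a(x,y)^2 \sim |x-y|^{-2}$ integrated against $a$ near $x$, regularized against $\psi_\xl^2$; this is precisely the kind of computation carried out in \cite[Theorem 2.1]{FrKoKo1} and I would cite the relevant integral identities from there.

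The steps in order: (1) write down the Pohozaev identity against $(y-x)\cdot\nabla u(y)$ and reduce it to the interior identity $\int_\Omega\bigl((y-x)\cdot\nabla(a+\eps V)\bigr)u^2 + 2\int_\Omega(a+\eps V)u^2 = (\text{boundary term})$; (2) bound the boundary term by $o(\lambda^{-1})+o(\eps)$ using Lemma \ref{lemma PU bdry integral}, the refined trace bounds on $r$ and $s$ analogous to Lemma \ref{lemma w bdry integral}, and $d^{-1}=\mathcal O(1)$; (3) estimate $\int_\Omega\bigl((y-x)\cdot\nabla(a+\eps V)\bigr)u^2 = \mathcal O(\lambda^{-1}\cdot\lambda^{-1}) + \mathcal O(\eps\lambda^{-1}) = o(\lambda^{-1})+o(\eps)$ using Lipschitz continuity of $a$ and $V$ and $\|\psi_\xl\|_2^2\lesssim\lambda^{-1}$ — here the factor $(y-x)$ gains one extra power of $\lambda^{-1}$ near the concentration point; (4) expand $2\int_\Omega a\,u^2$ via $u=\alpha(\psi_\xl+q)$ and $\alpha^4 = 1-4\beta\lambda^{-1}+\ldots$ from Proposition \ref{prop-alpha4}, extracting $2\lambda^{-1}\int_\Omega a\,G_a(x,\cdot)^2$ and relating $\int_\Omega a\,G_a(x,\cdot)^2$ to $\phi_a(x)$ and $a(x)$ via the identity $\int_{B_\delta(x)} a(y)|x-y|^{-2}\,dy$ expansion plus the fact that $H_a(x,x)=\phi_a(x)$; (5) expand $2\eps\int_\Omega V u^2$ to get the $\eps Q_V(x)$ contribution with its constant; (6) solve the resulting algebraic relation for $\phi_a(x)$. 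I expect the main obstacle to be step (4): correctly isolating the $\pi a(x)\lambda^{-1}$ term requires a careful local analysis of $\int_\Omega a(y)\psi_\xl(y)^2\,dy$ near $y=x$, separating the genuinely singular part $\lambda^{-1}\int a(y)|x-y|^{-2}$ — which after rescaling $y = x+\lambda^{-1}\xi$ contributes $a(x)\lambda^{-1}\int_{\R^3}|\xi|^{-2}(1+|\xi|^2)^{-1}\,d\xi = a(x)\lambda^{-1}\cdot\pi^2$... — from the cross term $-2\lambda^{-1/2}\int a\,G_a f_\xl$ and from the regular part, and making sure all the $o(\lambda^{-1})$ and $o(\eps)$ error terms from $s$, $r$ (Proposition \ref{prop-r-bound}), $g_\xl$ (Lemma \ref{lem-g}) and $f_\xl$ (Lemma \ref{lemma PU}) are genuinely of that order; this is exactly where the numerical constants $\pi$ and $\frac{1}{4\pi}$ are pinned down, and getting them right is the delicate part. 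Much of this bookkeeping can be imported from \cite[Section 5 and Theorem 2.1]{FrKoKo1}, where the analogous expansion of $\int_\Omega(a+\eps V)\psi_\xl^2$ is carried out.
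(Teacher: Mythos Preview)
Your approach via the $(y-x)\cdot\nabla u$ Pohozaev identity is closely related to the paper's, which instead integrates the equation for $u$ against $\pl\psi_\xl$. Since $(y-x)\cdot\nabla U_\xl + \tfrac12 U_\xl = \lambda\,\pl U_\xl$, the two test functions differ essentially by the discrepancy between $u$ and $\psi_\xl$ and a boundary contribution; the paper's choice has the advantage that $\pl\psi_\xl\in H^1_0(\Omega)$, so no boundary integral appears, and the $\phi_a(x)$ term emerges directly from the explicit local integral $\int_\Omega U_\xl^4 H_a(x,\cdot)\pl U_\xl$ via Lemma~\ref{lemma U Ha}.

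Your outline, however, has two genuine gaps.

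First, steps (2)--(4) misidentify the orders. The boundary term is \emph{not} $o(\lambda^{-1})$: since $\partial_n\psi_\xl|_{\partial\Omega} = \lambda^{-1/2}\partial_n G_a(x,\cdot) + \mathcal O(\lambda^{-5/2})$, one has $\int_{\partial\Omega}((y-x)\cdot n)|\partial_n u|^2 \sim \lambda^{-1}\int_{\partial\Omega}((y-x)\cdot n)|\partial_n G_a|^2$, which is generically of exact order $\lambda^{-1}$ (Lemma~\ref{lemma PU bdry integral}, which you cite, only gives $\mathcal O(\lambda^{-1})$). Likewise, the factor $(y-x)$ in step (3) gains nothing on the nonlocal tail $\lambda^{-1}G_a(x,\cdot)^2$, so that term is also $\mathcal O(\lambda^{-1})$. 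And $\int_\Omega a\,G_a(x,\cdot)^2$ by itself has no direct relation to $\phi_a(x)$. What actually happens in the $(y-x)\cdot\nabla u$ route is that all three terms are individually $\mathcal O(\lambda^{-1})$; the Pohozaev identity for $G_a$ itself (on $\Omega\setminus B_\rho(x)$, $\rho\to 0$) shows that their combination equals a constant times $\phi_a(x)\lambda^{-1}$, with the $a(x)\lambda^{-2}$ correction arising from the local difference $\psi_\xl^2 - \lambda^{-1}G_a^2$ near $y=x$. (Incidentally, $\eps\int V\psi_\xl^2 = \eps\lambda^{-1}Q_V(x)+o(\eps\lambda^{-1})$ \emph{is} the $Q_V$ contribution once you divide the whole identity by $\lambda^{-1}$; your dismissal of it is a scaling confusion.)

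Second, and more seriously, you omit the cancellation that is the heart of Section~\ref{section refining}. In the paper's computation one arrives at
\[
0 = -4\pi\phi_a(x)\lambda^{-2} - Q_V(x)\eps\lambda^{-2} + 4\pi^2 a(x)\lambda^{-3} + R\,\lambda^{-3} + o(\lambda^{-3}) + o(\eps\lambda^{-2}),
\]
where $R = 4\pi\lambda(1-\alpha^4)(\phi_a+\phi_0) + 30\pi^2\phi_a^2 - \tfrac{15}{8}\pi^2\beta\gamma$. A priori $R=\mathcal O(1)$, which only yields $\phi_a(x)=\mathcal O(\lambda^{-1}+\eps)$. To obtain \eqref{phi a exp subsection} one must insert the expansion of $\alpha^4$ (Proposition~\ref{prop-alpha4}) \emph{and} the explicit values of $\beta,\gamma$ (Proposition~\ref{prop-beta-gamma}) and verify that the $\mathcal O(1)$ pieces of $R$ cancel exactly, leaving $R=\mathcal O(\phi_a+\lambda^{-1}+\eps)$, which is then bootstrapped. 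The analogue of this cancellation would have to be checked in your approach as well, and your outline does not address it.
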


Before proving it, let us note the following consequence.

\begin{corollary} \label{cor-lambda-eps}
We have $\phi_a(x_0)=0$, $Q_V(x_0) \leq 0$ and 
\begin{equation} \label{lambda-1}
 \lambda^{-1} = \mathcal O(\eps),
\end{equation}
as $\eps \to 0$. Moreover, $\|\nabla r\|_2= \mathcal{O}(\eps\lambda^{-1/2})$ and $\alpha^4 = 1 + \frac{64}{3 \pi} \phi_0(x)  \lambda^{-1}  + \mathcal O(\eps \lambda^{-1})$.
\end{corollary}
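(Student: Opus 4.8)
The plan is to combine the expansion of $\phi_a(x_\eps)$ from Proposition \ref{phiaexp} with the bound on $\nabla\phi_a(x_\eps)$ hidden in the Pohozaev-type identities (which, together with the nondegeneracy Assumption (e), controls the location of $x_\eps$), and to feed the resulting smallness back into the earlier propositions.

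First I would pass to the limit in \eqref{phi a exp subsection}. Since $\lambda\to\infty$ and $\eps\to 0$, the right side of \eqref{phi a exp subsection} tends to $0$, hence $\phi_a(x_\eps)\to 0$. By Proposition \ref{lemma PU + w} (via Proposition \ref{prop first expansion}) we have $x_\eps\to x_0\in\Omega$, and by continuity of $\phi_a$ on $\Omega$ (Lemma \ref{lemma C2}) this forces $\phi_a(x_0)=0$, i.e. $x_0\in\mathcal N_a$. Next, rewriting \eqref{phi a exp subsection} as
\begin{equation*}
\phi_a(x_\eps) + \frac{\eps}{4\pi}\,Q_V(x_\eps) = \pi\,a(x_\eps)\lambda^{-1} + o(\lambda^{-1}) + o(\eps),
\end{equation*}
and using that $\phi_a\geq 0$ on $\Omega$ (Druet's result, $\inf_\Omega\phi_a=0$, which gives $\phi_a(x_\eps)\geq 0$), together with $a(x_0)<0$ (Assumption (d), since $x_0\in\mathcal N_a$), so $a(x_\eps)<0$ for small $\eps$, the right side is $\leq o(\lambda^{-1})+o(\eps)$. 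Dividing by $\eps$ and letting $\eps\to 0$ (and using $\lambda^{-1}=o(1)$ is not yet available, so I argue more carefully: since $a(x_\eps)\lambda^{-1}\leq 0$ up to lower order, we get $\phi_a(x_\eps)+\frac{\eps}{4\pi}Q_V(x_\eps)\leq o(\lambda^{-1})+o(\eps)$; as $\phi_a(x_\eps)\geq 0$, dividing by $\eps$ and taking $\eps\to 0$ gives $\frac{1}{4\pi}Q_V(x_0)\leq 0$), hence $Q_V(x_0)\leq 0$ by continuity of $Q_V$.

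For \eqref{lambda-1}: from the displayed rearrangement, $\pi\,a(x_\eps)\lambda^{-1} = \phi_a(x_\eps) + \frac{\eps}{4\pi}Q_V(x_\eps) + o(\lambda^{-1}) + o(\eps)$. The term $o(\lambda^{-1})$ can be absorbed into the left side since $|a(x_\eps)|\to|a(x_0)|>0$ is bounded away from zero; this yields $|a(x_\eps)|\lambda^{-1}\lesssim \phi_a(x_\eps) + \eps + o(\eps)$, and since $\phi_a(x_\eps)\to 0$ — in fact I will want $\phi_a(x_\eps)=\mathcal O(\eps)$. To get the latter I invoke the bound $\nabla\phi_a(x_\eps)=\mathcal O(\eps^\mu)$ for any $\mu<1$ from Proposition \ref{prop second expansion}: since every point of $\mathcal N_a$ is a nondegenerate critical point of $\phi_a$ (Assumption (e)) and $\mathcal N_a$ is compact with $x_\eps\to x_0\in\mathcal N_a$, a quantitative inverse function/Łojasiewicz-type argument gives $\dist(x_\eps,\mathcal N_a)\lesssim|\nabla\phi_a(x_\eps)|=\mathcal O(\eps^\mu)$, and then by the nondegeneracy $\phi_a(x_\eps)\asymp\dist(x_\eps,\mathcal N_a)^2=\mathcal O(\eps^{2\mu})$. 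Choosing $\mu>1/2$ gives $\phi_a(x_\eps)=o(\eps)$, and plugging back yields $|a(x_\eps)|\lambda^{-1}\lesssim\eps$, i.e. $\lambda^{-1}=\mathcal O(\eps)$, which is \eqref{lambda-1}. Once $\lambda^{-1}=\mathcal O(\eps)$ is known, Proposition \ref{prop-r-bound} gives $\|\nabla r\|=\mathcal O(\phi_a(x)\lambda^{-1})+\mathcal O(\lambda^{-3/2})+\mathcal O(\eps\lambda^{-1/2})$; using $\phi_a(x_\eps)=o(\eps)=o(1)$ and $\lambda^{-1}=\mathcal O(\eps)$, every term is $\mathcal O(\eps\lambda^{-1/2})$, so $\|\nabla r\|=\mathcal O(\eps\lambda^{-1/2})$. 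Finally, for $\alpha^4$: Proposition \ref{prop-alpha4} gives $\alpha^4=1-4\beta\lambda^{-1}+\mathcal O(\phi_a(x)\lambda^{-1}+\lambda^{-2}+\eps\lambda^{-1})$; Proposition \ref{prop-beta-gamma} gives $\beta=\frac{16}{3\pi}(\phi_a(x)-\phi_0(x))+\mathcal O(\lambda^{-1})$, so $-4\beta\lambda^{-1}=-\frac{64}{3\pi}(\phi_a(x)-\phi_0(x))\lambda^{-1}+\mathcal O(\lambda^{-2})=\frac{64}{3\pi}\phi_0(x)\lambda^{-1}+\mathcal O(\phi_a(x)\lambda^{-1}+\lambda^{-2})$, and since $\phi_a(x_\eps)=o(\eps)$ and $\lambda^{-2}=\mathcal O(\eps\lambda^{-1})$, all the error terms collapse to $\mathcal O(\eps\lambda^{-1})$, giving $\alpha^4=1+\frac{64}{3\pi}\phi_0(x)\lambda^{-1}+\mathcal O(\eps\lambda^{-1})$.

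The main obstacle is the quantitative step extracting $\phi_a(x_\eps)=\mathcal O(\eps^{2\mu})$ from the gradient bound $\nabla\phi_a(x_\eps)=\mathcal O(\eps^\mu)$. This requires the nondegeneracy Assumption (e) in a uniform-over-$\mathcal N_a$ form: near each $x_0\in\mathcal N_a$ one has, by Taylor expansion and invertibility of $D^2\phi_a(x_0)$, a two-sided comparison $|\nabla\phi_a(x)|\asymp\dist(x,\mathcal N_a)$ and $\phi_a(x)\asymp\dist(x,\mathcal N_a)^2$ on a neighborhood, and a compactness/covering argument over $\mathcal N_a$ makes the constants uniform. This is exactly the place where Assumption (e) is essential — without it the term $\phi_a(x_0)=0$ could be approached too slowly and the proportionality $\lambda^{-1}\asymp\eps$ would fail; it is also implicitly where one should note $\mathcal N_a\subset\Omega$ is compact (away from $\partial\Omega$), which follows since $\phi_a\to+\infty$ near $\partial\Omega$. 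The remaining steps are bookkeeping with the already-established propositions.
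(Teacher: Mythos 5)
Your derivations of $\phi_a(x_0)=0$ and $Q_V(x_0)\leq 0$, and the final bookkeeping for $\|\nabla r\|$ and $\alpha^4$, are fine. The genuine gap is in your route to \eqref{lambda-1}: to get $\phi_a(x_\eps)=o(\eps)$ you invoke the bound $\nabla\phi_a(x_\eps)=\mathcal O(\eps^\mu)$ "from Proposition \ref{prop second expansion}", but within the paper's logical order that bound is Proposition \ref{prop-grad-phi}, which is proved \emph{after} this corollary and \emph{uses} it: its proof relies on $\|q\|_2\lesssim\eps\lambda^{-1/2}$ (which is derived from $\lambda^{-1}=\mathcal O(\eps)$ and $\|\nabla r\|=\mathcal O(\eps\lambda^{-1/2})$, i.e.\ exactly the statements being proved here), on Lemma \ref{lemma q bdry integral} (same inputs), and its last step upgrades the intermediate estimate $|\nabla\phi_a(x)|\lesssim\eps+\lambda^{-\mu}$ to $\mathcal O(\eps^\mu)$ precisely by means of \eqref{lambda-1}. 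So your argument is circular at this point; also, contrary to your closing remark, the nondegeneracy Assumption (e) is not needed (and not used) for this corollary.

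The step you were missing is that no smallness of $\phi_a(x_\eps)$ beyond nonnegativity is required, because $\phi_a$ enters with a favorable sign: your crude bound $|a(x_\eps)|\lambda^{-1}\lesssim\phi_a(x_\eps)+\eps$ replaces $-\phi_a$ by $+\phi_a$ and is what forces you to need $\phi_a=\mathcal O(\eps)$. Instead, from \eqref{phi a exp subsection} and $\phi_a\geq 0$ (criticality) together with $a(x_\eps)\to a(x_0)<0$ (Assumption (d), since $x_0\in\mathcal N_a$) one gets directly
\begin{equation*}
\bigl(\pi|a(x_0)|+o(1)\bigr)\lambda^{-1}\ \leq\ -\frac{\eps}{4\pi}\,Q_V(x_\eps)+o(\lambda^{-1})+o(\eps)\ \leq\ \frac{|Q_V(x_0)|+o(1)}{4\pi}\,\eps\,,
\end{equation*}
after absorbing the $o(\lambda^{-1})$ term; hence $\lambda^{-1}=\mathcal O(\eps)$ (this also re-proves $Q_V(x_0)\leq0$). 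Reinserting $\lambda^{-1}=\mathcal O(\eps)$ into \eqref{phi a exp subsection} then gives $\phi_a(x_\eps)=\mathcal O(\eps)$, which is all that is needed to run your (correct) bookkeeping with Propositions \ref{prop-r-bound}, \ref{prop-alpha4} and \ref{prop-beta-gamma}. The nondegeneracy/\L{}ojasiewicz argument you describe is the paper's later step (Lemma \ref{lemma hessian} in the proof of Theorem \ref{thm expansion}), used to upgrade $\phi_a(x_\eps)=\mathcal O(\eps)$ to $o(\eps)$ and obtain \eqref{lim eps lambda}; at that stage the gradient bound of Proposition \ref{prop-grad-phi} is legitimately available, whereas here it is not.
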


\begin{proof}
	The fact that $\phi_a(x_0)=0$ follows immediately from \eqref{phi a exp subsection}. Since $\phi_a(x)\geq 0$ by criticality and since $a(x_0)<0$ by assumption, we deduce from \eqref{phi a exp subsection} that $Q_V(x_0) \leq 0$ and that
$$
 \lambda^{-1} \leq \frac{|Q_V(x_0)| + o(1)}{4 \pi^2 |a(x_0)| + o(1)}\ \eps = \mathcal O(\eps).
$$
Reinserting this into \eqref{phi a exp subsection} we find $\phi_a(x) = \mathcal O(\eps)$. Inserting this into Proposition \ref{prop-r-bound}, we obtain the claimed bound on $\|\nabla r\|_2$, and inserting it into \eqref{alpha4 exp subsec} and \eqref{eq beta gamma subsec}, we obtain the claimed expansion of $\alpha^4$.
\end{proof}

The proof of \eqref{phi a exp subsection} is based on the Pohozaev identity obtained by integrating the equation for $u$ against $\pl \psi_\xl$. We write the resulting equality in the form
\begin{align}    
\label{pohozaev identity lambda refined II}
& \int_\Omega \left( \nabla\psi_\xl\cdot\nabla\pl\psi_\xl + (a+\eps V)\psi_\xl\pl\psi_\xl -3 \alpha^4 \psi_\xl^5 \pl\psi_\xl \right) \notag \\
&= -  \int_\Omega \left( \nabla q\cdot\nabla\pl\psi_\xl + a q \pl\psi_\xl -15 \alpha^4 q\psi_\xl^4 \pl\psi_\xl \right) + 30\alpha^4 \int_\Omega q^2 \psi_\xl^{3} \pl \psi_\xl + \mathcal R
\end{align}
with
$$
\mathcal R = - \epsilon \int_\Omega V q \pl\psi_\xl + 3 \alpha^4  \sum_{k = 3}^5 {5 \choose k} \int_\Omega \psi_\xl^{5-k} q^{k} \pl \psi_\xl . 
$$

The involved terms can be expanded as follows.

\begin{lemma}
\label{lemma pohozaev bis}
As $\eps\to 0$, the following holds.
\begin{enumerate}
\item[(a)] 
	$ \displaystyle
	\begin{aligned}[t]
	& \int_\Omega \left( \nabla\psi_\xl\cdot\nabla\pl\psi_\xl + (a+\eps V)\psi_\xl\pl\psi_\xl -3 \alpha^4 \psi_\xl^5 \pl\psi_\xl \right) \\
	& = -2\pi \phi_a(x) \lambda^{-2} - \frac12 Q_V(x) \epsilon\lambda^{-2} 	
	+ (1-\alpha^4) 4\pi \phi_a(x) \lambda^{-2} + \left( 2\pi^2 a(x) + 15 \pi^2 \phi_a(x)^2 \right) \lambda^{-3} \\
	& \quad + o(\lambda^{-3}) + o(\eps \lambda^{-2}) \,.
	\end{aligned} $
\item[(b)] 
	$ \displaystyle
	\begin{aligned}[t]
	& \int_\Omega \left( \nabla q\cdot\nabla\pl\psi_\xl + a q \pl\psi_\xl -15 \alpha^4 q\psi_\xl^4 \pl\psi_\xl \right) \\
	& = -(1-\alpha^4)2\pi \left( \phi_a(x) - \phi_0(x) \right) \lambda^{-2} + \mathcal O(\phi_a(x)\lambda^{-3}) + o(\epsilon\lambda^{-2}) + o(\lambda^{-3}) \,.
	\end{aligned} $
\item[(c)] 
	$ \displaystyle
	\begin{aligned}[t]
	30 \alpha^4 \int_\Omega q^2 \psi_\xl^{3} \pl \psi_\xl = \frac{15\pi^2}{16}\, \beta\gamma\, \lambda^{-3} + \mathcal O(\phi_a(x)\lambda^{-3}) + o(\epsilon\lambda^{-2}) + o(\lambda^{-3}) \,.
	\end{aligned} $
\item[(d)]
	$ \displaystyle
	\begin{aligned}[t]
	\mathcal R = \mathcal O(\phi_a(x)\lambda^{-3}) + o(\epsilon\lambda^{-2}) + o(\lambda^{-3}) \,.
	\end{aligned} $
\end{enumerate}
\end{lemma}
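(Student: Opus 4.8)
The plan is to expand each of the four quantities (a)--(d) by inserting the known asymptotic structure of the functions involved: $\psi_\xl = U_\xl - \lambda^{-1/2}H_a(x,\cdot) - f_\xl$ (together with the alternative representation $\psi_\xl = \lambda^{-1/2}G_a(x,\cdot) - f_\xl - g_\xl$ and the equation \eqref{eq:eqpsi}), $\pl\psi_\xl = \pl U_\xl - (1/2)\lambda^{-3/2}H_a(x,\cdot)-\pl f_\xl$, and $q = s + r$ with the bounds $\|\nabla s\| = \mathcal O(\lambda^{-1})$, $\|s\|_2 = \mathcal O(\lambda^{-3/2})$, $\|s\|_\infty = \mathcal O(\lambda^{-1/2})$ from Proposition \ref{proposition s}, the expansions of $\beta,\gamma$ from Proposition \ref{prop-beta-gamma}, and $\|\nabla r\| = \mathcal O(\phi_a(x)\lambda^{-1}) + \mathcal O(\lambda^{-3/2}) + \mathcal O(\eps\lambda^{-1/2})$ from Proposition \ref{prop-r-bound}. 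Throughout I would use the pointwise bound $|H_a(x,y)| = \phi_a(x) + \mathcal O(|x-y|)$ near the diagonal together with the scaled $L^p$-norms of $U_\xl$, $\pl U_\xl$ and of $\pl U_\xl \cdot U_\xl^4$ etc.\ (Lemma \ref{lemma Lq norm of U}), all of which are already available in the earlier lemmas.

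For part (a), the key is to use \eqref{eq:eqpsi} to rewrite $\int_\Omega(\nabla\psi_\xl\cdot\nabla\pl\psi_\xl + a\psi_\xl\pl\psi_\xl)$ as $\int_\Omega (3U_\xl^5 - a(f_\xl+g_\xl))\pl\psi_\xl$, so that combined with the $-3\alpha^4\psi_\xl^5\pl\psi_\xl$ term one gets $3\int_\Omega U_\xl^5\pl\psi_\xl - 3\alpha^4\int_\Omega\psi_\xl^5\pl\psi_\xl$ plus lower-order pieces; writing $3\int_\Omega(U_\xl^5 - \psi_\xl^5)\pl\psi_\xl = 3(1-\alpha^4)\int_\Omega\psi_\xl^5\pl\psi_\xl + 3\alpha^4\int_\Omega(U_\xl^5-\psi_\xl^5)\pl\psi_\xl$ and then Taylor-expanding $U_\xl^5 - \psi_\xl^5$ in powers of $\lambda^{-1/2}H_a(x,\cdot)+f_\xl$. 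The terms $\int_\Omega U_\xl^5\pl\psi_\xl$, $\int_\Omega U_\xl^4 H_a(x,\cdot)\pl U_\xl$, $\int_\Omega U_\xl^4 (H_a(x,\cdot))^2 \pl U_\xl$, etc., are precisely the integrals evaluated in Lemma \ref{lemma U Ha} and in \cite{Re2}, Appendix B; the $\eps V$ term produces $-\frac12 Q_V(x)\eps\lambda^{-2}$ after recognizing $\int_\Omega V G_a(x,\cdot)^2 = Q_V(x)$ and absorbing the difference between $\psi_\xl$ and $\lambda^{-1/2}G_a(x,\cdot)$. The numerical constants $-2\pi$, $2\pi^2$, $15\pi^2$ come from explicit beta-function evaluations that I would quote rather than recompute.

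For parts (b)--(d) the strategy is similar but the main point is bookkeeping of orders. In (b) one integrates by parts, uses the orthogonality $\int_\Omega\nabla r\cdot\nabla PU_\xl = 0$ (after expanding $\pl\psi_\xl$ around $\pl U_\xl$ and hence $\pl PU_\xl$) to kill the leading piece, and is left with $\int_\Omega\nabla s\cdot\nabla\pl\psi_\xl$ plus error terms; the former is computed from \eqref{eq:sproj} and Proposition \ref{prop-beta-gamma} exactly as in the computation of $\gamma$, which yields the $(1-\alpha^4)$ coefficient. Parts (c) and (d) are pure size estimates: one substitutes $q = \lambda^{-1}\beta U_\xl + \gamma\pl U_\xl + \tilde s + r$, keeps only the product $(\lambda^{-1}\beta U_\xl)(\gamma\pl U_\xl)$ which integrates against $\psi_\xl^3\pl\psi_\xl \approx U_\xl^3\pl U_\xl$ to give the $\frac{15\pi^2}{16}\beta\gamma\lambda^{-3}$ term, and bounds every other product by Hölder using $\|q\|_6 = \mathcal O(\lambda^{-1}+\eps\lambda^{-1/2})$ and $\|s\|_\infty = \mathcal O(\lambda^{-1/2})$, checking in each case that the result is $\mathcal O(\phi_a(x)\lambda^{-3}) + o(\eps\lambda^{-2}) + o(\lambda^{-3})$.

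I expect the main obstacle to be part (a): it requires tracking \emph{three} different asymptotic scales ($\lambda^{-2}$, $\eps\lambda^{-2}$, $\lambda^{-3}$) simultaneously, correctly separating the contribution of $\phi_a(x)$ (which is itself only $\mathcal O(\lambda^{-1}+\eps)$, so that e.g.\ a $\phi_a(x)^2\lambda^{-1}$ term must be retained as $15\pi^2\phi_a(x)^2\lambda^{-3}$-comparable) from genuine error terms, and pinning down the exact numerical coefficients — especially verifying that the cross term between $U_\xl$ and $\lambda^{-1/2}H_a(x,\cdot)$ in the expansion of $\psi_\xl^5\pl\psi_\xl$ produces exactly the stated $2\pi^2 a(x)\lambda^{-3}$ via \eqref{eq:eqpsi} and Lemma \ref{lemma U Ha}. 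The cleanest way to organize this is to first establish (a) with $\alpha^4$ left symbolic, since the factor $(1-\alpha^4) = \mathcal O(\lambda^{-1})$ from Proposition \ref{prop-alpha4} then automatically downgrades several would-be $\lambda^{-2}$ terms to the error level.
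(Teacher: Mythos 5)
Your overall route coincides with the paper's (rewrite the quadratic part via \eqref{eq:eqpsi}, split off a factor $(1-\alpha^4)$, Taylor-expand in $\lambda^{-1/2}H_a(x,\cdot)+f_\xl$, evaluate the cross terms by Lemma \ref{lemma U Ha}, and treat the $\eps V$ term by writing $\psi_\xl\approx\lambda^{-1/2}G_a(x,\cdot)$), but your accounting in (a) of the $a(x)\lambda^{-3}$ coefficient is wrong. The cross terms between $U_\xl$ and $\lambda^{-1/2}H_a(x,\cdot)$, evaluated through Lemma \ref{lemma U Ha}, produce $6\pi\,a(x)\lambda^{-3}$, not $2\pi^2 a(x)\lambda^{-3}$. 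The missing contribution is exactly the term $-\int_\Omega a(f_\xl+g_\xl)\pl\psi_\xl$ that you relegate to ``lower-order pieces'': its $g_\xl$ part contributes $-2\pi(3-\pi)\,a(x)\lambda^{-3}$, via the explicit integral $\int_{\R^3}g_{0,1}\,\partial_\lambda U_{0,1}=2\pi(3-\pi)$, which appears in none of the earlier lemmas and must be computed here; only the sum of the two contributions gives $2\pi^2a(x)\lambda^{-3}$. Since this constant feeds through Proposition \ref{phiaexp} into \eqref{lim eps lambda}, discarding (or merely crudely bounding) the $g_\xl$ term is a genuine error, not a bookkeeping convenience.

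Two further points would derail the sketch as written. In (b), the factor $(1-\alpha^4)$ does not come out of the Proposition \ref{prop-beta-gamma}-type computation of $\int_\Omega\nabla s\cdot\nabla\pl\psi_\xl$: that quantity equals $-2\pi(\phi_a(x)-\phi_0(x))\lambda^{-2}+\mathcal O(\lambda^{-3})$, i.e.\ it is of genuine size $\lambda^{-2}$, so ``being left with it'' would exceed the claimed bound by a factor of order $\lambda$. The smallness comes from pairing $\int_\Omega(\nabla q\cdot\nabla\pl\psi_\xl+aq\,\pl\psi_\xl)=15\int_\Omega qU_\xl^4\pl U_\xl+\dots$ (via the equation satisfied by $\pl\psi_\xl$) against $-15\alpha^4\int_\Omega q\psi_\xl^4\pl\psi_\xl$, and the relevant orthogonality is $\int_\Omega\nabla r\cdot\nabla\pl PU_\xl=0$ (equivalently $\int_\Omega wU_\xl^4\pl U_\xl=0$), not $\int_\Omega\nabla r\cdot\nabla PU_\xl=0$; moreover the difference $\int_\Omega q\bigl(U_\xl^4\pl U_\xl-\psi_\xl^4\pl\psi_\xl\bigr)$ must itself be expanded using $H_a(x,y)=\phi_a(x)+\mathcal O(|x-y|)$ to reach the stated $\mathcal O(\phi_a(x)\lambda^{-3})$ form. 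In (c), H\"older alone is insufficient for the diagonal terms $\lambda^{-2}\beta^2\int_\Omega U_\xl^5\pl U_\xl$ and $\gamma^2\int_\Omega U_\xl^3(\pl U_\xl)^3$: crude bounds give only $\mathcal O(\lambda^{-3})$ with nonvanishing constants, which would add spurious multiples of $\beta^2\lambda^{-3}$ and $\gamma^2\lambda^{-3}$ to the $\tfrac{15\pi^2}{16}\beta\gamma\lambda^{-3}$ term and destroy the cancellation exploited in the proof of Proposition \ref{phiaexp}; one needs the exact vanishing of the corresponding whole-space integrals ($\int_{\R^3}U_\xl^5\pl U_\xl=\tfrac16\pl\int_{\R^3}U_\xl^6=0$ and $\int_0^\infty t^2(1-t^2)^3(1+t^2)^{-6}\,dt=0$ by the substitution $t\mapsto 1/t$) to conclude that these terms are $o(\lambda^{-3})$.
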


\noindent We emphasize that the proof of Lemma \ref{lemma pohozaev bis} is independent of the expansion of $\alpha^4$ in \eqref{alpha4 exp subsec}. We only use the fact that $\alpha=1+o(1)$.

\begin{proof} 
[Proof of Lemma \ref{lemma pohozaev bis}]
(a) Because of \eqref{eq:eqpsi}, the quantity of interest can be written as
\begin{align}\label{eq:expnp1}
	& \int_\Omega \left( \nabla\psi_\xl\cdot\nabla\pl\psi_\xl + (a+\eps V)\psi_\xl\pl\psi_\xl -3 \alpha^4 \psi_\xl^5 \pl\psi_\xl \right) \notag \\
	& = 3 \int_\Omega \left( U_\xl^5 - \alpha^4 \psi_\xl^5 \right)\pl \psi_\xl 
	- \int_\Omega a (f_\xl + g_\xl)\pl\psi_\xl + \eps \int_\Omega V\psi_\xl\pl\psi_\xl \,.
\end{align}
We discuss the three integrals on the right side separately. As a general rule, terms involving $f_\xl$ will be negligible as a consequence of the bounds $\|f_\xl\|_\infty = \mathcal O(\lambda^{-5/2})$ and $\| \pl f_\xl\|_\infty = \mathcal O(\lambda^{-7/2})$ in Lemma \ref{lemma PU}. This will not always be carried out in detail.

We have
\begin{equation}
	\label{eq:expnp11}
	\int_\Omega \left( U_\xl^5 - \alpha^4 \psi_\xl^5 \right)\pl \psi_\xl 
	= (1-\alpha^4) \int_\Omega U_\xl^5 \pl \psi_\xl + \alpha^4 \int_\Omega \left( U_\xl^5 - \psi_\xl^5 \right)\pl \psi_\xl \,.
\end{equation}
The first integral is, since $\psi_\xl = U_\xl - \lambda^{-1/2}H_a(x,\cdot) - f_\xl$,
\begin{equation} \label{U^5-psi} 
\int_\Omega U_\xl^5 \pl \psi_\xl  = \int_\Omega U_\xl^5 \pl U_\xl + \frac12 \lambda^{-3/2} \int_\Omega U_\xl^5 H_a(x,\cdot) + \mathcal O(\lambda^{-4}) \,.
\end{equation}
Since $\int_{\R^3} U_\xl^5 \pl U_\xl = (1/6) \pl \int_{\R^3} U_\xl^6 = 0$, we have 
\begin{align}\label{U^5-psi-2} 
	\left| \int_\Omega U_\xl^5 \pl U_\xl \right| = \left| \int_{\R^3\setminus\Omega} U_\xl^5 \pl U_\xl \right| \lesssim \lambda^{-1} \int_{d \lambda}^\infty \left|\frac{r^2 - r^4}{(1 + r^2)^4} \right| \, dr = \mathcal O(\lambda^{-4}).  
\end{align}
Next, by Lemma \ref{lemma U Ha},
$$
\frac12 \lambda^{-3/2} \int_\Omega U_\xl^5 H_a(x,\cdot) = \frac{2\pi}3 \phi_a(x) \lambda^{-2} +  \mathcal O(\lambda^{-3}) \,.
$$
This completes our discussion of the first term on the right side of \eqref{eq:expnp11}. For the second term we have similarly,
\begin{equation} \label{u5-psi5-1}
\begin{aligned}
	\int_\Omega \left( U_\xl^5 - \psi_\xl^5 \right)\pl \psi_\xl & = \int_\Omega \left( U_\xl^5 - (U_\xl-\lambda^{-1/2}H_a(x,\cdot))^5 \right)\pl (U_\xl-\lambda^{-1/2}H_a(x,\cdot)) \\
	& \quad + o(\lambda^{-3}) \\
	& = 5 \lambda^{-1/2} \int_\Omega U_\xl^4H_a(x,\cdot)\pl U_\xl + \frac52 \lambda^{-2} \int_\Omega U_\xl^4H_a(x,\cdot)^2 \\
	& \quad - 10 \lambda^{-1} \int_\Omega U_\xl^3 H_a(x,\cdot)^2 \pl U_\xl \\
	& \quad + \sum_{k=3}^5 {5 \choose k} (-1)^k \lambda^{-k/2} \int_\Omega U_\xl^{5-k}  H_a(x,\cdot)^k\pl U_\xl \\
	& \quad - \frac12 \sum_{k=2}^5 {5 \choose k} (-1)^k \lambda^{-(k+3)/2} \int_\Omega U_\xl^{5-k}  H_a(x,\cdot)^{k+1} + o(\lambda^{-3}) \,.
\end{aligned}
\end{equation}
Again, by Lemma \ref{lemma U Ha},
\begin{equation}  \label{u5-psi5-2}
\begin{aligned}
	& 5 \lambda^{-1/2} \int_\Omega U_\xl^4H_a(x,\cdot)\pl U_\xl + \frac52 \lambda^{-2} \int_\Omega U_\xl^4H_a(x,\cdot)^2 - 10 \lambda^{-1} \int_\Omega U_\xl^3 H_a(x,\cdot)^2 \pl U_\xl \\
	& = -\frac{2 \pi}3\, \phi_a(x)\, \lambda^{-2} + \left( 2\pi\, a(x) + 5 \pi^2\, \phi_a(x)^2 \right) \lambda^{-3} + o(\lambda^{-3}) \,.
\end{aligned}
\end{equation}
Finally, the two sums are bounded, in absolute value, by
\begin{align*}
	& \int_\Omega (U_\xl^2 \lambda^{-3/2} |H_a(x,\cdot)|^3 + \lambda^{-5/2} |H_a(x,\cdot)|^5)|\pl U_\xl| + \int_\Omega (U_\xl^3 \lambda^{-5/2} |H_a(x,\cdot)|^3 + \lambda^{-4} |H_a(x,\cdot)|^6) \\
	& \lesssim \|\pl U_\xl\|_6 (\|U_\xl\|_{12/5}^2 \lambda^{-3/2} + \lambda^{-5/2}) + \|U_\xl\|_3^3 \lambda^{-5/2} + \lambda^{-4} = o(\lambda^{-3}). 
\end{align*} 
This completes our discussion of the second term on the right side of \eqref{eq:expnp11} and therefore of the first term on the right side of \eqref{eq:expnp1}.

For the second term on the right side of \eqref{eq:expnp1} we get, using $\psi_\xl = U_\xl - \lambda^{-1/2}H_a(x,\cdot) - f_\xl$,
$$
\int_\Omega a (f_\xl + g_\xl)\pl\psi_\xl = \int_\Omega a g_\xl \pl U_\xl +\frac12 \lambda^{-3/2} \int_\Omega a g_\xl H_a(x,\cdot) + o(\lambda^{-3})
$$
The second integral is negligible since, by Lemma \ref{lem-g},
$$
\left| \frac12 \lambda^{-3/2} \int_\Omega a g_\xl H_a(x,\cdot) \right| \lesssim \lambda^{-3/2} \int_{\Omega} g_\xl \lesssim \lambda^{-4}\log\lambda \,.
$$

Since $a$ is differentiable, we can expand the first integral as
$$
\int_\Omega a g_\xl \pl U_\xl = a(x) \int_\Omega g_\xl \pl U_\xl + \mathcal O\left( \int_\Omega |x-y| g_\xl |\pl U_\xl| \right).
$$
We have
$$
\int_\Omega g_\xl \pl U_\xl = \lambda^{-3} \int_{\lambda(\Omega-x)} g_{0,1} \partial_\lambda U_{0,1} = \lambda^{-3} \int_{\R^3} g_{0,1} \partial_\lambda U_{0,1} + o(\lambda^{-3})
$$
and
$$
\int_{\R^3} g_{0,1} \partial_\lambda U_{0,1} = 4\pi \int_0^\infty \left( \frac1r - \frac1{\sqrt{1+r^2}} \right) \frac{1-r^2}{2(1+r^2)^{3/2}} \,r^2\,dr = 2\pi(3-\pi) \,.
$$
Using similar bounds one verifies that
$$
\int_\Omega |x-y| g_\xl |\pl U_\xl| \lesssim \lambda^{-4} \int_{\lambda(\Omega-x)} |z| g_{0,1} |\partial_\lambda U_{0,1} | \lesssim \lambda^{-4} \,.
$$
This completes our discussion of the second term on the right side of \eqref{eq:expnp1}.

For the third term on the right side of \eqref{eq:expnp1}, we write $\psi_\xl= \lambda^{-1/2} G_a(x,\cdot) - f_\xl-g_\xl$ and get
\begin{align*}
	& \int_\Omega V\psi_\xl\pl\psi_\xl = \int_\Omega V \left( \lambda^{-1/2} G_a(x,\cdot) - g_\xl \right) \partial_\lambda \left( \lambda^{-1/2} G_a(x,\cdot) - g_\xl \right) + o(\lambda^2) \\
	& = - \frac12 \lambda^{-2} Q_V(x) + \mathcal O \left( \lambda^{-3/2}  \int_\Omega G_a(x,\cdot) g_\xl + \lambda^{-1/2} \int_\Omega G_a(x,\cdot) |\pl g_\xl| + \int_\Omega g_\xl |\pl g_\xl| \right) \\
	& \quad + o(\lambda^2) \\
	&= - \frac12 \lambda^{-2} Q_V(x) + \mathcal O\left(\lambda^{-3/2} \|G_a(x,\cdot)\|_2 \|g_\xl\|_2 + \lambda^{-1} \|G_a(x,\cdot)\|_2 \|\pl g_\xl\|_2 + \|g_\xl\|_2 \|\pl g_\xl\|_2 \right) \\
	& \quad + o(\lambda^{-2}) \\
	&= - \frac12 \lambda^{-2} Q_V(x) + o( \lambda^{-2}). 
\end{align*}
In the last equality we used the bounds from Lemma \ref{lem-g} and the fact that $G_a(x,\cdot)\in L^2(\Omega)$. This completes our discussion of the third term on the right side of \eqref{eq:expnp1} and concludes the proof of (a).

\medskip

(b) We note that \eqref{eq:eqpsi} yields
$$
-\Delta\pl\psi_\xl + a \pl\psi_\xl = 15 U_\xl^4 \pl U_\xl - a\left( \pl f_\xl + \pl g_\xl \right).
$$
Because of this equation, the quantity of interest can be written as
\begin{align}\label{eq:expnp2}
	& \int_\Omega \left( \nabla q\cdot\nabla\pl\psi_\xl + a q \pl\psi_\xl -15 \alpha^4 q\psi_\xl^4 \pl\psi_\xl \right) \notag \\
	& = 15 \int_\Omega q \left( U_\xl^4 \pl U_\xl - \alpha^4 \psi_\xl^4 \pl\psi_\xl\right)
	- \int_\Omega a q \left( \pl f_\xl + \pl g_\xl \right).
\end{align}
We discuss the two integrals on the right side separately.

We have
\begin{align}
	\label{eq:expnp21}
	\int_\Omega q \left( U_\xl^4 \pl U_\xl - \alpha^4 \psi_\xl^4 \pl\psi_\xl\right)
	& = (1-\alpha^4) \int_\Omega q U_\xl^4 \pl U_\xl \notag \\
	& \quad + \alpha^4 \int_\Omega q \left( U_\xl^4 \pl U_\xl - \psi_\xl^4 \pl\psi_\xl\right).
\end{align}
The first integral is, by the orthogonality condition $0 = \int_\Omega \nabla w\cdot \nabla\pl P U_\xl = 15 \int_\Omega w U_\xl^4\pl U_\xl$,
\begin{align} \label{qU^4-U}
\int_\Omega q U_\xl^4 \pl U_\xl & = \lambda^{-1/2} \int_\Omega \left( H_a(x,\cdot) - H_0(x,\cdot) \right) U_\xl^4 \pl U_\xl \notag \\
& = - \frac2{15} \pi \left( \phi_a(x)-\phi_0(x)\right) \lambda^{-2} + 
\mathcal O(\lambda^{-3}).
\end{align}
For the second integral on the right side of \eqref{eq:expnp21} we have
\begin{align}\label{u^4-psi^4}
	& \int_\Omega q \left( U_\xl^4 \pl U_\xl - \psi_\xl^4 \pl\psi_\xl\right)   \notag \\ 
	& = \int_\Omega q \left( U_\xl^4 \pl U_\xl - (U_\xl - \lambda^{-1/2} H_a(x,\cdot))^4 \pl \left( U_\xl - \lambda^{-1/2} H_a(x,\cdot) \right) \right)	+ o(\lambda^{-3})\nonumber\\
	& = \mathcal O(\phi_a(x)\lambda^{-3}) + o(\epsilon\lambda^{-2}) + o(\lambda^{-3}) \,.
\end{align}
Let us justify the claimed bound here for a typical term. We write $H_a(x,y) = \phi_a(x) + \mathcal O(|x-y|)$ and get 
$$
\int_\Omega q U_\xl^4 \lambda^{-3/2} H_a(x,\cdot) = \lambda^{-3/2} \phi_a(x) \int_\Omega q U_\xl^4 + \mathcal O\left( \lambda^{-3/2} \int_\Omega q U_\xl^4 |x-y| \right).
$$
Using the bound \eqref{eq:qbound} on $q$ and Lemma \ref{lemma Lq norm of U} we get
$$
\left| \int_\Omega q U_\xl^4 \right| \leq \|q\|_6 \|U_\xl\|_{24/5}^4 \lesssim \lambda^{-3/2} + \epsilon \lambda^{-1} \,.
$$
The remainder term is better because of the additional factor of $|x-y|$. We gain a factor of $\lambda^{-1}$ since
$$
\left\| |x-\cdot|^{1/4} U_\xl \right\|_{24/5}^4 \lesssim \lambda^{-3/2} \,.
$$
Another typical term,
$$
\int_\Omega q U_\xl^3 \lambda^{-1/2} H_a(x,\cdot) \pl U_\xl \,,
$$
can be treated in the same way, since the bounds for $\pl U_\xl$ are the same as for $\lambda^{-1} U_\xl$; see Lemma \ref{lemma Lq norm of U}. The remaining terms are easier. This completes our discussion of the first term on the right side of \eqref{eq:expnp2}.

The second term on the right side of \eqref{eq:expnp2} is negligible. Indeed,
\begin{equation} \label{negligible}
\int_\Omega a q \left( \pl f_\xl + \pl g_\xl \right) = \mathcal O( \|q\|_6 \|\pl g_\xl\|_{6/5}) + o(\lambda^{-3}) = o(\lambda^{-3}) \,,
\end{equation}
where we used Lemma \ref{lem-g} and the same bound on $q$ as before. This completes our discussion of the second term on the right side of \eqref{eq:expnp2} and concludes the proof of (b).

\medskip

(c) We use the form \eqref{expansion s} of the zero modes $s$, as well as the bounds on $\|\nabla s\|_2$ and $\|\nabla r\|_2$ from \eqref{bounds s} and \eqref{nabla r subsection}, to find
\begin{align} 
	& \int_\Omega q^2\,  \psi_\xl^3 \, \pl \psi_\xl  = \int_\Omega s^2\,  \psi_\xl^3 \, \pl \psi_\xl  + \mathcal O(\phi_a(x) \lambda^{-3}) + o(\lambda^{-3}) + o(\eps \lambda^{-2}) \nonumber \\
	& = \beta^2\lambda^{-2} \int_\Omega U_\xl^5 \, \pl U_\xl  +2\beta\gamma\, \lambda^{-1} 
	\int_\Omega U_\xl^4 \, (\pl U_\xl )^2 +\gamma^2 \int_\Omega U_\xl^3 \, (\pl U_\xl )^3 \nonumber \\
	& \quad + \mathcal O(\phi_a(x) \lambda^{-3}) + o(\lambda^{-3}) + o(\eps \lambda^{-2}) \, . \label{rk-2}
\end{align}
A direct calculation using \eqref{pl-U} gives 
$$
\lambda^{-2} \int_\Omega U_\xl^5 \, \pl U_\xl = o(\lambda^{-3}), \qquad \int_\Omega U_\xl^3 \, (\pl U_\xl )^3 = o(\lambda^{-3})
$$
and 
\begin{align*}
	\int_\Omega U_\xl^4 \, (\pl U_\xl )^2 & = \frac 14\, \lambda^{-2} \int_\Omega U_\xl^6 -\lambda^3 \int_\Omega   \frac{ |x-y|^2}{(1+\lambda^2\, |x-y|^2)^{4}}\, + \lambda^5 \int_\Omega   \frac{ |x-y|^4}{(1+\lambda^2\, |x-y|^2)^{5}}\ \\
	& = \frac{\pi^2}{16}\, \lambda^{-2} -4\pi \lambda^{-2}\, \int_0^\infty\frac{t^4\, dt}{(1+t^2)^4} +4\pi \lambda^{-2} \int_0^\infty\frac{t^6\, dt}{(1+t^2)^5}+  o(\lambda^{-2})  \\
	& = \frac{\pi^2}{64}\, \lambda^{-2}+  o(\lambda^{-2}) .
\end{align*}
Inserting this into \eqref{rk-2} gives the claimed expansion (c).

The proof of (d) uses similar bounds as in the rest of the proof and is omitted.
\end{proof}

\begin{proof}[Proof of Proposition \ref{phiaexp}]
Combining \eqref{pohozaev identity lambda refined II} with Lemma \ref{lemma pohozaev bis} yields
\begin{align}
\label{phi a intermed}
0 &= -4 \pi \phi_a(x) \lambda^{-2}  - Q_V(x) \eps \lambda^{-2} + 4 \pi^2 a(x) \lambda^{-3} + \lambda^{-3} R \notag \\
& \quad + \mathcal O(\phi_a(x) \lambda^{-3}) + o(\lambda^{-3}) + o(\eps \lambda^{-2})
\end{align}
with 
\begin{equation*}
R = \lambda(1-\alpha^4) 4\pi \left( \phi_a(x)+\phi_0(x) \right) + 30\pi^2\phi_a(x)^2 - \frac{15}{8} \beta \gamma \pi^2 \,.
\end{equation*}
We now make use of the expansion \eqref{alpha4 exp subsec} of $\alpha^4-1$ and obtain
\begin{align*}
	R & = 16 \beta \pi \phi_0(x)  - \frac{15}{8} \beta \gamma \pi^2 + \mathcal O(\phi_a(x) + \lambda^{-1} + \epsilon) \,.
\end{align*}
Inserting the expansions \eqref{eq beta gamma subsec} of $\beta$ and $\gamma$, we find the cancellation
\begin{align}
	\label{R3}
	R & = \mathcal O(\phi_a(x) + \lambda^{-1} + \epsilon) \,. 
\end{align}
In particular, $R=\mathcal O(1)$ and, inserting this into \eqref{phi a intermed}, we obtain 
\[ \phi_a(x) = \mathcal O(\lambda^{-1} + \eps). \]
In particular, for the error term in \eqref{phi a intermed}, we have $\phi_a(x) \lambda^{-3} = o(\lambda^{-3})$ and, moreover, by \eqref{R3}, $R=\mathcal O(\lambda^{-1} + \eps)$. Inserting this bound into \eqref{phi a intermed}, we obtain the claimed expansion \eqref{phi a exp subsection}. 
\end{proof}

\subsection{Bounding $\nabla \phi_a(x)$}

In this subsection we prove the bound on $\nabla \phi_a(x)$ in Proposition~\ref{prop second expansion}.

\begin{proposition} \label{prop-grad-phi}
	For every $\mu<1$, as $\eps\to 0$, 
\begin{equation}  \label{bound-grad-phi}
|\nabla \phi_a(x)| \lesssim \eps^\mu \,.
\end{equation}
\end{proposition}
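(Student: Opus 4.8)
The plan is to exploit the one Pohozaev identity not yet used, namely the one obtained by testing the equation \eqref{equation u} for $u$ against the translation zero modes $\partial_{x_j}\psi_\xl$, $j=1,2,3$. This is the exact analogue of the proof of Proposition \ref{phiaexp}, which used the dilation mode $\partial_\lambda\psi_\xl$; the new difficulty is that the translation modes are considerably more singular, since $\|\nabla\partial_{x_j}PU_\xl\|\sim\lambda$ whereas $\|\nabla\partial_\lambda PU_\xl\|\sim\lambda^{-1}$, so the error terms have to be handled more carefully. First I would note that $\partial_{x_j}\psi_\xl\in H^1_0(\Omega)$: both $PU_{x,\lambda}$ and $(H_a-H_0)(x,\cdot)$ vanish on $\partial\Omega$ for every $x$, hence so do their $x$-derivatives, and the integrations by parts below produce no boundary terms. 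Testing \eqref{equation u} against $\partial_{x_j}\psi_\xl$ and inserting $u=\alpha(\psi_\xl+q)$ yields, exactly as in \eqref{pohozaev identity lambda refined II},
\begin{align*}
& \int_\Omega\left( \nabla\psi_\xl\cdot\nabla\partial_{x_j}\psi_\xl + (a+\eps V)\psi_\xl\partial_{x_j}\psi_\xl - 3\alpha^4\psi_\xl^5\partial_{x_j}\psi_\xl \right) \\
& = -\int_\Omega\left( \nabla q\cdot\nabla\partial_{x_j}\psi_\xl + a\, q\,\partial_{x_j}\psi_\xl - 15\alpha^4 q\,\psi_\xl^4\partial_{x_j}\psi_\xl \right) + 30\alpha^4\int_\Omega q^2\psi_\xl^3\partial_{x_j}\psi_\xl + \mathcal R_j \,,
\end{align*}
with a higher-order remainder $\mathcal R_j$.

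Next I would expand every term, using $-\Delta\psi_\xl+a\psi_\xl=3U_\xl^5-a(f_\xl+g_\xl)$ from \eqref{eq:eqpsi} on the left and the analogous identity $-\Delta\partial_{x_j}\psi_\xl+a\,\partial_{x_j}\psi_\xl=15U_\xl^4\partial_{x_j}U_\xl-a(\partial_{x_j}f_\xl+\partial_{x_j}g_\xl)$ on the right. The leading contribution arises from integrals such as $\int_\Omega U_\xl^4\,H_a(x,\cdot)\,\partial_{x_j}U_\xl$: because $\int_{\R^3}U_\xl^4\partial_{x_j}U_\xl=0$ by translation invariance, it is the first-order Taylor coefficient of $H_a(x,\cdot)$ along the diagonal that survives, and since $\nabla_1H_a(x,x)=\tfrac12\nabla\phi_a(x)$ one extracts a main term equal to a nonzero multiple of $\nabla\phi_a(x)\,\lambda^{-1}$. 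This requires the $\partial_{x_j}$-analogues of the expansions in Lemma \ref{lemma U Ha} (differentiating $H_a$ in its first variable), which come from the same scaling computations. The remaining pieces of the left side are treated as in Lemma \ref{lemma pohozaev bis}: the $f_\xl$-terms are negligible by the bounds of Lemma \ref{lemma PU}; the $g_\xl$-terms are controlled by Lemma \ref{lem-g}, the leading ones cancelling because $\partial_{x_j}U_\xl$ is odd about the concentration point; and $\eps\int_\Omega V\psi_\xl\partial_{x_j}\psi_\xl$ contributes, after writing $\psi_\xl=\lambda^{-1/2}G_a(x,\cdot)-f_\xl-g_\xl$, essentially $\tfrac{\eps}{2}\lambda^{-1}\int_\Omega V\,\partial_{x_j}\!\big(G_a(x,\cdot)^2\big)$, with the $g_\xl$'s regularizing the (odd, hence principal-value convergent) singularity of $\partial_{x_j}(G_a^2)$ at the diagonal.

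On the right side, the orthogonality relations $\int_\Omega\nabla r\cdot\nabla\partial_{x_j}PU_\xl=0$ and $\int_\Omega\nabla w\cdot\nabla\partial_{x_j}PU_\xl=0$ kill the term linear in $q$ that is of the same order as the main term; the leftover, coming from $q=w+\lambda^{-1/2}(H_a-H_0)(x,\cdot)$, is a multiple of $(1-\alpha^4)\lambda^{-1}\nabla(\phi_a-\phi_0)(x)$ and hence, by Corollary \ref{cor-lambda-eps} and $\nabla\phi_0(x)=\mathcal O(1)$, of order $\mathcal O(\lambda^{-2})+o(\lambda^{-1})|\nabla\phi_a(x)|$. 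The contributions of $s$ are estimated with Propositions \ref{proposition s} and \ref{prop-beta-gamma}, those of $r$ with Proposition \ref{prop-r-bound} and $\|\nabla r\|=\mathcal O(\eps\lambda^{-1/2})$ from Corollary \ref{cor-lambda-eps}, the quadratic term with \eqref{bounds s} and \eqref{nabla r subsection}, and $\mathcal R_j$ similarly; together with $\phi_a(x)=\mathcal O(\eps)$ from Corollary \ref{cor-lambda-eps}, these are all $o(\lambda^{-1})|\nabla\phi_a(x)|+\mathcal O(\lambda^{-2})+\mathcal O(\eps\lambda^{-1})$ up to a logarithmic factor in $\lambda$. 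Collecting everything, dividing by $\lambda^{-1}$, and absorbing the terms carrying $o(1)|\nabla\phi_a(x)|$, one arrives at $|\nabla\phi_a(x)|\lesssim\lambda^{-1}+\eps\,(1+|\log\lambda|)$. Since $\lambda^{-1}=\mathcal O(\eps)$ by Corollary \ref{cor-lambda-eps}, and since $\lambda$ is bounded by a fixed power of $\eps^{-1}$ so that $\eps|\log\lambda|\lesssim\eps|\log\eps|$, both terms are $o(\eps^\mu)$ for every $\mu<1$, which is \eqref{bound-grad-phi}.

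The main obstacle is precisely this error analysis. Because the Dirichlet energy of $\partial_{x_j}PU_\xl$ carries two extra powers of $\lambda$ compared with $PU_\xl$ and $\partial_\lambda PU_\xl$, several terms that were comfortably negligible in the proof of Proposition \ref{phiaexp} become borderline and can only be controlled by using simultaneously the precise orthogonality conditions, the odd parity of $\partial_{x_j}U_\xl$ about the concentration point, and the sharp bounds on $r$, $s$ and $g_\xl$; it is this borderline behaviour, together with the fact that the sharp two-sided relation $\lambda^{-1}\asymp\eps$ is not yet available, that forces the slightly lossy exponent $\mu<1$ in place of $\mu=1$.
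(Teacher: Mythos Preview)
Your strategy is sound but differs from the paper's. The paper does \emph{not} test against $\partial_{x_j}\psi_\xl$; instead it integrates the equation against $\nabla u$ itself, yielding the boundary Pohozaev identity $-\int_\Omega\nabla(a+\eps V)\,u^2=\int_{\partial\Omega}n\,(\partial_n u)^2$. Writing $u=\alpha(\psi_\xl+q)$, the $\psi_\xl$--part of this identity is shown (Lemma~\ref{lemma I psi}) to equal $4\pi\,\nabla\phi_a(x)\,\lambda^{-1}+\mathcal O(\lambda^{-1-\mu})$, while the $q$--contributions are controlled through the boundary trace estimate $\|\partial_n q\|_{L^2(\partial\Omega)}\lesssim\eps\lambda^{-1/2}$ (Lemma~\ref{lemma q bdry integral}), proved via the inequality \eqref{trace estimate w} and a Moser--type argument as in Lemma~\ref{lemma w bdry integral}. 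This gives $|\nabla\phi_a(x)|\lesssim\eps+\lambda^{-\mu}$, hence $\lesssim\eps^\mu$ by $\lambda^{-1}=\mathcal O(\eps)$. Your route keeps everything interior and parallels Section~\ref{subsection phi a}; the paper's route trades the delicate interior cancellations you describe (coming from the large size $\|\nabla\partial_{x_j}PU_\xl\|\sim\lambda$) for a single boundary estimate on $q$.

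One genuine gap in your write--up: the claim that ``$\lambda$ is bounded by a fixed power of $\eps^{-1}$'' is not available here. At this stage only $\lambda^{-1}=\mathcal O(\eps)$ is known, and in fact $\eps\lambda\to\infty$ is allowed (it occurs whenever $Q_V(x_0)=0$). Hence an error of the form $\eps\,|\log\lambda|$ cannot be absorbed into $\eps^\mu$. This is repairable rather than fatal: for the $\eps V$ term, writing the dominant piece $U_\xl\partial_{x_j}U_\xl=-\tfrac12\,\partial_{y_j}(U_\xl^2)$ and integrating by parts onto $V\in C^{0,1}$ gives $\mathcal O(\eps\lambda^{-1})$ with no logarithm, and the loss from the diagonal expansion of $H_a$ enters naturally as $\lambda^{-\mu}$ (see Remark~\ref{rem:U^4 pxi Hb}) rather than a log. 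With these adjustments your route yields $|\nabla\phi_a(x)|\lesssim\eps+\lambda^{-\mu}$, the same endpoint as the paper.
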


The proof of this proposition is a refined version of the proof of Proposition \ref{prop bdry concentration}. It is also based on expanding the Pohozaev identity \eqref{pohozaev type u}. Abbreviating, for $v,z \in H^1(\Omega)$,
\begin{equation} \label{I-def}
I[v,z] :=  \int_{\partial \Omega} \frac{\partial v}{\partial n} \frac{\partial z}{\partial n} n  + \int_\Omega (\nabla a) vz ,
\end{equation}
and writing $u = \alpha (\psi_\xl + q)$, we can write identity \eqref{pohozaev type u} as
\begin{equation}
	\label{pohoz-nabla-u}
	0 = I[\psi_\xl] + 2 I[\psi_\xl, q] + I[q] + \eps \int_\Omega (\nabla V) (\psi_\xl + q)^2 \,.
\end{equation}
The following lemma extracts the leading contribution from the main term $I[\psi_\xl]$. 
 
\begin{lemma}
\label{lemma I psi}
$I[\psi_\xl] = 4 \pi \nabla \phi_a(x) \lambda^{-1} +  \mathcal O(\lambda^{-1-\mu})$ for every $\mu < 1$. 
\end{lemma}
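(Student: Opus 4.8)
The plan is to convert $I[\psi_\xl]$ into a volume integral over $\Omega$ by means of the vectorial Pohozaev identity. For any $v\in H^2(\Omega)\cap H^1_0(\Omega)$ one has, by the same integration by parts that produces \eqref{pohozaev type u},
\[
I[v] \;=\; -2\int_\Omega \bigl((-\Delta+a)v\bigr)\,\nabla v \,.
\]
Since $\psi_\xl\in H^2(\Omega)\cap H^1_0(\Omega)$ and, by \eqref{eq:eqpsi}, $(-\Delta+a)\psi_\xl = 3U_\xl^5 - a(f_\xl+g_\xl)$, this gives
\[
I[\psi_\xl] \;=\; -6\int_\Omega U_\xl^5\,\nabla\psi_\xl \;+\; 2\int_\Omega a\,(f_\xl+g_\xl)\,\nabla\psi_\xl \,.
\]
The second integral is a lower-order error: writing $\nabla\psi_\xl = \nabla U_\xl - \lambda^{-1/2}\nabla_y H_a(x,\cdot) - \nabla f_\xl$, using $\|\nabla_y H_a(x,\cdot)\|_{L^\infty(\Omega)}\lesssim 1$, the bound $\|f_\xl\|_\infty=\mathcal O(\lambda^{-5/2})$ from Lemma \ref{lemma PU}, the bounds on $g_\xl$ from Lemma \ref{lem-g}, the bound $\|\nabla U_\xl\|_{L^1(\Omega)}=\mathcal O(\lambda^{1/2})$, and the oddness of $\nabla U_\xl$ about $x$ (which kills the leading constant when $g_\xl\,\nabla U_\xl$ is integrated), one finds it is $\mathcal O(\lambda^{-2})$, hence $\mathcal O(\lambda^{-1-\mu})$.

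For the first integral I would integrate by parts once more, exploiting the crucial fact that $\psi_\xl=0$ on $\partial\Omega$, so the boundary term vanishes:
\[
-6\int_\Omega U_\xl^5\,\nabla\psi_\xl \;=\; 6\int_\Omega \nabla(U_\xl^5)\,\psi_\xl \;=\; 30\int_\Omega U_\xl^4\,(\nabla U_\xl)\,\psi_\xl \,.
\]
Inserting $\psi_\xl = U_\xl - \lambda^{-1/2}H_a(x,\cdot) - f_\xl$, the $U_\xl$-term equals $5\int_\Omega\nabla(U_\xl^6) = 5\int_{\partial\Omega}U_\xl^6\,n = \mathcal O(\lambda^{-3})$, and the $f_\xl$-term is $\mathcal O(\|f_\xl\|_\infty\,\|\nabla(U_\xl^5)\|_{L^1(\Omega)}) = \mathcal O(\lambda^{-3})$, so
\[
I[\psi_\xl] \;=\; -30\,\lambda^{-1/2}\int_\Omega U_\xl^4\,(\nabla U_\xl)\,H_a(x,\cdot) \;+\; \mathcal O(\lambda^{-1-\mu}) \,.
\]
To evaluate the remaining integral I would use the near-diagonal behaviour of $H_a$ provided by Lemma \ref{lemma U Ha} (recalling $\phi_a\in C^2$ from Lemma \ref{lemma C2} and the symmetry $H_a(x,y)=H_a(y,x)$): for every $\mu<1$,
\[
H_a(x,y) \;=\; \phi_a(x) \;+\; \tfrac12\,\nabla\phi_a(x)\cdot(y-x) \;-\; \tfrac{a(x)}{2}\,|x-y| \;+\; \mathcal O\bigl(|x-y|^{1+\mu}\bigr) \,,
\]
the linear coefficient being pinned down as $\tfrac12\nabla\phi_a(x)$ by differentiating $\phi_a(x)=H_a(x,x)$ and using the symmetry. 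Inserting this: the constant $\phi_a(x)$ contributes $\mathcal O(\lambda^{-5/2})$ (via $\int_\Omega\nabla(U_\xl^5)=\int_{\partial\Omega}U_\xl^5 n$); the non-smooth term $-\tfrac{a(x)}{2}|x-y|$ is even about $x$, hence pairs with the odd field $U_\xl^4\nabla U_\xl$ to give only a tail contribution $\mathcal O(\lambda^{-5/2})$; the remainder gives $\mathcal O\bigl(\lambda^{-1/2}\int_\Omega U_\xl^4|\nabla U_\xl|\,|x-y|^{1+\mu}\bigr)=\mathcal O(\lambda^{-1-\mu})$ after rescaling $z=\lambda(y-x)$; and the linear term, after writing $U_\xl^4\partial_{y_j}U_\xl = \tfrac15\partial_{y_j}(U_\xl^5)$, integrating by parts (boundary term $\mathcal O(\lambda^{-5/2})$) and using $\int_{\R^3}U_\xl^5 = \tfrac{4\pi}{3}\lambda^{-1/2}$, produces $4\pi\,\nabla\phi_a(x)\,\lambda^{-1} + \mathcal O(\lambda^{-3})$. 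Collecting all pieces yields $I[\psi_\xl] = 4\pi\nabla\phi_a(x)\lambda^{-1} + \mathcal O(\lambda^{-1-\mu})$.

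The main obstacle is the near-diagonal analysis of $H_a$. Because $(-\Delta_y+a)H_a(x,\cdot)=a\,|x-\cdot|^{-1}\notin L^\infty$, the regular part $H_a(x,\cdot)$ is in general only $C^{1,\mu}$ for $\mu<1$ near $y=x$ (not $C^2$) and carries the genuinely non-differentiable piece $-\tfrac{a(x)}{2}|x-y|$; this is precisely why the error is $\mathcal O(\lambda^{-1-\mu})$ rather than $\mathcal O(\lambda^{-2})$. Two points require care: identifying the linear coefficient as $\tfrac12\nabla\phi_a(x)$ (where the symmetry of $H_a$, as in \cite{FrKoKo1}, is used), and verifying that the non-smooth term $-\tfrac{a(x)}{2}|x-y|$ does not contaminate the main term — its evenness about $x$ against the odd $\nabla U_\xl$ is what rescues it. Everything else is routine bookkeeping with the $L^q$-estimates for $U_\xl$, $\nabla U_\xl$, $f_\xl$ and $g_\xl$ from Lemmas \ref{lemma Lq norm of U}, \ref{lemma PU} and \ref{lem-g}, together with the standing facts $d^{-1}=\mathcal O(1)$ and $\lambda\to\infty$.
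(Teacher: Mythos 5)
Your proposal is correct and follows essentially the paper's route: the same Pohozaev identity $I[\psi_\xl]=-2\int_\Omega((-\Delta+a)\psi_\xl)\nabla\psi_\xl$ with \eqref{eq:eqpsi}, the same reduction to the main term $\lambda^{-1/2}\int_\Omega U_\xl^4(\nabla U_\xl)H_a(x,\cdot)$, and your inline evaluation via the near-diagonal expansion of $H_a$ (with the linear coefficient $\tfrac12\nabla\phi_a(x)$ and oddness killing the $|x-y|$ piece) is precisely how the paper proves \eqref{U^4 pxi Hb}, which it then cites through Remark \ref{rem:U^4 pxi Hb}. The only deviations are harmless bookkeeping: you integrate by parts once more using $\psi_\xl=0$ on $\partial\Omega$ instead of keeping the paper's splitting, and in the error term $\int_\Omega a(f_\xl+g_\xl)\nabla\psi_\xl$ you implicitly need a bound on $\nabla_y f_\xl$ not stated in Lemma \ref{lemma PU} (the paper avoids differentiating $f_\xl$ by a further integration by parts), but these terms are $\mathcal O(\lambda^{-2})$ either way, well inside the allowed $\mathcal O(\lambda^{-1-\mu})$.
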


On the other hand, the next lemma allows to control the error terms involving $q$. 

\begin{lemma}
\label{lemma q bdry integral}
$\|\frac{\partial q}{\partial n}\|_{L^2(\partial \Omega)}  \lesssim   \eps \lambda^{-1/2}$.
\end{lemma}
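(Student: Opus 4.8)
The plan is to follow the proof of Lemma~\ref{lemma w bdry integral}, but with $w$ replaced by $q$ and exploiting the sharper error bounds that are now available. First I would record the equation for $q$: since $u = \alpha(\psi_\xl + q)$ solves \eqref{equation u} and $\psi_\xl$ satisfies \eqref{eq:eqpsi}, subtracting yields $-\Delta q = \tilde F$ with
\[
\tilde F := 3\alpha^4(\psi_\xl + q)^5 - 3 U_\xl^5 + a(f_\xl + g_\xl) - a q - \eps V(\psi_\xl + q) \,.
\]
The structural gain over Lemma~\ref{lemma w bdry integral} is that $\tilde F$ no longer contains the ``bad'' term $a\,PU_\xl$ which produced the factor $\lambda^{-1/2}$ there; the only linear contributions are the negligible $a(f_\xl+g_\xl) - a q$ and the term $\eps V\psi_\xl$, which is precisely of the target order $\eps\lambda^{-1/2}$. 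Moreover $q \in H^2(\Omega)\cap H^1_0(\Omega)$, just as $w$ in Lemma~\ref{lemma w bdry integral}: the singular part $\lambda^{-1/2}(H_a(x,\cdot)-H_0(x,\cdot))$ hidden in $q = w + \lambda^{-1/2}(H_a(x,\cdot)-H_0(x,\cdot))$ vanishes on $\partial\Omega$, because $G_a(x,\cdot) = G_0(x,\cdot) = 0$ there. With the cut-off $\zeta(y) = \chi((y-x)/d)$ as in Lemma~\ref{lemma w bdry integral}, which satisfies $\zeta \equiv 1$ and $\nabla\zeta \equiv 0$ on $\partial\Omega$ (since $d = d(x,\partial\Omega)$), one has $\partial q/\partial n = \partial(\zeta q)/\partial n$ on $\partial\Omega$, so the trace inequality \eqref{trace estimate w} applied to $\zeta q$ gives
\[
\Bigl\| \frac{\partial q}{\partial n} \Bigr\|_{L^2(\partial\Omega)}^2 \lesssim \| \zeta \tilde F \|_{3/2}^2 + \bigl\| |\nabla\zeta|\,|\nabla q| \bigr\|_{3/2}^2 + \| (\Delta\zeta) q \|_{3/2}^2 \,.
\]

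For the two cut-off terms I would use that $\nabla\zeta$ and $\Delta\zeta$ are supported in $B_d(x)\setminus B_{d/2}(x)$, which lies at distance $\gtrsim 1$ from $x$ since $d^{-1} = \mathcal O(1)$ by Proposition~\ref{prop bdry concentration}. There $\|\nabla q\|_{L^2}$ and $\|q\|_{L^2}$ are small: $\|\nabla s\|_{L^2(\Omega\setminus B_{d/2}(x))} = \mathcal O(\lambda^{-3/2})$ by \eqref{bound nabla s outside} and $\|s\|_2 = \mathcal O(\lambda^{-3/2})$ by \eqref{bounds s}, while $\|\nabla r\| = \mathcal O(\eps\lambda^{-1/2})$ and $\|r\|_2 \lesssim \|r\|_6 \lesssim \|\nabla r\|$ by Corollary~\ref{cor-lambda-eps}; combined with $\lambda^{-1} = \mathcal O(\eps)$ this gives $\mathcal O(\eps\lambda^{-1/2})$. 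For $\|\zeta\tilde F\|_{3/2}$ I would split $\tilde F$ into its summands. Since $U_\xl \lesssim \lambda^{-1/2}$ on $\supp\zeta$, no cancellation is needed and there one has the pointwise bound $\zeta|\tilde F| \lesssim \zeta\bigl( U_\xl^5 + |q|^5 + \lambda^{-5/2}|H_a(x,\cdot)|^5 + |f_\xl|^5 + |f_\xl| + |g_\xl| + |q| + \eps(U_\xl + \lambda^{-1/2} + |q|) \bigr)$. The linear and $\eps$-contributions are decisive: using $\|q\|_{3/2} \lesssim \|s\|_2 + \|r\|_6 \lesssim \lambda^{-3/2} + \eps\lambda^{-1/2}$, $\|\psi_\xl\|_{3/2} \lesssim \lambda^{-1/2}$ (Lemmas~\ref{lemma Lq norm of U} and \ref{lemma PU}), and the fact that the terms $a(f_\xl+g_\xl)$ are negligible by Lemmas~\ref{lemma PU} and \ref{lem-g}, they contribute $\mathcal O(\eps\lambda^{-1/2})$ in total, saturated by $\eps V\psi_\xl$. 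The high powers $U_\xl^5$, $\lambda^{-5/2}|H_a|^5$, $|f_\xl|^5$ contribute $\mathcal O(\lambda^{-5/2}) = o(\eps\lambda^{-1/2})$ by the $L^p$-bounds on $U_\xl$ away from $x$ (Lemma~\ref{lemma Lq norm of U}) together with $\lambda^{-1} = \mathcal O(\eps)$.

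The single term that resists $H^1_0$-norm bounds is $\|\zeta q^5\|_{3/2}$, since $5\cdot\tfrac32 > 6$; this is the main obstacle, and — an $L^\infty$ bound on $q$ not being available at this stage — I would handle it by the Moser-type argument of Lemma~\ref{lemma w bdry integral}: multiplying $-\Delta q = \tilde F$ by $\zeta^{1/2}|q|^{1/2}q$, using the pointwise inequality \eqref{eq:ptwineq} (with $w$ replaced by $q$) and the Sobolev inequality for $\zeta^{1/4}|q|^{1/4}q$, one obtains as in \eqref{zeta w5 estimate}
\[
\| \zeta q^5 \|_{3/2}^2 \lesssim \Bigl( \int_\Omega |q|^{5/2}\, |\nabla(\zeta^{1/4})|^2 \Bigr)^4 + \Bigl( \int_\Omega |\tilde F|\, \zeta^{1/2} |q|^{3/2} \Bigr)^4 \,.
\]
The $|q|^5$-part of $\tilde F$ produces the contribution $\bigl(\int_\Omega \zeta^{1/2}|q|^{13/2}\bigr)^4 \le \|\zeta q^5\|_{3/2}^2 \|q\|_6^{16}$, which is absorbed into the left side because $\|q\|_6 = o(1)$; all remaining contributions are estimated by high powers of $\|q\|_6 \lesssim \|\nabla q\| \lesssim \lambda^{-1} + \eps\lambda^{-1/2}$ and $L^p$-bounds on $U_\xl$ away from $x$. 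Since $\|\nabla q\|$ is now far smaller than $\|\nabla w\|$ was and $\lambda \gtrsim \eps^{-1}$, even the crude bound $\|q\|_6^{10} \lesssim (\lambda^{-1} + \eps\lambda^{-1/2})^{10}$ makes all of these $o(\eps\lambda^{-1/2})$, so $\|\zeta q^5\|_{3/2} \lesssim \eps\lambda^{-1/2}$, and collecting the estimates gives $\|\partial q/\partial n\|_{L^2(\partial\Omega)} \lesssim \eps\lambda^{-1/2}$. The bulk of the work is bookkeeping; the only genuinely non-routine ingredient, the Moser step, is a direct transcription of the one already carried out for Lemma~\ref{lemma w bdry integral}.
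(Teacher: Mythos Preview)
Your proposal is correct and follows essentially the same approach as the paper: derive the equation $-\Delta q = \tilde F$, apply the trace inequality \eqref{trace estimate w} to $\zeta q$, bound the cut-off terms using $\|\nabla s\|_{L^2(\Omega\setminus B_{d/2}(x))}$ from \eqref{bound nabla s outside} and $\|\nabla r\|$ from Corollary~\ref{cor-lambda-eps}, and treat $\|\zeta q^5\|_{3/2}$ by the same Moser step as in Lemma~\ref{lemma w bdry integral}. The paper condenses your pointwise estimate for $\zeta|\tilde F|$ into the slightly cleaner form $\zeta|F| \lesssim \zeta|q|^5 + \eps\zeta U_\xl + |q| + \eps\lambda^{-1/2}$ (using $U_\xl, |\psi_\xl| \lesssim \lambda^{-1/2}$ and $g_\xl \lesssim \lambda^{-5/2}$ on $\supp\zeta$), but the content is the same.
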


Before proving these two lemmas, let us use them to give the proof of Proposition \ref{prop-grad-phi}. In that proof, and later in this subsection, we will use the inequality
\begin{equation}
	\label{eq:qbound2}
	\|q\|_2 \lesssim \eps \lambda^{-1/2} \,.
\end{equation}
This follows from the bound \eqref{bounds s} on $s$ and the bounds in Corollary \ref{cor-lambda-eps} on $\lambda^{-1}$ and $r$. Note that \eqref{eq:qbound2} is better than the bound \eqref{eq:qbound} in the $L^6$ norm.

\begin{proof}[Proof of Proposition \ref{prop-grad-phi}]
	We shall make use of the bounds
	\begin{equation}
		\label{eq:psibounds2}
		\|\psi_\xl\|_2 +  \| \frac{\partial \psi_\xl}{\partial n}\|_{L^2(\partial \Omega)} \lesssim \lambda^{-1/2} \,.
	\end{equation}
	The first bound follows by writing $\psi_\xl = U_\xl -\lambda^{-1/2}H_a(x,\cdot) + f_\xl$ and using the bounds in Lemmas \ref{lemma Lq norm of U} and \ref{lemma PU} and in \eqref{Ha-bound}. For the second bound we write $\psi_\xl = PU_\xl - \lambda^{-1/2}(H_a(x,\cdot)-H_0(x,\cdot))$ and use the bounds in Lemmas \ref{lemma PU bdry integral} and \ref{lemma nabla Hb bounds}.
 
 Combining the bounds \eqref{eq:psibounds2} with the corresponding bounds for $q$ from Lemma \ref{lemma q bdry integral} and \eqref{eq:qbound2} we obtain
\[ 
\left| I[\psi_\xl, q] \right| \lesssim \eps \lambda^{-1}, \qquad I[q] \lesssim \eps^2 \lambda^{-1} \,.
\]
Moreover, by \eqref{eq:qbound2} and \eqref{eq:psibounds2}, 
$$
\eps \left| \int_\Omega (\nabla V) (\psi_\xl+q)^2 \right| \lesssim \eps \lambda^{-1}. 
$$

In view of these bounds, Lemma \ref{lemma I psi} and equation \eqref{pohoz-nabla-u} imply $|\nabla \phi_a(x)| \lesssim \eps + \lambda^{-\mu}$. Because of \eqref{lambda-1}, this implies \eqref{bound-grad-phi}. 
\end{proof}

It remains to prove Lemmas \ref{lemma I psi} and \ref{lemma q bdry integral}. 

\begin{proof}
[Proof of Lemma \ref{lemma I psi}]
We integrate equation \eqref{eq:eqpsi} for $\psi_\xl$ against $\nabla \psi_\xl$ and obtain
\begin{equation}
	\label{eq:lemmaipsiproof}
	-\frac12 I[\psi_\xl] = 3 \int_\Omega U_\xl^5 \nabla \psi_\xl - \int_\Omega a (f_\xl + g_\xl) \nabla \psi_\xl \,.
\end{equation}
For the first integral on the right side we write $\psi_\xl = U_\xl - \lambda^{-1/2} H_a(x, \cdot) + f_\xl$ and integrate by parts to obtain
\begin{align*}
	3 \int_\Omega U_\xl^5 \nabla \psi_\xl & = 3 \int_{\partial\Omega} U_\xl^5 \left( \frac16\, U_\xl - \lambda^{-1/2}H_a(x,\cdot) + f_\xl \right) n \\
	& \quad + 15 \int_\Omega U_\xl^4 (\nabla U_\xl)  \left(\lambda^{-1/2}H_a(x,\cdot) -f_\xl \right).
\end{align*}
By Lemma \ref{lemma U Ha}, see also Remark \ref{rem:U^4 pxi Hb}, we have
$$
\int_\Omega U_\xl^4 (\nabla U_\xl)  H_a(x,\cdot) = -\int_\Omega U_\xl^4 (\nabla_x U_\xl)  H_a(x,\cdot) = -\frac{2\pi}{15} \nabla\phi_a(x) \lambda^{-1/2} + \mathcal O(\lambda^{-1/2-\mu}) \,.
$$
Finally, since $U_\xl \lesssim \lambda^{-1/2}$ on $\partial\Omega$ and by the bounds on $U_\xl$, $f_\xl$ and $H_a(x,\cdot)$ from Lemmas \ref{lemma Lq norm of U} and \ref{lemma PU} and from \eqref{Ha-bound}, we have
$$
3 \int_{\partial\Omega} U_\xl^5 \left( \frac16 U_\xl - \lambda^{-1/2}H_a(x,\cdot) + f_\xl \right) n + 15 \int_\Omega U_\xl^4 (\nabla U_\xl)  f_\xl = \mathcal O(\lambda^{-2})  \,.
$$
This shows that the first term on the right side of \eqref{eq:lemmaipsiproof} gives the claimed contribution.

On the other hand, for the second term on the right side of \eqref{eq:lemmaipsiproof} we have
\begin{align*}
	& \int_\Omega a(f_\xl+g_\xl)\nabla\psi_\xl = \int_\Omega a (f_\xl +g_\xl) \nabla (U_\xl -\lambda^{-1/2} H_a(x,\cdot)) \\
	& \quad - \frac12 \int_\Omega (\nabla a) f_\xl^2 - \int_\Omega (a\nabla g_\xl + g_\xl \nabla a) f_\xl + \frac12 \int_{\partial\Omega} a f_\xl^2 + \int_{\partial \Omega} a f_\xl g_\xl \\
	& = \int_\Omega a g_\xl \nabla U_\xl + \mathcal O(\lambda^{-3}) \,.
\end{align*}
Here we used bounds from Lemmas \ref{lemma PU} and \ref{lem-g} and from the proof of the latter. Finally, we write $a(y) = a(x) + \mathcal O(|x-y|)$ and using oddness of $g_\xl \nabla U_\xl$ to obtain
$$
\int_\Omega a g_\xl \nabla U_\xl = \mathcal O\left( \int_\Omega |x-y| g_\xl |\nabla U_\xl| \right) = \mathcal O(\lambda^{-2}) \,.
$$
This proves the claimed bound on the second term on the right side of \eqref{eq:lemmaipsiproof}. 
\end{proof}

\begin{proof}
[Proof of Lemma \ref{lemma q bdry integral}]
The proof is analogous to that of Lemma \ref{lemma w bdry integral}. By combining equation \eqref{equation w} for $w$ with $\Delta(H_a(x,\cdot) - H_0(x,\cdot)) = - aG_a(x,\cdot)$, we obtain $-\Delta q = F$ with
$$
F:=  -3U_\xl^5 + 3 \alpha^4 (\psi_\xl + q)^5 - aq + a (f_\xl + g_\xl) - \eps V(\psi_\xl+q) \,.
$$
(We use the same notation as in the proof of Lemma \ref{lemma w bdry integral} for analogous, but different objects.)

We define the cut-off function $\zeta$ as before, but now in our bounds we do not make the dependence on $d$ explicit, since we know already $d^{-1}=\mathcal O(1)$ by Proposition \ref{prop bdry concentration}. Then $\zeta q\in H^2(\Omega)\cap H^1_0(\Omega)$ and
$$
-\Delta(\zeta q) = \zeta F - 2\nabla\zeta \cdot\nabla q - (\Delta\zeta)q  \,.
$$

We claim that
\begin{equation}
	\label{eq:ptwf}
	\zeta |F| \lesssim \zeta |q|^5 + \eps \zeta U_\xl + |q| + \eps \lambda^{-1/2} \,.
\end{equation}
Indeed, on $\Omega \setminus B_{d/2}(x)$, we have $U_\xl \lesssim \lambda^{-1/2}$ and $g_\xl\lesssim \lambda^{-5/2}$. By Corollary \ref{cor-lambda-eps}, we have $\lambda^{-5/2} = \mathcal O(\eps \lambda^{-1/2})$. Moreover, we write $\psi_\xl = U_\xl - \lambda^{-1/2} H_a(x, \cdot) + f_\xl$ and use the bounds on $f_\xl$ and $H_a(x,\cdot)$ from Lemma \ref{lemma PU} and \eqref{Ha-bound}.

Combining \eqref{eq:ptwf} with inequality \eqref{trace estimate w}, we obtain
\begin{align*}
	\left\| \frac{\partial q}{\partial n} \right\|_{L^2(\partial \Omega)} & = \left\| \frac{\partial (\zeta q)}{\partial n} \right\|_{L^2(\partial \Omega)} \lesssim \|\Delta(\zeta q) \|_{3/2} = 	
	 \| \zeta F - 2\nabla\zeta \cdot\nabla q - (\Delta\zeta)q \|_{3/2} \\
	& \lesssim \|\zeta q^5\|_{3/2} + \eps \|\zeta U_\xl\|_{3/2} + \|q\|_{3/2} + \eps \lambda^{-1/2} + \| |\nabla \zeta| |\nabla q|\|_{3/2} + \|(\Delta \zeta) q\|_{3/2} \,.
\end{align*}

It remains to bound the norms on the right side. All terms, except for the first one, are easily bounded. Indeed, by \eqref{eq:qbound2},
\[ \|q\|_{3/2} + \| (\Delta\zeta) q \|_{3/2} \lesssim \|q\|_2 \lesssim \eps \lambda^{-1/2} \]
and 
\[ \| |\nabla \zeta| |\nabla q| \|_{3/2} \lesssim \|\nabla q\|_{L^2(\Omega \setminus B_{d/2}(x))} \leq \|\nabla s\|_{L^2(\Omega \setminus B_{d/2}(x))} + \|\nabla r\|_2 \lesssim \eps \lambda^{-1/2}, \]
where we used $\|\nabla s\|_{L^2(\Omega \setminus B_{d/2}(x))} \lesssim \lambda^{-3/2}$ by Lemma \ref{bounds s} and $\|\nabla r\|_2\lesssim \eps \lambda^{-1/2}$ by Corollary \ref{cor-lambda-eps}. (Notice that for the estimate on $s$ it is crucial that the integral avoids $B_{d/2}(x)$.) Moreover, by Lemma \ref{lemma Lq norm of U},
$$
\|\zeta U_\xl\|_{3/2} \lesssim \|U_\xl\|_{L^{3/2}(\Omega\setminus B_{d/2}(x))} \lesssim \lambda^{-1/2} \,. 
$$

To bound the remaining term $\|\zeta q^5\|_{3/2}$ we argue as in Lemma \ref{lemma w bdry integral} above and get
\begin{align*} \| \zeta q^5\|_{3/2} &= \left(\int_\Omega |\zeta^{1/4} |q|^{1/4} q|^6  \right)^{2/3} \lesssim \left(\int_\Omega |\nabla (\zeta^{1/4} |q|^{1/4} q)|^2  \right)^2	\\
&\lesssim \left( \int_\Omega |q|^{5/2} |\nabla (\zeta^{1/4})|^2  \right) ^2 + \left( \int_\Omega |F|\,  \zeta^{1/2} |q|^{3/2}  \right)^2 \nonumber  \\
& \lesssim \|q\|_6^5 + \left( \int_\Omega |F|\, \zeta^{1/2} |q|^{3/2}  \right)^2. 
\end{align*}
We use the pointwise estimate \eqref{eq:ptwf} on $\zeta F$, which is equally valid for $\zeta^{1/2} F$. The term coming from $|q|^5$ is bounded by
\[ \left( \int_\Omega |q|^{5 + \frac 32}  \zeta^{1/2}   \right)^2 = \left( \int_\Omega ( \zeta|q|^{5})^{1/2} q^4   \right)^2 \leq \|\zeta q^5\|_{3/2} \|q\|_6^{8} = o(\|\zeta q^5\|_{3/2}), \]
which can be absorbed into the left side. The contributions from the remaining terms in the pointwise bound on $\zeta^{1/2} |F|$ can by easily controlled and we obtain
\[ \| \zeta q^5\|_{3/2} \lesssim  \|q\|_6^5 + \lambda^{-5} + (\eps \lambda^{-1/2})^5 \lesssim \epsilon \lambda^{-1/2}. \]
Collecting all the estimates, we obtain the claimed bound.
\end{proof}



\section{Proof of Theorems \ref{thm expansion} and \ref{thm BP}}\label{sec:proofsadd}

\subsection{The behavior of $\phi_a$ near $x_0$}

We are now in a position to complete the proof of Theorem \ref{thm expansion}. Our main remaining goal is to prove 
\begin{equation}
\label{varphi_a final }
\phi_a(x) = o(\epsilon).
\end{equation} 
Once this is shown, we will be able to find a relation between $\lambda$ and $\eps$. The proof of \eqref{varphi_a final } (and only this proof) relies on the nondegeneracy of critical points of $\phi_a$.

We already know that $\phi_a(x_0) = 0$ and that $\phi_a(y)\geq 0$ for all $y\in\Omega$, hence $x_0$ is a critical point of $\phi_a$.  In this subsection we collect the necessary ingredients which exploit this fact. 

\begin{lemma}
\label{lemma C2}
The function $\phi_a$ is of class $C^2$ on $\Omega$. 
\end{lemma}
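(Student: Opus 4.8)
The plan is to exhibit $\phi_a$, locally near any point of $\Omega$, as the restriction to the diagonal of a function on $\Omega\times\Omega$ that is \emph{jointly} $C^2$ near the diagonal; the regularity hypothesis on $a$ enters exactly to make this possible, via an explicit parametrix subtraction.

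\textbf{Step 1: equation for $H_a$ and a parametrix.} Fix $x_0\in\Omega$ and a ball $B:=B_{2r}(x_0)$ with $\overline B\subset\Omega$, so that $a\in C^{2,\sigma}(\overline B)$; everything below uses only interior estimates, so $\partial\Omega$ is irrelevant. From \eqref{Ga-pde} and $-\Delta_x|x-y|^{-1}=4\pi\delta_y$, for each fixed $y\in\Omega$ the function $H_a(x,y)=|x-y|^{-1}-G_a(x,y)$ satisfies $(-\Delta_x+a(x))H_a(x,y)=a(x)/|x-y|$ in $\Omega$; it is classical that $H_a$ is jointly continuous on $\Omega\times\Omega$, so $H_a$ is bounded on $\overline B\times\overline B$. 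Using $\Delta_x|x-y|=2/|x-y|$, set
\[ M(x,y):=H_a(x,y)+\tfrac14\bigl(a(x)+a(y)\bigr)|x-y| , \]
which is symmetric in $(x,y)$, satisfies $M(x,x)=\phi_a(x)$, and solves $(-\Delta_x+a(x))M(\cdot,y)=g(\cdot,y)$ in $\Omega$ with
\[ g(x,y)=\frac{a(x)-a(y)-\nabla a(x)\cdot(x-y)}{2|x-y|}+|x-y|\Bigl(-\tfrac14(\Delta a)(x)+\tfrac14 a(x)(a(x)+a(y))\Bigr). \]
Since $a\in C^{2,\sigma}_{\rm loc}$, Taylor's theorem gives $|a(x)-a(y)-\nabla a(x)\cdot(x-y)|\lesssim|x-y|^2$, and in fact the quotient by $|x-y|$ is bounded and locally jointly $C^{0,\sigma'}$ for every $\sigma'<\sigma$ (using $D^2a\in C^{0,\sigma}_{\rm loc}$); hence $g$ has these two properties, and $g(x,x)=0$. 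Thus the Newtonian singularity of the right-hand side has been removed.

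\textbf{Step 2: joint $C^2$ regularity near the diagonal.} By interior Schauder estimates in the $x$-variable, $M(\cdot,y)\in C^{2,\sigma'}(B_r(x_0))$ with norm bounded uniformly in $y\in B$ (the $L^1_x$, say, norm of $M(\cdot,y)$ being bounded uniformly in $y$). Applying the same estimate to $M(\cdot,y)-M(\cdot,y')$, and interpolating the joint Hölder bound on $g$ with the $L^\infty$ bound, shows that $y\mapsto g(\cdot,y)$, hence $y\mapsto M(\cdot,y)$, is continuous into $C^{0,\sigma''}$, resp.\ $C^{2,\sigma''}$, for any $\sigma''<\sigma'$; therefore $\pxi\pxj M$ is jointly continuous near $(x_0,x_0)$. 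Differentiating the equation once in $y$ replaces the right-hand side by $\pyj g(\cdot,y)$, which is merely bounded (and, by dominated convergence, continuous in $y$ into every $L^p_x$); so $\pyj M(\cdot,y)\in C^{1,\gamma}(B_r(x_0))$ with the same uniformity, and $\pxi\pyj M$ is jointly continuous near $(x_0,x_0)$. Finally, the symmetry $M(x,y)=M(y,x)$ gives $\pyi\pyj M(x,y)=(\pxi\pxj M)(y,x)$, so $\pyi\pyj M$ is jointly continuous near $(x_0,x_0)$ as well. Hence $M\in C^2$ in a neighbourhood of $(x_0,x_0)$, and restricting to the diagonal, $\phi_a(x)=M(x,x)\in C^2$ near $x_0$. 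As $x_0\in\Omega$ was arbitrary, $\phi_a\in C^2(\Omega)$.

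\textbf{Main obstacle.} The only nontrivial point is that the PDE for $H_a$ in $x$ alone controls just the $x$-regularity, and one must upgrade this to \emph{joint} regularity in $(x,y)$ across the diagonal. This is forced through by (i) subtracting the explicit parametrix $\tfrac14(a(x)+a(y))|x-y|$, which turns the singular right-hand side $a(x)/|x-y|$ into a bounded Hölder function; (ii) the continuous (indeed Hölder) dependence on the parameter $y$ of solutions of the elliptic problem; and (iii) the symmetry of $H_a$, which yields the pure $y$-derivatives for free. The repeated small losses of Hölder exponent are harmless, since only $C^2$ (not $C^{2,\sigma}$) is claimed.
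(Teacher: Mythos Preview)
Your proof is correct and follows essentially the same approach as the paper's: both subtract the identical parametrix $M(x,y)=H_a(x,y)+\tfrac14(a(x)+a(y))|x-y|$ to remove the singular right-hand side, then obtain the pure second derivatives by Schauder estimates in one variable, the mixed derivatives by differentiating the equation in the parameter and applying $L^p$ elliptic regularity, and the remaining pure derivatives via the symmetry $M(x,y)=M(y,x)$. The only cosmetic differences are that the paper works with $-\Delta_y$ rather than $-\Delta_x+a(x)$ (so its right-hand side lacks your $\tfrac14 a(x)(a(x)+a(y))|x-y|$ term) and invokes Lemma~\ref{lemma nabla Hb bounds} explicitly, while you argue joint continuity via continuous dependence of solutions on Hölder data.
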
 

Since we were unable to find a proof for this fact in the literature, we provide one in Appendix \ref{section c2}. 

Thus, the following general lemma applies to $\phi_a$. 

\begin{lemma}
\label{lemma hessian}
	Let $u$ be $C^2$ near the origin and suppose that $u(0)=0$, $\nabla u(0)=0$ and that $\Hess u(0)$ is invertible. Then, as $x \to 0$,
	\begin{equation}
	\label{hessian general}
	u(x) = \frac12 \nabla u(x)\cdot \left( \Hess u(0) \right)^{-1} \nabla u(x) + o(|x|^2) \,.
	\end{equation}
Suppose additionally that $\Hess u(0) \geq c$ for some $c > 0$ in the sense of quadratic forms, i.e. the origin is a nondegenerate minimum of $u$. Then, as $x \to 0$, 
	\begin{equation}
	\label{hessian minimum}
	u(x) \lesssim  |\nabla u(x)|^2. 
\end{equation}	 
\end{lemma}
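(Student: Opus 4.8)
The plan is to prove this by applying Taylor's theorem to second order, both to $u$ and to $\nabla u$, and then to eliminate $x$ in favour of $\nabla u(x)$ using the invertibility of the Hessian. (One could alternatively invoke the Morse lemma, but the direct Taylor argument avoids regularity subtleties and is self-contained.) Write $A := \Hess u(0)$, which is symmetric — hence so is $A^{-1}$ — and, by hypothesis, invertible. Since $u$ is $C^2$ near the origin with $u(0)=0$ and $\nabla u(0)=0$, Taylor's theorem gives, as $x\to 0$,
\[
u(x) = \tfrac12\, x\cdot Ax + o(|x|^2), \qquad \nabla u(x) = Ax + o(|x|).
\]
From the second relation and the boundedness of $A^{-1}$ we obtain $x = A^{-1}\nabla u(x) + o(|x|)$, and in particular $|\nabla u(x)| = O(|x|)$ together with $|x| = O(|\nabla u(x)|)$ for $x$ small.

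Next I would substitute $x = A^{-1}\nabla u(x) + o(|x|)$ into the quadratic form $\tfrac12\, x\cdot Ax$. Expanding and using the symmetry of $A$, the main term is $\tfrac12\,\nabla u(x)\cdot A^{-1}\nabla u(x)$, while every remaining term pairs a factor that is $O(|x|)$ (namely $A^{-1}\nabla u(x)$ or $Ax$) with a factor that is $o(|x|)$, hence is $o(|x|^2)$; likewise the purely-error term is $o(|x|^2)$. Therefore
\[
\tfrac12\, x\cdot Ax = \tfrac12\,\nabla u(x)\cdot A^{-1}\nabla u(x) + o(|x|^2),
\]
and combining this with the Taylor expansion of $u$ yields \eqref{hessian general}.

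For \eqref{hessian minimum}, assume in addition $A\ge c>0$ as quadratic forms, so that $A^{-1}\le c^{-1}$ and hence $\nabla u(x)\cdot A^{-1}\nabla u(x)\le c^{-1}\,|\nabla u(x)|^2$. From $u(x) = \tfrac12\, x\cdot Ax + o(|x|^2)\ge \tfrac{c}{2}|x|^2 + o(|x|^2)$ we get $u(x)\ge \tfrac{c}{4}|x|^2$ for all sufficiently small $x$; in particular $u(x)>0$ for small $x\ne 0$ and the remainder $o(|x|^2)$ is also $o(u(x))$. Plugging these into \eqref{hessian general} gives $u(x)\le \tfrac1{2c}|\nabla u(x)|^2 + o(u(x))$, i.e. $(1-o(1))\,u(x)\le \tfrac1{2c}|\nabla u(x)|^2$, which for $x$ small forces $u(x)\lesssim |\nabla u(x)|^2$, as claimed.

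The argument is elementary throughout; the only points demanding a little care are the bookkeeping of the error terms in the substitution step (keeping track of which factors are $O(|x|)$ and which are $o(|x|)$) and, for the second part, the observation that the $o(|x|^2)$ remainder from \eqref{hessian general} can be absorbed into $u(x)$ precisely because $u$ grows quadratically near the nondegenerate minimum. Neither constitutes a genuine obstacle.
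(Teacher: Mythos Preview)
Your proof is correct and follows essentially the same strategy as the paper's: Taylor-expand both $u$ and $\nabla u$ to the appropriate order, invert the Hessian to express $x$ in terms of $\nabla u(x)$, substitute, and for the second part use the quadratic lower bound $u(x)\ge\tfrac{c}{4}|x|^2$ to absorb the $o(|x|^2)$ remainder. The only cosmetic difference is that the paper Taylor-expands around the point $x$ and evaluates at $0$ (so that $u(x)$ and $\nabla u(x)\cdot x$ appear directly in the identity $0=u(0)=u(x)-\nabla u(x)\cdot x+\tfrac12 x\cdot H(x)x+o(|x|^2)$, with $H(x)$ replaced by $H(0)$ at the end), whereas you expand around $0$ and then substitute into the quadratic form; both routes involve the same bookkeeping and yield the same result.
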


\begin{proof}
	We abbreviate $H(x) = \Hess u(x)$ and make a Taylor expansion around $x$ to get
	\begin{equation}
	\label{eq:u}
	0 = u(0) = u(x) - \nabla u(x)\cdot x + \frac12 x\cdot H(x) x + o(|x|^2)
	\end{equation}
	and
	\begin{eqnarray}
	\label{eq:nablau}
	0 = \nabla u(0) = \nabla u(x) - H(x) x + o(|x|^2) \,.
	\end{eqnarray}
	
	We infer from \eqref{eq:nablau} and the invertibility of $H(0)$ that
	$$
	x = H(x)^{-1} \nabla u(x) + o(|x|^2) \,.
	$$
	Inserting this into \eqref{eq:u} gives
	$$
	0 = u(x) - \frac12 \nabla u(x) \cdot H(x)^{-1} \nabla u(x) + o(|x|^2) \,,
	$$
Since $H(x)^{-1} = H(0)^{-1} + o(|x|)$, this yields \eqref{hessian general}. 

To prove \eqref{hessian minimum}, if 0 is a nondegenerate minimum, then a Taylor expansion around 0 shows
\begin{equation} \label{phi-lowerb} 
u(x) = \frac{1}{2} x \cdot H(0) x + o(|x|^2) \geq \frac{c}{4}|x|^2 
\end{equation}
for small enough $|x|$. Thus, the $o(|x|^2)$ in \eqref{hessian general} can be absorbed in the left side, thus \eqref{hessian minimum}.
\end{proof}


\subsection{Proof of Theorem \ref{thm expansion}}

Equation \eqref{u-eps-final} follows from Proposition \ref{prop first expansion}, together with \eqref{definition q}, \eqref{q-split} and \eqref{eq:sproj}. The facts that $x_0\in\mathcal N_a$ and that $Q_V(x_0)\leq 0$ follow from Corollary \ref{cor-lambda-eps}.

By Lemma \ref{lemma C2} and the assumption that $x_0$ is a nondegenerate minimum of $\phi_a$, we can apply Lemma \ref{lemma hessian} to the function $u(x) := \phi_a(x + x_0)$ to get  
\[ \phi_a(x) \lesssim |\nabla \phi_a(x)|^2 \,. \]
Therefore, by the bound on $\nabla\phi_a(x)$ in Proposition \ref{prop second expansion} with some fixed $\mu\in(1/2,1)$, we get
\begin{equation} \label{phi-grad-phi}
\phi_a(x) \lesssim  |\nabla \phi_a(x)|^2  = o(\eps) \,.
\end{equation}
This proves \eqref{phi-asymp} and, by non-degeneracy of $x_0$, also \eqref{x-x}. Moreover, inserting \eqref{phi-grad-phi} into the expansion of $\phi_a(x)$ from Proposition \ref{prop second expansion}, we find 
\[ 0=   a(x) \pi \lambda^{-1} - \frac{\eps}{4\pi}\, Q_V(x) + o(\lambda^{-1}) +o(\eps),  \]
that is, 
\[ \eps \lambda = 4 \pi^2 \frac{|a(x_0)|+ o(1)}{|Q_V(x_0)|+o(1)} \]
with the understanding that this means $\eps\lambda\to \infty$ if $Q_V(x_0)=0$. This proves \eqref{lim eps lambda}. 

 The remaining claims in Theorem \ref{thm expansion} follow from Proposition \ref{prop second expansion}.
 

\subsection{A bound on $\|w\|_\infty$}
\label{subsection infty bound w}

In this subsection, we prove a crude bound on the $L^\infty$ norm of the first-order remainder $w$ appearing in the decomposition $u = \alpha( PU_\xl + w)$, and also on some of its $L^p$ norms which cannot be controlled through Sobolev, i.e. $p > 6$. This bound was not needed in the proof of Theorem \ref{thm expansion}, but will be in that of Theorem \ref{thm BP}.

\begin{proposition}
\label{proposition infty bound w}
As $\eps \to 0$, 
\begin{equation}
\label{higher int w}
\|w\|_{p} \lesssim \lambda^{-\frac{3}{p}} \qquad \text{ for all } \, p \in (6, \infty).
\end{equation}
Moreover, for every $\mu > 0$, 
\begin{equation}
\label{infty bound w}
\|w\|_\infty = o(\lambda^\mu) \,. 
\end{equation}
\end{proposition}

Our proof follows \cite[proof of (25)]{Re1}, which concerns the case $N \geq 4$ and $a=0$. Since some of the required modifications are rather complicated to state, we give details for the convenience of the reader. 

\begin{proof}
	We begin by proving the first bound in the proposition, which we write as
	$$
	\|w\|^{r+1}_{3(r+1)} \lesssim \lambda^{-1} 
	\qquad\text{for all}\ r\in (1, \infty) \,.
	$$
	To prove this, we define $F$ by \eqref{eq:eqwrhs}, multiply \eqref{equation w} with $|w|^{r-1} w$ and integrate by parts to obtain 
	\[ \frac{4r}{(r+1)^2} \int_\Omega |\nabla |w|^{\frac{r+1}{2}}|^2  = \int_\Omega F |w|^{r-1} w . \]
	Thus, by Sobolev's inequality applied to $v = |w|^{\frac{r+1}{2}}$, 
	\begin{equation}
		\label{estimate int fw} 
		\|w\|^{r+1}_{3(r+1)} \lesssim \int_\Omega |F| |w|^r. 
	\end{equation}
	In order to estimate the right side of \eqref{estimate int fw}, we make use of the bound
	\begin{equation}
		\label{f eq 2}
		|F| \lesssim |\alpha^4 - 1| U_\xl^5 + U_\xl^4 |w| + |w|^5 + U_\xl^4 \varphi_\xl +  U_\xl + \varphi_\xl + |w| \,.
	\end{equation}
	This is a refinement of \eqref{eq:ptwf}, which is obtained by writing $PU_\xl = U_\xl - \varphi_\xl$ and using Lemma \ref{lemma PU} to bound $\varphi_\xl^5\lesssim\varphi_\xl$.
	
	We estimate the resulting terms separately. Using H\"older's inequality, Lemma \ref{lemma Lq norm of U}, Proposition \ref{prop-alpha4} and the fact that for any $\eta, p,q > 0$ with $p^{-1} + q^{-1} = 1$ there is $C_\eta > 0$ such that for any $a, b > 0$ one has $ab \leq \eta a^p + C_\eta b^q$, we obtain
	\begin{align*}
		|\alpha^4-1| \int_\Omega U_\xl^5 |w|^r  & \leq \lambda^{-1} \|w\|_{3(r+1)}^r \|U\|^5_{5 \cdot \frac{3r+3}{2r+3}} \lesssim \lambda^{-1} \|w\|_{3(r+1)}^r \lambda^{\frac{1}{2} \cdot \frac{r-1}{r+1}} = \|w\|_{3(r+1)}^r  \lambda^{-\frac{r+3}{2(r+1)}} \\
		& \leq \eta \|w\|_{3(r+1)}^{r+1} + C_\eta \lambda^{-\frac{r+3}{2}} \, ; 
	\end{align*}
	\begin{align*}
		\int_\Omega U_\xl^4 |w|^{r+1}  \leq 	\left(	\int_\Omega U_\xl^5 |w|^r 	\right)^{4/5}	\left(	\int_\Omega |w|^{r+5}	  \right)^{1/5} \leq \|w\|_{3(r+1)}^{r + \frac{1}{5}} \lambda^{- \frac{4}{5(r+1)}} \leq \eta \|w\|_{3(r+1)}^{r+1} + C_\eta \lambda^{-1}; 
	\end{align*}
		\begin{align*}
		\int_\Omega |w|^{5+r}   \leq \|w\|_{3(r+1)}^{r+1} \|w\|_6^4 \lesssim \|w\|_{3(r+1)}^{r+1}   \lambda^{-2} \,; 
	\end{align*}
	\begin{align*}
		\int_\Omega U_\xl^4 |w|^{r} \varphi_\xl & \leq \lambda^{-1/2} \|w\|_{3(r+1)}^r  \|U_\xl\|^4_{4 \cdot \frac{3r+3}{2r+3}} = \lambda^{-\frac{1}{2} - \frac{1}{r+1}} \|w\|_{3(r+1)}^r = \lambda^{- \frac{r+3}{2(r+1)}}  \|w\|_{3(r+1)}^r \\
		& \leq \eta \|w\|_{3(r+1)}^{r+1} + C_\eta \lambda^{-\frac{r+3}{2}} \,;
	\end{align*}
	\begin{align*}
		\int_\Omega U_\xl |w|^{r}   \leq \|w\|_{3(r+1)}^r \|U_\xl\|_\frac{3r+3}{2r+3} \lesssim \|w\|_{3(r+1)}^r  \lambda^{-\frac{1}{2}} \leq \eta \|w\|_{3(r+1)}^{r+1} + C_\eta \lambda^{-\frac{r+1}2}\, ; 
	\end{align*}
		\begin{align*}
		\int_\Omega \varphi_\xl |w|^{r}   \lesssim \lambda^{-\frac12} \|w\|_{3(r+1)}^r 
		\leq \eta \|w\|_{3(r+1)}^{r+1} + C_\eta \lambda^{-\frac{r+1}2} \,;
	\end{align*}
	\begin{align*}
		\int_\Omega |w|^{r+1} \lesssim \left( \int_\Omega |w|^{5+r}  \right)^\frac{r+1}{r+5} \lesssim  \| w \|_{3(r+1)}^{\frac{(r+1)^2}{r+5}}	\lambda^{-\frac{2(r+1)}{r+5}} \leq \eta \|w\|_{3(r+1)}^{r+1} + C_\eta \lambda^{-\frac{r+1}{2}} \,.
	\end{align*}
	By choosing $\eta$ small enough (but independent of $\lambda$), we can absorb the term $\eta \|w\|_{3(r+1)}^{r+1}$, as well as the term $\lambda^{-2} \|w\|_{3(r+1)}^{r+1}$, into the left hand side of inequality \eqref{estimate int fw} to get 
	\begin{align*}
		\|w\|^{r+1}_{3(r+1)} &\lesssim  \lambda^{-\frac{r+3}{2}} + \lambda^{-1} + \lambda^{-\frac{r+1}{2}} \lesssim \lambda^{-1} \,.
	\end{align*}
	This is the claimed bound.
	
	We now turn to the bound of the $L^\infty$ norm of $w$. We write equation \eqref{equation w} for $w$ as
\begin{equation}
\label{equation w with greensfct}
w (x) = \frac{1}{4 \pi} \int_\Omega G_0(x,y) F(y) .
\end{equation}
By H\"older's inequality and the fact that $0\leq G_0(x,y)\leq |x-y|^{-1}$, we have for every $\delta \in (0,2)$
\begin{equation}
\label{w infty est} 
\|w\|_\infty \leq \sup_{x \in \Omega} \|G_0(x, \cdot)\|_{3 - \delta} \|F\|_{\frac{3 - \delta}{2 - \delta}} \lesssim \|F\|_{\frac{3 - \delta}{2 - \delta}}. 
\end{equation}
Hence it suffices to estimate $\|F\|_q$ with some $q:= \frac{3-\delta}{2-\delta} > 3/2$.  
	
	We use again the bound \eqref{f eq 2}. The $L^q$-norms of the resulting terms are easy to estimate. Indeed, since $|\alpha^4 - 1| \lesssim \lambda^{-1}$ by Proposition \ref{prop-alpha4}, we have by Lemma \ref{lemma Lq norm of U}
\[ |\alpha^4 - 1| \|U_\xl^5\|_{q} \lesssim \lambda^{-1} \|U\|_{5q}^5 \lesssim \lambda^{\frac{3}{2} - \frac{3}{q}}. \]
Next, by Lemma \ref{lemma Lq norm of U} and \ref{lemma PU},
\begin{equation*}
	\| U_\xl^4 \varphi_\xl\|_q \lesssim  \lambda^{-1/2} \|U_\xl\|_{4q}^4 = \lesssim \lambda^{\frac{3}{2} - \frac 3q}. 
\end{equation*}
Using additionally the bound on $\|\nabla w\|$ from Proposition \ref{prop first expansion}, we can estimate, for every $q < 3$, 
\begin{align*}
	\| U_\xl +\varphi_\xl + |w| \|_q \leq \|U_\xl\|_q + \|\varphi_\xl\|_\infty + \|\nabla w\|_6 \lesssim \lambda^{-1/2}  \, .
\end{align*}
Finally, using the bound \eqref{higher int w},
\begin{equation*}
\|U_\xl^4 w\|_q \leq \|U_\xl\|_{5q}^4 \|w\|_{5q} \lesssim \lambda^{2 - \frac{12}{5q}} \|w\|_{5q} \lesssim \lambda^{2 - \frac{3}{q}}
\end{equation*}
and 
\[ \|w^5 \|_q = \|w\|_{5q}^5 \lesssim \lambda^{-\frac{3}{q}}.  \]
Inserting these estimates into \eqref{w infty est} yields
\[ \|w\|_\infty \lesssim  \lambda^{2 - \frac{3}{q}} \qquad \text{ for every } \quad q \in (3/2, 3). \]
As $\delta \searrow 0$ in \eqref{w infty est}, we have $q \searrow 3/2$ and hence $2 - \frac{3}{q} \searrow 0$. Thus \eqref{infty bound w} is proved. 
\end{proof}


\subsection{Proof of Theorem \ref{thm BP}}

By Proposition \ref{prop first expansion}, we have $u = \alpha(PU_\xl + w)$ with $\alpha = 1 + o(1)$. Moreover, by Proposition \ref{proposition infty bound w}, $\|w\|_\infty = o(\lambda^{1/2})$. On the other hand, by Lemma \ref{lemma PU} we have 
\[ \|PU_\xl\|_\infty = \|U_\xl\|_\infty + \mathcal O(\|\varphi_\xl\|_\infty) = \lambda^{1/2} + \mathcal O(\lambda^{-1/2}). \]
Putting these estimates together, we obtain
\[ \eps \|u_\eps \|_\infty^2 = \eps (\lambda^{1/2} + o(\lambda^{1/2}))^2 = \eps \lambda (1+o(1)) =  4 \pi^2 \frac{|a(x_0)|}{|Q_V(x_0)|}(1+ o(1)) \]
by the relationship between $\eps$ and $\lambda$ proved in Theorem \ref{thm expansion}. Moreover, $U_\xl(x)=\lambda^{1/2} = \|U_\xl\|_\infty$. This finishes the proof of  part (a) in Theorem \ref{thm BP}.

The proof of part (b) necessitates much fewer prerequisites. It only relies on the crude expansion of $u$ given in Proposition \ref{prop first expansion} and the rough bounds on $w$ from Proposition \ref{proposition infty bound w}. 

By applying $(-\Delta + a)^{-1}$, we write \eqref{equation u} as
\begin{equation}
\label{u greens asympt} u(z) = \frac{3}{4 \pi} \int_\Omega G_a(z,y) u(y)^5  - \frac{\epsilon}{4 \pi } \int_\Omega G_a(z,y) V(y) u(y)  \,. 
\end{equation}
We fix a sequence $\delta = \delta_\eps = o(1)$ with $\lambda^{-1}  = o(\delta_\eps)$. This condition, together with the bounds from Proposition \ref{prop first expansion} easily implies $\frac{3}{4 \pi} \int_{B_{\delta}(x)} u(y)^5 = \lambda^{-1/2} + o(\lambda^{-1/2})$. Hence
\[ \frac{3}{4 \pi} \int_{B_{\delta}(x)} G_a(z,y) u(y)^5 =\frac{3}{4 \pi} \int_{B_{\delta}(x)} (G_a(z,x_0)+o(1)) u(y)^5 = \lambda^{-1/2} G_a(z, x_0) + o(\lambda^{-1/2}). \]
On the complement of $B_\delta(x)$, using Proposition \ref{proposition infty bound w} and Lemma \ref{lemma Lq norm of U} we bound
\[ \left|\int_{\Omega \setminus B_{\delta}(x)} G_a(z,y) u(y)^5 \right| \lesssim \|G_a(z, \cdot)\|_2 (\|U_\xl\|^5_{L^{10}(\Omega \setminus B_\delta(x))} + \|w\|_{10}^5) \lesssim \lambda^{-5/2} \delta^{-7/2} + \lambda^{-3/2}. \]
Choosing e.g. $\delta = \lambda^{-2/7}$, the last bound is $o(\lambda^{-1/2})$.  

The second term on the right side of \eqref{u greens asympt} is easily bounded by
\[ \eps \left|\int_\Omega G_a(z,y) V(y) u(y) \right| \lesssim \eps \|G_a(z, \cdot)\|_2 (\|U\|_2 + \|w\|_2) \lesssim \eps \lambda^{-1/2} \]
by the bounds from Proposition \ref{prop first expansion} and from Lemma \ref{lemma Lq norm of U}. Collecting the above estimates, part (b) of Theorem \ref{thm BP} follows.


\section{Subcritical case: A first expansion} 
\label{sec-Bp-prelim}

In the remaining part of the paper we will deal with the proof of Theorems \ref{thm expansionconj} and \ref{thm BPconj}. The structure of our argument is very similar to that leading to Theorems \ref{thm expansion} and \ref{thm BP}. Namely, in the present section we derive a preliminary asymptotic expansion of $u_\eps$ and the involved parameters, which is refined subsequently in Section \ref{section refining conj} below. Because of the similarities to the above argument, we will not always give full details. 

The following proposition summarizes the results of this section. 

\begin{proposition}
\label{prop first expansion conj}
Let $(u_\epsilon)$ be a family of solutions to \eqref{BP-problem} satisfying \eqref{eq:sobmineps}. Then, up to the extraction of a subsequence, there are sequences $(x_\epsilon)\subset\Omega$, $(\lambda_\epsilon)\subset(0,\infty)$, $(\alpha_\epsilon)\subset\R$ and $(w_\eps) \subset T_{x_\eps, \lambda_\eps}^\bot$ such that
\begin{equation}
\label{expansion PU + w conj}
u_\epsilon = \alpha_\epsilon(PU_{x_\eps, \lambda_\eps} + w_\eps)
\end{equation}
and a point $x_0 \in \Omega$ such that
\begin{equation}
\label{parameters PU + w conj}
|x_\eps - x_0| = o(1), \quad \alpha_\eps = 1 + o(1), \quad \lambda_\eps \to \infty, \quad \|\nabla w_\eps \|_2= \mathcal O(\lambda_\eps^{-1/2}), \quad \eps = \mathcal O(\lambda_\eps^{-1}). 
\end{equation}
\end{proposition}


\subsection{A qualitative initial expansion}\label{sec:qualexpconj}

As a first step towards Proposition \ref{prop first expansion conj}, we observe that the qualitative expansion from Proposition \ref{lemma PU + w} still holds true, that is, there are sequences $(x_\epsilon)\subset\Omega$, $(\lambda_\epsilon)\subset(0,\infty)$, $(\alpha_\epsilon)\subset\R$ and $(w_\eps) \subset T_{x_\eps, \lambda_\eps}^\bot$ such that \eqref{expansion PU + w conj} holds and a point $x_0\in\overline\Omega$ such that, along a subsequence,
$$
|x_\eps - x_0| = o(1), \quad \alpha_\eps = 1 + o(1), \quad d_\eps \lambda_\eps \to \infty, \quad \|\nabla w_\eps \|_2= o(1),
$$
where, as before, $d_\eps := d(x_\eps, \partial \Omega)$.

Indeed, as explained in the proof of Proposition \ref{lemma PU + w}, it suffices to prove $u_\eps \rightharpoonup 0$ in $H^1_0(\Omega)$ up to a subsequence. To achieve this, we first integrate \eqref{BP-problem} against $u_\eps$ to obtain
	$$
	3 \left( \int_\Omega u_\eps^{6-\eps} \right)^{\frac{4-\epsilon}{6 - \eps}} = \frac{\int_\Omega |\nabla u_\eps|^2}{\left( \int_\Omega u_\eps^{6-\eps} \right)^{\frac{2}{6-\eps}}} + \frac{\int_\Omega a u_\eps^2}{\left( \int_\Omega u_\eps^{6-\eps} \right)^{\frac{2}{6-\eps}}} \,.
	$$
By \eqref{eq:sobmineps} and Hölder, the right side is bounded, hence $\|u_\eps\|_{6-\eps} \lesssim 1$. By \eqref{eq:sobmineps} again, $\|\nabla u_\eps\|_2\lesssim 1$. On the other hand, the right side is $\gtrsim 1$ by coercivity of $-\Delta +a$, which is a consequence of criticality, and by Hölder. This gives $\|u_\eps\|_{6-\eps} \gtrsim 1$, and hence $\|\nabla u_\eps\|_2 \gtrsim 1$ by Sobolev and Hölder. This completes the analogue of Step 1 in the proof of Proposition \ref{lemma PU + w}.

Let us now turn to Step 2 in that proof. We denote by $u_0$ a weak limit point of $u_\eps$ in $H^1_0(\Omega)$, which exists by Step 1. Still by Step 1, we may assume that the quantities $\|u_\eps\|_{6-\eps}$ and $\|\nabla u_\eps\|_2$ have non-zero limits. The only difference to Proposition \ref{lemma PU + w} is now that we modify the definition of $\mathcal M$ to 
$$
	\mathcal M = \lim_{\eps\to 0} \int_\Omega  (u_\eps-u_0)^{6-\eps},
$$
where the exponent is $6-\eps$ instead of $6$. Thanks to the uniform bound $\|u_\eps\|_{6-\eps} \lesssim 1$ by Step 1, it can be easily checked that the proof of the Br\'ezis-Lieb lemma (see e.g. \cite{LiLo}) still yields
$$
 \lim_{\eps\to 0} \int_\Omega u_\eps^{6-\eps} = \lim_{\eps \to 0} \int_\Omega u_0^{6-\eps} + \mathcal M  = \int_\Omega u_0^{6} + \mathcal M  \,. 
$$
Then the modified assumption \eqref{eq:sobmineps} can be used to conclude
	$$
	S\left( \int_\Omega u_0^6 + \mathcal M \right)^{1/3} = \int_\Omega |\nabla u_0|^2 + \mathcal T \,.
	$$
	The rest of the proof is identical to Proposition \ref{lemma PU + w}.

We again adopt the convention that in the remainder of the proof we only consider the above subsequence and we will drop the subscript $\eps$.

In order to prove Proposition \ref{prop first expansion conj}, we will prove in the following subsections that $x_0\in\Omega$, $\|\nabla w\|_2= \mathcal O(\lambda^{-1/2})$ and $\eps = \mathcal O(\lambda^{-1})$.


\subsection{The bound on $\|\nabla w\|_2$ }

The goal of this subsection is to prove

\begin{proposition}
	\label{lem-w-new-bound}
	As $\eps\to 0$,
	\begin{equation} \label{w-new-bound}
		\|\nabla w \|_2 \,  = \,  \mathcal O(\lambda^{-1/2}) + \mathcal O( (\lambda d )^{-1} ) + \mathcal O( \eps ) \, .
	\end{equation}
\end{proposition}

Note that, in contrast to Proposition \ref{boundw}, there appears an additional error $\mathcal O(\eps)$. We will prove in an extra step (Proposition \ref{boundoneps}) that $\eps = \mathcal O( (\lambda d )^{-1} )$, so this extra term will disappear later.

The proof of Proposition \ref{lem-w-new-bound} is somewhat lengthy and we precede it by an auxiliary result, which is a simple consequence of the fact that $\alpha\to 1$.

\begin{lemma} \label{lem-eps-log}
As $\eps\to 0$, 
$$
\eps\log\lambda = o(1).
$$
\end{lemma}

A useful consequence of this lemma is that
\begin{equation}
	\label{eq:uepsbound}
	U_\xl^{-\eps} \lesssim 1
	\qquad\text{in}\ \Omega \,.
\end{equation}
Indeed, this follows from the lemma together with the fact that $U_\xl \gtrsim \lambda^{-1/2}$ in $\Omega$.

\begin{proof} 
We integrate equation \eqref{BP-problem} against $u$ and use the decomposition \eqref{expansion PU + w conj}. This gives 
\begin{equation} \label{u-u}
\int_\Omega |\nabla (PU_\xl+ w)|^2 + \int_\Omega a (PU_\xl+ w)^2 = 3 \alpha^{4-\eps} \int_\Omega (PU_\xl+ w)^{6-\eps}. 
\end{equation}
By orthogonality 
\begin{align*}
\int_\Omega |\nabla (PU_\xl+ w)|^2 & = \int_\Omega |\nabla PU_\xl|^2 + \int_\Omega |\nabla  w|^2 =  \frac{3\pi^2}{4} +o(1). 
\end{align*}
Moreover, using Lemmas \ref{lemma Lq norm of U} and \ref{lemma PU} we find 
$ \int_\Omega a (PU_\xl+ w)^2 = o(1).$ On the other hand, 
$$
 \int_\Omega (PU_\xl+ w)^{6-\eps} =  \int_\Omega U_\xl^{6-\eps} + o(1). 
$$
Hence equation \eqref{u-u} combined with the fact that $\alpha\to 1$ implies 
 \begin{equation} \label{U-6-eps}
 \int_\Omega U_\xl^{6-\eps} = \frac{\pi^2}{4} +o(1)\, . 
 \end{equation} 
 Since 
 $$
  \int_\Omega U_\xl^{6-\eps} = \lambda^{-\frac \eps 2}\, \lambda^3  \int_\Omega \big(1+\lambda^2 |x-y|^2\big)^{-3+\frac \eps 2}  = \lambda^{-\frac \eps 2}\, \frac{\pi^2}{4} \, (1+o(1)), 
 $$
we have $\lambda^{-\frac \eps 2} \to 1 $ and hence the claim. 
\end{proof}

The next result quantifies the difference between $\int_\Omega U_\xl^{5-\eps}\, v$ and $\int_\Omega U_\xl^{5}\, v=0$ for  $v\in T_{x, \lambda}^\bot$.

\begin{lemma} \label{lem-aux-eps}
For every $v\in T_{x, \lambda}^\bot$,
\begin{align} 
\big | \int_\Omega U_\xl^{5-\eps}\, v\,  \big | &  \ \lesssim\  \eps\, \| v\|_6  \label{u5v-2} .
\end{align}
\end{lemma}

\begin{proof}  
By orthogonality,
\begin{equation*} 
 \int_\Omega U_\xl^{5-\eps}\, v  = \lambda^{-\frac \eps2}  \int_\Omega U_\xl^5\, e^{\eps\log  \sqrt{1+\lambda^2 |x-y|^2}}\, v =  \lambda^{-\frac\eps2}  \int_\Omega U_\xl^5\,\Big( e^{\eps\log  \sqrt{1+\lambda^2 |x-y|^2}}\ -1\Big)\, v \,.
\end{equation*} 
By Lemma \ref{lem-eps-log}, 
\begin{equation} \label{eps-log-lambda}
\eps\log  \sqrt{1+\lambda^2 |x-y|^2}= o(1)
\end{equation}
uniformly in $x$ and $y$. Hence 
\begin{equation} \label{exp-log}
0< e^{\eps\log  \sqrt{1+\lambda^2 |x-y|^2}}\ -1 \ \lesssim \ \eps\log  \sqrt{1+\lambda^2 |x-y|^2} \, \leq \, \eps\lambda\, |x-y|,  \\[3pt]
\end{equation}
where we have used the inequality  $\log\sqrt{1+t^2}\, \leq |t|$. Since $\| |x-y| \, U_\xl^5\|_{6/5} = \mathcal{O}(\lambda^{-1})$, the result follows from the H\"older inequality. 
\end{proof}

We are now in position to give the

\begin{proof}[Proof of Proposition \ref{lem-w-new-bound}]
	From equation \eqref{BP-problem} for $u$ we obtain the following equation for $w$,
\begin{equation} \label{eq-w-bp}
-\Delta w + a w = -3 U_\xl^5 -a PU_\xl +3\alpha^{4-\eps} (PU_\xl+w)^{5-\eps} \,.
\end{equation}
Integrating this equation against $w$ gives
\begin{equation}  \label{w2-eq}
\int_\Omega (|\nabla w|^2 + a w^2) = -\int_\Omega a PU_\xl w + 3\alpha^{4-\eps} \int_\Omega w (PU_\xl +w)^{5-\eps} \,.
\end{equation} 
As before, the first term on the right hand side is controlled easily by H\"older,
$$
\left | \int_\Omega a PU_\xl w\, \right | \ \lesssim\ \|PU_\xl\|_2\, \|w\|_2 \ \lesssim \ \lambda^{-1/2}\, \|\nabla w\|_2\, .
$$
In order to control the second term we use the fact that $PU_\xl= U_\xl -\varphi_\xl$.  Moreover, by Taylor and \eqref{eq:uepsbound},
\begin{align}\label{eq:decomptaylor}
(PU_\xl +w)^{5-\eps} & = (U_\xl -\varphi_\xl+w)^{5-\eps}   = U_\xl^{5-\eps} + (5-\eps) U_\xl^{4-\eps}w  \notag  \\ 
& \quad + \mathcal{O}\left( U_\xl^{4} \varphi_\xl + U_\xl^{3} w^2 + |w|^{5-\eps} + \varphi_\xl^{5-\eps} \right)  .
\end{align}
Hence,
\begin{align*}
\big | \int_\Omega  (PU_\xl +w)^{5-\eps}\, w \, - (5-\eps) \alpha^{4-\eps} \int_\Omega U_\xl^{4-\eps} w^2 \big |  & \leq \big | \int_\Omega U_\xl^{5-\eps}\, w\,  \big |  + \mathcal{O}\left( \int_\Omega U_\xl^{4} \varphi_\xl |w| \right)  \\
& \quad + \mathcal{O}\left( \|\nabla w\|_2^3 +  \|\nabla w\|_2 \|\varphi_\xl\|_6^{5-\eps} \right) .
\end{align*}
We estimate the first term on the right side using Lemma \ref{lem-aux-eps}. For the second term on the right side we argue as in the proof of Proposition \ref{boundw} and obtain
$$
\int_\Omega U_\xl^{4} \varphi_\xl |w| =  \mathcal{O}\left( (\lambda d)^{-1} \, \|\nabla w\|_2 \right).
$$
For the last term on the right side we use $\|\varphi_\xl\|^2_6 = \mathcal{O}((\lambda d)^{-1})$. Moreover, in view of  \eqref{exp-log},
\begin{align} \label{U4eps-w}
\int_\Omega U_\xl^{4-\eps} w^2 & \leq\,  \lambda^{-\eps/2} \int_\Omega  U_\xl^4 w^2\, + C \eps\lambda \int_\Omega  U_\xl^4 |x-y|\, w^2 \notag \\
& \leq\, \left( 1+ o(1) \right)  \int_\Omega  U_\xl^4 w^2\, + \mathcal{O} ( \eps\lambda^{-1/2} \, \|\nabla w\|_2^2). 
\end{align}
Altogether we obtain from \eqref{w2-eq},
\begin{align*} 
\int_\Omega (|\nabla w|^2 + a w^2 -15 \alpha^{4-\eps} U_\xl^4\, w^2)\, & \lesssim\,   \big( (\lambda d )^{-1} + \lambda^{-1/2} + \eps\big ) \|\nabla w\|_2 + o(\|\nabla w\|_2^2)
\end{align*}
An application of the coercivity inequality of Lemma \ref{lemma coercivity} now implies \eqref{w-new-bound}.
\end{proof}


\subsection{The bound on $\eps$}

The goal of this subsection is to prove

\begin{proposition}
	\label{boundoneps}
	As $\eps\to 0$,
	\begin{equation} 
		\label{eps leq lambda d -1}
		\eps = \mathcal{O}((\lambda d)^{-1})\, .
	\end{equation} 
\end{proposition}

We note that the analogue of this proposition is not needed in Section \ref{sec:firstexpansion} when studying \eqref{equation u}.

The proof of Proposition \ref{boundoneps} is based on the Pohozaev-type identity
\begin{equation}
\label{pohozaev_plPU}
\int_\Omega \nabla PU_\xl \cdot \nabla \pl PU_\xl  + \int_\Omega a (PU_\xl+w) \pl PU_\xl = \alpha^{4 -\eps} 3 \int_\Omega (PU_\xl+w)^{5-\eps} \pl PU_\xl \,,
\end{equation}
which arises from integrating equation \eqref{eq:u} against $\pl PU_\xl$ and inserting the following bounds.

\begin{lemma}\label{lem-rey1}
As $\eps \to 0$, we have 
\begin{equation}
\label{plPU_LHS} 
\int_\Omega \nabla PU_\xl \cdot \nabla \pl PU_\xl + \int_\Omega a (PU_\xl+w) \pl PU_\xl = \mathcal O(\lambda^{-2} d^{-1} + \lambda^{-1} \|\nabla w\|^2_2)
\end{equation}
and 
\begin{equation}
\label{plPU_RHS} 
3 \int_\Omega (PU_\xl+w)^{5-\eps} \pl PU_\xl = -\frac{1}{16} (1+o(1))\, \eps \lambda^{-1} + \mathcal O(\lambda^{-2} d^{-1} + \lambda^{-1} \|\nabla w\|^2_2) \,. 
\end{equation}
\end{lemma}

Before proving Lemma \ref{lem-rey1}, let us use it to deduce the main result of this subsection.

\begin{proof}[Proof of Proposition \ref{boundoneps}]
	Inserting \eqref{plPU_LHS} and \eqref{plPU_RHS} into \eqref{pohozaev_plPU} and applying the bound \eqref{w-new-bound} on $\|\nabla w\|$ we obtain
	$$
	(1 + o(1)) \eps \lesssim (\lambda d)^{-1} + \|\nabla w\|^2_2 \lesssim (\lambda d)^{-1} + \eps^2 \,.
	$$
	Since $\eps = o(1)$, \eqref{eps leq lambda d -1} follows. 
\end{proof}

In the proof of Lemma \ref{lem-rey1} we need the following auxiliary bound.

\begin{lemma} \label{lem-aux-eps2}
	For every $v\in T_{x, \lambda}^\bot$,
	\begin{align} 
		\big | \int_\Omega U_\xl^{4-\eps}\,\partial_\lambda U_\xl\, v\,  \big | &  \ \lesssim\  \eps\,\lambda^{-1}\, \|\nabla v\|_2  \label{u5v-3} .
	\end{align}
\end{lemma}

The proof of this lemma is analogous to that of Lemma \ref{lem-aux-eps} and is omitted.

\begin{proof}[Proof of Lemma \ref{lem-rey1}] 
We begin with proving \eqref{plPU_LHS}. First, by \cite[(B.5)]{Re2},
\[  \int_\Omega \nabla PU_\xl \cdot \nabla \pl PU_\xl  = \mathcal O(\lambda^{-2} d^{-1}). \]
Writing $PU_\xl = U_\xl - \varphi_\xl$, the second term in \eqref{plPU_LHS} is bounded by 
\begin{align*}
\left| \int_\Omega a (PU_\xl+w) \pl PU_\xl \right| &\lesssim (\|U_\xl\|_2 + \|w\|_2) (\|\pl U_\xl\|_2 + \|\pl \varphi_\xl\|_2) \\
& \lesssim \lambda^{-2} d^{-1/2} + \lambda^{-3/2} d^{-1/2} \|\nabla w\|_2\\
&\lesssim \lambda^{-2} d^{-1} + \lambda^{-1} \|\nabla w\|^2_2,
\end{align*} 
by Lemma \ref{lemma Lq norm of U} and \eqref{phi-upperbs}, followed by Young's inequality.  

Next, we prove \eqref{plPU_RHS}. Using \eqref{eq:decomptaylor} and \eqref{eq:uepsbound} we bound pointwise
\begin{align}\label{eq:decomptaylor2}
	(PU_\xl+w)^{5-\eps} \pl PU_\xl & = U_\xl^{5-\eps} \pl U_\xl + (5-\eps) U_\xl^{4-\eps} \pl U_\xl w \notag \\
	& \quad + \mathcal O\left( \left( U_\xl^{4} \varphi_\xl + U_\xl^{3} w^2 + |w|^{5-\eps} + \varphi_\xl^{5-\eps}\right) |\pl U_\xl| \right) \notag \\
	& \quad + \mathcal O \left( \left( U_\xl^{5} + |w|^{5-\eps} + \varphi_\xl^{5-\eps} \right) |\pl\varphi_\xl| \right).
\end{align}
The integral over $\Omega$ of the two remainder terms is bounded by a constant times
\begin{align*}
	& \|\varphi_\xl\|_\infty \| U_\xl \|_5^{4} \|\pl U_\xl\|_5 + \left( \| U_\xl\|_6^{3} \|w\|_6^2 + \|w\|_6^{5-\eps} + \|\varphi_\xl\|_6^{5-\eps} \right) \|\pl U_\xl\|_6 \\
	& \qquad +  \| U_\xl \|_5^{5} \|\pl\varphi_\xl\|_\infty + \left( \|w\|_6^{5-\eps} + \|\varphi_\xl\|_6^{5-\eps} \right) \|\pl \varphi_\xl\|_6 \\
	& \lesssim \lambda^{-2} d^{-1} + \lambda^{-1} \| w\|_6^2 \,,
\end{align*}
where in the last inequality we used the bounds from Lemmas \ref{lemma Lq norm of U} and \ref{lemma PU}.

By Lemma \ref{lem-aux-eps2}, the integral over $\Omega$ of the second term on the right side of \eqref{eq:decomptaylor2} is bounded by a constant times $\eps \lambda^{-1} \|\nabla w\|_2= o(\eps \lambda^{-1})$. 

Finally, by an explicit calculation,
\begin{align}\label{u5-eps}
 \int_\Omega U_\xl^{5-\eps} \pl U_\xl &=  \int_\Omega U_\xl^{5-\eps} \Big(\frac{U_\xl}{2\lambda}- \frac{\lambda^{3/2}\, |x-y|^2}{(1+\lambda^2\, |x-y|^2)^{3/2}} \Big) \notag  \\[3pt]
 &= \pi \lambda^{-1-\frac\eps2} 
 \left[ \frac{\Gamma(\frac 32) \Gamma(\frac{3-\eps}{2})}{\Gamma(3-\frac \eps 2)} - \frac{2\, \Gamma(\frac 52) \Gamma(\frac{3-\eps}{2})}{\Gamma(4-\frac \eps 2)} \right] +  \mathcal O(\lambda^{-4} d^{-3}) \notag \\
 & = -\frac{\pi^{3/2}}{4} \,  \eps \lambda^{-1-\frac\eps2}  \frac{ \Gamma(\frac{3-\eps}{2})}{\Gamma(4-\frac \eps 2)} +  \mathcal O(\lambda^{-4} d^{-3}) \notag \\
& = -\frac{\pi^2}{48} \, \eps \lambda^{-1} (1+o(1) ) +   \mathcal O(\lambda^{-4} d^{-3}),
\end{align}
where, in the last step, we used Lemma \ref{lem-eps-log}. This completes the proof of \eqref{plPU_RHS}. 
\end{proof}


\subsection{Excluding boundary concentration} 

The goal of this subsection is to prove

\begin{proposition} \label{prop-ebc2}
	$d^{-1}=\mathcal O(1)$. 
\end{proposition}

The proof is very similar to that of Proposition \ref{prop bdry concentration} and we will be brief. Integrating the first equation in \eqref{BP-problem} against $\nabla u$ implies the Pohozaev-type identity
\begin{equation}  \label{pohoz-1}
	- \int_\Omega (\nabla a) \, u^2 = \int_{\partial\Omega} n \Big(\frac{\partial u}{\partial n}\Big)^2 \,. 
\end{equation}
The volume integral on the left side can be estimated as before, since by Propositions \ref{lem-w-new-bound} and \ref{boundoneps} we have the same bound
$$
\|\nabla w\|_2^2 \lesssim \lambda^{-1} + (\lambda d)^{-2}
$$
as before. To bound the surface integral, we use the fact that
$$
\int_{\partial\Omega} \left(\frac{\partial w}{\partial n}\right)^2   =  \mathcal O(\lambda^{-1} d^{-1}) + o \big( \lambda^{-1} d^{-2}) \,.
$$
This is the analogue of Lemma \ref{lemma w bdry integral}. We only note that by \eqref{eq-w-bp} we have 
\begin{equation}
\label{F conj}
 F:= -\Delta w = 3 \alpha^{4-\eps} (PU_\xl+w)^{5-\eps} - 3 U_\xl^5 - a(PU_\xl+w)
\end{equation}
and that this function satisfies \eqref{pointwise bound f}. Therefore, using the above bound on $\|\nabla w\|_2$ we can proceed exactly in the same was as in the proof of Lemma \ref{lemma w bdry integral}.

Thus, as before, we obtain
$$
C\lambda^{-1} \nabla\phi_0(x) = \mathcal O(\lambda^{-1} d^{-3/2}) + o(\lambda^{-1} d^{-2})
$$
and then from $|\nabla\phi_0(x)|\gtrsim d^{-2}$ we conclude that $d^{-1}=\mathcal O(1)$, as claimed.


\subsection{Proof of Proposition \ref{prop first expansion conj}}

The existence of the expansion is discussed in Subsection \ref{sec:qualexpconj}. Proposition \ref{prop-ebc2} implies that $d^{-1} = \mathcal O(1)$, which implies that $x_0\in\Omega$. Moreover, inserting the bound $d^{-1} = \mathcal O(1)$ into Propositions \ref{lem-w-new-bound} and \ref{boundoneps}, we obtain $\eps = \mathcal{O} (\lambda^{-1})$ and $\|\nabla w\|_2 =   \mathcal{O} ( \lambda^{-1/2} )$, as claimed in Proposition \ref{prop first expansion conj}. This completes the proof of the proposition.


\section{Subcritical case: Refining the expansion} 
\label{section refining conj}

As in the additive case, we refine the analysis of the remainder term $w_\eps$ in Proposition \ref{prop first expansion conj}, which we write as $w_\eps = \lambda_\eps^{-1/2} (H_0(x_\eps, \cdot) - H_a(x_\eps, \cdot)) + s_\eps + r_\eps$ with $s_\eps$ and $r_\eps$ as in \eqref{definition r}. 

The following proposition summarizes the main results of this section. 

\begin{proposition}
\label{prop second expansion conj}
Let $(u_\epsilon)$ be a family of solutions to \eqref{BP-problem} satisfying \eqref{eq:sobmineps}. Then, up to the extraction of a subsequence, there are sequences $(x_\epsilon)\subset\Omega$, $(\lambda_\epsilon)\subset(0,\infty)$, $(\alpha_\epsilon)\subset\R$, $(s_\eps) \subset T_{x_\eps, \lambda_\eps}$ and $(r_\eps) \subset T_{x_\eps, \lambda_\eps}^\bot$ such that
\begin{equation}
\label{expansion psi + q conj}
u_\epsilon = \alpha_\epsilon(\psi_{x_\eps, \lambda_\eps} + s_\eps + r_\eps)
\end{equation}
and a point $x_0 \in \Omega$ such that, in addition to Proposition \ref{prop first expansion conj},
\begin{align}
\|\nabla r_\eps\|_2 &= \mathcal{O}\big(\eps +\lambda_\eps^{-3/2} +\phi_a(x_\eps)\, \lambda_\eps^{-1} \big) \,, \label{r-eps-bound conj}\\
\phi_a(x_\eps) & = \pi\,a(x_\eps)\,\lambda_\eps^{-1} + \frac{\pi}{32}\,\eps\lambda_\eps \left( 1 + o(1) \right) + o(\lambda_\eps^{-1}) \,, \label{eq-ph-tris-prop} \\
\nabla \phi_a(x) &= \mathcal O\left( \eps\lambda_\eps^{1/2} + \lambda_\eps^{-\mu} + \phi_a(x_\eps) \, \lambda_\eps^{-1/2} \right) \qquad\text{for any}\ \mu<1 \,, \label{eq:nablaphaconj} \\
\alpha_\eps^{4-\eps} &= 1 +\frac\eps 2\log\lambda_\eps -4\beta\lambda_\eps^{-1}
+ \mathcal O \big( \eps+ \phi_a(x_\eps) \lambda_\eps^{-1} \big) + o(\lambda_\eps^{-1})\, \label{exp-alpha-2-prop} . 
\end{align}
\end{proposition}

We will prove Proposition \ref{prop second expansion conj} through a series of propositions in the following subsections. 

\subsection{The bound on $\|\nabla r\|_2$}

The following proposition contains the bound on $\|\nabla r\|_2$ claimed in Proposition \ref{prop second expansion conj}. 

\begin{proposition} \label{prop-r-bound2}
As $\eps\to 0$,
\begin{equation}  \label{nabla-r-b}
\|\nabla r\|_2 = \mathcal{O}\big(\eps +\lambda^{-3/2} +\phi_a(x)\, \lambda^{-1} \big).
\end{equation}
\end{proposition}

\begin{proof}
Notice that 
\begin{equation*}
-\Delta r = -3 U_\xl^5 +3\alpha^{4-\eps} (\psi_\xl +s+r)^{5-\eps} +a \big( g_\xl + f_\xl \big)  -a (s+r) +\Delta s, 
\end{equation*}
with $g_\xl$ as in \eqref{eq:defg}. Hence
\begin{align}  \label{nabla-r}
\int_\Omega\big (|\nabla r|^2 +a r^2  \big) & = 3\alpha^{4-\eps} \int_\Omega (\psi_\xl+s+r)^{5-\eps} r  - \int_\Omega a\big(U_\xl -\frac{\lambda^{-1/2}}{|x-y|} +s-f_\xl \big) \, r  \,.
\end{align}
By Lemma \ref{lemma expansion r}(b)  
$$
\Big | \int_\Omega a\big(g_\xl + f_\xl - s \big) \, r\, \Big|  \, \lesssim \, \lambda^{-3/2}\, \| r\|_6 \, .
$$ 
Now, 
\begin{align} \label{psi-5eps}
\int_\Omega (\psi_\xl + s + r)^{5-\eps} r &= \int_\Omega U_\xl^{5-\eps} r + (5-\eps) \int_\Omega U_\xl^{4-\eps}  r^2 + (5-\eps) \int_\Omega U_\xl^{4-\eps}  rs \notag \\
& \quad - (5-\eps) \int_\Omega U_\xl^{4-\eps} ( \lambda^{-1/2} H_a(x, \cdot) + f_\xl) r + T_{3,\eps} \, ,
\end{align}
where similarly as in the proof Lemma \ref{lemma expansion r} we find that 
$$
 | T_{3,\eps} |   \, \lesssim \,  \lambda^{-2}\, \| r\|_6  + \| r\|_6^3.
$$
Moreover, similarly as in \eqref{U4eps-w} we obtain
$$
3 \alpha^{4-\eps}\, (5-\eps) \int_\Omega U_\xl^{4-\eps}\,  r^2  \, \leq\,  15  \int_\Omega U_\xl^4 r^2\, +o ( \| r\|_6^2) .  
$$

Next, we write
$$
\int_\Omega U_\xl^{4-\eps}  rs = \lambda^{-\eps/2} \left( \int_\Omega U_\xl^{4}  rs + \int_\Omega U_\xl^4 \left( e^{\eps \log\sqrt{1+ \lambda^2|x-y|^2}} - 1 \right)rs \right). 
$$
The prefactor $\lambda^{-\eps/2}$ on the right side tends to 1 by Lemma \ref{lem-eps-log}. The first integral in the parentheses is bounded in \eqref{u4-rs}. For the second integral we proceed again as in \eqref{U4eps-w} and obtain
$$
\left| \int_\Omega U_\xl^4 \left( e^{\eps \log\sqrt{1+ \lambda^2|x-y|^2}} - 1 \right)rs \right|
\lesssim \lambda \eps \left\| U^4 |x-y| \right\|_{3/2} \|r\|_6 \|s\|_6 \lesssim \eps \lambda^{-1} \|r\|_6 \,,
$$
where we used \eqref{bounds s} in the last inequality. Thus, recalling the bound on $\eps$ in \eqref{parameters PU + w conj},
$$
\big |  \int_\Omega U_\xl^{4-\eps}  rs \, \big | \, \lesssim\, \lambda^{-3/2}\, \| r\|_6\, .
$$

The fourth term on the right side of \eqref{psi-5eps} is bounded, in absolute value, by a constant times
$$
\int_\Omega U_\xl^4 \left(\lambda^{-1/2} |H_a(x,\cdot)| + |f_\xl| \right)|r| \lesssim \left( \lambda^{-1} \phi_a(x) + \lambda^{-2} \right) \|r\|_6 \,,
$$
where we used \eqref{u4r6}.

Using Lemma \ref{lem-aux-eps} to control the first term on the right hand side of \eqref{psi-5eps} and putting all the estimates into \eqref{nabla-r} we finally get
$$
\int_\Omega\big (|\nabla r|^2 +a r^2  -15\,  U_\xl^4 r^2  \big) \, \lesssim \,  \left(  \eps+ \lambda^{-1} \phi_a(x) + \lambda^{-3/2} \right) \| r\|_6   + o(\| r\|^2_6)\, .
$$
This, in combination with the coercivity inequality of Lemma \ref{lemma coercivity}, implies the claim.
\end{proof}


\subsection{Expanding $\alpha^{4-\eps}$} 

In this subsection, we prove the expansion of $\alpha^{4-\eps}$ in Proposition \ref{prop second expansion conj}. 

\begin{proposition}\label{prop-alpha4conj}
	As $\eps \to 0$,
	\begin{equation} \label{exp-alpha-2}
		\alpha^{4-\eps} = 1 +\frac\eps 2\log\lambda -4\beta\lambda^{-1} + \mathcal O \big( \eps+ \phi_a(x) \lambda^{-1} \big) + o(\lambda^{-1}) \,.
	\end{equation} 
\end{proposition}

\begin{proof}
	As in the proof of Lemma \ref{lem-eps-log} we integrate equation \eqref{BP-problem} against $u$. However, this time we write $u=\alpha(\psi_\xl+ q)$ and obtain
	$$
	\int_\Omega |\nabla (\psi_\xl+ q)|^2 + \int_\Omega a (\psi_\xl+ q)^2 = 3 \alpha^{4-\eps} \int_\Omega (\psi_\xl+ q)^{6-\eps} \,,
	$$
	which we write as
	\begin{align}
		\label{energy-alpha4-eps}
		& \int_\Omega \left( |\nabla\psi_\xl|^2 + a\psi_\xl^2 - 3\alpha^{4-\eps} |\psi_\xl|^{6-\eps} \right) \notag \\
		& + 2 \int_\Omega \left( \nabla q \cdot\nabla\psi_\xl + aq\psi_\xl - \frac{3(6-\eps)}2 \alpha^{4-\eps} q |\psi_\xl|^{4-\eps}\psi_\xl \right) = \mathcal R_0
	\end{align}
	with
	$$
	\mathcal R_0 := - \int_\Omega \left(|\nabla q|^2 + aq^2 \right) + 3 \alpha^{4-\eps} \int_\Omega \left( (\psi_\xl + q)^{6-\eps} - |\psi_\xl|^{6-\eps} - (6-\eps) |\psi_\xl|^{4-\eps}\psi_\xl q \right).
	$$
	We discuss separately the three terms that are involved in the identity \eqref{energy-alpha4-eps}.
	
	First, we claim that
	\begin{align*}
		& \int_\Omega \left( |\nabla\psi_\xl|^2 + a\psi_\xl^2 - 3\alpha^{4-\eps}|\psi_\xl|^{6-\eps} \right) \\
		& = (1-\alpha^{4-\eps}) \frac{3 \pi^2}{4}  + \frac{3\pi^2}8 \alpha^{4-\eps} \eps \log\lambda
		+ \mathcal O( \eps + \phi_a(x) \lambda^{-1} + \lambda^{-2} ) \,.
	\end{align*}
	Indeed, this follows in the same way as in the proof of Lemma \ref{lemma alpha^4} (a), together with the fact that
	$$
	\int_\Omega \left( |\psi_\xl|^{6-\eps} - \psi_\xl^{6} \right) = - \frac{\pi^{2}}{8} \eps \log\lambda 
	+ \mathcal O( \eps +\phi_a(x) \lambda^{-1} +\lambda^{-5/2}) \,.
	$$
	To prove the latter expansion, we write $\psi_\xl = U_\xl - \lambda^{-1/2} H_a(x,\cdot)-f_\xl$ and expand, recalling \eqref{eq:uepsbound},
	\begin{align*}
		|\psi_\xl|^{6-\eps} - \psi_\xl^{6} & = U_\xl^{6-\eps} - U_\xl^{6} \\
		& \quad + \mathcal O \left( U_\xl^5 \left( \lambda^{-1/2} |H_a(x,\cdot)| +|f_\xl| \right) + \lambda^{-5/2} |H_a(x,\cdot)|^5 + |f_\xl |^5 \right).
	\end{align*}
	Using the bounds from Lemma \ref{lemma PU}, \eqref{Ha-bound} and proceeding as in the proof of Lemma \ref{lemma U Ha}, we obtain
	$$
	\int_\Omega\! \left( U_\xl^5 \left( \lambda^{-1/2} |H_a(x,\cdot)| +|f_\xl| \right) + \lambda^{-5/2} |H_a(x,\cdot)|^5 + |f_\xl |^5 \right) \!=\! \mathcal O( \phi_a(x) \lambda^{-1} + \lambda^{-5/2}).
	$$
	On the other hand, by an explicit computation,
	\begin{align*}
		\int_\Omega \left( U_\xl^{6-\eps} - U_\xl^{6} \right) = \int_{\R^3} \left( U_\xl^{6-\eps} - U_\xl^{6} \right) + \mathcal O(\lambda^{-3}) & = \pi^{3/2} \left( \lambda^{-\eps/2} \frac{\Gamma(\frac{3-\eps}2)}{\Gamma(3-\frac\eps 2)} - \frac{\Gamma(\frac32)}{\Gamma(3)} \right) + \mathcal O(\lambda^{-3}) \\
		& = - \frac{\pi^{2}}{8} \eps \log\lambda + \mathcal O( \eps +\lambda^{-3}) \,, 
	\end{align*}
	proving the claimed expansion of the first term on the left side of \eqref{energy-alpha4-eps}
	
	We turn now to the second term on the left side of \eqref{energy-alpha4-eps} and claim that
	$$
	\int_\Omega \left( \nabla q \cdot\nabla\psi_\xl + aq\psi_\xl - \frac{3(6-\eps)}2 \alpha^{4-\eps} q |\psi_\xl|^{4-\eps} \psi_\xl \right) = \left( 1-3 \alpha^{4-\eps} \right) \frac{3\pi^2}{4}\, \beta \lambda^{-1} + \mathcal O(\lambda^{-2}) \,.
	$$
	To show this, we proceed as in the proof of Lemma \ref{lemma alpha^4} (b) and use the equation for $\psi_\xl$ to write
	\begin{align*}
		& \int_\Omega \left( \nabla q \cdot\nabla\psi_\xl + aq\psi_\xl - \frac{3(6-\eps)}2 \alpha^{4-\eps} q |\psi_\xl|^{4-\eps}\psi_\xl \right) = 3 \left( 1 - \frac{6-\eps}{2} \alpha^{4-\eps}\right) \int_\Omega q U_\xl^5 \\
		& \quad - \frac{3(6-\eps)}{2} \int_\Omega q \left( U_\xl^{5-\eps} - U_\xl^5\right)
		- \int_\Omega q \left( \frac{3(6-\eps)}{2} (|\psi_\xl|^{4-\eps}\psi_\xl - U_\xl^{5-\eps}) + a (f_\xl + g_\xl) \right).
	\end{align*}
	The first term on the right side was already computed in the proof of Lemma \ref{lemma alpha^4} (b) and the last term on the right side can be bounded in the same way as there, except that now, instead of \eqref{eq:qbound}, we use the bound
	\begin{equation}
		\label{eq:qboundconj}
		\|\nabla q\|_2 \lesssim \lambda^{-1} \,,
	\end{equation}
	which follows from the bounds on $s$ and $r$ in Propositions \ref{proposition s} and \ref{nabla-r-b}. For the second term on the right side we proceed as in the proof of Lemma \ref{lem-aux-eps} and obtain
	$$
	\left| \int_\Omega q \left( U_\xl^{5-\eps} - U_\xl^5 \right) \right| \lesssim \eps \lambda^{1-\eps/2} \int_\Omega |q| U_\xl^5 |x-y| \leq \eps \lambda^{1-\eps/2} \left\| U^5 |x-y| \right\|_{6/5} \|q\|_6 \lesssim \eps \|q\|_6 \lesssim \eps \lambda^{-1} \,.
	$$
	By Proposition \ref{boundoneps}, this is $\mathcal O(\lambda^{-2})$.	
	
	Finally, we bound $\mathcal R_0$, the term on the right side of \eqref{energy-alpha4-eps}. Because of \eqref{eq:qboundconj}, the first integral in the definition of $\mathcal R_0$ is $\mathcal O(\lambda^{-2})$. The second integral is bounded, in absolute value, by a constant times
	$$
	\int_\Omega \left( |\psi_\xl|^{4-\eps} q^2 + |q|^{6-\eps} \right) \lesssim \left\| \psi_\xl \right\|_6^{4-\eps} \|q\|_6^2 + \|q\|_6^{6-\eps}  \lesssim \lambda^{-2} \,.
	$$
	
	Inserting all the bounds in \eqref{energy-alpha4-eps}, we obtain the claimed bound.
\end{proof}


\subsection{Expanding $\phi_a(x)$}

In this subsection we prove the following important expansion.

\begin{proposition} \label{eps-lambda-phi-BP}
	As $\eps \to 0$, 
	\begin{align}   \label{eq-ph-tris}
		\phi_a(x) = \pi\,a(x)\,\lambda^{-1} + \frac{\pi}{32}\, \eps \lambda \left(1+o(1) \right) + o(\lambda^{-1}) \,. 
	\end{align}
\end{proposition}

The proof of this proposition, which is the analogue of Proposition \ref{phiaexp}, is a refined version of the proof of Proposition \ref{boundoneps}. We integrate equation \eqref{BP-problem} for $u$ against $\partial_\lambda \psi_{x,\lambda}$ and we write the resulting equality in the form
\begin{align}\label{phi-a-0} 
	& \int_\Omega \left( \nabla \psi_\xl \cdot \nabla \pl\psi_\xl + a \psi_\xl \pl\psi_\xl -3 \alpha^{4-\eps} |\psi_\xl|^{4-\eps}\psi_\xl \pl\psi_\xl \right) \notag \\
	& = - \int_\Omega \left( \nabla q \cdot \nabla \pl\psi_\xl + a q \pl\psi_\xl - 3(5-\eps) \alpha^{4-\eps} q |\psi_\xl|^{4-\eps} \pl\psi_\xl \right) \notag \\[4pt]
	& \quad + \frac{3(5-\eps)(4-\eps)}2 \alpha^{4-\eps} \int_\Omega q^2 |\psi_\xl|^{2-\eps} \psi_\xl \pl\psi_\xl + \mathcal R
\end{align}
with
$$
\mathcal R \!=\! 3\alpha^{4-\eps} \!\int_\Omega \!\! \big( (\psi_\xl+q)^{5-\eps} \!- |\psi_\xl|^{4-\eps} \psi_\xl - \!(5-\eps) |\psi_\xl|^{4-\eps} q - \frac{(5-\eps)(4-\eps)}2 |\psi_\xl|^{2-\eps}\psi_\xl q^2 \big) \pl\psi_\xl.
$$

\begin{lemma}\label{lemma pohozaev bisconj}
	As $\eps\to 0$, the following holds.
	\begin{enumerate}
		\item[(a)]
		$ \displaystyle 
		\begin{aligned}[t]
			& \int_\Omega \left( \nabla \psi_\xl \cdot \nabla \pl\psi_\xl + a \psi_\xl \pl\psi_\xl -3 \alpha^{4-\eps}|\psi_\xl|^{4-\eps}\psi_\xl \pl\psi_\xl \right) \\
			& = -  2\pi\, \phi_a(x)\, \lambda^{-2} \left(1+o(1)\right) + \frac{\pi^2}{16}\, \eps\lambda^{-1}  \left(1+o(1)\right)  +2\pi^2 a(x) \lambda^{-3}  + o(\lambda^{-3})  \,.
		\end{aligned}$
		\item[(b)]
		$ \displaystyle
		\begin{aligned}[t]
			& \int_\Omega \left( \nabla q \cdot \nabla \pl\psi_\xl + a q \pl\psi_\xl - 3(5-\eps) \alpha^{4-\eps} q |\psi_\xl|^{4-\eps} \pl\psi_\xl \right) \\
			& = - \big( 1- \alpha^{4-\eps} \big) 2\pi \left( \phi_a(x) - \phi_0(x) \right) \lambda^{-2} + \mathcal O(\eps \lambda^{-2} \log\lambda + \phi_a(x)\, \lambda^{-3}) + o(\lambda^{-3}) \,.
		\end{aligned}$
		\item[(c)]
		$\displaystyle \int_\Omega q^2 |\psi_\xl|^{2-\eps} \psi_\xl \pl\psi_\xl = \frac{\pi^2}{32}\,\beta\gamma\, \lambda^{-3} + \mathcal O(\eps\lambda^{-2} + \phi_a(x)\,\lambda^{-3}) + o(\lambda^{-3}) \,.$
		\item[(d)]
		$\displaystyle \mathcal R = o(\lambda^{-3})$		
	\end{enumerate}
\end{lemma}

The proof of Lemma \ref{lemma pohozaev bisconj} is independent of the expansion of $\alpha^{4-\eps}$ in Proposition \ref{prop-alpha4conj}. We only use the fact that $\alpha= 1+ o(1)$.

\begin{proof}
	(a) As in the proof of Lemma \ref{lemma pohozaev bis} (a), see equation \eqref{eq:expnp1}, we have
	\begin{align*}
		& \int_\Omega \left( \nabla \psi_\xl \cdot \nabla \pl\psi_\xl + a \psi_\xl \pl\psi_\xl -3 	\alpha^{4-\eps} |\psi_\xl|^{4-\eps}\psi_\xl \pl\psi_\xl \right) \\
		& = 3 \int_\Omega \left( U_\xl^5 -  \alpha^{4-\eps} |\psi_\xl|^{4-\eps}\psi_\xl \right) \pl\psi_\xl
		- \int_\Omega a (f_\xl + g_\xl) \pl\psi_\xl \,.
	\end{align*}
	The second integral on the right side was shown in the proof of Lemma \ref{lemma pohozaev bis} (a) to satisfy
	$$
	\int_\Omega a (f_\xl + g_\xl) \pl\psi_\xl = 2\pi\left( 3- \pi \right) a(x) \lambda^{-3} + o(\lambda^{-3}) \,.
	$$
	We write the first integral on the right side as
	\begin{align}
		& \int_\Omega \left( U_\xl^5 -  \alpha^{4-\eps} |\psi_\xl|^{4-\eps}\psi_\xl \right) \pl\psi_\xl
		= \left( 1 - \alpha^{4-\eps} \right) \int_\Omega U_\xl^5 \pl\psi_\xl \nonumber  \\
		& \qquad\qquad - \alpha^{4-\eps} \int_\Omega \left( U_\xl^{5-\eps} - U_\xl^5 \right)\pl\psi_\xl 
		- \alpha^{4-\eps} \int_\Omega \left(|\psi_\xl|^{4-\eps}\psi_\xl - U_\xl^{5-\eps}\right)\pl\psi_\xl.      \label{psi-u-5-eps}
	\end{align}
	As shown in the proof of Lemma \ref{lemma pohozaev bis} (a),
	$$
	\int_\Omega U_\xl^5 \pl\psi_\xl = \frac{2\pi}3\phi_a(x) \lambda^{-2} + \mathcal O(\lambda^{-3}) \,.
	$$
	Next, by Lemma \ref{lemma PU},
	$$
	\int_\Omega \!\big( U_\xl^{5-\eps} - U_\xl^5 \big)\pl\psi_\xl = \int_\Omega \!\big( U_\xl^{5-\eps} - U_\xl^5 \big)\pl U_\xl + \frac12 \lambda^{-3/2} \int_\Omega \!\big( U_\xl^{5-\eps} - U_\xl^5 \big) H_a(x,\cdot) + o(\lambda^{-3}) \,.
	$$
	For the first term, we use \eqref{u5-eps} and the bounds from the proof of Lemma \ref{lemma pohozaev bis} (a) to get
	$$
	\int_\Omega \left( U_\xl^{5-\eps} - U_\xl^5 \right)\pl U_\xl = - \frac{\pi^2}{48} \eps \lambda^{-1} (1+o(1)) + \mathcal O(\lambda^{-4}) \,.
	$$
	For the second term, we use the bound $\|U_\xl^{-\eps}-1\|_\infty = \mathcal{O}(\eps\log\lambda)$ and compute
	$$
	\lambda^{-3/2} \left| \int_\Omega \left( U_\xl^{5-\eps} - U_\xl^5 \right) H_a(x,\cdot) \right| \lesssim \eps\lambda^{-3/2} \log\lambda \int_\Omega U_\xl^5 H_a(x,\cdot) \lesssim \eps\lambda^{-2} \log\lambda = o(\eps\lambda^{-1}) \,.
	$$
	
	Concerning the last term on the right hand side of \eqref{psi-u-5-eps}, we will prove	
	\begin{equation}
		\label{eq:psiueps}
		\int_\Omega \! \big(|\psi_\xl|^{4-\eps}\psi_\xl - U_\xl^{5-\eps}\big)\pl\psi_\xl \!=\! \frac{2\pi}3\,\phi_a(x) \lambda^{-2} \left( 1 + o(1) \right) - 2\pi\,a(x)\,\lambda^{-3} + \mathcal O(\phi_a(x)^2\,\lambda^{-3}) + o(\lambda^{-3}).
	\end{equation}
	This will complete our discussion of the right hand side of \eqref{psi-u-5-eps} and hence the proof of (a). 
	
	The proof of \eqref{eq:psiueps} is similar to the corresponding argument in the proof of Lemma \ref{lemma pohozaev bis} (a), but we include some details. We bound pointwise
	\begin{align*}
		|\psi_\xl|^{4-\eps}\psi_\xl - U_\xl^{5-\eps} & = -(5-\eps) \lambda^{-\frac 12}\, U_\xl^{4-\eps} \, H_a(x,\cdot) + \tfrac12(5-\eps)(4-\eps) \lambda^{-1} U_\xl^{3-\eps} \, H_a(x,\cdot)^2   \\
		& \quad +
		\mathcal{O} \Big ( \lambda^{-3/2} U_\xl^2 |H_a(x,\cdot)|^3 + \lambda^{-5/2} |H_a(x,\cdot)|^5 + U_\xl^4 |f_\xl| + |f_\xl|^5 \Big).
	\end{align*}
	Using the bounds from Lemmas \ref{lemma Lq norm of U} and \ref{lemma PU}, we easily find that the remainder term, when integrated against $|\pl\psi_\xl|$ is $o(\lambda^{-3})$. Using  expansion \eqref{expansion Ha new} we obtain, by an explicit calculation similar to  \eqref{U^4 pl U Hb} and \eqref{U^4 Hb^2},
	\begin{align*}
		&\int_\Omega  U_\xl^{4-\eps} H_a(x,\cdot)\pl\psi_\xl    =  \int_\Omega  U_\xl^{4-\eps} \pl U_\xl  H_a(x,\cdot) + \mathcal{O}(\lambda^{-5/2}\, \phi_a(x)^2) +o(\lambda^{-5/2}) \\
		& \qquad =- \left( \frac{2\pi}{15} + \mathcal O(\eps) \right) \phi_a(x)\, \lambda^{-\frac{3+\eps}{2}} + \frac{2\pi}{5}\, a(x)\, \lambda^{-\frac{5}{2}}  + \mathcal{O}(\lambda^{-5/2}\,  \phi_a(x)^2)+ o(\lambda^{-5/2}) \\
		& \qquad =- \frac{2\pi}{15} \, \phi_a(x)\, \lambda^{-\frac{3}{2}} \left( 1+ o(1) \right) + \frac{2\pi}{5}\, a(x)\, \lambda^{-\frac{5}{2}}  + \mathcal{O}(\lambda^{-5/2}\,  \phi_a(x)^2)+ o(\lambda^{-5/2}) \,,
	\end{align*} 
	where we used Lemma \ref{lem-eps-log}. In the same way, we get
	$$
	\int_\Omega   U_\xl^{3-\eps}  H_a(x,\cdot)^2 \pl\psi_\xl   = \mathcal{O}(\lambda^{-2}\, \phi^2_a(x)) +o(\lambda^{-2}) .
	$$
	This proves \eqref{eq:psiueps}.

(b) As in the proof of Lemma \ref{lemma pohozaev bis} (b) we have
	\begin{align*}
		& \int_\Omega \left( \nabla q \cdot \nabla \pl\psi_\xl + a q \pl\psi_\xl -3  (5-\eps)	\alpha^{4-\eps} |\psi_\xl|^{4-\eps} q \pl\psi_\xl \right) \\
		& = 3 \int_\Omega q \left( 5 U_\xl^4 \pl U_\xl - (5-\eps) \alpha^{4-\eps} |\psi_\xl|^{4-\eps} \pl\psi_\xl \right)	- \int_\Omega a q \left(\pl f_\xl + \pl g_\xl \right).
	\end{align*}
	According to \eqref{negligible}, the second term on the right side is $o(\lambda^{-3})$. (Note that we now use the bound \eqref{eq:qboundconj} instead of \eqref{eq:qbound}.) We write the first integral as
	\begin{align*}
		& \int_\Omega q \left( 5 U_\xl^4 \pl U_\xl - (5-\eps) \alpha^{4-\eps} |\psi_\xl|^{4-\eps} \pl\psi_\xl \right)	 = \left( 5\left( 1 - \alpha^{4-\eps}\right) + \eps \alpha^{4-\eps} \right) \int_\Omega q U_\xl^4 \pl U_\xl \\
		& \quad + (5-\eps) \alpha^{4-\eps} \int_\Omega q \left( U_\xl^4 \pl U_\xl - \psi_\xl^{4} \pl\psi_\xl \right) + (5-\eps) \alpha^{4-\eps} \int_\Omega q \left( \psi_\xl^4  - |\psi_\xl|^{4-\eps}\right) \pl\psi_\xl .
	\end{align*}
	According to \eqref{qU^4-U},
	\begin{align*}
		& \left( 5\left( 1 - \alpha^{4-\eps}\right) + \eps \alpha^{4-\eps} \right) \int_\Omega q U_\xl^4 \pl 	U_\xl \\
		& =  \left( 5\left( 1 - \alpha^{4-\eps}\right) + \eps \alpha^{4-\eps} \right) \left( - \frac{2\pi}{15} \left( \phi_a(x) - \phi_0(x) \right) \lambda^{-2} + \mathcal O(\lambda^{-3}) \right) \\
		& = - \frac{2\pi}{3} \big( 1 - \alpha^{4-\eps}\big) \left( \phi_a(x) - \phi_0(x) \right) \lambda^{-2} + \mathcal O(\eps \lambda^{-2}) + o(\lambda^{-3})
	\end{align*}
	and according to \eqref{u^4-psi^4},	using \eqref{eq:qboundconj} instead of \eqref{eq:qbound},
	\begin{align*}
		\int_\Omega q \left( U_\xl^4 \pl U_\xl - \psi_\xl^{4} \pl\psi_\xl \right) = \mathcal O (\phi_a(x)\lambda^{-3}) + o(\lambda^{-3})
	\end{align*}
	Finally, for any fixed $\delta \in (0,d(x))$ and for any $p>1$ we have, by Lemma \ref{lemma PU},
	\begin{equation} \label{ball-cp} 
		\| \psi_\xl^p\,  \pl \psi_\xl \|_{L^\infty(B_\delta(x)^c\cap \Omega)} =  \mathcal{O}\big(\lambda^{-\frac{3+p}{2}} \big) \,.
	\end{equation} 
	On the other hand, taking $\delta$ sufficienctly small (but independent of $\eps$) we obtain $U_\xl \lesssim \psi_\xl \lesssim U_\xl$ on $B_\delta(x)$. The latter implies $\psi_\xl^{-\eps} =U_\xl^{-\eps}(1+ \mathcal{O}\big(\eps))$ on $B_\delta(x)$, and therefore 
	$$
	\|  1- \psi_\xl^{-\eps} \|_{L^\infty(B_\delta(x))} = \mathcal{O}\big(\eps \log \lambda \big) .
	$$
	Consequently, using \eqref{eq:qboundconj} and \eqref{ball-cp}, 
	\begin{align*}
		\left| \int_\Omega q \left( \psi_\xl^4  - |\psi_\xl|^{4-\eps}\right)  \pl\psi_\xl  \right| \lesssim \|q\|_6 \left( \eps \log\lambda\ \| \psi_\xl^4 \pl\psi_\xl \|_{6/5} + \lambda^{-\frac{7}{2}} \right) \lesssim \eps \lambda^{-2} \log\lambda + \lambda^{-\frac{9}{2}} \,.
	\end{align*}
	Collecting all the bounds, we arrive at the claimed expansion in (b).
	
	(c) The relevant term with exponent $2-\eps$ replaced by $2$ was computed in Lemma \ref{lemma pohozaev bis} (c). The same computation, but with Proposition \ref{prop-r-bound2} instead of Proposition \ref{prop-r-bound}, gives
	$$
	\int_\Omega q^2 \psi_\xl^3 \pl\psi_\xl = \frac{\pi^2}{32}\,\beta\gamma\, \lambda^{-3} + \mathcal O(\eps \lambda^{-2} + \phi_a(x)\,\lambda^{-3}) + o(\lambda^{-3}) \,.
	$$
	(The $\mathcal O(\eps\lambda^{-2})$ term comes from bounding $\int_\Omega rs\psi_\xl^3\pl\psi_\xl$.)
	
	 We bound the difference similarly as at the end of the previous part (b), namely,
	\begin{align*}
		\left| \int_\Omega q^2 \left( |\psi_\xl|^{2-\eps}\psi_\xl - \psi_\xl^3\right) \pl\psi_\xl \right| & \lesssim \|q\|_6^2 \left( \eps \log\lambda\, \|\psi_\xl^3 \pl\psi_\xl \|_{3/2} + \lambda^{-3}\right) \\
		& \lesssim \eps \lambda^{-3}\log\lambda + \lambda^{-5} = o(\lambda^{-3}) \,.
	\end{align*} 
	
	The proof of (d) uses similar bounds as in the rest of the proof and is omitted.
\end{proof}

\begin{proof}[Proof of Proposition \ref{eps-lambda-phi-BP}] 
	Inserting the bounds from Lemma \ref{lemma pohozaev bisconj} into \eqref{phi-a-0}, we obtain
	\begin{align*}
		\phi_a(x) \left(1+o(1)\right) - \frac{\pi}{32}\, \eps\lambda  \left(1+o(1)\right)  -\pi a(x) \lambda^{-1} - \left( 1- \alpha^{4-\eps} \right) \phi_0(x)
		+ \frac{15\,\pi}{32}\,\beta\gamma\, \lambda^{-1} = o(\lambda^{-1}) \,.
	\end{align*}
	Inserting the expansion of $\alpha^{4-\eps}$ from Proposition \ref{prop-alpha4conj}, this becomes
	$$
	\phi_a(x) \left(1+o(1)\right) - \frac{\pi}{32}\, \eps\lambda  \left(1+o(1)\right)  -\pi a(x) \lambda^{-1} - 4\beta \, \phi_0(x)\, \lambda^{-1}
	+ \frac{15\,\pi}{32}\,\beta\gamma\, \lambda^{-1} = o(\lambda^{-1}) \,.
	$$
	Using the expansions \eqref{eq beta gamma subsec} of $\beta$ and $\gamma$, this can be simplified to
	$$
	\phi_a(x) \left(1+o(1)\right) - \frac{\pi}{32}\, \eps\lambda  \left(1+o(1)\right)  -\pi a(x) \lambda^{-1} = o(\lambda^{-1}) \,,
	$$
	which is the assertion.  
\end{proof}


\subsection{Bounding $\nabla\phi_a$} In this subsection we prove the bound on $\nabla \phi_a(x)$ in Proposition \ref{prop second expansion conj}. 

\begin{proposition}  \label{prop-nabla-phi-a}
For every $\mu<1$, as $\eps\to 0$,  
\begin{equation}\label{bound-grad-phi2}
|\nabla \phi_a(x)| \lesssim \eps \lambda^{1/2} + \lambda^{-\mu} + \phi_a(x) \, \lambda^{-1/2}\, .
\end{equation}
\end{proposition}

Note that together with \eqref{parameters PU + w conj} it follows from Proposition \ref{prop-nabla-phi-a} that $x_0$ is a critical point of $\phi_a$. 

The proof of Proposition \ref{prop-nabla-phi-a} is a refined version of the proof of Proposition \ref{prop-ebc2} and is again based on the Pohozaev identity  \eqref{pohoz-1}. The latter reads, in the notation of \eqref{I-def},
\begin{align}\label{pohoz-3}
	0 = I[\psi_\xl] + 2\, I[\psi_\xl, q] + I[q] \, .
\end{align}
To control the boundary integrals involving $q$ in this identity, we need the following lemma, which is the analogue of Lemma \ref{lemma q bdry integral}.

\begin{lemma}
	\label{lem-q-bdry}
	$\|\frac{\partial q}{\partial n}\|_{L^2(\partial \Omega)}  \lesssim \eps +  \lambda^{-3/2} +\phi_a (x)\, \lambda^{-1}$.
\end{lemma}

Before proving this lemma, let us use it to complete the proof of Proposition \ref{prop-nabla-phi-a}. In that proof, and later in this subsection, we will use the inequality
\begin{equation}\label{eq:qbound2conj}
	\|q\|_2 \,\lesssim\, \eps + \lambda^{-3/2} + \phi_a(x) \, \lambda^{-1}\, ,
\end{equation}
This follows from the bound \eqref{bounds s} on $s$ and the bound in Proposition \ref{prop-r-bound2} on $r$.

\begin{proof}[Proof of Proposition \ref{prop-nabla-phi-a}]
	It follows from Lemma \ref{lem-q-bdry} and the bounds \eqref{eq:qbound2conj} and \eqref{eq:psibounds2} that
	$$
	\left| I[\psi_\xl, q] \right| \lesssim \eps \lambda^{-1/2} +  \lambda^{-2} + \phi_a(x)\,\lambda^{-3/2} \,,
	\qquad
	\left| I[q] \right| \lesssim \eps^2 + \lambda^{-3} + \phi_a(x)^2\,\lambda^{-2} \,.
	$$
	The claim thus follows from Lemma \ref{lemma I psi} and \eqref{pohoz-3}.
\end{proof}

\begin{proof}[Proof of Lemma \ref{lem-q-bdry}] 
Note that $-\Delta q = F$ with
$$
F:=  -3U_\xl^5 + 3 \alpha^{4-\eps} (\psi_\xl + q)^{5-\eps} - aq + a (f_\xl + g_\xl)  \,.
$$
With the cut-off function $\zeta$ defined as in the proof of Lemma \ref{lemma w bdry integral}, we have
$$
-\Delta(\zeta q) = \zeta F - 2\nabla\zeta \cdot\nabla q - (\Delta\zeta)q  \,.
$$
Arguing as in \eqref{eq:ptwf} we deduce that 
\begin{equation}
	\label{eq:ptwf2}
	\zeta | F| \lesssim \zeta |q|^{5-\eps} +  |q| + \lambda^{-5/2} \,.
\end{equation}
Now we follow the line of arguments in the proof of Lemma \ref{lemma q bdry integral}. The only difference is that instead of \eqref{eq:qbound2} we have the bound 
\begin{equation} 
\|q\|_2 \,\lesssim\, \eps +  \lambda^{-3/2} + \phi_a(x) \, \lambda^{-1}\, ,
\end{equation}
which follows from \eqref{bounds s} and Proposition \ref{prop-r-bound2}. Using this estimate we find 
$$
\| \Delta (\zeta q )\|_{3/2} \, \lesssim\, \eps +   \lambda^{-3/2} + \phi_a(x) \, \lambda^{-1}\, .
$$
In combination with  \eqref{trace estimate w}, this proves the claim. 
\end{proof}


\section{Proof of Theorems \ref{thm expansionconj} and \ref{thm BPconj}}
\label{sec:proofssubcrit}


\subsection{Proof of Theorem \ref{thm expansionconj}}

Equation \eqref{u-eps-finalconj} follows from Proposition \ref{prop first expansion conj}, together with \eqref{definition q}, \eqref{q-split} and \eqref{eq:sproj}. Proposition \ref{prop first expansion conj} gives also $|x_\eps - x_0| = o(1)$. Moreover, the bound on $\lambda$ in \eqref{parameters PU + w conj} together with \eqref{eq:nablaphaconj} gives $\nabla \phi_a (x_0) = 0$, and \eqref{r-eps-bound conj} gives $\|\nabla r\|_2= \mathcal O(\eps + \lambda^{-3/2}+ \phi_a(x)\lambda^{-1})$. By the bound on $\lambda$ in \eqref{parameters PU + w conj}, this proves the claimed bound on $\|\nabla r\|_2$ if $\phi_a(x_0)\neq 0$. In case $\phi_a(x_0)=0$ we will see below that $\phi_a(x)=o(\lambda^{-1})$ and $\eps= \mathcal O(\lambda^{-2})$, so we again obtain the claimed bound.

Next, equation \eqref{eq-ph-tris-prop} shows that
\begin{equation} \label{exp-phi-bp}
\lim_{\eps\to 0}\eps\lambda = \frac{32}{\pi}\, \phi_a(x_0)    \, ,
\end{equation} 
which is \eqref{lim eps lambdaconj}. 

Equation \eqref{alpha-asympconj} follows from \eqref{exp-alpha-2-prop}. In case $\phi_a(x_0)\neq 0$ this is immediate, and in case $\phi_a(x_0) = 0$ we use, in addition, the expansion of $\beta$ from Proposition \ref{prop-beta-gamma} and the fact that $\eps=o(\lambda^{-1})$ by \eqref{exp-phi-bp}.

Finally, let us assume $\phi_a(x_0) = 0$ and prove \eqref{lim-eps-lambda^2}. We apply Lemma  \ref{lemma hessian} to the function $u(x) := \phi_a(x + x_0)$ and get $\phi_a(x) \lesssim  |\nabla \phi_a(x)|^2$. From \eqref{eq:nablaphaconj}, together with the fact that $\eps=o(\lambda^{-1})$ by \eqref{exp-phi-bp}, we then get 
\begin{equation} \label{phi-final}
	\phi_a(x) = o(\lambda^{-1}) \,.
\end{equation}
Inserting this into \eqref{eq-ph-tris-prop}, we obtain
$$
\pi\,a(x)\,\lambda^{-1} + \frac{\pi}{32}\,\eps\lambda \left( 1+ o(1) \right) = o(\lambda^{-1}) \,,
$$
which is \eqref{lim-eps-lambda^2}. This completes the proof of Theorem \ref{thm expansionconj}.


\subsection{A bound on $\|w\|_\infty$ }\label{subsection infty bound wconj}

To complete the proof of Theorem \ref{thm BPconj} it remains to establish a suitable bound on $\|w\|_\infty$, as well as on $\|w\|_p$ for $p > 6$. This is provided by the following modification of Proposition \ref{proposition infty bound w}.

\begin{proposition} \label{lem-w-infty}
As $\eps \to 0$, 
\begin{equation}
\label{wp bound conj}
\|w\|_{p} \lesssim \lambda^{-\frac{3}{p}} \qquad \text{ for every } \quad p \in (6, \infty).
\end{equation}
Moreover, for every $\mu > 0$, 
\begin{equation}
\label{w infty bound conj}
\|w\|_\infty = o(\lambda^\mu).
\end{equation} 
\end{proposition}

\begin{proof}
To prove the bound \eqref{wp bound conj}, let $r > 1$ and $F$ given by \eqref{F conj}. As in the proof of Proposition \ref{proposition infty bound w}, we obtain the same bound \eqref{estimate int fw}, where similarly to \eqref{f eq 2}, $F$ satisfies 
\begin{equation}
\label{f-eps-upperb}
|F| \, \lesssim\,  U_\xl^{5-\eps}\, |\alpha^{4-\eps} - 1| + |U_\xl^{5-\eps}-U_\xl^5|+  U_\xl^4 (|w| + \varphi_\xl) + |w|^5 + \varphi_\xl +U_\xl+|w| \, .
\end{equation}
Using the bounds $\eps \lesssim \lambda^{-1}$ from Proposition \ref{prop first expansion conj} and $|\alpha^{4-\eps} - 1| \lesssim \eps \log \lambda$ by Proposition \ref{prop-alpha4conj}, we can estimate, for every $r > 1$, 
\begin{align*}
& \int_\Omega \left(U_\xl^{5-\eps}\, |\alpha^{4-\eps} - 1| + |U_\xl^5- U_\xl^{5-\eps}| \right) |w|^r \\
&  \lesssim \|w\|_{3(r+1)}^r \left( \|U_\xl^{5-\eps}\|_\frac{3r+3}{2r+3} |\alpha^{4-\eps} - 1| + \|U_\xl^5 - U_\xl^{5-\eps}\|_\frac{3r+3}{2r+3} \right) \\
& \lesssim \|w\|_{3(r+1)}^r \eps \log \lambda \|U_\xl\|_{5 \cdot \frac{3r+3}{2r+3}}^5 \lesssim  \|w\|_{3(r+1)}^r \eps \log \lambda \,  \lambda^{\frac12 \frac{r-1}{r+1}} \\
&  \leq \eta \|w\|_{3(r+1)}^{r+1} + C_\eta (\log \lambda)^{r+1} \lambda^{-\frac{r+3}{2}} \leq \eta \|w\|_{3(r+1)}^{r+1} + C_\eta \lambda^{-1}, 
\end{align*}
Hence the right side of \eqref{estimate int fw} fulfills the same estimate as in the proof of Proposition \ref{proposition infty bound w}, and we conclude \eqref{wp bound conj} as there.

We now turn to the bound \eqref{w infty bound conj}. 
From \eqref{eq-w-bp} we deduce that
\begin{equation}
\label{equation-w-eps}
w (x) = \frac{1}{4 \pi} \int_\Omega G_0(x,y) F(y) , 
\end{equation}
As in Proposition \ref{proposition infty bound w}, we need to estimate $\|F\|_q$ for some $q > 3/2$ using \eqref{f-eps-upperb}. We bound
\[ 
\|U_\xl^{5-\eps}|\alpha^{4-\eps} - 1|\|_{q} \ \lesssim  \ (\eps \log \lambda+\lambda^{-1}) \|U_\xl\|_{5q}^5 \lesssim    \lambda^{\frac{3}{2} - \frac{3}{q}} \log \lambda
 \]
for every $q > 3/2$. Similarly, 
$$
\|U_\xl^{5-\eps}- U_\xl^5\|_q \, \lesssim\, \eps \log \lambda \|U_\xl\|_{5q}^5 \lesssim \lambda^{\frac{3}{2} - \frac{3}{q}} \log \lambda
$$
for every $q > 3/2$. The other terms resulting from \eqref{f-eps-upperb} are identical to those already estimated in Proposition \ref{proposition infty bound w}. As there, we thus obtain $\|F\|_q \lesssim \lambda^{2 - \frac{3}{q}} \log \lambda$. Letting $q \searrow 3/2$ yields \eqref{w infty bound conj}.
\end{proof}


\subsection{Proof of Theorem \ref{thm BPconj}}

At this point, the proof of Theorem \ref{thm BPconj} is almost identical to the proof of Theorem \ref{thm BP}. We provide some details nevertheless. 

By the bound $\|w\|_\infty = o(\lambda^{1/2})$ from Proposition \ref{lem-w-infty} and Proposition \ref{prop first expansion}, we have $\|u_\eps\|_\infty = \lambda^{1/2} + o(\lambda^{1/2})$. Thus part (a) of Theorem \ref{thm BPconj} follows from \eqref{lim eps lambdaconj} and \eqref{lim-eps-lambda^2}, respectively. 

To prove part (b), we rewrite equation \eqref{equation u} as 
\[ u(z) = \frac{3}{4\pi} \int_\Omega G_a(z,y) u(y)^{5-\eps}. \]
Fix again  $\delta = \delta_\eps = o(1)$ with $\lambda^{-1}  = o(\delta_\eps)$, so that $\frac{3}{4 \pi} \int_{B_{\delta_\eps(x)}} u(y)^5 = 1 + o(1)$. Then 
\[ \frac{3}{4 \pi} \int_{B_{\delta}(x)} G_a(z,y) u(y)^5 =\frac{3}{4 \pi} \int_{B_{\delta}(x)} (G_a(z,x_0)+o(1)) u(y)^5 = \lambda^{-\frac{1}{2} - \frac{\epsilon}{2}} G_a(z, x_0) + o(\lambda^{-\frac{1}{2} - \frac{\epsilon}{2}}). \]
On the other hand, by Lemmas \ref{lem-w-infty} and \ref{lemma Lq norm of U},
\[ |\int_{\Omega \setminus B_{\delta}(x)} G_a(z,y) u(y)^{5-\eps}| \lesssim \|G_a(z, \cdot)\|_2 (\|U_\xl\|^{5-\eps}_{L^{10}(\Omega \setminus B_\delta(x))} + \|w\|_{10}^{5-\eps}) \lesssim \lambda^{-5/2} \delta^{-7/2} + \lambda^{-3/2}. \]
Choosing $\delta = \lambda^{-c}$ with $c>0$ small enough and  observing that $\lambda^{-\eps/2} = 1+o(1)$ by Lemma \ref{lem-eps-log}, the proof of part (b) of Theorem \ref{thm BPconj} is complete. 


\appendix
 
\section{Some useful bounds}

In this section, we collect some bounds which will be of frequent use in our estimates. 

\begin{lemma}
\label{lemma Lq norm of U}
Let $x \in \Omega$ and let $1 \leq q < \infty$. As $\lambda \to \infty$, we have 
\begin{equation}
\label{U qnorm bound}
\|U_\xl\|_{L^q(\Omega)} \lesssim 
\begin{cases}
\lambda^{-1/2}, & 1 \leq q < 3, \\
\lambda^{-1/2}\, (\log\lambda) ^{\frac 13} & q = 3, \\
\lambda^{\frac{1}{2} - \frac{3}{q}}, & q > 3. 
\end{cases}
\end{equation} 

Moreover, we have $\pxi U_\xl (y) = \lambda^{5/2} \frac{y_i - x_i}{(1 + \lambda^2 |x-y|^2)^{3/2}}$ with 
\[ \|\pxi U_\xl\|_{L^q(\Omega)} \lesssim 
\begin{cases}
\lambda^{-1/2}, & 1 \leq q < 3/2, \\
\lambda^{-1/2}\, (\log\lambda)^{\frac 23}, & q = 3/2,  \\
\lambda^{\frac{3}{2} - \frac{3}{q}}, & q > 3/2. 
\end{cases}
\]
and $\pl U_\xl(y) = \frac 12 \lambda^{-1/2} \frac{1 - \lambda^2|x-y|^2}{(1 + \lambda^2 |x-y|^2)^{3/2}}$ with
\[ \|\pl U\|_q \leq \lambda^{-1} \|U\|_q \quad \text{ for any } \quad 1 \leq q \leq \infty. \]
Moreover, for any $\rho = \rho_\lambda$ with $\rho \lambda \to \infty$, 
\[ \| U \|_{L^q(\Omega \setminus B_\rho(x))} \lesssim 
\begin{cases}
\lambda^{-1/2}, & 1 \leq q < 3, \\
\lambda^{-1/2}\, (\log\lambda)^{\frac 13}, & q = 3, \\
\lambda^{-\frac{1}{2}} \rho^{\frac{3-q}{q}}, & q > 3, 
\end{cases} \]
and 
\[ \| \pl U \|_{L^q(\Omega \setminus B_\rho(x))} \lesssim 
\begin{cases}
\lambda^{-3/2}, & 1 \leq q < 3, \\
\lambda^{-3/2}\, (\log\lambda)^{\frac 13}, & q = 3, \\
\lambda^{-\frac{3}{2}} \rho^{\frac{3-q}{q}}, & q > 3,
\end{cases} \]
and 
\[ \|\pxi U \|_{L^q(\Omega \setminus B_\rho(x))} \lesssim 
\begin{cases}
\lambda^{-1/2}, & 1 \leq q < 3/2, \\
\lambda^{-1/2}\, (\log\lambda)^{\frac 23}, & q = 3/2, \\
\lambda^{-\frac{1}{2}} \rho^{\frac{3-2q}{q}}, & q > 3/2. 
\end{cases} \]
\end{lemma}

\begin{proof}
Taking $R > 0$ such that $\Omega \subset B_R(x)$, we have 
\begin{align*}
\int_\Omega U_\xl^q \lesssim \lambda^{-3 + \frac{q}{2}} \int_0^{\lambda R} \frac{r^{2}}{(1 + r^2)^{q/2}} \lesssim \lambda^{-3 + \frac{q}{2}} \int_1^{\lambda R} r^{2-q} \lesssim 
\begin{cases}
\lambda^{-q/2}, & 1 \leq q < 3, \\
\lambda^{-q/2}\, (\log\lambda) ^{\frac 13} & q = 3, \\
\lambda^{\frac{q}{2} - 3}, & q > 3. 
\end{cases}
\end{align*}
This proves \eqref{U qnorm bound}. The remaining bounds follow by analogous explicit computations, which we omit. 
\end{proof}

\begin{lemma}
\label{lemma PU}
We have 
\[ U_\xl = PU_\xl + \lambda^{-1/2} H_0(x, \cdot) + f_\xl, \]
with 
\begin{equation} \label{f-upperbs}
\|f_\xl\|_\infty \lesssim \lambda^{-5/2} d^{-3}, \quad 
\|\pl f_\xl\|_\infty \lesssim \lambda^{-7/2} d^{-3}, \quad
\|\pxi f_\xl\|_\infty \lesssim \lambda^{-5/2} d^{-4}.
\end{equation}
The function $\varphi_\xl := \lambda^{-1/2} H_0(x, \cdot) + f_\xl$ satisfies $0 \leq \varphi_\xl \leq U_\xl$ as well as 
\begin{equation} \label{phi-upperbs}
	\| \varphi_\xl\|_6 \lesssim \lambda^{-1/2} d^{-1/2}, \quad
	\|\varphi_\xl\|_\infty \lesssim \lambda^{-1/2} d^{-1} \, .
\end{equation} 
Moreover,
$$
\|\pl \varphi_\xl \|_6 \lesssim \lambda^{-3/2}d^{-1/2}, \quad \|\pl \varphi_\xl \|_\infty \lesssim \lambda^{-3/2} d^{-1}
$$
and
$$
\quad  \|\pxi \varphi_\xl\|_6 \lesssim \lambda^{-1/2} d^{-1/2}, \quad \|\pxi \varphi_\xl\|_\infty \lesssim \lambda^{-1/2} d^{-2}. 
$$
\end{lemma}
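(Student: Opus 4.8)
The plan is to base everything on the harmonic function $\varphi_\xl := U_\xl - PU_\xl$ together with the maximum principle. Since $U_\xl$ solves $-\Delta U_\xl = 3U_\xl^5$ on all of $\R^3$, the defining relations \eqref{eq-pu} show that $\varphi_\xl$ is harmonic in $\Omega$ and coincides with $U_\xl$ on $\partial\Omega$. In particular $\|\varphi_\xl\|_{L^\infty(\Omega)} = \|U_\xl\|_{L^\infty(\partial\Omega)}$, and since $|x-y|\ge d$ for $y\in\partial\Omega$ we have $U_\xl(y)\le \lambda^{-1/2}|x-y|^{-1}\le \lambda^{-1/2}d^{-1}$, which gives the $L^\infty$ bound on $\varphi_\xl$. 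Positivity $\varphi_\xl>0$ is immediate from the maximum principle, and $\varphi_\xl\le U_\xl$ amounts to $PU_\xl\ge 0$, which again follows from the maximum principle because $-\Delta PU_\xl = 3U_\xl^5\ge 0$ with zero boundary data.

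Next I would set $f_\xl := \varphi_\xl - \lambda^{-1/2}H_0(x,\cdot)$, which is exactly the claimed decomposition $U_\xl = PU_\xl + \lambda^{-1/2}H_0(x,\cdot) + f_\xl$. By \eqref{ha-def} with $a=0$, the function $H_0(x,\cdot)$ is harmonic in $\Omega$ (the point singularities of $|x-\cdot|^{-1}$ and of $G_0(x,\cdot)$ at $x$ cancel), so $f_\xl$ is harmonic in $\Omega$; and since $G_0(x,\cdot)=0$ on $\partial\Omega$, its boundary values are $f_\xl(y)=U_\xl(y)-\lambda^{-1/2}|x-y|^{-1}$. With $t:=\lambda|x-y|$ one has on $\partial\Omega$
\[
\Bigl| U_\xl(y) - \frac{\lambda^{-1/2}}{|x-y|} \Bigr| = \frac{\lambda^{-1/2}}{|x-y|}\,\bigl|(1+t^{-2})^{-1/2}-1\bigr| \lesssim \frac{\lambda^{-1/2}}{|x-y|}\,t^{-2} = \frac{\lambda^{-5/2}}{2|x-y|^3} \lesssim \lambda^{-5/2}d^{-3},
\]
since $t=\lambda|x-y|\ge\lambda d$, so the maximum principle yields $\|f_\xl\|_\infty\lesssim\lambda^{-5/2}d^{-3}$. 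For $\pl f_\xl$ and $\pxi f_\xl$ I would use that these are again harmonic in $y$ (as $\partial_\lambda$, $\partial_{x_i}$ commute with $\Delta_y$), with boundary values the corresponding parameter-derivatives of $U_\xl(y)-\lambda^{-1/2}|x-y|^{-1}$; differentiating the explicit formulas for $U_\xl$, $\pl U_\xl$ and $\pxi U_\xl$ from Lemma \ref{lemma Lq norm of U} one sees that the leading $\lambda^{-1/2}|x-y|^{-1}$-type terms cancel, leaving boundary data of size $\lambda^{-7/2}|x-y|^{-3}$, resp. $\lambda^{-5/2}|x-y|^{-4}$, and the maximum principle gives the remaining two bounds in \eqref{f-upperbs}.

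It then remains to estimate $\varphi_\xl = \lambda^{-1/2}H_0(x,\cdot) + f_\xl$, $\pl\varphi_\xl = -\tfrac12\lambda^{-3/2}H_0(x,\cdot)+\pl f_\xl$ and $\pxi\varphi_\xl = \lambda^{-1/2}\pxi H_0(x,\cdot)+\pxi f_\xl$. Here I would use $0\le H_0(x,y)\le\min(|x-y|^{-1},d^{-1})$ — the first inequality because $G_0\ge 0$, the second because $H_0(x,\cdot)$ is harmonic and $|x-y|\ge d$ on $\partial\Omega$ — together with the analogous decay for the harmonic function $\pxi H_0(x,\cdot)$, whose boundary data is $\partial_{x_i}|x-y|^{-1}$ (of size $\le |x-y|^{-2}\le d^{-2}$ on $\partial\Omega$) and which is handled the same way. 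The $f_\xl$-terms are lower order thanks to $(\lambda d)^{-1}\to 0$, and this gives the $L^\infty$ bounds on $\pl\varphi_\xl$ and $\pxi\varphi_\xl$. For the $L^6$ bounds the efficient route is first to upgrade the sup estimates on $f_\xl$, $\pl f_\xl$, $H_0(x,\cdot)$, $\pxi H_0(x,\cdot)$ to pointwise decay — of the form $\varphi_\xl(y)\lesssim \lambda^{-1/2}(|x-y|+d)^{-1}$ on $\Omega$, and the corresponding versions for the derivatives — by propagating the boundary estimates into $\Omega$ via the Poisson kernel, and then to integrate, using $\int_\Omega(|x-y|+d)^{-6}\,dy\lesssim d^{-3}$.

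The main obstacle I anticipate is precisely this last step: obtaining the pointwise decay of the harmonic correction terms away from $x$, uniformly in $x\in\Omega$, rather than only their (elementary) sup bounds, and tracking the sharp power of $d$ through the harmonic extension. This requires quantitative control of the Poisson kernel of $\Omega$ — where the $C^2$-regularity of $\partial\Omega$ from Assumption~(a) enters — and some care near $\partial\Omega$, where the boundary data of these functions changes sign. All the other ingredients reduce to the maximum principle and the elementary expansion of the explicit bubble $U_\xl$ in the regime $\lambda d\gg 1$.
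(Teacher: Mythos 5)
Your strategy is essentially the paper's: the paper proves the three $L^\infty$ bounds on $\varphi_\xl$, $\pl \varphi_\xl$, $\pxi \varphi_\xl$ exactly by your maximum-principle argument (these functions are harmonic in $y$, with boundary data $U_\xl$, $\pl U_\xl$, $\pxi U_\xl$ of size $\lambda^{-1/2}d^{-1}$, $\lambda^{-3/2}d^{-1}$, $\lambda^{-1/2}d^{-2}$), and for everything else it simply cites \cite[Prop.~1]{Re2}. What you do for the cited part is the standard proof and is correct where you carry it out: $f_\xl$ is harmonic with boundary data $U_\xl-\lambda^{-1/2}|x-\cdot|^{-1}$, and your expansions of this data and of its $\lambda$- and $x_i$-derivatives in the regime $\lambda d\gg 1$ give precisely \eqref{f-upperbs}, as do $0\le\varphi_\xl\le U_\xl$ via positivity of $PU_\xl$. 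For $\|\varphi_\xl\|_6$ you need no Poisson kernel at all: $0\le\varphi_\xl\le\min\{U_\xl,\lambda^{-1/2}d^{-1}\}\lesssim\lambda^{-1/2}(|x-\cdot|+d)^{-1}$ already integrates to $\lambda^{-1/2}d^{-1/2}$. For the derivative pieces, the interior decay you plan to extract from the Poisson kernel is obtained more cheaply from a barrier: $|x-y|^{-1}$ is superharmonic in $\Omega$, so any function harmonic in $\Omega$ with boundary data $\lesssim A|x-y|^{-1}$ is $\lesssim A|x-y|^{-1}$ throughout $\Omega$; applying this with $A=\lambda^{-3/2}$ to $\pl\varphi_\xl$, and with $A=\lambda^{-7/2}d^{-2}$, $\lambda^{-5/2}d^{-3}$, $d^{-1}$ to $\pl f_\xl$, $\pxi f_\xl$, $\pxi H_0(x,\cdot)$ (their boundary data being $\lesssim\lambda^{-7/2}|x-y|^{-3}$, $\lambda^{-5/2}|x-y|^{-4}$, $|x-y|^{-2}$), and combining with the sup bounds, yields all the needed $L^6$ estimates without any quantitative boundary regularity. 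This is worth doing explicitly, because the crude route $\|\cdot\|_6\le\|\cdot\|_\infty|\Omega|^{1/6}$ does \emph{not} suffice: e.g.\ $\lambda^{-7/2}d^{-3}$ is not $\mathcal O(\lambda^{-3/2}d^{-1/2})$ without a rate for $\lambda d\to\infty$, so some interior decay is genuinely required, as you anticipated.

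One item cannot be closed as printed: the bound $\|\pxi\varphi_\xl\|_6\lesssim\lambda^{-1/2}d^{-1/2}$. The natural pointwise bound your scheme produces, $|\pxi\varphi_\xl|\lesssim\lambda^{-1/2}(|x-y|+d)^{-2}$ (equivalently, the barrier bound $\lambda^{-1/2}d^{-1}\min\{|x-y|^{-1},d^{-1}\}$), integrates in $L^6$ to $\lambda^{-1/2}d^{-3/2}$, and no improvement is possible: in the half-space model $H_0(x,y)=|y-\bar x|^{-1}$ with $\bar x$ the reflected point, so $\lambda^{-1/2}\|\pxi H_0(x,\cdot)\|_{6}\sim\lambda^{-1/2}d^{-3/2}$, and the $f_\xl$-part is of lower order and cannot cancel it. So the correct power there is $d^{-3/2}$ (which is also what \cite[Prop.~1]{Re2} gives); this discrepancy is a defect of the displayed statement rather than of your method, and it is immaterial for the paper, since the derivative bounds of this lemma are only used after Proposition \ref{prop bdry concentration} has established $d^{-1}=\mathcal O(1)$, where $d$-powers are irrelevant.
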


\begin{proof} 
	Everything, except for the $L^\infty$ bounds on $\varphi_\xl$, $\pxi\varphi_\xl$ and $\pl\varphi_\xl$, is taken from \cite[Prop.~1]{Re2}. Since these functions are harmonic, the remaining bounds follow from the maximum principle.
\end{proof} 

\begin{lemma}
	\label{lemma PU bdry integral}
	We have
	\begin{enumerate}
		\item[(a)]
		$\int_{\partial \Omega} n \left( \frac{\partial PU_\xl}{\partial n} \right)^2  = C \lambda^{-1} \nabla \phi_0(x) + o(\lambda^{-1} d^{-2})$ for some constant $C > 0$,
		\item[(b)] $\int_{\partial \Omega} y \cdot n \left( \frac{\partial PU_\xl}{\partial n} \right)^2  = \mathcal O(\lambda^{-1} d^{-2})$, 
		\item[(c)] $\int_{\partial \Omega} \left( \frac{\partial PU_\xl}{\partial n} \right)^2  = \mathcal O(\lambda^{-1} d^{-2})$. 
	\end{enumerate}
\end{lemma}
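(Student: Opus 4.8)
The plan is to handle part (a) by a Pohozaev-type identity and parts (b), (c) --- which need no cancellation --- by a pointwise bound on $\partial PU_\xl/\partial n$ along $\partial\Omega$. Since $PU_\xl$ solves $-\Delta PU_\xl=3U_\xl^5$ in $\Omega$ with $PU_\xl=0$ on $\partial\Omega$, multiplying by $\partial_{y_k}PU_\xl$ and integrating by parts twice (using that the gradient of $PU_\xl$ on $\partial\Omega$ is purely normal) gives
\begin{equation*}
-\frac12\int_{\partial\Omega}\Bigl(\frac{\partial PU_\xl}{\partial n}\Bigr)^2 n\,d\sigma=3\int_\Omega U_\xl^5\,\nabla_y PU_\xl\,dy ,
\end{equation*}
so (a) reduces to expanding the right-hand side.

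\textbf{Parts (b) and (c).} I would write $PU_\xl(y)=\frac{3}{4\pi}\int_\Omega G_0(y,z)U_\xl(z)^5\,dz$, so that for $y\in\partial\Omega$, $\frac{\partial PU_\xl}{\partial n}(y)=\frac{3}{4\pi}\int_\Omega\frac{\partial G_0}{\partial n_y}(y,z)\,U_\xl(z)^5\,dz$. Using the standard Poisson-kernel bound $|\partial_{n_y}G_0(y,z)|\lesssim\dist(z,\partial\Omega)\,|y-z|^{-3}\le|y-z|^{-2}$ on a $C^2$ domain and splitting the $z$-integral at $|z-x|=|y-x|/2$ (bounding the inner piece by $|y-x|^{-2}\int_\Omega U_\xl^5\lesssim\lambda^{-1/2}|y-x|^{-2}$ via Lemma \ref{lemma Lq norm of U}, and the outer piece by $\lambda^{-5/2}|y-x|^{-4}$ using $U_\xl(z)^5\lesssim\lambda^{-5/2}|z-x|^{-5}$ and $\int_{B_R(y)}|y-z|^{-2}\,dz\lesssim R$), one gets $\bigl|\frac{\partial PU_\xl}{\partial n}(y)\bigr|\lesssim\lambda^{-1/2}|y-x|^{-2}$ on $\partial\Omega$; here $\lambda^{-2}|y-x|^{-2}\le(\lambda d)^{-2}=o(1)$ absorbs the outer piece. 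Squaring and integrating, together with $\int_{\partial\Omega}|y-x|^{-4}\,d\sigma(y)\lesssim d^{-2}$ (from parametrizing $\partial\Omega$ near the point of $\partial\Omega$ closest to $x$), this gives $\bigl\|\frac{\partial PU_\xl}{\partial n}\bigr\|_{L^2(\partial\Omega)}^2\lesssim\lambda^{-1}d^{-2}$; since $|n|\le1$ and $|y\cdot n|\lesssim1$ on the bounded set $\partial\Omega$, parts (c) and (b) follow.

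\textbf{Part (a).} Into the right-hand side of the Pohozaev identity I would substitute $PU_\xl=U_\xl-\varphi_\xl$ with $\varphi_\xl=\lambda^{-1/2}H_0(x,\cdot)+f_\xl$ as in Lemma \ref{lemma PU}, use $\nabla_y U_\xl=-\nabla_x U_\xl$, and integrate by parts once more in the $\varphi_\xl$-term using $\varphi_\xl=U_\xl$ on $\partial\Omega$. This leaves: the term $-3\int_\Omega U_\xl^5\nabla_x U_\xl=-\tfrac12\nabla_x\int_\Omega U_\xl^6$, which is $o(\lambda^{-1}d^{-2})$ since $\int_{\R^3}U_\xl^6$ is $x$-independent and the exterior tail is $\mathcal O(\lambda^{-3}d^{-4})$; a boundary contribution $\lesssim\int_{\partial\Omega}U_\xl^6=\mathcal O(\lambda^{-3}d^{-4})$; the $f_\xl$-term, bounded by $\|f_\xl\|_\infty\int_\Omega U_\xl^4|\nabla_x U_\xl|\lesssim\lambda^{-5/2}d^{-3}\cdot\lambda^{1/2}=o(\lambda^{-1}d^{-2})$; and the main term $-15\lambda^{-1/2}\int_\Omega U_\xl^4(\nabla_x U_\xl)\,H_0(x,\cdot)$. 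For the latter I would split $\Omega=B_{d/4}(x)\cup(\Omega\setminus B_{d/4}(x))$: on $B_{d/4}(x)$ expand $H_0(x,y)=\phi_0(x)+\tfrac12\nabla\phi_0(x)\cdot(y-x)+\mathcal O(d^{-3}|y-x|^2)$ (interior estimates for the harmonic function $H_0(x,\cdot)$ together with $0\le H_0(x,\cdot)\le d^{-1}$), so the constant term vanishes by oddness, the quadratic remainder is $o(\lambda^{-1}d^{-2})$, and the linear term produces $-2\pi\lambda^{-1}\nabla\phi_0(x)+o(\lambda^{-1}d^{-2})$ via $\int_{\R^3}U_\xl^4(\partial_{x_i}U_\xl)(y_j-x_j)\,dy=\tfrac{4\pi}{15}\lambda^{-1/2}\delta_{ij}$ and $|\nabla\phi_0(x)|\lesssim d^{-2}$ (this is the $d$-explicit form of the computation behind Lemma \ref{lemma U Ha} with $a\equiv0$); while on $\Omega\setminus B_{d/4}(x)$ one estimates crudely from $|H_0(x,\cdot)|\le d^{-1}$ and the decay of $U_\xl^4|\nabla_x U_\xl|$ to get $\mathcal O(\lambda^{-3}d^{-4})$. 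Collecting everything, $\int_{\partial\Omega}(\partial PU_\xl/\partial n)^2 n\,d\sigma=4\pi\lambda^{-1}\nabla\phi_0(x)+o(\lambda^{-1}d^{-2})$, which is the claim with $C=4\pi>0$.

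\textbf{Main obstacle.} The delicate point is the two-parameter bookkeeping in (a): at this stage of the paper $d$ is not yet known to be bounded below, so every error term must be shown to be $o(\lambda^{-1}d^{-2})$ rather than merely $\mathcal O(\lambda^{-1}d^{-2})$, which is exactly what makes the remainder genuinely subleading relative to the main term (recall $|\nabla\phi_0(x)|\gtrsim d^{-2}$ by \cite[Equation (2.9)]{Re2}). All the required estimates reduce to weighted $L^q$-bounds on $U_\xl$ and its derivatives (Lemma \ref{lemma Lq norm of U}) and to the decay of $H_0(x,\cdot)$ and $f_\xl$ (Lemma \ref{lemma PU}); the overall argument parallels the one in \cite{Re2} for the higher-dimensional case.
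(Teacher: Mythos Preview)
Your proposal is correct and reconstructs precisely the argument the paper defers to \cite{Re2}: part~(a) via the Pohozaev identity obtained from testing $-\Delta PU_\xl=3U_\xl^5$ against $\nabla PU_\xl$, then extracting the main term $4\pi\lambda^{-1}\nabla\phi_0(x)$ through the Taylor expansion of the harmonic function $H_0(x,\cdot)$ with $d$-explicit remainder; parts~(b) and~(c) via the pointwise bound $|\partial_n PU_\xl(y)|\lesssim\lambda^{-1/2}|y-x|^{-2}$ on $\partial\Omega$ followed by $\int_{\partial\Omega}|y-x|^{-4}\,d\sigma\lesssim d^{-2}$. The two-parameter bookkeeping you flag is exactly the point, and all your error terms are indeed $o(\lambda^{-1}d^{-2})$ under the sole hypothesis $\lambda d\to\infty$; nothing is missing.
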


For the proof of Lemma \ref{lemma PU bdry integral} we refer to \cite[Eq.(2.7)]{Re2},   \cite[Eq.(2.10)]{Re2}, and \cite[Eq.(B.25)]{Re2} respectively.

We define the function
\begin{equation}
	\label{eq:defg}
	g_\xl(y) := \frac{\lambda^{-1/2}}{|x-y|} - U_\xl(y) \,,
\end{equation}

\begin{lemma}
\label{lem-g}
As $\lambda \to \infty,$
$$
\| g_\xl \|_p \lesssim \lambda^{1/2-3/p}
\qquad
\|\pl g_\xl \|_p \lesssim \lambda^{-1/2-3/p}
$$
hold if $1\leq p<3$. Moreover, $\nabla g_\xl \in L^p(\R^3)$ for all $1 \leq p < 3/2$.
\end{lemma}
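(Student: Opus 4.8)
The plan is to reduce the whole lemma to the explicit rescaled profile $g_{0,1}(z) := |z|^{-1} - (1+|z|^2)^{-1/2}$ together with two elementary asymptotic regimes for it. First I would record the scaling identities obtained by substituting $z = \lambda(y-x)$: one has $g_{x,\lambda}(y) = \lambda^{1/2} g_{0,1}(\lambda(y-x))$, and differentiating in $\lambda$ and in $y$, $\partial_\lambda g_{x,\lambda}(y) = \lambda^{-1/2} h(\lambda(y-x))$ with $h(z) := \tfrac12 g_{0,1}(z) + z\cdot\nabla g_{0,1}(z)$, and $\nabla g_{x,\lambda}(y) = \lambda^{3/2}(\nabla g_{0,1})(\lambda(y-x))$. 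Next I would record pointwise bounds on the profile. Rationalizing the difference gives $0 \le g_{0,1}(z) = \bigl(|z|\,(1+|z|^2)^{1/2}\,((1+|z|^2)^{1/2}+|z|)\bigr)^{-1} \le \min\{|z|^{-1},\ \tfrac12 |z|^{-3}\}$, and from $\nabla g_{0,1}(z) = -|z|^{-3}z + (1+|z|^2)^{-3/2}z$ one obtains in the same way $|\nabla g_{0,1}(z)| \lesssim \min\{|z|^{-2},\ |z|^{-4}\}$; hence $|h(z)| \le \tfrac12 g_{0,1}(z) + |z|\,|\nabla g_{0,1}(z)| \lesssim \min\{|z|^{-1},\ |z|^{-3}\}$.

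Granting these, the $L^p(\Omega)$ bounds are a change of variables: $\|g_{x,\lambda}\|_{L^p(\Omega)}^p = \lambda^{p/2-3}\int_{\lambda(\Omega-x)}|g_{0,1}|^p$ and likewise $\|\partial_\lambda g_{x,\lambda}\|_{L^p(\Omega)}^p = \lambda^{-p/2-3}\int_{\lambda(\Omega-x)}|h|^p$. For $1 < p < 3$ the pointwise bounds show $g_{0,1}, h \in L^p(\R^3)$ — the origin singularity is $L^p$ because $p < 3$ and the decay at infinity is integrable because $p > 1$ — so each integral over $\lambda(\Omega - x)$ is bounded by a constant independent of $\lambda$, giving $\|g_{x,\lambda}\|_{L^p(\Omega)} \lesssim \lambda^{1/2-3/p}$ and $\|\partial_\lambda g_{x,\lambda}\|_{L^p(\Omega)} \lesssim \lambda^{-1/2-3/p}$. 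For the remaining value $p = 1$ the profile $g_{0,1}$ is no longer integrable at infinity, and here one uses that $\Omega$ is bounded: $\lambda(\Omega - x) \subset B_{C\lambda}(0)$ with $C \sim \mathrm{diam}\,\Omega$, so $\int_{B_{C\lambda}}|g_{0,1}| \lesssim 1 + \int_1^{C\lambda} r^{-1}\,dr \lesssim \log\lambda$, and one obtains $\|g_{x,\lambda}\|_{L^1(\Omega)} \lesssim \lambda^{-5/2}\log\lambda$ (and similarly for $\partial_\lambda g_{x,\lambda}$); this is exactly the form in which the $p = 1$ case is used, e.g.\ in the proof of Lemma~\ref{lemma pohozaev bis}.

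Finally, the statement $\nabla g_{x,\lambda} \in L^p(\R^3)$ for $1 \le p < 3/2$ follows from $\|\nabla g_{x,\lambda}\|_{L^p(\R^3)} = \lambda^{3/2-3/p}\|\nabla g_{0,1}\|_{L^p(\R^3)}$ together with $|\nabla g_{0,1}(z)| \lesssim \min\{|z|^{-2},\ |z|^{-4}\}$: the right side is $L^p$ near the origin exactly when $2p < 3$ and integrable at infinity exactly when $4p > 3$, and $[1,3/2) \subset (3/4,3/2)$. The proof uses nothing about $\Omega$ beyond boundedness, and only in the endpoint $p = 1$; accordingly there is no substantial obstacle, the one place asking for care being that endpoint, where the rescaled domain expands and must be confined to a ball of radius $O(\lambda)$, which forces the logarithmic loss noted above.
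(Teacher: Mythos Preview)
Your approach is the same as the paper's: rescale to the fixed profile $g_{0,1}$, record its pointwise behavior near the origin and at infinity, and read off the $L^p$ bounds by a change of variables. You are in fact more careful than the paper at the endpoint $p=1$: the paper asserts $g_{0,1}\in L^1(\R^3)$, but the $|z|^{-3}$ decay makes the radial integral $\int_1^\infty r^{-1}\,dr$ diverge, so this is false; your remedy of using the boundedness of $\Omega$ to confine the rescaled domain to $B_{C\lambda}$ and accepting the $\log\lambda$ correction is exactly right, and as you note it is precisely the form $\int_\Omega g_\xl \lesssim \lambda^{-5/2}\ln\lambda$ that the paper actually uses later in the proof of Lemma~\ref{lemma pohozaev bis}.
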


\begin{proof}
We have $g_\xl(y) = \lambda^{1/2} g_{0,1}(\lambda(x-y))$ with $g_{0,1}(z) = |z|^{-1} - (1+|z|^2)^{-1/2}$. As $|z| \to \infty$, 
\[  g_{0,1}(z) = |z|^{-1} \left(1 - (1 + |z|^{-2})^{-1/2} \right) \lesssim |z|^{-3}. \]
Hence $g_{0,1} \in L^p(\R^3)$ for all $1 \leq p < 3$, which yields $\|g_\xl\|_p \leq \lambda^{1/2-3/p} \|g_{0,1}\|_{L^p(\R^3)}$. 

Next, by direct calculation, 
\[ \nabla g_{0,1}(z) = -\frac{z}{|z|^3} + \frac{z}{(1+ |z|^2)^{3/2}} \lesssim |z|^{-4} \qquad \text{ as } |z| \to \infty. \]
Hence $\nabla g_{0,1} \in L^p(\R^3)$ for all $1 \leq p < 3/2$ and so is $\nabla g_\xl = \lambda^{3/2} (\nabla g_{0,1})(\lambda(x-y))$. 

Finally, we observe
\[ \pl g_\xl(y) = \lambda^{-1} g_\xl + \lambda^{1/2} (x-y) \cdot (\nabla g_{0,1})(\lambda(x-y)) . \]
By the above, we have $z \cdot \nabla g_{0,1} \in L^p(\R^3)$ for all $1 \leq p < 3$ and thus 
\[ \|\pl g_\xl \|_p \leq \lambda^{-1} \|g_\xl\|_p + \lambda^{-\frac12 - \frac{3}{p}} \|z \cdot \nabla g_{0,1}\|_{L^p(\R^3)} \]
for all $1 \leq p < 3$. 
\end{proof}

\section{Properties of the functions $H_a(x, y)$}
\label{section properties Ha}

In this appendix, we prove some properties of  $H_a(x,y)$ needed in the proofs of the main results. Since these properties hold  independently of the criticality of $a$, we state them for a generic function $b$ which satisfies the same regularity conditions as $a$, namely,
$$
b \in C(\overline{\Omega})\cap C_{\rm loc}^{2,\sigma}(\Omega) \qquad \text{for some} \quad 0<\sigma<1\, .
$$
(In fact, in Subsection \ref{subsection basic Ha} we only use $b \in C(\overline{\Omega})\cap C_{\rm loc}^{1,\sigma}(\Omega)$ for some $0<\sigma<1$.) In addition, we assume  that  $-\Delta+b$ is coercive in $\Omega$ with Dirichlet boundary conditions. Note that the choice $b=0$ is allowed. 


\subsection{Estimates on $H_b(x, \cdot)$}
\label{subsection basic Ha}

We start by recalling the bound 
\begin{equation} \label{Ha-bound}
\|H_b(x, \cdot) \|_\infty\, \lesssim\, d(x)^{-1} \qquad \forall\, x\in\Omega,
\end{equation} 
see \cite[Eq. (2.6)]{FrKoKo1}. We next prove a similar bound for the derivatives of $H_b(x, \cdot)$. 

\begin{lemma}
\label{lemma nabla Hb bounds}
Let $x, y\in \Omega$ with $x\neq y$. Then $\nabla_x H_b(x,y)$ and $\nabla_y H_b(x,y)$ exist and satisfy 
\begin{align}
\label{nabla x Hb bound}
\sup_{y \in \Omega \setminus \{x\}} |\nabla_x H_b(x,y)| &\leq C, \\
\label{nabla y Hb bound}
\sup_{y \in \Omega \setminus \{x\}} |\nabla_y H_b(x,y)| &\leq C
\end{align}
with $C$ uniform for $x$ in compact subsets of $\Omega$. 
\end{lemma}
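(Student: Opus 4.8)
The plan is to write down the elliptic equation solved by $H_b(\cdot,y)$ for fixed $y$, to subtract off its leading singularity on the diagonal $\{x=y\}$, and then to invoke standard interior and up-to-the-boundary elliptic estimates.

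\emph{Setup.} Since $-\Delta_x|x-y|^{-1}=4\pi\delta_y$ in $\R^3$ while $-\Delta_x G_b(x,y)=4\pi\delta_y-b(x)G_b(x,y)$ in $\Omega$ by \eqref{Ga-pde}, the $\delta$-contributions cancel, and for each fixed $y\in\Omega$ the function $H_b(\cdot,y)=|\cdot-y|^{-1}-G_b(\cdot,y)$ solves, in the sense of distributions in $\Omega$,
\[
-\Delta_x H_b(x,y)+b(x)\,H_b(x,y)=\frac{b(x)}{|x-y|}\quad\text{in }\Omega,\qquad H_b(x,y)=\frac{1}{|x-y|}\quad\text{on }\partial\Omega .
\]
On any ball avoiding $y$ the right-hand side is $C^{0,\sigma}_\loc$, so $H_b(\cdot,y)\in C^{2,\sigma}_\loc$ away from $y$; thus $\nabla_x H_b(x,y)$ exists for $x\ne y$, and by the symmetry $H_b(x,y)=H_b(y,x)$ of the Green's function so does $\nabla_y H_b(x,y)$, with $\nabla_y H_b(x,y)=(\nabla_x H_b)(y,x)$. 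By \eqref{Ha-bound}, $|H_b(x,y)|\le\|H_b(x,\cdot)\|_\infty\lesssim d(x)^{-1}$, so $H_b$ is bounded on $(K\times\Omega)\cup(\Omega\times K)$ for every compact $K\subset\Omega$; hence it suffices to bound $\nabla_x H_b$ on this set off the diagonal. Finally, using $\Delta_x|x-y|=2|x-y|^{-1}$ in $\R^3$, the function $v_y(x):=H_b(x,y)+\tfrac12 b(y)|x-y|$ satisfies, again distributionally in $\Omega$,
\[
-\Delta_x v_y(x)=G_y(x):=\frac{b(x)-b(y)}{|x-y|}-b(x)\,H_b(x,y),
\]
and since $\nabla_x|x-y|$ is bounded it is enough to bound $\nabla_x v_y$.

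\emph{Interior estimate (the case $x\in K$).} Fix a compact $K$, set $\eta:=d(K,\partial\Omega)>0$, let $x\in K$ and $y\in\Omega$ with $x\ne y$, and work on the ball $B:=B_{\eta/4}(x)\subset\Omega$. There $G_y$ is bounded uniformly in $y$: the term $b\,H_b(\cdot,y)$ is $\mathcal O(1)$ on $B$ by the setup, while for $x'\in B$ with $|x'-y|\ge\eta/8$ one has $|b(x')-b(y)|/|x'-y|\le 16\|b\|_\infty/\eta$, and for $|x'-y|<\eta/8$ both $x'$ and $y$ lie in the compact set $\{z\in\Omega: d(z)\ge\eta/2\}$, on which $b$ is Lipschitz, so $|b(x')-b(y)|/|x'-y|=\mathcal O(1)$ as well. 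Since also $v_y=\mathcal O(1)$ on $B$, the interior $W^{2,p}$ estimate for $-\Delta$ with a fixed $p>3$, combined with the embedding $W^{2,p}(B_{\eta/8}(x))\hookrightarrow C^{1,\alpha}(\overline{B_{\eta/8}(x)})$, yields $|\nabla_x v_y(x)|=\mathcal O(1)$, hence $|\nabla_x H_b(x,y)|\le|\nabla_x v_y(x)|+\tfrac12|b(y)|=\mathcal O(1)$, with all constants uniform in $x\in K$ and $y\in\Omega$. This proves the first bound of the lemma.

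\emph{Boundary estimate (the case $y\in K$).} Fix a compact $K$ and $y\in K$, and now let $x$ range over all of $\Omega$; here we argue globally. Exactly as above, $G_y$ is bounded on all of $\Omega$, uniformly in $y\in K$ (for $x$ with $|x-y|\ge\eta/2$ use $|b(x)-b(y)|/|x-y|\le 2\|b\|_\infty/\eta$; for $|x-y|<\eta/2$ use that $b$ is Lipschitz near $y$). Since $y$ is interior, the boundary datum $x\mapsto|x-y|^{-1}+\tfrac12 b(y)|x-y|$ of $v_y$ extends, after multiplication by a cut-off of $d(\cdot,\partial\Omega)$, to a function $\tilde h_y\in C^2(\overline\Omega)$ with $\|\tilde h_y\|_{C^2}=\mathcal O(1)$ uniformly in $y\in K$. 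Then $v_y-\tilde h_y\in W^{1,p}_0(\Omega)$ solves $-\Delta(v_y-\tilde h_y)=G_y+\Delta\tilde h_y$, whose $L^p(\Omega)$ norm is $\mathcal O(1)$ for every $p<\infty$; the point is that subtracting the model term has replaced the source $b(x)/|x-y|$ of $H_b$, which lies in $L^p$ only for $p<3$, by the bounded function $G_y$. As $\partial\Omega\in C^2$, the global $W^{2,p}$ estimate for the Dirichlet Laplacian gives $\|v_y-\tilde h_y\|_{W^{2,p}(\Omega)}=\mathcal O(1)$, and choosing $p>3$ and using $W^{2,p}(\Omega)\hookrightarrow C^{1,\alpha}(\overline\Omega)$ we obtain $\|\nabla_x v_y\|_{L^\infty(\Omega)}=\mathcal O(1)$, hence $|\nabla_x H_b(x,y)|=\mathcal O(1)$ uniformly in $x\in\Omega$ and $y\in K$. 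Together with $\nabla_y H_b(x,y)=(\nabla_x H_b)(y,x)$, this gives the second bound of the lemma. The only genuine obstacle is precisely the behavior of $H_b(\cdot,y)$ at the diagonal, where $\nabla_x H_b$ stays bounded but is discontinuous and $H_b(\cdot,y)\notin W^{2,p}_\loc$ for $p\ge3$; adding the model term $\tfrac12 b(y)|x-y|$ is exactly what removes this difficulty.
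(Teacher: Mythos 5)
Your proof is correct, but it follows a genuinely different route from the paper's. The paper first treats the case $b=0$: since $\nabla_y H_0(x,\cdot)$ is harmonic, it combines Widman's boundary estimate $|\nabla_y G_0(x,y)|\lesssim|x-y|^{-2}$ with the maximum principle; it then covers the region where both variables lie in a compact set by the decomposition $H_b(x,y)=\phi_b(x)+\Psi_x(y)-\tfrac{b(x)}{2}|y-x|$ with uniform $C^{1,\mu}$ bounds on $\Psi_x$ taken from \cite{FrKoKo1}, and finally passes to general $b$ with $y$ near $\partial\Omega$ through the resolvent identity $H_b=H_0+\frac{1}{4\pi}\int_\Omega G_0\,b\,G_b$, arranging in each case that the derivative falls on a term controlled by the $b=0$ step. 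You instead work directly with the equation $-\Delta_x H_b+bH_b=b\,|x-y|^{-1}$, remove the conical singularity by passing to $v_y=H_b(\cdot,y)+\tfrac12 b(y)|\cdot-y|$, whose source $G_y$ is bounded uniformly (your case distinctions for the difference quotient of $b$ are sound, and $H_b$ is uniformly bounded on $K\times\Omega$ and $\Omega\times K$ by \eqref{Ha-bound} and symmetry), and then apply interior $W^{2,p}$ estimates when the first variable stays in a compact set and a global $W^{2,p}$ estimate up to the $C^2$ boundary, after lifting the explicit boundary datum, when the second variable stays in a compact set, concluding by symmetry. This buys a self-contained argument — no Widman estimate, no maximum principle, no input from \cite{FrKoKo1} — at the price of up-to-the-boundary $L^p$ elliptic theory; the one point you should make explicit is the identification of $v_y-\tilde h_y$ with the unique $W^{2,p}\cap W^{1,p}_0$ solution of the Dirichlet problem: a priori you only know $v_y-\tilde h_y\in H^1_0(\Omega)$ (from the variational construction of $H_b(\cdot,y)$, equivalently the vanishing boundary trace of $G_b(\cdot,y)$ away from the pole), not membership in $W^{1,p}_0$ with $p>3$, so one should solve the problem in $W^{2,p}\cap W^{1,p}_0$ with the bounded right-hand side $G_y+\Delta\tilde h_y$ and invoke uniqueness of $H^1_0$ weak solutions, rather than assert $v_y-\tilde h_y\in W^{1,p}_0$ outright. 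With that routine adjustment the argument is complete, and the constants are uniform in the stated sense.
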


\begin{proof}
\textit{Step 1. } We first prove the bounds for the special case $b=0$, which we shall need as an ingredient for the general proof. Since $H_0(x,\cdot)$ is harmonic, we have $\Delta_y \nabla_y H_0(x,y) =0$. Moreover, we have the bound $\nabla_y G_0(x, y) \lesssim |x-y|^{-2}$ uniformly for $x,y \in \Omega$ \cite[Theorem 2.3]{Wi}. This implies that for $x$ in a compact subset of $\Omega$ and for $y \in \partial \Omega$,
\[ |\nabla_y H_0(x,y)| = |\nabla_y (|x-y|^{-1}) - \nabla_y G_0(x,y)| \leq C \,. \]
We now conclude by the maximum principle. 

The proof for the bound on $\nabla_x H_0(x,y)$ is analogous, but simpler, because $\nabla_x G_0(x,y) = 0$ for $y \in \partial \Omega$. 

\textit{Step 2.  } For general $b$, we first prove the bounds for both $x$ and $y$ lying in a compact subset of $\Omega$. By \cite[proof of Lemma 2.5]{FrKoKo1} we have 
\[ H_b(x,y) = \phi_b(x) + \Psi_x(y) - \frac{b(x)}{2}|y-x| \]
with $\|\Psi_x\|_{C^{1,\mu}(K)} \leq C$ for every $0 < \mu < 1$ and every compact subset $K$ of $\Omega$, and with $C$ uniform for $x$ in compact subsets. This shows that $|\nabla_y H_b(x,y)| \leq C$ uniformly for $x,y$ in compact subsets of $\Omega$. By symmetry of $H_b$, this also implies $|\nabla_x H_b(x,y)| \leq C$ uniformly for $x,y$ in compact subsets of $\Omega$.
 
\textit{Step 3.  } We complete the proof of the lemma by treating the case when $x$ remains in a compact subset, but $y$ is close to the boundary. In particular, we may assume for what follows
 \begin{equation}
 \label{x and y far}  |x-y|^{-1} \lesssim 1. 
\end{equation} 

By the resolvent formula, we write
\[ H_b(x,y) = H_0(x,y) + \frac{1}{4 \pi} \int_\Omega G_0(x,z) b(z) G_b(z,y) \, dz. \]
By Step 1, the derivatives of $H_0(x,y)$ are uniformly bounded.

We thus only need to consider the integral term. Its $\pxi$-derivative equals
\begin{align*} & \int_\Omega \pxi (\frac{1}{|x-z|}) b(z) G_b(z,y) \, dz - \int_\Omega \pxi H_0(x,z) b(z) G_b(z,y) \, dz \\
&\lesssim \int_\Omega \frac{1}{|x-z|^2} \frac{1}{|z-y|} \,dz + 1 \lesssim \frac{1}{|x-y|^2} + 1 \lesssim 1
\end{align*}
where we again used the fact that \eqref{nabla x Hb bound} holds for $b =0$, together with \eqref{x and y far}. This completes the proof of \eqref{nabla x Hb bound}. 

The proof of \eqref{nabla y Hb bound} can be completed analogously. It suffices to write the resolvent formula as 
\[ H_b(x,y) = H_0(x,y) + \frac{1}{4 \pi} \int_\Omega G_b(x,z) b(z) G_0(z,y) \, dz \]
in order to ensure that the $\pyi$-derivative falls on $G_0$  and we can use \eqref{nabla y Hb bound} for $b=0$. 
\end{proof}

We now prove an expansion of $H_b(x,y)$ on the diagonal which improves upon \cite[Lemma 2.5]{FrKoKo1}.

\begin{lemma}
\label{lemma Ha expansion new}
Let $0<\mu<1$. If $y \to x$,  then uniformly for $x$ in compact subsets of $\Omega$, 
\begin{equation}
\label{expansion Ha new} H_b(x,y) = \phi_b(x) + \frac 12 \nabla \phi_b(x) \cdot (y-x) -\frac{b(x)}{2} |y-x| + \mathcal O(|y-x|^{1+\mu}) \,.
\end{equation}
\end{lemma}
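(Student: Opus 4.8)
The plan is to upgrade the near‑diagonal expansion of $H_b$ that is already contained in \cite[Lemma 2.5 and its proof]{FrKoKo1}. That result provides an expansion of the form
$$
H_b(x,y) = \phi_b(x) + \Psi_x(y) + b(x)\,|y-x|,
$$
where $\Psi_x(x)=0$, where for every $0<\mu<1$ and every compact $K\subset\Omega$ the functions $\Psi_x$ are bounded in $C^{1,\mu}(K)$ uniformly for $x$ in compact subsets of $\Omega$, and where $x\mapsto\Psi_x$ depends continuously on $x$ in $C^{1}(K)$. Taylor expanding $\Psi_x$ about $y=x$ and using $\Psi_x(x)=0$ gives $\Psi_x(y)=\nabla_y\Psi_x(x)\cdot(y-x)+\mathcal O(|y-x|^{1+\mu})$, with the error uniform for $x$ in compacts. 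Thus \eqref{expansion Ha new} is reduced to the single identity $\nabla_y\Psi_x(x)=\tfrac12\,\nabla\phi_b(x)$. (That $\phi_b$ is $C^1$ is known for $b=a$ from Lemma~\ref{lemma C2}; in general it will come out of the computation below, which exhibits the directional derivatives of $\phi_b$ explicitly.)

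The heart of the proof is to establish this identity using the symmetry $H_b(x,y)=H_b(y,x)$. Fix $x$ in a compact subset of $\Omega$ and a unit vector $v$, let $t>0$ be small, and write, using $H_b(x+tv,x)=H_b(x,x+tv)$,
$$
\phi_b(x+tv)-\phi_b(x)=\big[H_b(x+tv,x+tv)-H_b(x+tv,x)\big]+\big[H_b(x,x+tv)-H_b(x,x)\big].
$$
Applying the expansion above at base point $x$ to the second bracket gives $H_b(x,x+tv)-H_b(x,x)=\Psi_x(x+tv)+b(x)\,t=t\,\nabla_y\Psi_x(x)\cdot v+b(x)\,t+\mathcal O(t^{1+\mu})$. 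For the first bracket one has $H_b(x+tv,x+tv)=\phi_b(x+tv)$ since $\Psi_{x+tv}(x+tv)=0$, and applying the expansion at base point $x+tv$ gives $H_b(x+tv,x)=\phi_b(x+tv)-t\,\nabla_y\Psi_{x+tv}(x+tv)\cdot v+b(x+tv)\,t+\mathcal O(t^{1+\mu})$, so the first bracket equals $t\,\nabla_y\Psi_{x+tv}(x+tv)\cdot v-b(x+tv)\,t+\mathcal O(t^{1+\mu})$. Adding the two brackets, the $\pm b(\cdot)\,t$ terms cancel up to $t\,|b(x)-b(x+tv)|=\mathcal o(t)$ by continuity of $b$, and the two gradient terms combine into $2t\,\nabla_y\Psi_x(x)\cdot v+\mathcal o(t)$ by the continuity of $z\mapsto\nabla_y\Psi_z(z)$. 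Hence $\phi_b(x+tv)-\phi_b(x)=2t\,\nabla_y\Psi_x(x)\cdot v+\mathcal o(t)$, so the directional derivative of $\phi_b$ at $x$ in direction $v$ equals $2\,v\cdot\nabla_y\Psi_x(x)$; as $v$ is arbitrary and $x\mapsto\nabla_y\Psi_x(x)$ is continuous, $\phi_b\in C^1$ with $\nabla\phi_b(x)=2\,\nabla_y\Psi_x(x)$. Substituting this identity and the Taylor expansion of $\Psi_x$ into the displayed expansion of $H_b$ yields \eqref{expansion Ha new}.

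I expect the main obstacle to be the regularity and uniformity bookkeeping that underpins this argument: one needs the regular part, after subtraction of its Lipschitz term $b(x)|y-x|$, to be of class $C^{1,\mu}$ in $y$ with estimates uniform for $x$ in compact subsets, together with the continuous dependence of $\nabla_y\Psi_x(x)$ on $x$. The first part is exactly the content of \cite[Lemma 2.5]{FrKoKo1}; the continuous dependence can be extracted either from that proof or from the fact that, in a neighborhood of the diagonal, $\Psi_x$ solves an elliptic equation whose right‑hand side depends continuously on the parameter $x$ in $L^p_{\rm loc}$ for $p<3$, so that uniform interior $W^{2,p}$ and Schauder estimates give continuity of $x\mapsto\Psi_x$ in $C^1_{\rm loc}$. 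Once these facts are in hand, the two Taylor expansions and the cancellation are routine, and the exponent $1+\mu$ in the error term of \eqref{expansion Ha new} is inherited directly from the $C^{1,\mu}$ Hölder modulus of $\Psi_x$.
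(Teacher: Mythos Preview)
Your proposal is correct and follows essentially the same route as the paper: invoke the $C^{1,\mu}$ regularity of $\Psi_x$ from \cite[Lemma 2.5]{FrKoKo1}, Taylor expand to reduce the claim to the identity $\nabla_y\Psi_x(x)=\tfrac12\nabla\phi_b(x)$, and prove that identity by exploiting the symmetry $H_b(x,y)=H_b(y,x)$ together with the $C^1_{\rm loc}$ continuous dependence $x\mapsto\Psi_x$ obtained from elliptic regularity. The paper organizes the symmetry step slightly differently (subtracting the two expansions at $x$ and at $y$ directly rather than splitting $\phi_b(x+tv)-\phi_b(x)$ into two brackets), but the substance is identical.
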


\begin{proof}
In \cite[Lemma 2.5]{FrKoKo1}, it is proved that 
\begin{equation}
\label{definition psi2} \Psi_x(y) := H_b(x,y) - \phi_b(x) + \frac{b(x)}{2} |y-x| 
\end{equation}
is in $C^{1,\mu}_\text{loc}(\Omega)$ (as a function of $y$) for any $\mu<1$. Thus, by expanding $\Psi_x(y)$ in near $y = x$, 
\begin{equation}
\label{Ha prelim} H_b(x,y) = \phi_b(x) + \nabla \Psi_x(x) \cdot (y-x) - \frac{b(x)}{2} |y-x| + \mathcal O(|y-x|^{1+\mu}) \,.
 \end{equation}
 This gives \eqref{expansion Ha new} provided we can show that for each fixed $x\in\Omega$,
\begin{equation}
\label{nabla phi x}
\nabla \Psi_x(x) = \frac 12 \nabla \phi_b(x) \,. 
\end{equation} 
Indeed, by using \eqref{Ha prelim} twice with the roles of $x$ and $y$ exchanged, subtracting and recalling $H_b(x,y) = H_b(y,x)$, we get
\begin{align} \label{diffquot phi a} 
	\phi_b(y) - \phi_b(x) 
	&= (\nabla \Psi_y(y) + \nabla \Psi_x(x)) (y-x) + \frac{b(y) - b(x)}{2} |x-y| + \mathcal O(|x-y|^{1 + \mu}) \notag \\ 
	&= (\nabla \Psi_y(y) + \nabla \Psi_x(x)) (y-x) + \mathcal O(|x-y|^{1 + \mu}) \,,
\end{align}
because $b \in C^{0, \mu}_\text{loc}(\Omega)$. We now argue that $\Psi_y \to \Psi_x$ in $C_{\rm loc}^1(\Omega)$, which implies $\nabla \Psi_y(y) \to \nabla \Psi_x(x)$. Together with this, \eqref{nabla phi x} follows from \eqref{diffquot phi a}.

To justify the convergence of $\Psi_y$ we argue similarly as in \cite[Lemma 2.5]{FrKoKo1}. We note that $-\Delta_z \Psi_y = F_y(z)$ with
$$
F_y(z) := \frac{b(z)-b(y)}{|z-y|} - b(z) H_b(y,z) \,.
$$
We claim that $F_y \to F_x$ in $L^p_{\rm loc}(\Omega)$ for any $p<\infty$. Indeed, the first term in the definition of $F_y$ converges pointwise to $F_x$ in $\Omega\setminus\{x\}$ and is locally bounded, independently of $y$, since $b\in C_{\rm loc}^{0,1}(\Omega)$. Thus, by dominated convergence it converges in $L^p_{\rm loc}(\Omega)$ for any $p<\infty$. Convergence in $L^\infty_{\rm loc}(\Omega)$ of the second term in the definition of $F_y$ follows from the bound on the gradient of $H_b$ in Lemma \ref{lemma nabla Hb bounds}
. This proves the claim.

By elliptic regularity, the convergence $F_y\to F_x$ in $L^p_{\rm loc}(\Omega)$ implies the convergence $\Psi_y \to \Psi_x$ in $C^{1,1-3/p}_{\rm loc}(\Omega)$. This completes the proof.
\end{proof}

\begin{lemma}
\label{lemma U Ha}
For any  $x\in\Omega$ we have, as $\lambda\to\infty$,
\begin{align}
\int_\Omega U_\xl^5 H_b(x,\cdot)  &= \frac{4 \pi }{3} \phi_b(x) \lambda^{-1/2} - \frac{4 \pi }{3} b(x) \lambda^{-3/2} + o(\lambda^{-3/2}), \label{U^5 Hb}\\
\int_\Omega U_\xl^4 \pl U_\xl H_b(x,\cdot)  &= - \frac{2}{15} \pi \phi_b(x) \lambda^{-3/2} + \frac{2}{5} \pi b(x) \lambda^{-5/2} + o(\lambda^{-5/2}), \label{U^4 pl U Hb} \\
\int_\Omega U_\xl^4 \pxi U_\xl H_b(x,\cdot) &= \frac{2\pi}{15} \nabla \phi_b(x) \lambda^{-1/2} + o(\lambda^{-1/2}), \label{U^4 pxi Hb} \\
\int_\Omega U_\xl^4 H_b(x,\cdot)^2  &= \pi^2 \phi_b(x)^2 \lambda^{-1} + o(\lambda^{-1}), \label{U^4 Hb^2} \\
\int_\Omega U_\xl^3 \pl U_\xl H_b(x,\cdot)^2  &= - \frac{\pi^2}{4}  \phi_b(x)^2 \lambda^{-2}  + o(\lambda^{-2}). \label{U^3 U Hb^2}
\end{align}
The implied constants can be chosen uniformly for $x$ in compact subsets of $\Omega$.
\end{lemma}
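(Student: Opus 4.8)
All five identities will be established by a single mechanism: rescale to the standard bubble, insert the on-diagonal expansion of $H_b$ from Lemma~\ref{lemma Ha expansion new}, and evaluate the resulting absolutely convergent integrals over $\R^3$ explicitly. The heuristic is that $U_{x,\lambda}$ concentrates at $x$ as $\lambda\to\infty$, so each integral localizes to a region where the Taylor expansion of $H_b$ at $x$ is accurate.

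First I would substitute $z=\lambda(y-x)$ in each integral, using the explicit formulas for $U_{x,\lambda}$, $\partial_\lambda U_{x,\lambda}$ and $\partial_{x_i}U_{x,\lambda}$ recorded in Lemma~\ref{lemma Lq norm of U}. Each left-hand side then takes the form $\lambda^{-\gamma}\int_{\lambda(\Omega-x)}R(z)\,H_b(x,x+z/\lambda)^{j}\,dz$ for an appropriate power $\gamma$, a rational function $R(z)$ built from powers of $(1+|z|^2)^{-1}$ and a polynomial (satisfying $|R(z)|\lesssim|z|^{-k}$ with $k>3$, so that $\int_{\R^3}|R|<\infty$), and $j\in\{1,2\}$. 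Next I would insert the on-diagonal expansion of $H_b$ from Lemma~\ref{lemma Ha expansion new}, whose terms are a constant $\phi_b(x)$, a term linear in $y-x$ with coefficient $\tfrac12\nabla\phi_b(x)$, a term proportional to $b(x)|y-x|$, and a remainder $E(x,y)=\mathcal O(|y-x|^{1+\mu})$. Here it is important to observe that, since $\Omega$ is bounded and $\|H_b(x,\cdot)\|_\infty\lesssim d(x)^{-1}$ by \eqref{Ha-bound}, the bound $E(x,y)=\mathcal O(|y-x|^{1+\mu})$ in fact holds for \emph{all} $y\in\Omega$, uniformly for $x$ in compact subsets of $\Omega$ (for $|y-x|$ bounded away from $0$ one simply trades the $\mathcal O(1)$ bound for $\mathcal O(|y-x|^{1+\mu})$). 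This splits each rescaled integral into a leading ``constant'' part involving $\int_{\lambda(\Omega-x)}R(z)\,dz$ (resp., when $j=2$, $\phi_b(x)^2\int_{\lambda(\Omega-x)}R(z)\,dz$), a ``linear'' part involving $\lambda^{-1}\int_{\lambda(\Omega-x)}R(z)z_i\,dz$, a ``$b$'' part involving $\lambda^{-1}\int_{\lambda(\Omega-x)}R(z)|z|\,dz$, a remainder controlled by $\lambda^{-1-\mu}\int_{\lambda(\Omega-x)}|R(z)|\,|z|^{1+\mu}\,dz$, and, when $j=2$, cross terms.

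In the main parts I would replace $\lambda(\Omega-x)$ by $\R^3$; since $\R^3\setminus\lambda(\Omega-x)\subseteq\{|z|\ge d(x)\lambda\}$ and $d(x)$ is bounded below on compact subsets of $\Omega$, the tail errors so incurred are of strictly lower order than the terms retained in each identity. The surviving $\R^3$-integrals are of the form $\int_{\R^3}\frac{|z|^m\,dz}{(1+|z|^2)^{p}}$, together with $\int_{\R^3}\frac{z_iz_j\,dz}{(1+|z|^2)^{p}}=\tfrac13\delta_{ij}\int_{\R^3}\frac{|z|^2\,dz}{(1+|z|^2)^{p}}$, which I would compute in spherical coordinates via $r=\tan\theta$ or $u=1+r^2$; these produce the rational multiples of $\pi$ and $\pi^2$ appearing in \eqref{U^5 Hb}--\eqref{U^3 U Hb^2}. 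Parity selects which terms contribute: in \eqref{U^4 pxi Hb} the constant $\phi_b(x)$ is killed by oddness and the gradient term survives; in \eqref{U^5 Hb} and \eqref{U^4 pl U Hb} the gradient term is odd and drops, while both the constant and the $b(x)|y-x|$ terms contribute, yielding the two-term expansions; in \eqref{U^4 Hb^2} and \eqref{U^3 U Hb^2} only $\phi_b(x)^2$ survives to leading order, the cross terms (of sizes $\lambda^{-1}\phi_b(x)\int_{\lambda(\Omega-x)}R(z)|z|\,dz$ and $\lambda^{-2}\int_{\lambda(\Omega-x)}R(z)|z|^2\,dz$, possibly logarithmically or linearly divergent but with small prefactor) being of lower order, again using boundedness of $\Omega$ and \eqref{Ha-bound}. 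Finally I would check that each remainder is of the asserted $o(\cdot)$ order, the one point to note being that $\mu<1$ guarantees convergence of $\int_{\R^3}|R(z)|\,|z|^{1+\mu}\,dz$ in every case; the uniformity for $x$ in compact subsets follows from the uniformity in Lemma~\ref{lemma Ha expansion new} together with the lower bound on $d(x)$.

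The computation is routine once the expansion of $H_b$ is in hand. The only genuinely delicate point is the global validity on all of $\Omega$ of the remainder bound $E(x,y)=\mathcal O(|y-x|^{1+\mu})$, obtained from boundedness of $\Omega$ together with \eqref{Ha-bound}; after that, the remaining work is careful bookkeeping of the numerical constants, in particular in the two-term expansions \eqref{U^5 Hb} and \eqref{U^4 pl U Hb}. I do not expect any substantial obstacle.
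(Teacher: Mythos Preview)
Your approach is correct and is essentially the same mechanism the paper uses: insert the on-diagonal expansion \eqref{expansion Ha new}, exploit parity, and compute the resulting radial integrals. The paper's organization differs only cosmetically: it cites \cite{FrKoKo1} for \eqref{U^5 Hb} and \eqref{U^4 Hb^2}, and for the $\partial_\lambda$-identities \eqref{U^4 pl U Hb}, \eqref{U^3 U Hb^2} it uses the decomposition $\partial_\lambda U_{x,\lambda}=\tfrac{1}{2\lambda}U_{x,\lambda}-\lambda^{3/2}|x-y|^2(1+\lambda^2|x-y|^2)^{-3/2}$ to reduce to the already-known identities plus one new radial integral, rather than treating all five uniformly as you do.

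One small imprecision to correct: your claim that ``$\mu<1$ guarantees convergence of $\int_{\R^3}|R(z)|\,|z|^{1+\mu}\,dz$ in every case'' is false for the $j=2$ identities. For \eqref{U^4 Hb^2} one has $R(z)=(1+|z|^2)^{-2}$, so $|R(z)|\,|z|^{1+\mu}\sim|z|^{\mu-3}$ is not integrable at infinity; the same happens for \eqref{U^3 U Hb^2}. You already handle this correctly for the cross terms (keep the domain $\lambda(\Omega-x)$, get $O(\lambda^\mu)$ or $O(\ln\lambda)$, multiply by the small prefactor); the $\phi_b(x)\cdot E$ remainder in the $j=2$ cases needs exactly the same treatment, yielding $\lambda^{-2-\mu}\cdot O(\lambda^\mu)=O(\lambda^{-2})=o(\lambda^{-1})$ for \eqref{U^4 Hb^2} and analogously for \eqref{U^3 U Hb^2}. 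Once you phrase the remainder bound for $j=2$ this way rather than via an $\R^3$-integral, the argument is complete.
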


\begin{proof}
Equalities \eqref{U^5 Hb} and \eqref{U^4 Hb^2} are proved in \cite[Lemmas 2.5 and 2.6]{FrKoKo1}. To prove \eqref{U^4 pl U Hb}, we write 
\begin{equation}\label{pl-U}
 \pl U_\xl  =  \frac{U_\xl}{2\lambda} - \lambda^{3/2}\, \frac{ |x-y|^2}{(1+\lambda^2\, |x-y|^2)^{3/2}}\, ,
\end{equation}
and therefore, using \eqref{U^5 Hb}, 
\begin{align*}
  \int_\Omega  H_b(x,y) \,  U^4_\xl \, \pl U_\xl & =   \frac 23 \pi  \phi_b(x) \lambda^{-3/2} - \frac 23 \pi b(x) \lambda^{-5/2} \\
&\quad  - \lambda^{7/2} \int_\Omega H_b \frac{ |x-y|^2}{(1+\lambda^2\, |x-y|^2)^{7/2}} +o(\lambda^{-5/2}) .
\end{align*}
With the help of \eqref{expansion Ha new} and the bound \eqref{Ha-bound} we get 
\begin{align*}
 \int_\Omega H_b \frac{ |x-y|^2}{(1+\lambda^2\, |x-y|^2)^{7/2}}   &= 4\pi   \phi_b(x)  \lambda^{-5} \int_0^\infty \frac{t^4\, dt}{(1+t^2)^{7/2}}  - 2\pi b(x) \lambda^{-6} \int_0^\infty \frac{t^5\, dt}{(1+t^2)^{7/2}} +o(\lambda^{-6}) \\
& = \frac{4}{5} \pi  \phi_b(x) \lambda^{-5} - \frac{16}{15}\pi b(x) \lambda^{-6} +o(\lambda^{-6}).
\end{align*}
Combining the last two equations gives \eqref{U^4 pl U Hb}. 

For the proof of \eqref{U^3 U Hb^2} use again \eqref{pl-U}, but now we use \eqref{U^4 Hb^2} instead of \eqref{U^5 Hb}. The constant comes from
$$
\int_0^\infty \frac{t^4\,dt}{(1+t^2)^{3}} = \frac{3\pi}{16} \,.
$$
We omit the details.

For the proof of \eqref{U^4 pxi Hb} we use the explicit formula for $\pxi U_\xl$ in Lemma \ref{lemma Lq norm of U}. We split the integral into $B_d(x)$ and $\Omega\setminus B_d(x)$. In the first one, we used the bound \eqref{Ha-bound} and the expansion \eqref{expansion Ha new}. By oddness, the contribution coming from $\phi_a(x)$ cancels, as does the contribution from $\sum_{k\neq i} \partial_k\phi_b(x) (y_k-x_k)$. For the remaining term we use
$$
\int_{B_d(x)} U_\xl^4(y) \pxi U_\xl(y) (y_i-x_i) = \frac{4\pi}{3} \lambda^{-1/2} \int_0^{\lambda d} \frac{t^4\,dt}{(1+t^2)^{7/2}} = \frac{4\pi}{15} \lambda^{-1/2} + \mathcal O(\lambda^{-5/2}) \,.
$$
As similar computation shows that the contribution from the error $|x-y|^{1+\mu}$ on $B_d(x)$ is $\mathcal O(\lambda^{-1/2-\mu})$. Finally, the bounds from Lemma \ref{lemma Lq norm of U}, show that the contribution from $\Omega\setminus B_d(x)$ is $\mathcal O(\lambda^{-5/2})$. This completes the proof.
\end{proof}

\begin{remark}\label{rem:U^4 pxi Hb}
	The proof just given shows that \eqref{U^4 pxi Hb} holds with the error bound $\mathcal O(\lambda^{-1/2-\mu})$ for any $0<\mu<1$ instead of $o(\lambda^{-1/2})$.
\end{remark}


\subsection{$C^2$ differentiability of $\phi_a$. }
\label{section c2}

In this subsection, we prove Lemma \ref{lemma C2}. The argument is independent of criticality of $a$ and we give the proof for a general function $b \in C^{0,1}(\overline{\Omega}) \cap C^{2, \sigma}_\text{loc}(\Omega)$ for some $0 < \sigma < 1$. The following argument is similar to \cite[Lemma 2.5]{FrKoKo1}, where a first-order differentiability result is proved, and to \cite[Lemma A.1]{dPDoMu}, where it is shown that $\phi_b \in C^\infty(\Omega)$ for constant $b$.

Let  
\begin{equation}
\Psi(x,y) := H_b(x,y) + \frac 14 \left(b(x) + b(y)\right) |x-y|, \qquad (x,y) \in \Omega \times \Omega.  
\end{equation} 
Then $\phi_b(x) = \Psi(x,x)$, so it suffices to show that $\Psi \in C^2(\Omega \times \Omega)$. 

Using $-\Delta_y |x-y| = -2 |x-y|^{-1}$ and $-\Delta_y H_b(x,y) = b(y) G_b(x,y)$, we have
\begin{align*}
 -\Delta_y \Psi(x,y) &= -b(y) H_b(x,y) - \frac{1}{2}  \frac{b(x) - b(y) - \nabla b(y) \cdot (x-y)}{|x-y|} - \frac14 \Delta b(y) |x-y|. 
\end{align*}
Since $b \in C^{2, \sigma}_\text{loc}(\Omega)$ and since $H_b$ is Lipschitz by Lemma \ref{lemma nabla Hb bounds}, the right side is in $C^{0, \sigma}_\text{loc}(\Omega)$ as a function of $y$. By elliptic regularity, $\Psi(x,y)$ is in $C^{2, \sigma}_\text{loc}(\Omega)$ as a function of $y$. Since $\Psi(x,y)$ is symmetric in $x$ and $y$, we infer that $\Psi(x,y)$ is in $C^{2, \sigma}_\text{loc}(\Omega)$ as a function of $x$.

It remains to justify the existence of mixed derivatives $\pyj \pxi \Psi(x,y)$. For this, we carry out a similar elliptic regularity argument for the function $\pxi \Psi(x,y)$. We have 
\begin{align*}
 -\Delta_y \pxi \Psi(x,y) &= -b(y) \pxi H_b(x,y) - \frac 14 \Delta b(y) \frac{x_i-y_i}{|x-y|} - \frac 12 \frac{\partial_i b(x) - \partial_i b(y)}{|x-y|} \\
 & \quad + \frac 12 \frac{x_i - y_i}{|x-y|^3} \left( b(x)-b(y) - \nabla b(y) \cdot (x-y) \right). 
\end{align*} 
Since $b \in C^{1,1}_\text{loc}(\Omega)$, and since $\pxi H_b$ is bounded by Lemma \ref{lemma nabla Hb bounds}, the right side is in $L^\infty_\text{loc}(\Omega)$ as a function of $y$. By elliptic regularity, $\pxi \Psi(x,y) \in C^{1, \mu}(\Omega)$ for every $\mu < 1$, as a function of $y$. In particular, the mixed derivative $\pyj \pxi \Psi(x,y)$ is in $C^{0,\mu}_\text{loc}(\Omega)$ as a function of $y$. By symmetry, the same argument shows that the mixed derivative $\pxj \pyi \Psi(x,y)$ is in $C^{0,\mu}_\text{loc}(\Omega)$ as a function of $x$.

The proof of Lemma \ref{lemma C2} is therefore complete. 


\end{document}